\numberwithin{equation}{section}
\newtheorem{Theorem}{Theorem}[section]
\newtheorem{Corollary}[Theorem]{Corollary}
\newtheorem{Lemma}[Theorem]{Lemma}
\newtheorem{Proposition}[Theorem]{Proposition}
 { \theoremstyle{definition}
\newtheorem{Remark}[Theorem]{Remark} }
\newcommand\bC{\mathbf C}
\newcommand\bF{\mathbf F}
\newcommand\bH{\mathbf H}
\newcommand\bR{\mathbf R}
\newcommand\bZ{\mathbf Z}
\newcommand\ra{\mathrm a}
\newcommand\rb{\mathrm b}
\newcommand\rc{\mathrm c}
\newcommand\rd{\mathrm d}
\newcommand\re{\mathrm e}
\newcommand\rg{\mathrm g}
\newcommand\rE{\mathrm E}
\newcommand\rF{\mathrm F}
\newcommand\rG{\mathrm G}
\newcommand\rJ{\mathrm J}
\newcommand\rM{\mathrm M}
\newcommand\rO{\mathrm O}
\newcommand\rU{\mathrm U}
\DeclareMathOperator\homology{H}
\renewcommand\H{\homology}
\newcommand\mapsfrom\leftarrow
\newcommand\longto\longrightarrow
\newcommand\mono\hookrightarrow
\newcommand\epi\twoheadrightarrow
\newcommand\<\langle
\renewcommand\>\rangle
\newcommand\Co{\mathrm{Co}}
\newcommand\Fi{\mathrm{Fi}}
\newcommand\Leech{\mathrm{Leech}}
\newcommand\Suz{\mathrm{Suz}}
\newcommand\Perm{\mathrm{Perm}}
\newcommand\Spin{\mathrm{Spin}}
\newcommand\SU{\mathrm{SU}}
\newcommand\SO{\mathrm{SO}}
\DeclareMathOperator\Aut{Aut}
\DeclareMathOperator\Sym{Sym}
\DeclareMathOperator\Alt{Alt}
\DeclareMathOperator\Sq{Sq}
\newcommand\SWeyl{\mathrm{SWeyl}}
\DeclareMathOperator\coker{coker}
\newcommand{\GL}{\mathrm{GL}}
\newcommand{\SL}{\mathrm{SL}}
\newcommand{\Sp}{\mathrm{Sp}}
\newcommand{\Hom}{\mathrm{Hom}}
\newcommand{\Ext}{\mathrm{Ext}}
\DeclareMathOperator{\trace}{trace}
\newcommand\tr\trace
\newcommand{\GSp}{\mathrm{GSp}}
\begin{document}
\allowdisplaybreaks

\newcommand{\arXivNumber}{1810.00463}

\renewcommand{\thefootnote}{}

\renewcommand{\PaperNumber}{059}

\FirstPageHeading

\ShortArticleName{Third Homology of some Sporadic Finite Groups}

\ArticleName{Third Homology of some Sporadic Finite Groups\footnote{This paper is a~contribution to the Special Issue on Moonshine and String Theory. The full collection is available at \href{https://www.emis.de/journals/SIGMA/moonshine.html}{https://www.emis.de/journals/SIGMA/moonshine.html}}}

\Author{Theo JOHNSON-FREYD~$^\dag$ and David TREUMANN~$^\ddag$}

\AuthorNameForHeading{T.~Johnson-Freyd and D.~Treumann}

\Address{$^\dag$~Perimeter Institute for Theoretical Physics, Waterloo, Ontario, Canada}
\EmailD{\href{mailto:theojf@pitp.ca}{theojf@pitp.ca}}

\Address{$^\ddag$~Department of Mathematics, Boston College, Boston, Massachusetts, USA}
\EmailD{\href{treumann@bc.edu}{treumann@bc.edu}}

\ArticleDates{Received September 30, 2018, in final form August 06, 2019; Published online August 10, 2019}

\Abstract{We compute the integral third homology of most of the sporadic finite simple groups and of their central extensions.}

\Keywords{sporadic groups; group cohomology}

\Classification{20D08; 20J06}

\renewcommand{\thefootnote}{\arabic{footnote}}
\setcounter{footnote}{0}

\section{Introduction}

In this paper we compute the third homology of some of the sporadic simple groups, and of their central extensions. For many of these groups we are able to name elements (characteristic classes) that generate $\H^4(G;\bZ)$, the Pontryagin dual of~$\H_3(G)$. In the following table we write~$n.G$ for the Schur covering of the sporadic group~$G$~-- for a sporadic simple group, the covering is always by a cyclic group $n = \H_2(G)$~-- and have left empty spaces where $G = n.G$.
\begin{gather*}
\begin{array}{r|cccccccc}
\hline \hline
& \mathrm{M}_{11} &
\mathrm{M}_{12} &
\mathrm{M}_{22} &
\mathrm{M}_{23} &
\mathrm{M}_{24} &
\\
\hline
n = \H_2(G) & 1 & 2 & 12 & 1 & 1 \\
\H_3(G) & 8 & 2 \times 24 & 1 & 1 & 12 \\
\H_3(n.G) & & 8 \times 24 & 24 & & \\
[3pt]
\hline \hline
\\[-9pt]
& \mathrm{HS} &
\mathrm{J}_2 &
\mathrm{Co}_1 &
\mathrm{Co}_2 &
\mathrm{Co}_3 &
\mathrm{McL} &
\mathrm{Suz} &
\\
\hline
\H_2(G) & 2 & 2 & 2 & 1 & 1 & 3 & 6 \\
\H_3(G) & 2 \times 2 & 30 & 12 & 4 & 6 & 1 & 4 \\
\H_3(n.G) & 2 \times 8 & 120 & 24 & & & 1 & 24\\
[3pt]
\hline \hline
\\[-9pt]
& \mathrm{J}_1 &
\mathrm{O'N} &
\mathrm{J}_3 &
\mathrm{Ru} &
\mathrm{J}_4 &
\mathrm{Ly} \\
\hline
\H_2(G) & 1 & 3 & 3 & 2 & 1 & 1\\
\H_3(G) & 30 & 8 & 15 & ? & 1 & 1 \\
\H_3(n.G) & & 8 & 3\times 15 & ? & & \\
\hline\hline
\\[-9pt]
& \mathrm{He} & \mathrm{HN} &
\mathrm{Th} &
\mathrm{Fi}_{22} &
\mathrm{Fi}_{23} &
\mathrm{Fi}'_{24} &
\mathrm{B} &
\mathrm{M}
\\
\hline
\H_2(G) & 1 & 1 & 1 & 6 & 1 & 3 & 2 & 1\\
\H_3(G) & 12 & ? & ? & 1 & ? & ? & ? & 24 \times [{\leq} 4] \\
\H_3(n.G) & & & & 3 \times [{\leq} 4]& &? & ?\\
[3pt]
\hline \hline
\end{array}
\end{gather*}

An expression like
``$a \times b$''
 is short for
 $\bZ/a \oplus \bZ/b$.
 Question marks in the table denote groups for which we do not know the answer, and ``$[{\leq} 4]$'' denotes an unknown, possibly trivial, group of order dividing~$4$.
Further partial results for the groups $\mathrm{HN}$, $\mathrm{Th}$, $\Fi_{23}$, and $\Fi_{24}'$ are listed in Section~\ref{sec:monsters}.

Only some entries in the table are original. The Schur multiplier row (the first row in the table) was computed over many years, partly in service of the classification of finite simple groups, and is available in the ATLAS~\cite{ATLAS}.
With $\bF_2$-coefficients, the entire cohomology rings of many of the smaller sporadic groups are listed in \cite{MR2035696},
{and at large primes the cohomology rings of many sporadic groups are computed in \cite{CBThomasI,CBThomasII}}.
The Mathieu entries are reviewed in~\cite{SE09}. Significantly,
$\H_3(\rM_{24})$ was first computed in that paper using Graham Ellis's software package ``HAP'', which we have found can also determine $\H_3(G)$ for $G \in \{\mathrm{HS},2\mathrm{HS},\mathrm{J}_2, 2\mathrm{J}_2, \mathrm{J}_1,\mathrm{J}_3,\mathrm{McL}\}$ using the permutation models given in the ATLAS.
For the larger groups $G$, although HAP cannot calculate $\H_3(G)$ on its own, it played an essential role in our calculations, as did the ``Cohomolo'' package by Derek Holt.

\subsection{Motivation}\label{intro:motivation}
If $G$ is a compact simple Lie group, or a finite cover of a compact simple Lie group, the cohomology of its classifying space can be complicated at small primes but one always has $\H^4(BG;\bZ) \cong \bZ$; see~\cite{HenriquesWZW} for a proof and some discussion of its role in conformal field theory. In unpublished work~\cite{Grodal}, Jesper Grodal has shown that, with finitely many exceptions, $\H^4(G;\bZ) \cong \bZ/\big(q^2-1\big)$ whenever $G$ is a simple finite group which arises as the $\bF_q$-points of a~split and simply connected algebraic group over $\bF_q$. Part of our motivation has been to see whether we could discern any patterns in~$\H^4(G;\bZ)$ when~$G$ is sporadic.

We have also been inspired by the idea that $3$-cocycles $G \times G \times G \to \mathrm{U}(1)$ (when $G$ is finite, these represent classes in $\H^4(G;\bZ)$) can explain and predict some features of moonshine \cite{CLW, GPRV,MR3539377,MR2500561}. Such a cocycle can arise as the gauge anomaly of a~$G$-action on a conformal field theory. Even in the newer examples of moonshine where no conformal-field-theoretic explanation is known, there are some numerical hints about this cocycle. For example, Duncan--Mertens--Ono have used our calculations to explore a cocycle in their ``O'Nan moonshine'' \cite[Section~3]{DMO1}.

To some extent these hints can be pursued in an elementary way in pure group theory.
If $s$ and $t$ are a pair of commuting elements in a finite group $G$, we may define the following infinite group:
\begin{gather*}
\Gamma(s,t) := \left\{\left(
\left(\begin{matrix}
a & b \\ c & d
\end{matrix}
\right), g \right) \in \SL_2(\bZ) \times G\, \bigg| \,gsg^{-1} = s^a t^b \text{ and } gtg^{-1} = s^c t^d
\right\}.
\end{gather*}
It is the fundamental group of one of the components of the moduli stack of pairs $(E,T)$, where~$E$ is an elliptic curve and $T$ is a $G$-torsor over $E$.
If there is a natural family of McKay--Thompson series attached to~$G$, one expects that their modularity properties (and more ambiguously, their mock modularity properties) can be expressed in terms of a holomorphic line bundle on this space, or equivalently in terms of a $\Gamma(s,t)$-equivariant line bundle on the upper-half plane. The topological types of such line bundles are parametrized by the finite group $\H^2(\Gamma(s,t);\bZ)$, which is the target of a transgression map $\H^4(G;\bZ) \to \H^2(\Gamma(s,t);\bZ)$ \cite[Section~2]{MR2500561}.

\section{Preliminaries}

\subsection{{Notation}}\label{subsec:notation}

We will generally follow the ATLAS naming conventions for finite groups. We will write both~``$\bZ/n$'' and plain~``$n$'' for the cyclic group of order~$n$. When~$q$ is a prime power, we will occasionally use ``$q$'' to denote the finite field $\bF_q$ of that order. Physicists typically denote the cyclic group of order~$n$ by~$\bZ_n$. Following mathematics conventions, we will instead reserve~$\bZ_p$, where~$p$ is prime, for the ring of $p$-adic integers.

We will write ``$N.J$'' or ``$NJ$'' for an extension with normal subgroup $N$ and quotient $J$. Extensions that are known to split are written with a colon ``$N:J$'', and extensions which are known not to split are written with a raised dot ``$N \cdot J$''. The name ``$p^n$'', where $p$ is prime, denotes an elementary abelian group of that order, and if $n$ is even then ``$p^{1+n}$'' denotes an extraspecial group of that order. (There are two such extraspecial groups, called ``$p^{1+n}_\pm$''.)

We diverge from the ATLAS in the names for orthogonal groups.
The group called ``$\rO_n(q)$'' in the ATLAS is not the $n\times n$ orthogonal group over $\bF_q$. Rather, the ATLAS uses ``$\rO_n(q)$'' for the simple subquotient of the orthogonal group. To avoid confusion, we will follow Dieudonn\'e and write ``$\Omega_n(q)$'' for this simple group. We will care only about the case when $n \geq 5$ is odd~-- when $n$ is even, there are two orthogonal groups, called $\Omega^\pm_n(q)$. When $n\geq 5$ and $q$ is odd, $\Omega_n(q)$ is the commutator subgroup of $\mathrm{SO}_n(\bF_q) = \Omega_n(q):2$, and is the image of $\Spin_n(\bF_q) = 2.\Omega_n(q)$ in $\mathrm{SO}_n(\bF_q)$, and is the kernel of the ``spinor norm'' $\mathrm{SO}_n(\bF_q) \to \bF_q^\times / \{\text{squares}\} \cong \bZ/2$.

Conjugacy classes of order $n$ are named $n\ra$, $n\rb$, $n\rc$, and so on. For simple groups the conjugacy classes are ordered by size of the centralizer (from largest to smallest). In all cases we follow GAP's character table library, which includes a copy of the ATLAS character tables, for the names of conjugacy classes. The online version of the ATLAS \cite{ATLASonline} includes a number of irreducible modular representations. (We henceforth adopt the standard abbreviation ``irrep'' for ``irreducible representation''.) These are typically assigned letters ``$a$'', ``$b$'', etc., to distinguish irreps of the same dimension and characteristic.

If $G$ is a finite group, the names ``$\H_*(G)$'' and ``$\H^*(G)$'' always refer to group (co)homology, or equivalently the space (co)homology of the classifying space $BG$ of $G$. When $G$ is a Lie group, we will explicitly write $\H_*(BG)$ and $\H^*(BG)$ to avoid confusion with the (co)homology of the underlying manifold of $G$. Cohomology groups of $G$ with (twisted) coefficients in~$A$ are denoted~$\H^*(G;A)$. We sometimes abbreviate $\H^*(G;\bZ)$ by just $\H^*(G)$. All homology groups in this paper are with $\bZ$-coefficients.

\subsection{General methods}\label{sec:methods}

In this section and the next we review some standard techniques in group cohomology, which we return to repeatedly in the following sections. These techniques are by no means due to us~-- we employed them successfully in \cite{jft} to calculate the cohomology of Conway's largest sporadic group, and find in this paper that they also apply to most of the other sporadic groups. These techniques are designed to understand the cohomology groups of a finite group $G$ and not, say, to compute explicit resolutions of $\bZ$ over $\bZ[G]$.

The first {technique is to compute}
 the $p$-primary part of $\H^4(G;\bZ)$, which we denote by $\H^4(G;\bZ)_{(p)}$, one prime at a time. An upper bound for the $p$-primary part is provided by the following lemma \cite[Section~XII.8]{MR0077480}:

\begin{Lemma}\label{transfer restriction}
Let $G$ be a finite group and let $S \subseteq G$ be a subgroup that contains a Sylow $p$-subgroup for some prime $p$. The {restriction} map $\alpha \mapsto \alpha|_S\colon \H^k(G;\bZ)_{(p)} \to \H^k(S;\bZ)_{(p)}$ is an injection onto a direct summand.
\end{Lemma}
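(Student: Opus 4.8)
The plan is to exploit the transfer (corestriction) map together with the fact that the index $[G:S]$ is prime to $p$. Recall that for any subgroup $S \subseteq G$ there is a transfer homomorphism $\mathrm{cor}\colon \H^k(S;\bZ) \to \H^k(G;\bZ)$ satisfying the basic identity $\mathrm{cor}\circ\mathrm{res} = [G:S]\cdot\id$ on $\H^k(G;\bZ)$, where $\mathrm{res}$ denotes restriction (see \cite[Section~XII.8]{MR0077480}). Since $S$ contains a Sylow $p$-subgroup, its index $[G:S]$ is coprime to $p$. For $k \geq 1$ and $G$ finite, $\H^k(G;\bZ)$ is a finite abelian group, so every $\bZ$-linear map of cohomology groups preserves the decomposition into primary parts; in particular $\mathrm{res}$ and $\mathrm{cor}$ restrict to maps on the $p$-primary summands, and the identity above restricts to $\mathrm{cor}\circ\mathrm{res} = [G:S]\cdot\id$ on $\H^k(G;\bZ)_{(p)}$.

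Next I would observe that multiplication by $[G:S]$ is an automorphism of the finite abelian $p$-group $\H^k(G;\bZ)_{(p)}$, because $[G:S]$ is a unit modulo the exponent of this group. Let $u$ be an integer with $u\,[G:S] \equiv 1$ modulo that exponent, and set $r := u\cdot\mathrm{cor}|_{(p)}\colon \H^k(S;\bZ)_{(p)} \to \H^k(G;\bZ)_{(p)}$. Then $r\circ\mathrm{res} = u\,[G:S]\cdot\id = \id$ on $\H^k(G;\bZ)_{(p)}$. Thus $r$ is a left inverse to the restriction map, which shows both that $\mathrm{res}$ is injective and that its image is a direct summand, split off by the idempotent $\mathrm{res}\circ r$ of $\H^k(S;\bZ)_{(p)}$.

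The argument is entirely formal once the transfer is in hand, so there is no genuine obstacle: the only points requiring care are the existence of the transfer with the stated composition identity, and the compatibility of all maps with the primary decomposition, which is automatic for homomorphisms of finite abelian groups. The degenerate case $k = 0$ is trivial, since $\H^0(G;\bZ) = \bZ$ is torsion-free and its $p$-primary part vanishes.
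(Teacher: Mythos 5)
Your proof is correct, and it is exactly the standard transfer argument that the paper invokes by citing Cartan--Eilenberg [Section~XII.8]: the identity $\mathrm{cor}\circ\mathrm{res} = [G:S]\cdot\id$ plus coprimality of $[G:S]$ to $p$ yields the splitting $r = u\cdot\mathrm{cor}$ and the idempotent $\mathrm{res}\circ r$. The paper gives no independent proof, so your argument coincides with its intended one.
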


Lemma \ref{transfer restriction}, together with some basic properties (which we review in some detail in Section~\ref{subsec:eag}) of $\H^4(\bZ/p;\bZ)$ and $\H^4(\bZ/p \times \bZ/p;\bZ)$, allows us to dispose of many of the larger primes, at least for sporadic groups:

\begin{Lemma}\label{large primes}Let $p$ be a prime and let $G$ be a finite group with strictly fewer than $(p-1)/2$ conjugacy classes of order $p$. If the $p$-Sylow subgroup of $G$ is isomorphic to $\bZ/p$ or to $\bZ/p \times \bZ/p$, then the $p$-part of $\H^4(G;\bZ)$ vanishes.
\end{Lemma}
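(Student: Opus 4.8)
The plan is to reduce to the Sylow $p$-subgroup $P$ via Lemma~\ref{transfer restriction} and then to show that $\H^4(P;\bZ)$ has no nonzero invariants under the ``Weyl group'' $W := N_G(P)/C_G(P)$. First I would note that the hypotheses force $p$ to be odd: if $p \le 3$ then $(p-1)/2 \le 1$, so ``strictly fewer than $(p-1)/2$ classes of order $p$'' means no elements of order $p$ at all, contradicting the nontriviality of $P$; hence I may assume $p \ge 5$. Writing $P \cong (\bZ/p)^r$ with $r \in \{1,2\}$ and taking $S = P$ in Lemma~\ref{transfer restriction}, I get an injection $\H^4(G;\bZ)_{(p)} \hookrightarrow \H^4(P;\bZ)$. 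By naturality of restriction under the inner automorphisms $c_n$ for $n \in N_G(P)$, which act trivially on $\H^*(G)$, the image lies in the $W$-invariants $\H^4(P;\bZ)^W$; here $C_G(P) \supseteq P$ acts trivially because $P$ is abelian, so the $N_G(P)$-action factors through $W \subseteq \Aut(P) = \GL_r(\bF_p)$. It therefore suffices to prove $\H^4(P;\bZ)^W = 0$.

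Next I would identify $\H^4(P;\bZ)$ as a $W$-module. For $r = 1$ this is $\bZ/p$; for $r = 2$ a K\"unneth computation gives $\H^4(P;\bZ) \cong (\bZ/p)^3$, since the degree-$4$ Tor contributions vanish (a sum $i+j=5$ cannot be even${}+{}$even). In both cases $\H^4(P;\bZ)$ is spanned by cup products of degree-$2$ classes (the indecomposable degree-$3$ Tor class for $r=2$ does not enter degree $4$, as $\H^1(P;\bZ)=0$), so as a module for $\Aut(P)$ it is $\Sym^2 \H^2(P;\bZ)$. Identifying $\H^2(P;\bZ)$ with the character group $\Hom(P,\bF_p)$, on which $\Aut(P)$ acts naturally, this is exactly the space of $\bF_p$-valued quadratic forms on $P$; thus $\H^4(P;\bZ)^W$ is the space of $W$-invariant quadratic forms. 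Separately, since $P$ is abelian, Burnside's fusion theorem says $N_G(P)$ controls fusion in $P$, so the $G$-conjugacy classes of elements of order $p$ are precisely the $W$-orbits on $P \setminus \{0\}$, and the hypothesis reads that there are fewer than $(p-1)/2$ such orbits.

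The heart of the argument, and the step I expect to require the most care, is the counting that links these two facts. Suppose toward a contradiction that a nonzero $W$-invariant quadratic form $q$ exists. Because $p$ is odd, a nonzero quadratic form is a nonzero function, so I may choose $v_0 \in P$ with $q(v_0) \neq 0$; then the values $q(\lambda v_0) = \lambda^2 q(v_0)$ for $\lambda \in \bF_p^\times$ range over the coset $q(v_0)\cdot(\bF_p^\times)^2$ of size $(p-1)/2$, so $q$ takes at least $(p-1)/2$ distinct nonzero values on $P \setminus \{0\}$. Each level set $q^{-1}(c)$ is $W$-invariant, hence a union of $W$-orbits, and distinct values give disjoint nonempty level sets, so $P \setminus \{0\}$ has at least $(p-1)/2$ distinct $W$-orbits, contradicting the hypothesis. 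This forces $\H^4(P;\bZ)^W = 0$, whence $\H^4(G;\bZ)_{(p)} = 0$ by the injection above. The only genuinely delicate points are the integral computation of $\H^4\big((\bZ/p)^2;\bZ\big)$ with its $\GL_2(\bF_p)$-action (checking that no Tor class contributes in degree $4$ and that the module is really $\Sym^2$) and the appeal to Burnside fusion to convert ``conjugacy classes of order $p$'' into ``orbits of $W$''.
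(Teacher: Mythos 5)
Your proof is correct, and it rests on the same pillars as the paper's: Lemma~\ref{transfer restriction} to embed $\H^4(G;\bZ)_{(p)}$ into the cohomology of the Sylow subgroup, the identification of $\H^4$ of an elementary abelian group of rank $\le 2$ with $\Sym^2$ of its dual (the paper's Lemma~\ref{elemab}, where the $\Alt^3$ term vanishes), and the index-two subgroup of squares in $\bF_p^\times$ as the source of the bound $(p-1)/2$. The middle of the argument, however, is organized differently. The paper argues contrapositively and locally, with no fusion theorem: having fewer than $(p-1)/2$ classes forces, for each cyclic subgroup $C = \langle h\rangle$ of order $p$, an element $x \in G$ with $xhx^{-1} = h^a$ and $a \neq \pm 1$; since conjugation acts trivially on $\H^\bullet(G;\bZ)$ but scales $\H^4(C;\bZ)$ by $a^2 \neq 1$, every class restricted from $G$ vanishes on every cyclic subgroup, and one finishes because $\Sym^2(H^*)$ is detected on cyclic subgroups. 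Your argument is the contradiction form of the same count---a nonzero invariant form already takes $(p-1)/2$ distinct values on the generators of the single cyclic subgroup $\langle v_0\rangle$---but it routes through $W$-orbits and therefore needs Burnside's fusion theorem to convert the hypothesis on conjugacy classes into a bound on orbits. That appeal is legitimate ($P$ is an abelian Sylow subgroup) but avoidable: the conjugation-invariance of restricted classes, which you already invoked to land in the $W$-invariants, shows directly that a class restricted from $G$ takes equal values on $G$-conjugate elements of $P$, which is all your level-set counting requires. The trade-off is that your packaging (invariant quadratic forms versus orbit counts) makes the role of the hypothesis conceptually transparent, while the paper's version is leaner on outside ingredients; both of the points you flagged as delicate---the K\"unneth/cup-product identification of $\H^4\big((\bZ/p)^2;\bZ\big)$ with $\Sym^2$, and the applicability of Burnside fusion---do check out.
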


If $p \geq 5$ and $G$ is a sporadic simple group whose $p$-Sylow has order $p$ or $p^2$, then one sees by inspecting the tables of conjugacy classes that the criterion applies unless $p = 5$ and $G \in \{\mathrm{J}_2,\mathrm{Suz}\}$. We will see in Lemma~\ref{lem:suz 5} that the 5-part of $\H^4(\Suz;\bZ)$ vanishes as well.

\begin{proof}In any group with fewer than $(p-1)/2$ conjugacy classes of order $p$, the cyclic subgroups $C \subset G$ of that order have the following property: there is a generator $h \in C$ and an element $x \in G$ such that $xhx^{-1} = h^a$, where $a$ is neither $1$ nor $-1$ mod $p$. Conjugation by such an $x$ acts trivially on $\H^\bullet(G;\bZ)$ but nontrivially on $\H^4(C;\bZ)$~-- indeed it scales a nontrivial element $t \in \H^2(H;\bZ) \cong \bZ/p$ to $at$ and the cup-square of that nontrivial element $t^2 \in \H^4(H;\bZ) \cong \bZ/p$ to $a^2 t^2$.
It follows that the image of the restriction map $\H^4(G;\bZ) \to \H^4(C;\bZ)$ is zero, for every order $p$-subgroup $C \subset G$.

Let $H$ be a $p$-Sylow subgroup of $G$, and consider the subgroup $X \subset \H^4(H;\bZ)$ that vanishes on every order-$p$ subgroup $C \subset H$. The discussion above shows that the image of the restriction map $\H^4(G;\bZ) \to \H^4(H;\bZ)$ lies in $X$, and by Lemma \ref{transfer restriction}, this restriction map is an injection on the $p$-primary part of $\H^4(G;\bZ)$. When $p$ is odd and $H$ is an elementary abelian $p$-group of rank at most two, $\H^4(H;\bZ) \cong \Sym^2(H^*)$ (see Lemma \ref{elemab}), and so $\H^4(H;\bZ)$ is detected on cyclic subgroups, i.e., $X = 0$.
\end{proof}

In many cases not covered by Lemma~\ref{large primes}, there is a maximal subgroup $S \subseteq G$ that contains a~$p$-Sylow, and that has shape $S = E.J$ where $E$ is either an elementary abelian or an extraspecial $p$-group. (See \cite{MR3682590} for a survey of maximal subgroups of finite groups.)
Sometimes we know $\H^4(J;\bZ)$, either by induction or by computer. The Lyndon--Hochschild--Serre (LHS) spectral sequence {(detailed for example in \cite[Section~6.8]{MR1269324})}
\begin{gather*}
E_2^{ij} = \H^i\big(J;\H^j(E;\bZ)\big) \implies \H^{i+j}(S;\bZ)
\end{gather*}
gives an upper bound for $\H^4(S;\bZ)$, and therefore for $\H^4(G;\bZ)_{(p)}$, in terms of $\H^4(J;\bZ)$, which we assume is known by earlier computations, together with the cohomology groups with twisted coefficients
\begin{gather*}
\H^0\big(J;\H^4(E;\bZ)\big), \qquad \H^1\big(J;\H^3(E;\bZ)\big), \qquad \H^2\big(J;\H^2(E;\bZ)\big).
\end{gather*}
The contribution from $\H^3\big(J;\H^1(E;\bZ)\big)$ is zero, since $\H^1(E;\bZ)= 0$ for every finite~$E$.
We describe the groups $\H^j(E;\bZ)$ for $j = {2,3,4}$ as $\Aut(E)$-modules in Section~\ref{sec p-groups}. We used extensively Derek Holt's software package ``Cohomolo'' to determine the groups $\H^1(J;-)$ and $\H^2(J;-)$, but sometimes the following vanishing criterion can be employed instead:

\begin{Lemma}\label{lemma:central character} Suppose that the center $Z(J)$ has order prime to $p$ and acts on $\H^j(E;\bZ)$ through a nontrivial character $Z(J) \to \bF_p^\times$. Then $\H^i\big(J; \H^j(E;\bZ)\big) = 0$ for all $i$.
\end{Lemma}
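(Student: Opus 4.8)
The plan is to produce a single coefficient automorphism of $M := \H^j(E;\bZ)$ that acts simultaneously as the identity and as zero on $\H^*(J;M)$, and then conclude that this cohomology must vanish. The two facts I would lean on are that $M$ is a finite $p$-group, being the cohomology of the finite $p$-group $E$, and that some central element scales $M$ modulo $p$ by a unit different from $1$.

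First I would pick $z_0 \in Z(J)$ with $\chi(z_0) \neq 1$ in $\bF_p^\times$, which exists because $\chi$ is nontrivial. Since $z_0$ is central, multiplication by $z_0$ is a $\bZ[J]$-module endomorphism $\phi\colon M \to M$, as it commutes with the $J$-action, and by hypothesis its reduction modulo $p$ is the scalar $\chi(z_0)\cdot\id$. The key algebraic point is then that $\phi - \id$ is an automorphism of $M$: its reduction modulo $p$ is multiplication by the unit $\chi(z_0)-1$, hence an isomorphism of $M/pM$, and Nakayama's lemma (valid since $M$ is a finite $p$-group) upgrades surjectivity to bijectivity of $\phi - \id$ on all of $M$.

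Next I would invoke the standard fact that inner automorphisms act trivially on group cohomology, in the form valid for twisted coefficients (e.g.\ Brown, \emph{Cohomology of Groups}, III.8): for any $g \in J$ the combined map on $\H^*(J;M)$ induced by conjugation $c_g$ together with coefficient translation by $g$ is the identity. Applying this to $g = z_0$, whose conjugation action is trivial because $z_0$ is central, leaves precisely the map $\phi_*$ induced by the coefficient automorphism $\phi$; thus $\phi_* = \id$ on $\H^*(J;M)$. Because $\H^*(J;-)$ is an additive functor of the coefficient module, $(\phi - \id)_* = \phi_* - \id_* = 0$. On the other hand $\phi - \id$ is an automorphism of $M$, so by functoriality $(\phi - \id)_*$ is an automorphism of $\H^*(J;M)$. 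A map that is at once an automorphism and the zero map forces $\H^i(J;M) = 0$ for every $i$, which is the claim.

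The coprimality of $|Z(J)|$ with $p$ is not used directly by this argument; rather, it is what makes the action of $Z(J)$ on the $p$-group $M$ semisimple, so that ``acting through a single character $\chi$'' is a meaningful hypothesis in the first place. The only step demanding care is the exact bookkeeping of the inner-automorphism statement with nontrivial coefficients, namely getting the variance and the combination of the conjugation and translation maps right; once that is pinned down the conclusion is immediate. An equally short alternative avoids that fact entirely: since $|Z(J)|$ is invertible on the $p$-group $M$, the Lyndon--Hochschild--Serre spectral sequence for $Z(J) \triangleleft J$ collapses to $\H^i(J;M) \cong \H^i\big(J/Z(J); M^{Z(J)}\big)$, and the same Nakayama argument shows $M^{Z(J)} = 0$.
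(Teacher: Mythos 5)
Your proof is correct, but it takes a genuinely different route from the paper's. The paper argues $p$-adically: it rewrites $\H^i\big(J;\H^j(E;\bZ)\big)$ as $\Ext^i_{\bZ_p[J]}\big(\bZ_p,\H^j(E;\bZ)\big)$, lifts the character through the Teichmuller isomorphism to $\chi\colon Z(J)\to\bZ_p^\times$, and forms the central idempotent $e=\frac1{|Z(J)|}\sum_z \chi(z)^{-1}z$, so that $\bZ_p[J]=e\bZ_p[J]\times(1-e)\bZ_p[J]$ as rings; nontriviality of $\chi$ puts a projective resolution of the trivial module entirely in the $(1-e)$-block while the hypothesis puts $M:=\H^j(E;\bZ)$ in the $e$-block, so all $\Ext$ groups vanish. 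Your ``center kills'' argument replaces this block decomposition by the single $J$-equivariant endomorphism $\phi=(z_0\cdot)$: the inner-automorphism fact (Brown III.8) gives $\phi_*=\id$ on $\H^*(J;M)$, Nakayama plus $\chi(z_0)\neq 1$ makes $\phi-\id$ a coefficient automorphism, and additivity of $\H^*(J;-)$ exhibits a map that is simultaneously an automorphism and zero. Each approach buys something. Yours is more elementary (no $p$-adic group rings, no Teichmuller lift, no idempotents), and, as you correctly observe, it needs less than the stated hypothesis: a single central element acting on $M/pM$ by a unit $\neq 1$ suffices, with no semisimplicity or full coprimality required. The paper's idempotent argument is more structural: it yields at once $\Ext^i_{\bZ_p[J]}(N,M)=0$ for \emph{every} module $N$ in the complementary block, not just $N=\bZ_p$, which is the form that adapts to isotypic decompositions. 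Two minor points: your argument needs $M$ to be a finite $p$-group, i.e., $j>0$, but this is automatic because the hypothesis cannot hold for $j=0$ (the $J$-action on $\H^0(E;\bZ)=\bZ$ is trivial), mirroring the paper's explicit restriction to $j>0$; and your parenthetical alternative via the spectral sequence for $Z(J)\triangleleft J$ is also valid, and is precisely the variant in which the coprimality of $|Z(J)|$ and $p$ is used directly rather than only to make the hypothesis meaningful.
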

\begin{proof}The statement is vacuous when $p=2$, and so we assume $p$ is odd for the remainder of the proof.
Let $\bZ_p[J]$ denote the group ring of $J$ with coefficients in the $p$-adic integers $\bZ_p$.
For $j > 0$, $\H^j(E;\bZ)$ is a finite $p$-group, so $\H^i\big(J;\H^j(E;\bZ)\big) \cong \Ext^i_{\bZ_p[J]}\big(\bZ_p,H^j(E,\bZ)\big)$ when $\bZ_p$ is given the trivial $J$-action.

Let $\chi$ be the composite of the character $Z(J) \to \bF_p^{\times}$ with the Teichmuller isomorphism
{$\bF_p^\times \cong \bZ_p^\times[\mathrm{tor}]$, where $\bZ_p^\times[\mathrm{tor}] \subseteq \bZ_p^\times$ denotes the torsion subgroup,}
and let
\begin{gather*}
e = \frac1{|Z(J)|}\sum_{z \in Z(J)} \chi(z)^{-1} z
\end{gather*}
be the corresponding central idempotent in $\bZ_p[J]$, so that $\bZ_p[J] = e\bZ_p[J] \times (1-e)\bZ_p[J]$ as rings. Since $\chi$ is nontrivial there is a projective resolution $P_{\bullet} \to \bZ_p$ of the trivial $J$-module with $P_m = (1-e)P_m$ for every $m$. It follows that $\Ext^i_{\bZ_p[J]}(\bZ_p,M) = 0$, for all $i$, whenever~$M$ is a~$\bZ_p[J]$-module with $M = eM$.
\end{proof}

{The LHS spectral sequence allows us a comparison between the cohomology of a group and of its Schur cover.
Let $G$ be a finite group with $n \subseteq \H_2(G)$ such that $\H^1(G; \bZ/n) = 0$.
Then the corresponding central extension $nG$ is unique up to isomorphism. Consider the LHS spectral sequence for this extension.
Since the extension is central, $G$ acts trivially on $n$ and so on $\H^j(n; \bZ)$, and so we have an isomorphism of bigraded rings
\begin{gather*} E_2^{ij} \cong \H^i(G; \H^j(n;\bZ)) \cong \H^i(G; \bZ[y]/(ny)), \end{gather*}
where $y$ has bidegree $(i,j) = (0,2)$; see, e.g., \cite[Section~II.8]{SerresThesis} and \cite[Section~II.5]{HochschildSerre}.
Using that $\H^1(G;\bZ/n) = 0$, in total degree $\leq 5$ the $E_2$ page reads:
\begin{gather*}
\begin{array}{cccccc}
0 \\
(\bZ/n)y^2 & 0 \\
0 & 0 & 0\\
(\bZ/n)y & 0 & \H^2(G;\bZ/n) & \H^2(G;\bZ/n) \\
0 & 0 & 0 & 0 & 0 \\
\bZ & 0 & \H^2(G;\bZ) & \H^3(G;\bZ) & \H^4(G;\bZ) & \H^5(G;\bZ)
\end{array}
\end{gather*}
It follows that the pullback $\H^4(G;\bZ) \to \H^4(nG;\bZ)$ is an injection. (Such pullbacks are examples of edge maps, described for example in \cite[Section~6.8.2]{MR1269324}.)}

Let us focus on the case when $n$ is a power of a prime $p$, and restrict to $p$-parts. Then $\H_1(G)_{(p)} = \H^2(G;\bZ)_{(p)} = 0$. If furthermore $\H_2(G)_{(p)}$ is cyclic, then $\H^2(G;\bZ/n) \cong \bZ/n$, and we have the $E_2$ page
\begin{gather*}
\begin{array}{cccccc}
0 \\
(\bZ/n)y^2 & 0 \\
0 & 0 & 0\\
(\bZ/n)y & 0 & \bZ/n & \H^3(G;\bZ/n) \\
0 & 0 & 0 & 0 & 0 \\
\bZ & 0 & 0 & \H^3(G)_{(p)} & \H^4(G)_{(p)} & \H^5(G)_{(p)}
\end{array}
\end{gather*}
The universal coefficient theorem describes $\H^3(G;\bZ/n)$ as an extension
\begin{gather*} \H^3(G;\bZ/n) = \bigl[ \H^3(G)_{(p)} \otimes (\bZ/n)y \bigr].\hom(\H_3(G), \bZ/n).\end{gather*}
The $d_2$ differential vanishes for degree reasons, and so $E_3^{ij} = E_2^{ij}$.

The extension $nG \to G$ splits when pulled back along itself, which implies that pullback $\H^3(G)_{(p)} \to \H^3(nG)_{(p)}$ has kernel of order $n$, forcing the differential $d_3\colon (\bZ/n)y \to \H^3(G)_{(p)}$ to be an inclusion.
The Leibniz rule then determines $d_3\big(y^2\big)$. If for instance $\H_2(G)_{(p)}$ is cyclic of order~$N$, then so is $\H^3(G)_{(p)}$; calling its generator ``$x$'', we have $d_3 y = (N/n)x$ and $d_3 y^2 = (2N/n)xy$, where $xy$ is the generator of the submodule $\bZ/n \cong \H^3(G)_{(p)} \otimes (\bZ/n)y \subset \H^2(G;\bZ/n)$.

All together we learn:

\begin{Lemma}\label{lemma:schur cover} Let $G$ be a finite group.

If $p$ is an odd prime such that $\H_1(G)_{(p)} = 0$ and $\H_2(G)_{(p)} = p$, then the pullback map $\H^4(G;\bZ) \to \H^4(pG;\bZ)$ is an injection with cokernel of order dividing $p$, and all classes in $\H^4(pG;\bZ)$ restrict trivially to the central $p \subseteq pG$.

If $\H_1(G)_{(2)} = 0$ and $\H_2(G)_{(2)}$ is $($nontrivial and$)$ cyclic, then the pullback $\H^4(G;\bZ) \to \H^4(2G;\bZ)$ is an injection with cokernel of order dividing $4$, and if the cokernel has order $4$ then there are classes in $\H^4(2G;\bZ)$ with nontrivial restriction to the central $2 \subset 2G$.

If $\H_1(G)_{(2)} = 0$ and $\H_2(G)_{(2)} = 4$, then the pullback $\H^4(G;\bZ) \to \H^4(4G;\bZ)$ is an injection with cokernel of order dividing~$8$; again equality forces there to exist a class in $\H^4(4G;\bZ)$ with nontrivial restriction to the central $4\subseteq 4G$, and all classes in $\H^4(4G;\bZ)$ vanish when restricted to the central $2 \subset 4 \subset 4G$.
\end{Lemma}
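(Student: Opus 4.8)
The plan is to read everything off the $E_2$ page and the $d_3$ differential already assembled above, specialized to the three choices of kernel order $n$, where in each case I set $N=|\H_2(G)_{(p)}|=|\H^3(G)_{(p)}|$. First I would observe that the pullback $\H^4(G;\bZ)_{(p)}\to\H^4(nG;\bZ)_{(p)}$ is precisely the base edge map $E_2^{4,0}\to E_\infty^{4,0}\hookrightarrow\H^4(nG)_{(p)}$. The slot $(4,0)$ receives no differential: the only potential $d_3$ source $d_3\colon E_3^{1,2}\to E_3^{4,0}$ starts from $\H^1(G;\bZ/n)=0$, and all higher sources $E_r^{4-r,r-1}$ sit either in the vanishing odd rows or in negative columns; it also supports no outgoing differential. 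Hence $E_\infty^{4,0}=\H^4(G)_{(p)}$ and the pullback is injective in every case. Its cokernel is $\H^4(nG)_{(p)}/E_\infty^{4,0}$, whose associated graded is $E_\infty^{3,1}\oplus E_\infty^{2,2}\oplus E_\infty^{1,3}\oplus E_\infty^{0,4}$; since $\H^{\mathrm{odd}}(\bZ/n;\bZ)=0$ kills the slots $(3,1)$ and $(1,3)$, the cokernel has order $|E_\infty^{2,2}|\cdot|E_\infty^{0,4}|$.

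Next I would bound the two surviving slots. The slot $(2,2)$ is a subgroup of $E_2^{2,2}=\H^2(G;\bZ/n)\cong\bZ/n$ (using $\H_1(G)_{(p)}=0$ and the cyclicity of $\H_2(G)_{(p)}$), so it contributes a factor dividing $n$; since I only want an upper bound I can stay agnostic about the differential $d_3\colon E_3^{2,2}\to E_3^{5,0}=\H^5(G)_{(p)}$, which can only shrink it. For $(0,4)$ I would feed the explicit formula $d_3(y^2)=(2N/n)\,xy$ into each case. When $p$ is odd with $n=N=p$, the factor $2N/n=2$ is invertible mod $p$, so $d_3$ is injective on $(\bZ/p)y^2$ and $E_\infty^{0,4}=0$; thus the cokernel divides $p$. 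When $n=2$ (and $N$ is even), $d_3(y^2)=N\,xy=0$ in the exponent-$2$ group $\H^3(G)_{(2)}\otimes(\bZ/2)y$, so $(0,4)$ survives as $\bZ/2$ and the cokernel divides $4$. When $n=N=4$, $d_3(y^2)=2xy$ has order $2$, so $E_\infty^{0,4}=\ker d_3=\langle 2y^2\rangle\cong\bZ/2$ and the cokernel divides $8$.

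Finally I would translate the slot $(0,4)$ into restriction to the central subgroup, which is the fiber edge map $\H^4(nG)_{(p)}\twoheadrightarrow E_\infty^{0,4}\hookrightarrow E_2^{0,4}=\H^4(\bZ/n;\bZ)$. In the odd case $E_\infty^{0,4}=0$, so all classes restrict trivially to the central $p$. In the two $2$-primary cases, if the cokernel attains its maximal order then $E_\infty^{0,4}$ must be the full $\bZ/2$, and since the restriction surjects onto it there is a class with nonzero restriction to the central $2$ (respectively $4$). The one delicate point, and the step I expect to be the main obstacle, is the last clause of the $n=4$ case: restriction to the order-$2$ subgroup $2\subset4$ factors as $\H^4(4G)\to\H^4(\bZ/4)\to\H^4(\bZ/2)$, where the second map sends the generator $y^2\mapsto\bar y^2$ but annihilates $2y^2$. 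Because I have just identified the image of the first map as $E_\infty^{0,4}\subseteq\langle 2y^2\rangle$, the composite is zero regardless of whether $E_\infty^{0,4}$ is all of $\bZ/2$ or trivial; the real content is that the surviving generator is $2y^2$ and not $y^2$. Everything else is bookkeeping of edge maps and orders.
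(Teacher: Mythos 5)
Your proposal is correct and is essentially the paper's own argument: the lemma is stated there as the summary (``All together we learn'') of precisely this LHS spectral sequence analysis~--- injectivity of the base edge map onto $E_\infty^{4,0}$, the cokernel bounded by $|E_\infty^{2,2}|\cdot|E_\infty^{0,4}|$, and the transgression formula $d_3(y)=(N/n)x$, $d_3\big(y^2\big)=(2N/n)xy$ fed into the three cases. Your handling of the final clause~--- locating $E_\infty^{0,4}$ inside $\langle 2y^2\rangle$ and noting that $\H^4(\bZ/4)\to\H^4(\bZ/2)$ kills $2y^2$~--- is exactly the bookkeeping the paper leaves implicit.
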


\begin{Lemma}\label{lemma:cokers} Let $p$ be an odd prime such that $\H_1(G)_{(p)} = 0$ and $\H_2(G)_{(p)} = p$. Let $pG$ denote a~nonsplit central extension of $G$ by the group $\bZ/p$. Suppose that a~$p$-Sylow $S \subseteq G$ also has $\H^1(S;\bZ/p) = 0$, and that the central extension $pG$, when restricted to $S$, is nonsplit. Then the pullback map $\H^4(pG;\bZ) \to \H^4(pS;\bZ)$ induces an injection
\begin{gather*}
\coker\big(\H^4(G;\bZ) \to \H^4(pG;\bZ)\big)\hookrightarrow \coker\big(\H^4(S;\bZ) \to \H^4(pS;\bZ)\big).
\end{gather*}
This injection is an isomorphism if $S$ contains the $p$-Sylow of $G$.
\end{Lemma}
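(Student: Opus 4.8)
The plan is to apply naturality of the Lyndon--Hochschild--Serre spectral sequence to the restriction maps attached to $S\mono G$ and $pS\mono pG$, where $pS$ is the preimage of $S$ in $pG$, a central extension of $S$ by the same central $\bZ/p$. All cokernels in sight are $p$-groups, since inflation along a degree-$p$ central extension is an isomorphism on $\ell$-primary parts for every $\ell\neq p$; so I work with the $p$-local spectral sequences throughout. The analysis preceding Lemma~\ref{lemma:schur cover} applies to $G$ as stated: because $\H_1(G)_{(p)}=0$ the inflation $\H^4(G)\to\H^4(pG)$ is injective with image $E_\infty^{4,0}$, because $\H_2(G)_{(p)}=p$ has exponent $p$ the class $y^2$ transgresses, and therefore $\coker(\H^4(G)\to\H^4(pG))\cong E_\infty^{2,2}(G)=\ker\big(d_3^G\colon \H^2(G;\bZ/p)\,y\to\H^5(G)_{(p)}\big)$, a subquotient of $\bZ/p$. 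For the subgroup $S$ I use only that the rows $j=1,3$ of the $E_2$ page vanish: this gives $\coker(\H^4(S)\to\H^4(pS))$ a two-step filtration with subgroup $E_\infty^{2,2}(S)=\ker d_3^S$ and quotient $E_\infty^{0,4}(S)$, so $E_\infty^{2,2}(S)$ is in any case a subgroup of this cokernel.

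For the injection, the restriction maps assemble into a morphism of spectral sequences whose effect on $E_2^{2,2}$ is, after the canonical identification of the two central fibers, the restriction $\H^2(G;\bZ/p)\to\H^2(S;\bZ/p)$. The source is $\bZ/p$, generated by the class of $pG$, and its image is the class of $pG|_S=pS$, which is nonzero since $pS$ is nonsplit; hence the map is injective, and by naturality it carries $\ker d_3^G$ into $\ker d_3^S$. Following this with the inclusion $E_\infty^{2,2}(S)\mono\coker(\H^4(S)\to\H^4(pS))$ produces the asserted injection of cokernels.

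For the isomorphism, suppose $S$ contains a $p$-Sylow of $G$, so $pS$ contains a $p$-Sylow of $pG$; by the transfer argument of Lemma~\ref{transfer restriction} (valid for any coefficients) restriction then injects $\H^5(G)_{(p)}$, $\H^3(G;\bZ/p)$, and $\H^2(G;\bZ/p)$ onto direct summands of the corresponding groups for $S$. Injectivity on $\H^3(\cdot;\bZ/p)$ keeps the Bockstein of the extension class nonzero, so $d_3^S(y^2)\neq0$ and the cokernel for $S$ equals $E_\infty^{2,2}(S)$; and the commuting square $d_3^S\circ\mathrm{res}=\mathrm{res}\circ d_3^G$, with $\mathrm{res}$ injective on $\H^5$, shows that the restriction of a generator of $E_2^{2,2}(G)$ lies in $\ker d_3^S$ if and only if that generator lies in $\ker d_3^G$. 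Thus the injection is onto as soon as $E_2^{2,2}(S)=\H^2(S;\bZ/p)$ is generated by this restricted class. This last equality is the crux and the main obstacle: a priori $\H^2(S;\bZ/p)$ could exceed $\bZ/p$ when $\H_2(S)_{(p)}$ is noncyclic, leaving room for extra classes in $\ker d_3^S$. I would secure it from the hypothesis $S\supseteq\text{Sylow}$ by verifying in the situation at hand that $\H^2(S;\bZ/p)=\bZ/p$ (equivalently, that $\H_2(S)_{(p)}$ is cyclic), with the corestriction $\H^4(pS)\to\H^4(pG)$ available as a complementary tool: through the pullback square $pS=pG\times_G S$ it commutes with inflation and so descends to a retraction of the injection of cokernels, which splits it since $[G:S]$ is prime to $p$, reducing the isomorphism to the vanishing of the complementary summand.
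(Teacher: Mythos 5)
Your proposal takes the same route as the paper: compare the integral LHS spectral sequences of $pG\to G$ and $pS\to S$ through the morphism induced by restriction, identify $\coker\big(\H^4(G;\bZ)\to\H^4(pG;\bZ)\big)$ with $E_\infty^{2,2}(G)=\ker d_3^G\subseteq\H^2(G;\bZ/p)\cong\bZ/p$, and carry it into $E_\infty^{2,2}(S)\subseteq\coker\big(\H^4(S;\bZ)\to\H^4(pS;\bZ)\big)$. Your injection argument is complete, and is in fact more scrupulous than the paper's own write-up: the paper only shows that the two cokernels are each trivial or of order $p$ and that their vanishing criteria ($d_3\alpha=\operatorname{Bock}\big(\alpha^2\big)=0$, respectively its restriction) are linked, leaving the induced map on cokernels implicit, whereas you track that map through the filtration, and naturality of $d_3$ substitutes for the paper's explicit Bockstein formula. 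Your argument that $E_\infty^{0,4}(S)=0$ when $S$ contains a Sylow (restriction is injective on $\H^3(-;\bZ/p)$, so $d_3^S\big(y^2\big)=d_3^G\big(y^2\big)|_S\neq0$) is also correct, and is a step the paper does not spell out.

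The gap you flag in the surjectivity step is genuine, but it is the paper's gap as much as yours. Surjectivity needs $\ker d_3^S\subseteq\langle\alpha|_S\rangle$, for which it suffices that $\H^2(S;\bZ/p)\cong\bZ/p$, equivalently that $\H_2(S)_{(p)}$ be cyclic; and you are right that this does not follow from ``$S$ contains a $p$-Sylow''. For instance $S=\rU_4(3)\subseteq G=\mathrm{McL}$ at $p=3$ satisfies every stated hypothesis of Lemma~\ref{lemma:cokers} (it is perfect, contains the $3$-Sylow, and the restricted triple cover is nonsplit by Lemma~\ref{transfer restriction}), yet $\H_2(S)_{(3)}\cong(\bZ/3)^2$ is not cyclic. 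The paper's proof assumes the missing input silently: its first sentence applies Lemma~\ref{lemma:schur cover} to $S$, whose hypothesis $\H_2(S)_{(p)}=p$ appears nowhere among the assumptions of Lemma~\ref{lemma:cokers}; in the actual applications this is verified separately (e.g.\ $\H^2(S;\bZ/3)=\bZ/3$ is computed for $S=3^{3+3}{:}\SL(3,3)$ in Corollary~\ref{cor:3o73}). So with the implicit extra hypothesis $\H^2(S;\bZ/p)\cong\bZ/p$ your argument closes and coincides with the paper's; without it, neither your argument nor the paper's establishes the isomorphism claim from the stated hypotheses. Your transfer observation is sound as far as it goes---corestriction does commute with inflation for the pullback square $pS=pG\times_G S$, so the injection of cokernels is split---but it merely renames the problem as the vanishing of the complementary summand, which is the same unresolved point, so do not expect it to rescue the general statement.
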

\begin{proof} By Lemma~\ref{lemma:schur cover}, $\coker\big(\H^4(G;\bZ) \to \H^4(pG;\bZ)\big)$ and $\coker\big(\H^4(S;\bZ) \to \H^4(pS;\bZ)\big)$ are each {either trivial or of order~$p$}. We need only to show that if $\coker\big(\H^4(G;\bZ) \to \H^4(pG;\bZ)\big) = \bZ/p$, then $\coker\big(\H^4(S;\bZ) \to \H^4(pS;\bZ)\big) = \bZ/p$.

 Consider spectral sequence for the extension $pG \to G$ discussed before Lemma~\ref{lemma:schur cover}: we see that
 $\coker\big(\H^4(G;\bZ) \to \H^4(pG;\bZ)\big) = p$ if and only if the $d_3\colon E_3^{22} \to E_3^{50}$
 vanishes. Let $\alpha \in \H^2(G;\bZ/p) {{}\cong E_3^{22}}$ denote the generator classifying the extension $pG$. Then $d_3\colon \alpha \mapsto \operatorname{Bock}\big(\alpha^2\big)$, where $\operatorname{Bock} \colon \H^4(G;\bZ/p) \to \H^5(G;\bZ)$ denotes the integral Bockstein. This can be confirmed by comparing the spectral sequence for $\H^*(pG;\bZ)$ with the one for $\H^*(pG;\bZ/p)$.

But then $\operatorname{Bock}\big((\alpha|_S)^2\big) = \operatorname{Bock}\big(\alpha^2\big)|_S$ also vanishes, and so $\coker\big(\H^4(S;\bZ) \to \H^4(pS;\bZ)\big) \allowbreak = p$ by the spectral sequence for the extension~$pS$. Conversely, assuming $S$ contains the $p$-Sylow in~$G$, if $\operatorname{Bock}\big(\alpha^2\big)|_S = 0$, then $\operatorname{Bock}\big(\alpha^2\big) = 0$ by Lemma~\ref{transfer restriction}.
\end{proof}

As we have mentioned, each page of the LHS spectral sequence provides an upper bound for~$\H^4(G)_{(p)}$. We can improve this upper bound whenever we can show that the images of the two maps
\begin{gather*}
\H^4(J;\bZ) \to \H^4(S;\bZ) \leftarrow \H^4(G;\bZ)
\end{gather*}
have trivial intersection. We can often prove this by restricting generators of~$\H^4(J;\bZ)$ and $\H^4(G;\bZ)$ to cyclic subgroups and showing that no class in $\H^4(S;\bZ)$ can simultaneously enjoy the restrictions mandated by both $\H^4(J;\bZ)$ and $\H^4(G;\bZ)$. For these calculations, we rely on GAP's character table library, which includes a copy of the ATLAS and, provided it contains the subgroup~$S$, knows how conjugacy classes fuse along the maps~$S \to G$ and $S \to J$.

\subsection{Characteristic classes} \label{subsec:charclasses}

\looseness=-1 With the improved upper bound in hand, the last step is to give a lower bound for $\H^4(G;\bZ)$. In almost all cases these come from the characteristic class of a representation $V\colon G \to K$, where~$K$ is a Lie group. Usually we can take $K = \SU(N)$ or $\Spin(N)$, for which the characteristic classes are called, respectively, the second Chern class $c_2$ and the first fractional Pontryagin class $\frac{p_1}2$. In two cases these ``classical'' characteristic classes~$c_2$ and $\frac{p_1}2$ are not strong enough, and we appeal to the Lie groups $K = \mathrm{E}_6$ and $\mathrm{E}_8$. For some of the Monster sections, it is not possible for Lie-group-valued representations to give a strong enough lower bound, and we instead appeal to the construction of~\cite{JFmoonshine} to provide a ``monstrous characteristic class'' of a representation of~$G$ in~$\rM$.

\looseness=-1 We now review the story of $c_2$ and $\frac{p_1}2$. See also~\cite{MR878978} for a detailed treatment of characteristic classes of finite groups.
Suppose $N \geq 2$. Then $\H^4(B\mathrm{U}(N);\bZ) \cong \bZ^2$, with standard generators the square of the first Chern class $c_1^2$ and the second Chern class~$c_2$. The first of these restricts trivially along $\SU(N) \subset \mathrm{U}(N)$, and so vanishes when restricted to any finite simple group $G$; but if $V\colon G \to \mathrm{U}(N)$ is an $N$-dimensional representation, then $c_2(V) \in \H^4(G;\bZ)$ is a~potentially-interesting class. Similarly, provided $N \geq 5$, the generators of $\H^4(B\mathrm{SO}(N);\bZ) \cong \bZ$ and $\H^4(B\Spin(N);\bZ) \cong \bZ$ are called the first Pontryagin class $p_1$ and the first fractional Pontryagin class $\frac{p_1}2$. Like the symbol $\frac{p_1}{2}$ suggests, the pullback $\H^4(B\mathrm{SO}(N);\bZ) \to \H^4(B\Spin(N);\bZ)$ along the double cover sends $p_1$ to $2 \times \frac{p_1}2$.
There are also maps between $\SU(N)$ and $\mathrm{SO}(N)$ and $\Spin(2N)$ which either complexify a real representation or produce the underlying real representation of a complex representation. The characteristic classes restrict along these maps as
\begin{alignat*}{4}
 \SU(N) & \to \Spin(2N), \qquad & \mathrm{SO}(N) & \to \SU(N), &\\
 -c_2 & \mapsfrom \tfrac{p_1}2, \qquad & -p_1 & \mapsfrom c_2. &
\end{alignat*}
These classes are {\it stable} in the sense that they are preserved along the standard inclusions $\SU(N) \subseteq \SU(N+1)$ and $\Spin(N) \subseteq \Spin(N+1)$. When $N = 4$, $\H^4(B\Spin(4);\bZ)$ is not gene\-ra\-ted by $\frac{p_1}2$, but that class is still defined by restricting along the standard inclusion into $\Spin(N)$ for $N$ large.

To show that the Chern class $c_2(V)$ of an $N$-dimensional representation $V\colon G \to \mathrm{U}(N)$ has large order, it often suffices to restrict it to a cyclic subgroup $\langle g\rangle \subset G$. If $g$ has order~$n$, then $\H^4(\langle g\rangle;\bZ) \cong \bZ[t]/(nt)$, where the degree-$2$ generator $t$ is defined as the first Chern class~$c_1(\bC_1)$ of the one-dimensional representation $\bC_\zeta\colon g \mapsto \zeta = \exp(2\pi i /n) \in \mathrm{U}(1)$. The other 1-dimensional representations of $\langle g\rangle$ are its tensor powers $\bC_{\zeta^m} = \bC_\zeta^{\otimes m}\colon g \mapsto \zeta^m$, and $c_1(\bC_{\zeta^m}) = mt$ and $c_2(\bC_{\zeta^m}) = 0$. A~higher-degree representation splits over $\langle g\rangle$ as a sum of 1-dimensional representations. The Whitney sum formula says that for any group $G$ and representations~$V$,~$W$, we have
\begin{gather*} c_1(V \oplus W) = c_1(V) + c_1(W) \in \H^2(G;\bZ), \\ c_2(V\oplus W) = c_2(V) + c_2(W) + c_1(V)c_1(W) \in \H^4(G;\bZ).\end{gather*}
In particular, if $V|_{\langle g\rangle} = \bigoplus_k \bC_\zeta^{m_k}$, then
\begin{gather*} c_2(V)|_{\langle g\rangle} = \sum_{k<k'} m_k m_{k'} t^2 \in \H^4(\langle g\rangle;\bZ).\end{gather*}
The Chern classes are traditionally organized into a {\it total Chern class} of mixed degree $c(V) = 1 + \sum\limits_{i\geq 1}c_i(V) \in \H^\bullet(G;\bZ)$. The full Whitney sum formula then says that $c(V \oplus W) = c(V)c(W)$; for the one-dimensional representations of a cyclic group, $c(\bC_{\zeta^m}) = 1+mt$; and the above formula is the coefficient on $t^2$ of $c(V)|_{\langle g\rangle} = \prod_k (1 + m_k t)$.

A representation $V\colon G \to \SU(N)$ is called {\it real} if it factors, up to $\SU(N)$-conjugacy, through $\SO(N)$, i.e., if the representation preserves a nondegenerate symmetric bilinear form. For irreps, this occurs if and only if the Frobenius--Schur indicator of $V$ is $+1$. (A representation with indicator~$-1$ is called {\it quaternionic} and factors through a symplectic group.) Frobenius--Schur indicators are quick to compute from a character table for~$G$; they are listed in the ATLAS and easily accessed in GAP. A real representation $V\colon G \to \SO(N)$ is {\it Spin} if it factors (aka lifts) through $\Spin(N)$; a choice of factorization is also called a {\it spin structure}. This occurs if and only if the second Stiefel--Whitney class $w_2(V) \in \H^2(G; \bZ/2)$ vanishes. This happens automatically if $G$ is a~Schur cover of a simple group, as then $\H^2(G; \bZ/2) = 0$.

Given a real representation $V\colon G \to \SO(N)$ with complexification $V \otimes \bC\colon G \to \SU(N)$, the classes $p_1(V)$ and $c_2(V\otimes \bC)$ agree up to sign, and so the calculation can proceed as above. Calculating $\frac{p_1}2(V)$ for $V\colon G \to \Spin(N)$ can be harder. If $V$ factored through $W\colon G \to \SU(N/2)$, then the calculation would be easy, as then $\frac{p_1}2(V) = -c_2(W)$. In the cases of interest, this does not occur for the whole representation $V$ but does occur for its restriction $V|_{\langle g\rangle}$ to a cyclic subgroup.

The spin structure for a real representation $V\colon G \to \SO(N)$, if it exists, typically is not unique.
 Rather, the choices form a torsor for $\H^1(G;\bZ/2) = \hom(G;\bZ/2)$ (so in particular the lift is unique for quasisimple groups). Even though the lift is typically not unique, the class~$\frac{p_1}2(V)$, if it exists, depends only on the complex representation $V\colon G \to \mathrm{SU}(N)$ (since the factorization through $\SO(N)$ is unique):

\begin{Lemma}\label{lem:p12} Suppose $V_1,V_2\colon G \to \Spin(N)$ are two spin structures on the same real representation $V\colon G \to \SO(N)$. Then $\frac{p_1}2(V_1) = \frac{p_1}2(V_2) \in \H^4(G;\bZ)$.
\end{Lemma}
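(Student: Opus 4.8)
\emph{Proof proposal.} The plan is to track how $\tfrac{p_1}2$ responds to twisting a spin structure by the central $\bZ/2\subset\Spin(N)$. Write $z$ for the nontrivial element of $\ker(\Spin(N)\to\SO(N))=\bZ/2$, which is central. Since $V_1$ and $V_2$ lift the same $V$, the assignment $g\mapsto V_2(g)V_1(g)^{-1}$ lands in $\langle z\rangle$, and because $\langle z\rangle$ is central it is a homomorphism $\chi\colon G\to\bZ/2$; this is exactly the class in $\H^1(G;\bZ/2)=\hom(G,\bZ/2)$ by which the two spin structures differ. Thus $V_2=\mu\circ(V_1,\chi)$, where $\mu\colon\Spin(N)\times\bZ/2\to\Spin(N)$ is multiplication by the central $\bZ/2$. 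Pulling back along $(BV_1,B\chi)\colon BG\to B\Spin(N)\times B\bZ/2$, the desired equality $V_1^*\tfrac{p_1}2=V_2^*\tfrac{p_1}2$ follows once I show that $\mu^*\tfrac{p_1}2=\mathrm{pr}_1^*\tfrac{p_1}2$ in $\H^4(B\Spin(N)\times B\bZ/2;\bZ)$, where $\mathrm{pr}_1$ is the first projection.

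First I would pin down this group by K\"unneth. Since $\Spin(N)$ is a compact, connected, simply connected simple Lie group, $\H^i(B\Spin(N);\bZ)$ vanishes for $i=1,2,3$ and is $\bZ\cdot\tfrac{p_1}2$ for $i=4$, while $\H^*(B\bZ/2;\bZ)=\bZ[u]/(2u)$ with $\deg u=2$. The K\"unneth sequence then gives $\H^4(B\Spin(N)\times B\bZ/2;\bZ)=\bZ\langle\tfrac{p_1}2\otimes1\rangle\oplus(\bZ/2)\langle1\otimes u^2\rangle$, with no Tor contribution (the relevant Tor groups involve $\H^1$ or $\H^3$ of $B\Spin(N)$, which vanish). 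Both $\mu$ and $\mathrm{pr}_1$ restrict to the identity on $\Spin(N)\times\{1\}$, so both $\mu^*\tfrac{p_1}2$ and $\mathrm{pr}_1^*\tfrac{p_1}2$ have coefficient $1$ on $\tfrac{p_1}2\otimes1$; hence their difference is $\epsilon\,(1\otimes u^2)$ for some $\epsilon\in\bZ/2$. Restricting to $\{1\}\times B\bZ/2$ detects $\epsilon$: there $\mathrm{pr}_1$ is constant and $\mu$ is the central inclusion $z\colon\bZ/2\hookrightarrow\Spin(N)$, so $\epsilon$ equals the coefficient of $z^*\tfrac{p_1}2\in\H^4(B\bZ/2;\bZ)$.

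The heart of the argument is then the computation $z^*\tfrac{p_1}2=0$, and this is the step I expect to be the main obstacle. I would carry it out by restricting through a coordinate block $\Spin(2)\cong\rU(1)\subset\Spin(N)$ whose image in $\SO(N)$ is an $\SO(2)$ generating $\pi_1\big(\SO(N)\big)$, so that $z=-1\in\rU(1)$. On $\SO(2)$ the standard representation complexifies to $\bC_\phi\oplus\bC_{-\phi}$ plus trivial summands, giving $p_1|_{\SO(2)}=s^2$ with $s=c_1(\bC_\phi)$; since $\Spin(2)\to\SO(2)$ is the squaring map, $s$ pulls back to $2\tilde s$ and $p_1|_{\Spin(2)}=4\tilde s^2$, whence $\tfrac{p_1}2\big|_{\Spin(2)}=2\tilde s^2$. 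Finally $\tilde s$ restricts to the generator $u$ of $\H^2(B\bZ/2;\bZ)$ along $\langle z\rangle\hookrightarrow\rU(1)$, so $\tfrac{p_1}2\big|_{\langle z\rangle}=2u^2=0$ because $u^2$ has order $2$. Therefore $\epsilon=0$, the two pullbacks agree, and $\tfrac{p_1}2(V_1)=\tfrac{p_1}2(V_2)$. Every step other than this last restriction computation is formal, relying only on the torsor description already recorded above and on the vanishing of $\H^{1,2,3}(B\Spin(N);\bZ)$.
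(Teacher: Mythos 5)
Your proposal is correct, and its skeleton is the same as the paper's: you describe the difference of the two spin structures by a homomorphism $\chi\colon G \to \bZ/2$, rewrite $V_2$ as the composite of $(V_1,\chi)$ with the central multiplication map $\bZ/2 \times \Spin(N) \to \Spin(N)$, and use the K\"unneth decomposition of $\H^4(B(\bZ/2\times\Spin(N));\bZ)$ to reduce everything to the single vanishing statement $\frac{p_1}2\big|_{\langle z\rangle} = 0$ for the central element $z$ of $\ker(\Spin(N)\to\SO(N))$. Where you genuinely diverge is in how you prove that vanishing. The paper restricts to a maximal torus: it invokes the injectivity of $\H^4(BK)\to\H^4(BT)=\Sym^2(L)$ for compact connected $K$, writes $p_1 = \sum_i e_i^2$ in the $\SO(N)$ weight lattice, and expresses $\frac{p_1}2 = 2s^2 - \sum_{i<j}e_ie_j$ in the spin weight lattice $L' = L + \bZ s$, $s = \frac12\sum_i e_i$, observing that every term is a product of a character trivial on $z$ with something else. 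You instead restrict through a single coordinate circle $\Spin(2)\cong\rU(1)$ containing $z$: the relation $2\cdot\frac{p_1}2 = p_1$ restricted to $B\Spin(2)$ gives $2x = 4\tilde s^2$ in the torsion-free group $\H^4(B\rU(1);\bZ)\cong\bZ$, forcing $\frac{p_1}2\big|_{\Spin(2)} = 2\tilde s^2$, which dies in $\H^4(B\bZ/2;\bZ)\cong\bZ/2$. Your route is more elementary and self-contained -- it needs neither the injectivity of restriction to a maximal torus nor the structure of the spin weight lattice, only functoriality and the absence of torsion in $\H^4(B\rU(1);\bZ)$ -- at the cost of being special to this particular class, whereas the paper's weight-lattice method computes the restriction of any element of $\H^4(B\Spin(N);\bZ)$ to any subgroup of the torus and is the template reused elsewhere in the paper (e.g., for identifying $\frac{p_1}2$ against $c_2$ on cyclic subgroups). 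One small point of care in your argument, which you handle correctly: choosing the $\SO(2)$ block so that it generates $\pi_1(\SO(N))$ (hence $N\geq 3$) is what guarantees the preimage circle is the connected double cover $\rU(1)\xrightarrow{2}\rU(1)$ with $z=-1$ inside it.
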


See \cite[Section~1.4]{jft} for an explanation of Lemma~\ref{lem:p12} in terms of the ``string obstruction''.

\begin{proof} The reason that spin structures form a torsor for $\H^1(G;\bZ/2)$ is the following. Let $c \in \Spin(N)$ denote the nontrivial element in $\ker(\Spin(N) \to \SO(N))$. There is a group homomorphism
 \begin{gather*} \alpha\colon \ \bZ/2 \times \Spin(N) \to \Spin(N), \qquad (i,g) \mapsto c^i g,\end{gather*}
 covering the standard projection $\bZ/2 \times \SO(N) \to \SO(N)$.
 Given $V_1$ and $V_2$ as above, there is a unique map $\phi\colon G \to \bZ/2$ such that
 \begin{gather*} V_2 = \alpha \circ (\phi,V_1).\end{gather*}

 Let $\pi\colon \bZ/2 \times \Spin(N) \to \Spin(N)$ denote the standard projection. Then $V_1 = \pi \circ (\phi,V_1)$. In particular, it suffices to show that the pullbacks of $\frac{p_1}2$ along the two maps $\alpha,\pi\colon \bZ/2 \times \Spin(N) \to \Spin(N)$ agree. But $\H^4\big(B(\bZ/2 \times \Spin(N))\big) = \H^4(\bZ/2) \oplus \H^4(B\Spin(N))$ by the K\"unneth formula, and
 \begin{gather*} \pi^*{\tfrac{p_1}2} = \bigl(0, {\tfrac{p_1}2}\bigr) \in \H^4(\bZ/2) \oplus \H^4(B\Spin(N)), \\
 \alpha^*{\tfrac{p_1}2} = \bigl({\tfrac{p_1}2}|_{\langle c\rangle}, {\tfrac{p_1}2}\bigr) \in \H^4(\bZ/2) \oplus \H^4(B\Spin(N)),\end{gather*}
 so it suffices to show that~$\frac{p_1}2$ has trivial restriction to $\bZ/2 \cong \langle c\rangle = \ker(\Spin(N) \to \SO(N))$.

 Suppose that $K$ is a compact connected Lie group with maximal torus $T \subseteq K$, and write $L = \hom(T, U(1)) = \H^2(BT)$ for its weight lattice.
 Then the restriction map $\H^4(BK) \to \H^4(BT) = \Sym^2(L)$ is an injection.
 When $K = \SO(N)$, there is a natural identification $L \cong \bZ^N$. Writing $e_1,\dots,e_N$ for the standard basis, we have $p_1 = \sum_i e_i^2$. The weight lattice $L'$ of $\Spin(N)$ is the extension of $L$ through the element $s = \frac12 \sum_i e_i$. Working in $\Sym^2 L'$, we have
 \begin{gather*} \tfrac{p_1}2 = 2s^2 - \sum_{i<j} e_i e_j.\end{gather*}
 But $e_i$ and $2s$ are in $L$ and so restrict trivially to $\langle c \rangle$, and so $\frac{p_1}2$ also restricts trivially.
\end{proof}

\section[Elementary abelian and extraspecial $p$-groups]{Elementary abelian and extraspecial $\boldsymbol{p}$-groups} \label{sec p-groups}

\subsection{Elementary abelian groups}\label{subsec:eag}

\begin{Lemma} \label{elemab}Let $E = p^n$ be an elementary abelian $p$-group and let $E^*:=\Hom(E,\mu_p)$, where~$\mu_p$ denotes the group of $p$th roots of unity in $\bC^*$.
\begin{enumerate}\itemsep=0pt
\item[$1.$] If $p = 2$, we have isomorphisms of $\GL(E)$-modules
\begin{gather*}
\H^2(E;\bZ) = E^*, \qquad \H^3(E;\bZ) = \Alt^2(E^*), \qquad \H^4(E;\bZ) = E^*.\Alt^2(E^*).\Alt^3(E^*),
\end{gather*}
where the last group on the right denotes a filtered $\GL(E)$-module whose subquotients are
 $E^*$, $\Alt^2(E^*)$, and $\Alt^3(E^*)$. The submodule $E^*.\Alt^2(E^*)$ is $\GL(E)$-isomorphic to $\Sym^2(E^*)$.
\item[$2.$] If $p$ is odd, we have isomorphisms of $\GL(E)$-modules
\begin{gather*}
\H^2(E;\bZ) = E^*, \qquad \H^3(E;\bZ) = \Alt^2(E^*), \qquad \H^4(E;\bZ) = \Sym^2(E^*) \oplus \Alt^3(E^*).
\end{gather*}
\end{enumerate}
\end{Lemma}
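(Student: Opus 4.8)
The plan is to read the integral cohomology off from the mod-$p$ cohomology, whose ring structure and $\GL(E)$-action are completely explicit, by running the Bockstein long exact sequence. Write $\beta$ for the mod-$p$ Bockstein on $\H^*(E;\bF_p)$. For odd $p$ one has the standard $\GL(E)$-equivariant presentation $\H^*(E;\bF_p)=\Alt(E^*)\otimes\Sym(E^*)$, the exterior generators $x_i$ spanning $\H^1(E;\bF_p)=E^*$ in degree $1$, the polynomial generators $y_i=\beta x_i$ spanning a second copy of $E^*$ in degree $2$, and $\beta$ the Koszul derivation $x_i\mapsto y_i$; for $p=2$ one has $\H^*(E;\bF_2)=\Sym(E^*)=\bF_2[t_1,\dots,t_n]$ with $\beta=\Sq^1$ the derivation $t_i\mapsto t_i^2$. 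A Künneth reduction to the rank-one case shows that $(\H^*(E;\bF_p),\beta)$ is acyclic in positive degrees; equivalently, the second page of the Bockstein spectral sequence is trivial there, so the sequence collapses and $\H^k(E;\bZ)$ has exponent $p$ for every $k>0$. Exponent $p$ makes the reduction $\H^k(E;\bZ)\to\H^k(E;\bF_p)$ injective with image $\ker\beta$, which by acyclicity equals $\operatorname{im}\beta$; hence there is a $\GL(E)$-equivariant isomorphism
\begin{gather*}
\H^k(E;\bZ)\cong\operatorname{im}\bigl(\beta\colon\H^{k-1}(E;\bF_p)\to\H^k(E;\bF_p)\bigr),\qquad k\geq 1.
\end{gather*}
This reduces the lemma to computing the image of $\beta$ in degrees $2$, $3$, and $4$, which I do $\GL(E)$-equivariantly below.

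For odd $p$ this is immediate from the bigrading of $\H^*(E;\bF_p)$ by exterior and polynomial degree, along which $\beta$ has bidegree $(-1,+1)$. In degree $2$ the image is the span of the $y_i$, namely $E^*$; in degree $3$ it is $\beta$ of the $x_ix_j$, injectively giving $\Alt^2E^*$. In degree $4$ the relevant degree-$3$ classes split into the $x_iy_j$ and the $x_ix_jx_k$, whose images land in distinct bidegrees: from the $x_iy_j$ one gets the symmetrization onto $\Sym^2E^*$ (the classes $y_iy_j$), and from the $x_ix_jx_k$ one gets an injective image $\cong\Alt^3E^*$ inside the $x_ix_jy_k$. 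Since these occupy different bidegrees the sum is direct, giving $\H^4(E;\bZ)\cong\Sym^2E^*\oplus\Alt^3E^*$ and proving part~2.

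For $p=2$ there is no such bigrading, and this is where the content lies: here $\H^k(E;\bZ)\cong\operatorname{im}(\Sq^1\colon\Sym^{k-1}E^*\to\Sym^kE^*)$, computed on monomials via $\Sq^1(t_i^3)=t_i^4$, $\Sq^1(t_i^2t_j)=t_i^2t_j^2$, and $\Sq^1(t_it_jt_k)=t_i^2t_jt_k+t_it_j^2t_k+t_it_jt_k^2$. Degree $2$ gives $\langle t_i^2\rangle\cong E^*$ (the Frobenius twist, trivial over $\bF_2$), and degree $3$ gives $\langle t_i^2t_j+t_it_j^2\rangle$, where $\Sq^1$ kills the Frobenius sub $\langle t_i^2\rangle$ and identifies the image with $\Sym^2E^*/E^*\cong\Alt^2E^*$ coming from the characteristic-two sequence $0\to E^*\to\Sym^2E^*\to\Alt^2E^*\to0$. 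For degree $4$ I would build the filtration by hand: the squaring map $\Sym^2E^*\to\Sym^4E^*$, $f\mapsto f^2$, is a $\GL(E)$-equivariant injection whose image $F=\langle t_a^2t_b^2\rangle$ lies in $\operatorname{im}\Sq^1$ and is $\cong\Sym^2E^*$, while its own Frobenius sub $\langle t_i^4\rangle\cong E^*$ is the bottom step, with $F/\langle t_i^4\rangle\cong\Alt^2E^*$ by the same sequence; thus $F=E^*.\Alt^2E^*\cong\Sym^2E^*$. The remaining classes $\Sq^1(t_it_jt_k)$ carry a single squared index, hence are independent modulo $F$, and the assignment $t_i\wedge t_j\wedge t_k\mapsto\Sq^1(t_it_jt_k)$ is a $\GL(E)$-isomorphism onto the top subquotient $\Alt^3E^*$. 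A dimension count ($\binom{n+1}{2}+\binom{n}{3}$ classes, matching $\dim\operatorname{im}\Sq^1$) shows these exhaust the image, completing part~1.

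The step I expect to be the main obstacle is this last one: for $p=2$ the three pieces do not split off but assemble into a genuine non-split filtration, so the care lies in pinning down the $\GL(E)$-stable submodules as iterated Frobenius images and in verifying, through the characteristic-two sequence $0\to E^*\to\Sym^2E^*\to\Alt^2E^*\to0$, that the bottom two steps combine to give exactly $\Sym^2E^*$, together with the bookkeeping that confirms no further integral classes intrude.
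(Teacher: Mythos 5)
Your proof is correct, and it takes a genuinely different route from the paper, which in fact offers no argument at all for this lemma: it simply cites \cite[Proposition~2.2]{MR2320456} and \cite[Lemma~4.4]{jft}. Your derivation is self-contained: you establish acyclicity of $(\H^*(E;\bF_p),\beta)$ in positive degrees by K\"unneth reduction to the rank-one case, deduce that $\H^{>0}(E;\bZ)$ has exponent $p$ and is $\GL(E)$-equivariantly identified with $\operatorname{im}\beta = \ker\beta$, and then compute $\operatorname{im}\beta$ in degrees $2$, $3$, $4$. This handles both parities uniformly and makes visible exactly why the odd-$p$ answer splits (the exterior/polynomial bigrading, with respect to which $\beta$ has bidegree $(-1,+1)$) while the $p=2$ answer assembles into a non-split filtration (no such bigrading; the steps arise as iterated Frobenius images inside $\bF_2[t_1,\dots,t_n]$, with the bottom two steps identified with $\Sym^2(E^*)$ via the squaring map and the characteristic-two sequence $0\to E^*\to\Sym^2 E^*\to\Alt^2 E^*\to 0$). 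Two small points: the injectivity of $\Alt^3(E^*)\to\operatorname{im}\Sq^1/F$ is worth one explicit line (distinct triples $\{i,j,k\}$ contribute disjoint sets of monomials $t_i^2t_jt_k$, none of which occur in $F$, whose monomials are $t_i^4$ and $t_i^2t_j^2$); and your closing ``dimension count'' is unnecessary and, as phrased, circular --- the image of a linear map is spanned by the images of basis monomials, and you have listed $\Sq^1$ of every monomial type, so exhaustion is automatic. What the citation buys the paper is brevity; what your argument buys is precisely the description the paper relies on later anyway --- in Section~\ref{sub.extraspecial 2group} the authors identify $\H^\bullet(E;\bZ)$ with $\ker\bigl(\Sq^1\bigr)\subseteq\Sym^\bullet(E^*)$ and compute differentials via $\Sq^1$, which is exactly the identification your proof establishes rather than assumes.
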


\begin{proof}See \cite[Proposition 2.2]{MR2320456} or \cite[Lemma 4.4]{jft}.
\end{proof}

If $V$ is an elementary abelian $p$-group, we regard it as an $\bF_p$-vector space in the obvious way. We may identify $E^*$ with the usual dual $\bF_p$-vector space to $E$ by fixing at the outset an isomorphism $\mu_p \cong \bZ/p$. We use $\Sym^n(V)$ and $\Alt^n(V)$ for the symmetric and exterior powers of~$V$; recall in positive characteristic these are defined as quotients of $V^{\otimes n}$ in the following way:
\begin{itemize}\itemsep=0pt
\item $\Sym^n(V):= \H_0\big(S_n;V^{\otimes n}\big)$ are the coinvariants of $V^{\otimes n}$ by the symmetric group action
\item $\Alt^n(V)$ is the quotient of $V^{\otimes n}$ by the subspace spanned by tensors with a repeated tensorand (tensors $v_1 \otimes \cdots \otimes v_n$ with $v_i = v_j$ for some $i \neq j$).
\end{itemize}
Though $\Sym^n(E^*)$ and $\Sym^n(E)^*$ are not isomorphic as $\GL(E)$-modules if $p \leq n$ (instead the dual of $\Sym^n(E^*)$ is the space of divided powers of $E$), let us record:

\begin{Lemma}
If $p$ is a prime and $E$ is an $\bF_p$-vector space, there is an isomorphism
\begin{gather*}
\Alt^n(E^*) \cong \Alt^n(E)^*
\end{gather*}
of $\GL(E)$-modules.
\end{Lemma}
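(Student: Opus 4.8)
The plan is to exhibit a canonical $\GL(E)$-invariant perfect pairing between $\Alt^n(E^*)$ and $\Alt^n(E)$; nondegeneracy then yields the claimed isomorphism $\Alt^n(E^*) \cong \Alt^n(E)^*$, and invariance makes it $\GL(E)$-equivariant. The pairing I would use is the determinant pairing. On tensor powers there is the bilinear map
\[
(E^*)^{\otimes n} \times E^{\otimes n} \to \bF_p, \qquad (\phi_1 \otimes \cdots \otimes \phi_n,\, v_1 \otimes \cdots \otimes v_n) \mapsto \det\big(\phi_i(v_j)\big)_{i,j},
\]
which is defined over $\bZ$ and so causes no trouble in characteristic $p$. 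Since $(g\phi_i)(gv_j) = \phi_i(v_j)$ for $g \in \GL(E)$, this pairing is $\GL(E)$-invariant.

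First I would check that it descends to exterior powers. The determinant $\det(\phi_i(v_j))$ is alternating separately in the rows and in the columns: if $\phi_a = \phi_b$ for some $a \neq b$ then two rows of the matrix coincide and the determinant vanishes, and likewise two columns coincide if $v_a = v_b$. Since the elementary tensors with a repeated tensorand span exactly the subspaces by which $(E^*)^{\otimes n}$ and $E^{\otimes n}$ are quotiented to form $\Alt^n(E^*)$ and $\Alt^n(E)$, bilinearity forces the pairing to vanish on these subspaces in each variable. Hence it factors through a well-defined $\GL(E)$-invariant pairing $\Alt^n(E^*) \times \Alt^n(E) \to \bF_p$.

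It remains to prove nondegeneracy, which I would do by direct computation in a basis. Fix a basis $e_1, \dots, e_d$ of $E$ with dual basis $e_1^*, \dots, e_d^*$; then $\{e_{i_1} \wedge \cdots \wedge e_{i_n}\}_{i_1 < \cdots < i_n}$ and $\{e_{j_1}^* \wedge \cdots \wedge e_{j_n}^*\}_{j_1 < \cdots < j_n}$ are bases of $\Alt^n(E)$ and $\Alt^n(E^*)$. The pairing of two such basis vectors is $\det\big(e_{j_k}^*(e_{i_l})\big) = \det(\delta_{j_k, i_l})$, which equals $1$ when the index sets coincide (both being sorted, the matrix is the identity) and $0$ otherwise (some row is identically zero). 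Thus the Gram matrix is the identity and the pairing is perfect. The induced map $\omega \mapsto \langle \omega, -\rangle$ is then the desired isomorphism $\Alt^n(E^*) \isom \Alt^n(E)^*$, and $\GL(E)$-equivariance follows formally from invariance of the pairing via the identity $\langle g\omega, \eta\rangle = \langle \omega, g^{-1}\eta\rangle$.

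There is no real obstacle in the argument; the point worth emphasizing is \emph{why} it succeeds for $\Alt^n$ where the corresponding claim fails for $\Sym^n$. The diagonal entries of the Gram matrix above are the units $\pm 1$ supplied by the determinant, so nondegeneracy is insensitive to the characteristic. The symmetric analogue would replace the determinant by a permanent, whose relevant entries involve factorials that cease to be units once $p \leq n$; this is precisely the source of the discrepancy between $\Sym^n(E^*)$ and $\Sym^n(E)^*$ recorded just before the lemma.
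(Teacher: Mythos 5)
Your proof is correct and is essentially the paper's own argument: the determinant pairing you define, $\det\big(\phi_i(v_j)\big)$, is by the Leibniz formula exactly the paper's pairing $\sum_{\sigma \in S_n} (-1)^\sigma \langle v_1, w_{\sigma(1)}\rangle \cdots \langle v_n, w_{\sigma(n)}\rangle$, which the paper likewise observes is $\GL(E)$-equivariant and descends to a perfect pairing between $\Alt^n(E)$ and $\Alt^n(E^*)$. You simply make explicit the two verifications the paper leaves implicit (the descent to the quotients and the identity Gram matrix on the standard bases), so the two proofs coincide in substance.
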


\begin{proof}The pairing $V^{\otimes n} \otimes (V^*)^{\otimes n} \to \bZ/p$ given by
\begin{gather*}
\langle v_1 \otimes \cdots \otimes v_n, w_1 \otimes \cdots \otimes w_n\> = \sum_{\sigma \in S_n} (-1)^\sigma \langle v_1, w_{\sigma(1)}\rangle \cdots \langle v_n, w_{\sigma (n)}\rangle,
\end{gather*}
 where $(-1)^\sigma$ denotes the sign of the permutation $\sigma$, is $\GL(V)$-equivariant and descends to a~perfect pairing between $\Alt^n(V)$ and $\Alt^n(V^*)$.
\end{proof}

\subsection[Extraspecial $p$-groups for $p$ odd]{Extraspecial $\boldsymbol{p}$-groups for $\boldsymbol{p}$ odd}

If $p$ is prime, $E = p^n$ is an elementary abelian $p$-group and $\omega$ is a function $E \times E \to \bZ/p$, we define a multiplication on the set of formal monomials of the form $z^i t^u$ (where $i \in \bZ/p$ and $u \in E$) by the formula
\begin{gather*}
\big(z^i t^u\big)\big(z^j t^v\big) := z^{i+j + \omega(u,v)} t^{u+v}.
\end{gather*}
If $\omega$ is bilinear, this multiplication is associative, $z^0 t^0$ is a two-sided unit, and $z^{-i+\omega(u,u)}t^{-u}$ is the two-sided inverse to $z^i t^u$: we defined a group that we denote by $(p.E)_{\omega}$. The groups associated to $(p.E)_{\omega}$ and $(p.E)_{\omega'}$ are isomorphic if $\omega - \omega'$ can be written as $j(u+v) - j(u) - j(v)$ for some function $j\colon E \to \bZ/p$~-- in particular if $p$ is odd then $\omega(u,v)$ and
\begin{gather*}
\tfrac{1}{2} (\omega(u,v) - \omega(v,u) ) = \omega(u,v) - \tfrac{1}{2} (\omega(u+v,u+v) - \omega(u,u)-\omega(v,v) )
\end{gather*}
determine isomorphic groups, so when~$p$ is odd we may as well assume that $\omega \in \Alt^2(E^*)$ is skew-symmetric. The center contains $z$, and if $p$ is odd it is generated by~$z$ if and only if~$\omega$ is nondegenerate; in that case $n = 2m$ and $(p.E)_{\omega} = p^{1+2m}$ is a copy of the extraspecial $p$-group of exponent $p$. (The extraspecial group of exponent~$p^2$ comes from a non-bilinear cocycle $\omega\colon E \times E \to \bF_p$. The extraspecial groups of order $2^{1+2m}$ will be treated in Section~\ref{sub.extraspecial 2group}; the group $(p.E)_{\omega}$ that we have defined is always elementary abelian when $p = 2$).

The automorphism group of $p^{1+2m}$ is $E:\mathrm{GSp}(E,\omega)$, where $E$ acts by inner automorphisms $z^i t^u \mapsto z^{i+2\omega(v,u)} t^u$ and{\samepage
\begin{gather*}
\mathrm{GSp}_{2m}(E,\omega) = \{(g,a) \,|\, g\colon E \to E,\, a \in \GL_1(\bF_p),\, \omega(gu,gv) = a\omega(u,v)\}
\end{gather*}
acts by $(g,a) (z^i t^u) = z^{ai} t^{gu}$. The scalar $a = a(g)$ is determined by $g$.}

Let $L_\omega \subseteq \Alt^2(E^*)$ denote the line spanned by $\omega$. It is a one-dimensional $\mathrm{GSp}$-submodule by construction, and we write $L_{\omega}^n$ for its $n$th tensor power. Note $L_{\omega}^{-1} = L_{\omega}^*$. If $\omega$ is nondegenerate then $E \otimes L_\omega \cong E^*$ as $\mathrm{GSp}$-modules, via the map which sends $u \otimes \omega$ to the functional $\omega(u,-)$. Provided $p$ is odd, we have a splitting
\begin{gather*}
\Alt^2(E^*) = L_{\omega} \oplus \Alt^2(E^*)_{\omega},
\end{gather*}
where $\Alt^2(E^*)_{\omega}$ is the kernel of the projection $\Alt^2(E^*) \cong \Alt^2(E \otimes L_{\omega}) \cong \Alt^2(E^*)^* \otimes L_{\omega}^{2} \to L_{\omega}^{-1} \otimes L_{\omega}^2$ dual to the inclusion $L_{\omega} \to \Alt^2(E^*)$.

If $m \geq 2$ we also have an inclusion $E^* \otimes L_{\omega} \to \Alt^3(E^*)$ sending $f \in E^*$ to $f \wedge \omega$.

\begin{Lemma}\label{lem:extraspecial-odd}
Let $p$ be an odd prime, let $E = p^{2m}$ be an elementary $p$-group and let $\omega \in \Alt^2(E^*)$ be a nondegenerate symplectic form. Then if $m \geq 2$,
\begin{gather*}
\H^2\big(p^{1+2m};\bZ\big) \cong E^*, \qquad \H^3\big(p^{1+2m};\bZ\big) \cong \Alt^2(E^*)_\omega,
\end{gather*}
as $\mathrm{GSp}_{2m}$-modules. If $m \geq 3$,
\begin{gather*}
\H^4\big(p^{1+2m};\bZ\big) \cong \Sym^2(E^*) \oplus \Alt^3(E^*)/(E^* \otimes L_{\omega}) ,
\end{gather*}
while if $m = 2$,
\begin{gather*}
\H^4\big(p^{1+4};\bZ\big) \cong \Sym^2(E^*).\big(\Alt^2(E^*)_{\omega} \otimes L_{\omega}\big),
\end{gather*}
a possibly nontrivial extension of $\Alt^2(E^*)_{\omega}$ by $\Sym^2(E^*)$.
\end{Lemma}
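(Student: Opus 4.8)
The plan is to run the Lyndon--Hochschild--Serre spectral sequence for the central extension $\bZ/p \to p^{1+2m} \to E$, whose class is the nondegenerate form $\omega \in \H^2(E;\bZ/p) = \Alt^2(E^*)$. Since the extension is central, $E$ acts trivially on the center, so
\begin{gather*}
E_2^{ij} = \H^i\big(E; \H^j(\bZ/p;\bZ)\big), \qquad \H^*(\bZ/p;\bZ) = \bZ[y]/(py), \quad |y| = 2.
\end{gather*}
The odd rows vanish, so all even differentials are zero, $E_2 = E_3$, and in total degree $\leq 4$ the only relevant differential is $d_3$. The columns $j = 0,2,4$ are read off from Lemma~\ref{elemab}: the bottom row is $\H^\bullet(E;\bZ)$, and the rows $j = 2,4$ are $\H^\bullet(E;\bZ/p)\cdot y^{1,2}$. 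I would first record the key $\mathrm{GSp}$-equivariance: the multiplier character scales both the central generator $z$ and the form $\omega$ by the same scalar $a$, so the coefficient $y$ is $\mathrm{GSp}$-equivariantly a copy of $L_\omega$; this is exactly how the tensor factors $L_\omega$ will enter the final answer.

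The heart of the computation is to identify $d_3$. The transgression sends $y$ to the integral extension class, $d_3(y) = \omega \in \H^3(E;\bZ) \cong \Alt^2(E^*)$, and the remaining $d_3$'s are fixed by the Leibniz rule and Kudo's transgression theorem; on the $\Alt$-type classes $d_3$ becomes the Lefschetz operator ``wedge with $\omega$'' on $\Alt^\bullet(E^*)$, with the Bockstein and $\Sym$ generators handled by the corresponding $\wedge\omega$ maps. With this in hand $\H^2$ and $\H^3$ are immediate. In total degree $2$ only $E_\infty^{2,0} = E^*$ survives, since $d_3(y) = \omega \neq 0$ kills $y$; this gives $\H^2 = E^*$. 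In total degree $3$ the term $E_\infty^{1,2} = E^*\cdot y$ dies because $\wedge\omega\colon \Alt^1(E^*) \to \Alt^3(E^*)$ is injective for $m \geq 2$, leaving only $E_\infty^{3,0} = \Alt^2(E^*)/L_\omega$; by the $p$-odd splitting $\Alt^2(E^*) = L_\omega \oplus \Alt^2(E^*)_\omega$ this is exactly $\Alt^2(E^*)_\omega$, giving $\H^3 = \Alt^2(E^*)_\omega$.

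For $\H^4$ the associated graded comes from $E_\infty^{4,0}$, $E_\infty^{2,2}$ and $E_\infty^{0,4}$. On the bottom row the incoming $d_3\colon E_3^{1,2} = E^*\cdot y \to E_3^{4,0} = \Sym^2(E^*)\oplus\Alt^3(E^*)$ is $f \mapsto f\wedge\omega$, landing in $\Alt^3(E^*)$ with image the copy $E^*\otimes L_\omega$, so $E_\infty^{4,0} = \Sym^2(E^*)\oplus\Alt^3(E^*)/(E^*\otimes L_\omega)$. The entire $m$-dichotomy is then governed by two hard-Lefschetz thresholds that both flip at $m = 2$. First, $\wedge\omega\colon \Alt^1(E^*) \to \Alt^3(E^*)$ is injective for all $m \geq 2$ but an isomorphism precisely when $m = 2$, so the $\Alt^3$-summand of $E_\infty^{4,0}$ vanishes for $m = 2$ and is nonzero for $m \geq 3$. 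Second, the $\Alt^2$-part of $E^{2,2}$ carries $d_3 = \wedge\omega\colon \Alt^2(E^*) \to \Alt^4(E^*) \subseteq \H^5(E;\bZ)$, which is injective for $m \geq 3$ (so $E_\infty^{2,2} = 0$) but has kernel the primitive part $\Alt^2(E^*)_\omega$ when $m = 2$; using the $L_\omega$-twist of the coefficient, this survivor is $\Alt^2(E^*)_\omega\otimes L_\omega$. Together with $E_\infty^{0,4} = 0$ (killed by $d_3(y^2) = 2y\,\omega$, using $p$ odd), this yields the direct sum $\Sym^2(E^*)\oplus\Alt^3(E^*)/(E^*\otimes L_\omega)$ for $m \geq 3$ and the two-step filtration $\Sym^2(E^*).(\Alt^2(E^*)_\omega\otimes L_\omega)$ for $m = 2$, with $\Sym^2(E^*)$ the submodule $E_\infty^{4,0}$.

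I expect the main obstacle to be pinning the $d_3$ differentials down exactly: identifying them with the Lefschetz operators $\wedge\omega$ and correctly tracking their effect on the Bockstein and $\Sym$ generators, since the whole $m = 2$ versus $m \geq 3$ distinction turns on the injectivity thresholds of $\wedge\omega\colon \Alt^1 \to \Alt^3$ and $\wedge\omega\colon \Alt^2 \to \Alt^4$. This is cleanest to carry out by transporting the differentials to the mod-$p$ spectral sequence, where $\H^\bullet(E;\bF_p) = \Lambda(E^*)\otimes\bF_p[E^*]$ is a free module and the $d_r$ are honest Leibniz derivations governed by the $\mathfrak{sl}_2$-action of $\omega$, and then returning to $\bZ$-coefficients via the Bockstein. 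The remaining step for $m = 2$ is only to exhibit the filtration $\Sym^2(E^*).(\Alt^2(E^*)_\omega\otimes L_\omega)$; the statement already allows this extension to be nonsplit, so no splitting needs to be decided.
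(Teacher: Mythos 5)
Your proposal follows essentially the same route as the paper's proof: the LHS spectral sequence for the central extension $p \to p^{1+2m} \to E$, with the rows identified via Lemma~\ref{elemab}, the transgression $d_3(y)=\omega$, and the remaining $d_3$'s identified (by multiplicativity and comparison with $\bF_p$-coefficients) as wedge-with-$\omega$ Lefschetz maps whose injectivity thresholds at $m=2$ versus $m\geq 3$ produce exactly the stated dichotomy. The only differences are cosmetic~-- you cite Kudo's transgression theorem and the $\mathfrak{sl}_2$/Lefschetz language explicitly where the paper simply asserts the identification of $d_3$, and both treatments leave the key claim about the kernel of $d_3$ on $E_3^{2,2}$ at the same level of detail.
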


\begin{proof}We consider the action of $\GSp$ on the LHS spectral sequence
\begin{gather*}
\H^s(E;\H^t(p)) \Rightarrow \H^{s+t}(p.E).
\end{gather*}
We have $\H^2(p) = L_{\omega}$ and $\H^4(p) = L_{\omega}^{2}$ in the left $s = 0$ column. The bottom $t = 0$ row is computed in Lemma~\ref{elemab}.
To compute the $t=2$ row, recall that, provided $p$ is odd, $\H^\bullet(E;\bF_p)$ is the graded-commutative $\bF_p$-algebra generated by a copy of $E^*$ in degree $1$ and a second copy of $E^*$ in degree $2$; in particular:
\begin{gather*}
 \H^1(E;\bF_p) \cong E^*, \qquad \H^2(E;\bF_p) \cong \Alt^2(E^*) \oplus E^*, \\ \H^3(E;\bF_p) \cong \Alt^3(E^*) \oplus (E^* \otimes E^*) \cong \Alt^3(E^*) \oplus \Alt^2(E^*) \oplus \Sym^2(E^*). \end{gather*}

All together, we have on the $E_2$-page:
\begin{gather*}
\begin{array}{ccccc}
L_{\omega}^{2} \\
0 & 0 & 0 \\
L_{\omega} & E^* \otimes L_{\omega} & (\Alt^2(E^*) \oplus E^*) \otimes L_{\omega} & \Alt^2(E) \otimes L_\omega \oplus \cdots \\
0 & 0 & 0 & 0 & 0 \\
\bZ & 0 & E^* & \Alt^2(E^*) & \Sym^2(E^*) \oplus \Alt^3(E^*)
\end{array}
\end{gather*}

The $d_2$ differential vanishes and the $d_3$ differentials $L_\omega \to \Alt^2(E^*)$, $E^* \otimes L_{\omega} \to \Alt^3(E^*)$, and $L_{\omega}^{2} \to \Alt^2(E^*) \otimes L_\omega$ are the injections discussed above. Indeed, the LHS spectral sequence is constructed so that $d_3$ sends the generator $\omega \in L_\omega$ to the extension class $\omega \in \Alt^2(E^*)$, and so it sends $\omega^2 \in L_\omega^2$ to $2\omega \,d_3\omega$. The claim for $E^* \otimes L_{\omega} \to \Alt^3(E^*)$ follows from comparing with the $\bF_p$-cohomology.

It remains to understand $d_3\colon \big(\Alt^2(E^*) \oplus E^*\big) \otimes L_{\omega} \to \H^5(E)$.
We claim that this map is an injection when $m\geq 3$, and that when $m=2$ its kernel is $\Alt^2(E^*)_\omega \otimes L_{\omega} \subseteq \Alt^2(E^*)$.
Note also that when $m=2$, the map $E^* \otimes L_{\omega} \to \Alt^3(E^*)$ is an isomorphism.
In this range of degrees, the sequence stabilizes after page $4$,
 and so on the $E_\infty$ page we see
\begin{gather*}
 \begin{array}{ccccc}
 0 \\
 0 & 0 & 0 \\
 0 & 0 & \Alt^2(E^*)_\omega \otimes L_{\omega} \\
 0 & 0 & 0 & 0 & 0 \\
 \bZ & 0 & E^* & \Alt^2(E^*)_\omega & \Sym^2(E^*) \\
 \end{array}
\end{gather*} if $m=2$ and
\begin{gather*}
 \begin{array}{ccccc}
 0 \\
 0 & 0 & 0 \\
 0 & 0 & 0 \\
 0 & 0 & 0 & 0 & 0 \\
 \bZ & 0 & E^* & \Alt^2(E^*)_\omega & \Sym^2(E^*) \oplus \Alt^3(E^*)/(E^* \otimes L_\omega)
 \end{array}
\end{gather*}
if $m\geq 3$.\end{proof}

\subsection{Extraspecial 2-groups} \label{sub.extraspecial 2group}

If $E$ is an elementary abelian $2$-group then any central extension $2.E$ is determined up to isomorphism by the function
\begin{gather*}
Q\colon \ E \to \bF_2, \qquad Q(v) = \begin{cases} 1 & \text{if the lifts of $v$ in $2.E$ have order $4$,}
\\
0 & \text{otherwise,}
\end{cases}
\end{gather*}
which is a quadratic form. It is not usually possible to write the multiplication explicitly in terms of $Q$~-- indeed if~$Q$ is nondegenerate and $E$ has rank~$6$ or more the orthogonal group of~$Q$ (which we denote by~$O(Q)$) does not act on $2.E$~\cite{MR0476878}. But $O(Q)$ still acts on the cohomology of~$2.E$.

The LHS spectral sequence begins:
\begin{gather*}
\begin{array}{ccccc}
2 \\
0 & 0 & 0\\
2 & E^* & \Sym^2(E^*) \\
0 & 0 & 0 & 0 & 0 \\
\bZ & 0 & E^* & \Alt^2(E^*) & E^*.\Alt^2(E^*).\Alt^3(E^*)
\end{array}
\end{gather*}
We first wish to describe the $d_3$ differential. To do so, recall first that $\H^\bullet(E)$ injects into $\H^\bullet(E;\bF_2) \cong \Sym^\bullet(E^*)$ as the subalgebra in the kernel of the derivation $\Sq^1\colon \Sym^\bullet(E^*) \to \Sym^{\bullet+1}(E^*)$.
Identifying $\H^\bullet(E)$ with its image in $\H^\bullet(E;\bF_2)$,
 the $d_3$ differential sends $f \in E_2^{i2} \cong \Sym^i(E^*)$ to $\Sq^1(fQ) \in \Sym^{i+3}(E^*)$. In particular, it sends the generator of the $2$ in degree $(0,2)$ to $\Sq^1(Q) \in \Sym^3(E^*)$. The image of $\Sq^1\colon \Sym^2(E^*)$ to~$\Sym^3(E^*)$ is isomorphic to $\Alt^2(E^*)$, and under this isomorphism $\Sq^1$ takes $Q$ to its underlying alternating form $B_Q(x,y) = Q(x+y) - Q(x) - Q(y)$.

Let us suppose that $Q$ is nondegenerate and $E = 2^{2m}$. Then in particular $B_Q \neq 0$, so that $d_3\colon 2 \to \Alt^2(E^*)$ is an injection. Let $f \in E^*$ in degree $(1,2)$ and consider $d_3(f) = \Sq^1(fQ) = f^2Q + f\Sq^1(Q)$. Since $\Sym^\bullet(E^*)$ has no zero-divisors, if $f\neq 0$ but $d_3(f)=0$, then we must have $\Sq^1(Q) = fQ$. This cannot happen when $m\geq 2$, and so $d_3\colon E^* \to E^*.\Alt^2(E^*).\Alt^3(E^*)$ is an injection in this case.
(When $m=1$, it is an injection when $Q$ has Arf invariant $-1$ and is not an injection when $Q$ has Art invariant $+1$.)
Thus, provided $m\geq 2$, we find
\begin{gather*} \H^1(2.E) \cong E^*, \qquad \H^2(2.E) = \Alt^2(E^*)/B_Q. \end{gather*}

The $d_3$ differential emitted by the $\Sym^2(E^*)$ in degree $(2,2)$ always has kernel~-- $Q$ itself~-- and nothing more provided $m\geq 2$. Finally, if $m \geq 3$, then $d_5\colon E_5^{04} \to E_5^{50}$ is nonzero, and the~$E_\infty$ page looks like
\begin{gather*}
\begin{array}{ccccc}
0 \\
0 & 0 \\
0 & 0 & Q \\
0 & 0 & 0 & 0 \\
\bZ & 0 & E^* & \Alt^2(E^*)/B_Q & X
\end{array}
\end{gather*}
with
\begin{gather*} X \cong \big(E^*.\Alt^2(E^*).\Alt^3(E^*)\big)/E^*. \end{gather*}
This can be simplified slightly. The inclusion $E^* \to E^*.\Alt^2(E^*).\Alt^3(E^*)$, sending $f \mapsto \Sq^1(fQ)$, does not land within the $E^*.\Alt^2(E^*) \cong \Sym^2(E^*)$ submodule, and so the composition $E^* \to E^*.\Alt^2(E^*).\Alt^3(E^*) \to \Alt^3(E^*)$ is nonzero. But $E^*$ is simple as an $O(Q)$-module, and so this map $E^* \to \Alt^3(E^*)$ is an injection. (It sends $f \mapsto f\wedge B_Q$.) Thus we can write
\begin{gather*} X \cong E^* .\Alt^2(E^*).\big(\Alt^3(E^*)/E^*\big). \end{gather*}

All together, provided $m\geq 3$,
\begin{gather*} \H^4(2.E) \cong \big(E^*.\Alt^2(E^*).\Alt^3(E^*)/E^*\big).2. \end{gather*}
The group $X$ is elementary abelian, although the extensions written above do not split $O(Q)$-equivariantly. The group $\H^4(2.E)$ is not elementary abelian; it is isomorphic to $(\bZ/2)^n \times (\bZ/4)$ for $n = \dim(X)-1 = \binom m 2 + \binom m 3 - 1$ when $m\geq 3$.

Finally, when $m=2$, whether $d_5\colon E_5^{04} \to E_5^{50}$ vanishes or not depends on the Arf invariant of $Q$. Indeed,
\begin{gather*} \H^4\big(2^{1+4}_+\big) = X.4 \cong 2^9\times 8, \qquad \H^4\big(2^{1+4}_-\big) = X.2 \cong 2^9\times 4. \end{gather*}
(Both cases are extensions of $X = E^*.\Alt^2(E^*).\Alt^3(E^*)/E^* \cong 2^{10}$.)

\section[Dempwolff groups, Chevalley groups and their exotic Schur covers]{Dempwolff groups, Chevalley groups\\ and their exotic Schur covers} \label{sec:Dempwolff}

\subsection{Dempwolff and Alperin groups}

In \cite{MR0357639}, Dempwolff determined that there were no nontrivial extensions of $\GL_n(\bF_2)$ by its defining representation on $2^n$, unless $n \leq 5$. Conversely, nontrivial extensions exist for $n = 3,4,5$; up to isomorphism there is a unique group which can serve as the extension, which we will call
\begin{gather*}
2^3 \cdot \GL_3(\bF_2), \qquad 2^4 \cdot \GL_4(\bF_2), \qquad 2^5 \cdot \GL_5(\bF_2).
\end{gather*}
The largest of these is studied in~\cite{MR0393276}, {though not proved to exist until~\cite{MR0409630,MR0399193}}. A similar group is the nonsplit Alperin-type group
\begin{gather*}
4^3 \cdot \GL_3(\bF_2).
\end{gather*}

\begin{Lemma}\label{lemma:Dempwolff} If $n = 3,4,5$, then $\H_3(\GL_n(\bF_2)) = \bZ/12$. Furthermore,
\begin{enumerate}\itemsep=0pt
\item[$1)$] $\H_3\big(2^3 \cdot \GL_3(\bF_2)\big) \cong \bZ/2 \oplus \bZ/8 \oplus \bZ/3$; 
\item[$2)$] $\H_3\big(2^4 \cdot \GL_4(\bF_2)\big) \cong \bZ/2 \oplus \bZ/4 \oplus \bZ/3$; 
\item[$3)$] $\H_3\big(2^5 \cdot \GL_5(\bF_2)\big) \cong \bZ/8 \oplus \bZ/3$; 
\item[$4)$] $\H_3\big(4^3 \cdot \GL_3(\bF_2)\big) \cong (\bZ/2)^2 \oplus \bZ/8 \oplus \bZ/3$. 
\end{enumerate}
\end{Lemma}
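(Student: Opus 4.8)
Since for a finite group $\H^4(G;\bZ)$ is isomorphic to the Pontryagin dual of $\H_3(G)$, the plan is to compute $\H^4(G;\bZ)$ one prime at a time, first for $G=\GL_n(\bF_2)$ and then bootstrapping to the extensions. For the primes $p\in\{5,7,31\}$ dividing $|\GL_n(\bF_2)|$ the $p$-Sylow is cyclic, and counting the conjugacy classes of order $p$ (via the distribution of irreducible factors of $x^p-1$ over $\bF_2$) shows there are fewer than $(p-1)/2$ of them, so Lemma~\ref{large primes} makes the $p$-part vanish. This leaves $p=3$, which must contribute $\bZ/3$, and $p=2$, which must contribute $\bZ/4$.

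At $p=3$: when $n=3$ the Sylow is $\bZ/3$ with $\H^4=\bZ/3$, and when $n=4,5$ it is $(\bZ/3)^2$ with $\H^4\cong\Sym^2(E^*)$ by Lemma~\ref{elemab}. In the latter case the image of the restriction $\H^4(G)_{(3)}\to\H^4(S)$ guaranteed by Lemma~\ref{transfer restriction} lands in the $N_G(S)/S$-invariants of $\Sym^2(E^*)$, which a short finite computation (using the block-swap and sign elements realizable in $G$) shows to be a single line $\bZ/3$. The matching lower bound is $c_2$ of a small faithful complex representation: on an order-$3$ element with eigenvalues $\{1,\omega,\omega^2\}$ the Whitney formula gives $c_2|_{\langle g\rangle}=2t^2\neq0$, so the $3$-part is exactly $\bZ/3$. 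At $p=2$ I would bound $\H^4(G)_{(2)}$ from above by restricting to a $2$-Sylow ($D_8$ for $n=3$) and computing the stable subgroup from the known integral cohomology of that $2$-group together with the fusion data in GAP's character table library; the lower bound $\bZ/4$ comes from a class ($c_2$, or $\frac{p_1}{2}$ of a real Spin representation, well defined by Lemma~\ref{lem:p12}) restricting to a generator of $\H^4(\bZ/4)=\bZ/4$ on an order-$4$ element. For $n=3,4$ one also has the independent check that HAP computes $\H_3$ directly from the small permutation model ($\GL_4(\bF_2)=A_8$).

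For the Dempwolff extensions I would run the LHS spectral sequence for the normal subgroup $E=2^n$,
\[ E_2^{ij}=\H^i\big(\GL_n(\bF_2);\H^j(E;\bZ)\big)\Rightarrow \H^{i+j}\big(2^n\cdot\GL_n(\bF_2);\bZ\big), \]
feeding in the $\GL(E)=\GL_n(\bF_2)$-module structure $\H^2(E)=E^*$, $\H^3(E)=\Alt^2(E^*)$, $\H^4(E)=E^*.\Alt^2(E^*).\Alt^3(E^*)$ from Lemma~\ref{elemab}(1). The odd part is immediate: every $E_2^{ij}$ with $j>0$ is a $2$-group, so the $\bZ/3$ in $(E_2^{4,0})_{(3)}$ survives untouched. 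For the $2$-part the contributions to total degree $4$ are $(E_2^{4,0})_{(2)}=\bZ/4$ (possibly truncated by the $d_3$ entering from $E^{1,2}$), the twisted groups $\H^2(\GL_n(\bF_2);E^*)$ and $\H^1(\GL_n(\bF_2);\Alt^2(E^*))$, and the invariants $\H^0(\GL_n(\bF_2);\H^4(E))$ (the slot $E^{3,1}$ vanishes as $\H^1(E;\bZ)=0$); I would obtain the twisted $\H^1$ and $\H^2$ from Derek Holt's Cohomolo. The $d_3$ and $d_5$ differentials and the $E_\infty$ extension problems I would settle by comparing with $\bF_2$-coefficients and integral Bocksteins, exactly as in the extraspecial computations of Section~\ref{sec p-groups}, and the predicted lower bounds (e.g.\ the $\bZ/8$ for $n=5$) I would realize by restricting $c_2$ or $\frac{p_1}{2}$ to an order-$8$ element created inside the extension. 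The Alperin group $4^3\cdot\GL_3(\bF_2)$ has $E=(\bZ/4)^3$, which is outside the toolkit of Section~\ref{sec p-groups}, so I would instead filter by the characteristic $2$-torsion subgroup $(\bZ/2)^3\subset(\bZ/4)^3$: it is normal with quotient a group $2^3\cdot\GL_3(\bF_2)$, reducing the computation to item~(1) together with the elementary-abelian module data via a second LHS spectral sequence.

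The main obstacle is uniform across all four extensions: it is the computation of the twisted cohomology groups $\H^1,\H^2$ of $\GL_n(\bF_2)$ in the modules $E^*$ and $\Alt^2(E^*)$ (delegated to Cohomolo), together with the resolution of the higher differentials and the $2$-primary extension problems on the $E_\infty$ page, and the separate production of characteristic-class lower bounds sharp enough to meet the upper bounds at $p=2$. The Alperin case adds the complication that its normal subgroup is not elementary abelian, so the convenient $\GL(E)$-module descriptions of $\H^j(E)$ are unavailable and must be rebuilt through the $(\bZ/2)^3$-filtration.
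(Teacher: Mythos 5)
Your spectral-sequence architecture for $2^5\cdot\GL_5(\bF_2)$ matches the paper's upper bound exactly: Cohomolo (via the Dempwolff--Thompson--Smith computation $\H^2\big(\GL_5(\bF_2);\big(2^5\big)^*\big)=\bZ/2$) kills every twisted entry except $E_2^{22}$, giving $\big|\H^4\big(2^5\cdot\GL_5(\bF_2)\big)\big|\leq 24$. But there is a genuine gap at the decisive step, the lower bound in item (3). You propose to produce the $\bZ/8$ by ``restricting $c_2$ or $\frac{p_1}{2}$ to an order-$8$ element created inside the extension.'' This is precisely the method that fails here: as noted in Section~\ref{subsec:charclasses}, for this group the classical characteristic classes $c_2$ and $\frac{p_1}2$ are not strong enough. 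The paper's actual argument uses Griess's embedding $2^5\cdot\GL_5(2)\subset \rE_8$ (compact form) \cite{MR0407149}: the candidate class is the restriction of the generator $e\in\H^4(B\rE_8)$, and the dual Coxeter number relation $c_2(V_{248})=-60\,e$ converts the problem into showing $c_2(V_{248})\neq 0$, which is checked on a \emph{binary dihedral} subgroup $2D_8$ (found in the normalizer of an $8\rc$-element) via the formula of \cite[Lemma~6.1]{jft}. Two features are essential and absent from your plan: the order-$8$ class is $e$ itself, which is not visibly a Chern or Pontryagin class of any representation of the finite group (indeed $c_2(V_{248})=-60e$ has order only $2$ in $\bZ/24$); and the detection happens on a nonabelian subgroup, not a cyclic one. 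Without the Lie-theoretic input, your proof of (3) stops at $\H^4\leq\bZ/24$.

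Two further remarks. For items (1), (2), (4) the paper runs no spectral sequences at all: these groups are PerfectGroup(1344,2), PerfectGroup(322560,5) and PerfectGroup(10752,4) in Holt's GAP library, and HAP computes their $\H_3$ (and $\H_3(\GL_n(\bF_2))=\bZ/12$) directly from permutation models. Your hand-computation route for them would have to resolve exactly the kind of problem that is fatal in case (3): an $E_\infty$ page whose graded pieces have exponent at most $4$ must be assembled into a group containing $\bZ/8$, and the order-$8$ lower bound certifying that extension again has no evident cyclic-subgroup Chern-class source (for $2^3\cdot\GL_3(2)$ the natural candidate is once more a Lie embedding, into $\mathrm{G}_2$). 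Deferring this to ``Bockstein comparisons'' is not a proof. Finally, your filtration of the Alperin group assumes that its quotient by the characteristic subgroup $2^3\subset 4^3$ is the nonsplit Dempwolff group $2^3\cdot\GL_3(2)$ rather than the split extension $2^3:\GL_3(2)$; that identification is plausible but needs justification before it can ``reduce the computation to item (1)''.
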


\begin{proof}HAP can handle all of these groups except the largest $2^5 \cdot \GL_5(\bF_2)$. (In Derek Holt's library of perfect groups, available in GAP, $2^3 \cdot \GL_3(\bF_2)$ is $\mathrm{PerfectGroup}(1344,2)$, $2^4 \cdot \GL_4(\bF_2)$ is $\mathrm{PerfectGroup}(322560,5)$, and $4^3 \cdot \GL_3(\bF_2)$ is $\mathrm{PerfectGroup}(10752,4)$. One may call these groups by number, have GAP find faithful permutation representations for them, and then feed those permutation groups to HAP~-- no further human involvement is needed.)

We will obtain $\H_3\big(2^5 \cdot \GL_5(\bF_2)\big) \cong \H^4\big(2^5 \cdot \GL_5(\bF_2)\big)$ from the LHS spectral sequence. Using the description from Lemma~\ref{elemab} of the $\GL_5(\bF_2)$-module structure on $\H^{\leq 4}(2^5)$, together with Cohomolo, we find the $E_2$ page of that spectral sequence is
\begin{gather*}
\begin{array}{ccccc}
 0 \\
 0 & 0 & 0 \\
 0 & 0 & \bZ/2 \\
 0 & 0 & 0 & 0 & 0 \\
 \bZ & 0 & 0 & 0 & \bZ/12
\end{array}
\end{gather*}
The $E_2^{22}$ entry here is the Dempwolff--Thompson--Smith computation $\H^2\big(\GL_5;\big(2^5\big)^*\big) = \bZ/2$, and confirmed by Cohomolo.

To complete the proof of (3), it suffices to give an element of $\H^4\big(2^5 \cdot \GL_5(2)\big)$ whose order is divisible by $8$.
There is a famous embedding, due to~\cite{MR0407149}, of $2^5 \cdot \GL_5(2)$ into the compact Lie group $\rE_8$.
Let us write~$e$ for the generator of $\H^4(B\rE_8)$. We will prove that the restriction $e|_{2^5 \cdot \GL_5(2)}$ is such an element.

For the remainder of the proof, let $V$ denote the $248$-dimensional adjoint representation of~$\rE_8$. The dual Coxeter number of $\rE_8$ is $h^\vee = 30$.
For any simple simply connected Lie group~$G$, the dual Coxeter number measures the ratio of the fractional Pontryagin class of the adjoint representation of $G$ with the generator of $\H^4(BG)$: \begin{gather*}\tfrac{p_1}2(\text{adj}) = h^\vee \in \bZ \cong \H^4(BG).\end{gather*}
In particular, $c_2(V) = - 60 e$. Since $60$ is divisible by $4$, to show that the order of $e|_{2^5 \cdot \GL_5(2)}$ is divisible by $8$, it suffices to show that the order $c_2(V)|_{2^5 \cdot \GL_5(2)}$ is divisible by $2$.

We will do so by finding a binary dihedral group $2D_8 \subseteq 2^5 \cdot \GL_5(2)$ such that $c_2(V)|_{2D_8}$ is nonzero. To find such a group, we look inside the normalizer of an order-$8$ element.
There are three conjugacy classes of elements of order $8$ in $2^5\cdot \GL_5(2)$.
The normalizer of class $8\rc$ is $\operatorname{SmallGroup}(64,151)$ in the GAP library. It can be built directly in GAP: the ATLASRep package includes a copy of $2^5\cdot \GL_5(2)$ as a permutation group on 7440 points; GAP can compute orders of centralizers and normalizers, and so in particular can identify class $8\rc$; then GAP can build the normalizer of an element of conjugacy class $8\rc$ as a subgroup of $2^5\cdot \GL_5(2)$. There are four conjugacy classes of order-$8$ elements in $\operatorname{SmallGroup}(64,151)$, and GAP checks that all four merge in $2^5\cdot \GL_5(2)$ to conjugacy class $8\rc$.

Finally, $\operatorname{SmallGroup}(64,151)$ contains a copy of the binary dihedral group $2D_8$ of order $16$. Since $2D_8$ is a finite subgroup of $\SU(2)$, its cohomology is easy to compute: in particular, $\H^4(2D_8)$ is cyclic of order $|2D_8| = 16$ and is generated by $c_2$ of the ``defining'' two-dimensional representation. As in \cite[Section 6]{jft}, let us index the irreducible representations:
\begin{gather*}
\begin{tikzpicture}
 \path (0,0) node (SW) {$V_1$}
 (1,1) node (A) {$V_6$}
 (0,2) node (NW) {$V_0$}
 (2,1) node (M) {$V_4$}
 (3,1) node (B) {$V_5$}
 (4,2) node (NE) {$V_2$}
 (4,0) node (SE) {$V_3$};
 \draw (A) -- (SW); \draw (A) -- (NW); \draw (A) -- (M);
 \draw (B) -- (SE); \draw (B) -- (NE); \draw (B) -- (M);
\end{tikzpicture}
\end{gather*}
In particular, $V_0$ is the trivial representation, $V_6$ is the ``defining'' two-dimensional irrep, $V_5$ is the other faithful irrep, $V_4$ is the two-dimensional real irrep of $D_8$, and $V_1$, $V_2$, and $V_3$ are the nontrivial one-dimensional irreps.

Character table constraints provide a unique fusion map $2D_8 \to 2^5\cdot \GL_5(2)$ sending the elements of order $8$ to conjugacy class $8\rc$. Along this map, the $248$-dimensional irrep $V$ of $2^5\cdot \GL_5(2)$ decomposes as
\begin{gather*} V|_{2D_8} = 15 V_0 \oplus 15 V_1 \oplus 15 V_2 \oplus 15 V_3 \oplus 30 V_4 \oplus 32 V_5 \oplus 32 V_6. \end{gather*}
Lemma 6.1 of \cite{jft} gives a formula for the second Chern class of any representation of $2D_8$ in which the representations
$V_2$ and $V_3$
 appear with the same coefficient. That formula is
\begin{gather*} c_2\left(\bigoplus n_i V_i\right) = 4n_4 + 9n_5 + n_6 \pmod {16}, \qquad \text{ if } n_1 = n_2,\end{gather*}
where we have identified $\H^4(2D_8) = \bZ/16$ by identifying $1 \in \bZ/16$ with $c_2(V_6)$. Applying this formula to the $248$-dimensional representation $V$ gives
\begin{gather*} c_2(V)|_{2D_8} = 8 \pmod {16}.\end{gather*}
In particular, $c_2(V)$ is nonzero in $\H^4(2D_8)$. As explained above, this implies that $\H^4\big(2^5\cdot \GL_5(2)\big)$ contains an element of order divisible by $8$ (namely, the restriction of the generator of $\H^4(B\rE_8)$), and so must be isomorphic to $\bZ/24$.
\end{proof}

\subsection{A few exotic Chevalley groups}

For the most part, any central extension of a finite Chevalley group $G(\bF_q)$ is the group of $\bF_q$-points of a central extension of the algebraic group $G$.
 In particular if $G$ is of simply connected type then the multiplier $\H_2(G(\bF_q))$ is usually zero. The finitely many exceptions were classified by Steinberg and Griess. Many of these exotic central extensions occur as centralizers in the sporadic groups.

 \begin{Lemma} \label{lemma:Sp6}$\H_3(\Sp_6(\bF_2)) = \bZ/2 \oplus \bZ/4 \oplus \bZ/3$ and $\H_3(2 \cdot \Sp_6(\bF_2)) = \bZ/2 \oplus \bZ/8 \oplus \bZ/3$.
\end{Lemma}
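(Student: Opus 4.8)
The plan is to compute $\H^4(\Sp_6(\bF_2);\bZ)$, which (both groups being finite) is Pontryagin dual to $\H_3(\Sp_6(\bF_2))$, one prime at a time, using $|\Sp_6(\bF_2)| = 2^9\cdot 3^4\cdot 5\cdot 7$ and $\H_2(\Sp_6(\bF_2)) = \bZ/2$. The primes $5$ and $7$ are immediate: the Sylow subgroups are $\bZ/5$ and $\bZ/7$, and the character table shows a single class of order $5$ and a single class of order $7$, so Lemma~\ref{large primes} makes both parts vanish. For $p = 3$ (Sylow order $3^4$) I would restrict to the maximal subgroup $\mathrm{U}_4(2){:}2$, of order $2^7\cdot 3^4\cdot 5$, which contains a full Sylow $3$-subgroup; by Lemma~\ref{transfer restriction} the $3$-part of $\H^4(\Sp_6(\bF_2))$ is a direct summand of $\H^4(\mathrm{U}_4(2){:}2)_{(3)}$, a group accessible by HAP/Cohomolo, yielding the upper bound $\bZ/3$. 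A matching lower bound comes from the $7$-dimensional irreducible representation $V$ (recall $\Sp_6(\bF_2)\cong\Omega_7(2)$, so $V$ is real of Frobenius--Schur indicator $+1$): using the eigenvalue formula for $c_2$ from Section~\ref{subsec:charclasses}, one checks that $c_2(V)$ has nonzero $3$-part on a cyclic subgroup generated by an element of order $12$, where $\H^4\cong\bZ/12$. This fixes the $3$-part as $\bZ/3$.

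The prime $2$ (Sylow order $2^9$) is the crux. For the upper bound I would use the Siegel parabolic $S = 2^6{:}\GL_3(\bF_2)$, which contains a full Sylow $2$-subgroup and whose unipotent radical $E = 2^6$ is the \emph{abelian} group of symmetric $3\times 3$ matrices over $\bF_2$, with the Levi $\GL_3(\bF_2)$ acting by $B\mapsto gBg^{\mathsf T}$. Lemma~\ref{lemma:Dempwolff} gives $\H^4(\GL_3(\bF_2))_{(2)} = \bZ/4$, and Lemma~\ref{elemab} describes the $\GL_3(\bF_2)$-module structure of $\H^{\le 4}(E;\bZ)$; feeding these into the LHS spectral sequence and computing the twisted groups $\H^i(\GL_3(\bF_2);\H^j(E;\bZ))$ with Cohomolo bounds $\H^4(S)_{(2)}$, and hence $\H^4(\Sp_6(\bF_2))_{(2)}$. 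As in Section~\ref{sec:methods}, I would sharpen this by showing the images of $\H^4(\GL_3(\bF_2))$ and of $\H^4(\Sp_6(\bF_2))$ in $\H^4(S)$ meet trivially, using the known fusion of cyclic subgroups, so that the $2$-part is capped at $\bZ/2\oplus\bZ/4$. The lower bound is again $c_2(V)$ of the $7$-dimensional representation: restricted to suitable cyclic subgroups (through elements of order $4$ and order $8$) it should realize an element of order $4$ together with an independent element of order $2$. Matching this against the sharpened upper bound gives $\H_3(\Sp_6(\bF_2))_{(2)} = \bZ/2\oplus\bZ/4$.

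For $2\cdot\Sp_6(\bF_2)$ the odd-primary parts are unchanged (the extension is by a $2$-group), so the $3$-part stays $\bZ/3$ and $5,7$ stay trivial. For $p = 2$, Lemma~\ref{lemma:schur cover} (the case $\H_2(G)_{(2)}$ nontrivial cyclic) shows the pullback $\H^4(\Sp_6(\bF_2))\to\H^4(2\cdot\Sp_6(\bF_2))$ is injective with cokernel dividing $4$, so the $2$-part has order $16$ or $32$; I would cut this to exactly $16$ either by an LHS spectral sequence for the preimage of $S$ in the cover, or by the criterion of Lemma~\ref{lemma:schur cover} that an order-$4$ cokernel would require a class restricting nontrivially to the central $\bZ/2$. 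The decisive structural input is that $w_2(V)\neq 0$ (it is exactly the generator of $\H^2(\Sp_6(\bF_2);\bZ/2)=\bZ/2$ building the Schur cover), so $V$ lifts from $\SO_7$ to $\Spin_7$ precisely over $2\cdot\Sp_6(\bF_2)$, where $\frac{p_1}2(V)$ is defined. Since $2\cdot\frac{p_1}2(V) = p_1(V) = -c_2(V)$ still has order $4$ after the injective pullback, $\frac{p_1}2(V)$ has order $8$; together with the upper bound this forces $\H_3(2\cdot\Sp_6(\bF_2))_{(2)} = \bZ/2\oplus\bZ/8$. (Equivalently one can compute $c_2$ of the faithful $8$-dimensional spin representation, the restriction of the spin representation of $\Spin_7$.)

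The main obstacle is the $2$-primary bookkeeping. On the upper-bound side this means evaluating the twisted cohomology $\H^i(\GL_3(\bF_2);\H^j(2^6;\bZ))$ and tracking the LHS differentials $d_2,d_3,d_5$ through the non-split filtration of $\H^4(2^6;\bZ)$ from Lemma~\ref{elemab}, which is where Cohomolo is indispensable, and the analogous calculation over the cover needed to rule out the a priori order-$32$ group. On the lower-bound side it means the representation-theoretic certification---fusion of cyclic subgroups and eigenvalue multiplicities---needed to show the characteristic classes realize both generators of $\bZ/2\oplus\bZ/4$ in $\Sp_6(\bF_2)$ and an order-$8$ class in the cover; i.e.\ making the upper and lower bounds meet, and thereby distinguishing $\bZ/2\oplus\bZ/4$ from $\bZ/8$ below and $\bZ/2\oplus\bZ/8$ from the competing groups of order $16$ and $32$ above.
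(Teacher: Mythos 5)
The paper's own proof of this lemma is not a hand calculation at all: it simply runs HAP on faithful permutation models of the two groups (fast for $\Sp_6(\bF_2)$, and about six hours on the degree-$276$ model $2\cdot\Sp_6(\bF_2)\subseteq\Co_3$ for the cover). Your proposal is therefore a genuinely different, structural route in the style of Sections~\ref{sec:methods}--\ref{subsec:charclasses}, and many of its ingredients are sound: the primes $5$, $7$ via Lemma~\ref{large primes}; the use of $\mathrm{U}_4(2){:}2$ and Lemma~\ref{transfer restriction} at $p=3$; the Siegel parabolic $2^6{:}\GL_3(\bF_2)$ (abelian unipotent radical, full $2$-Sylow) as the vehicle for the LHS spectral sequence; and the observation that $w_2$ of the $7$-dimensional representation is the Schur-cover class, so that $\frac{p_1}2(V)$ is defined exactly on $2\cdot\Sp_6(\bF_2)$ and has order $8$ once $c_2(V)$ has order $4$ downstairs.

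There is, however, a genuine gap at the decisive step: the $2$-primary \emph{lower} bound for $\Sp_6(\bF_2)$ itself. You propose to realize $\bZ/2\oplus\bZ/4$ using only $c_2(V)$ of the $7$-dimensional representation, restricted to cyclic subgroups of orders $4$ and $8$, claiming this yields ``an element of order $4$ together with an independent element of order $2$''. This cannot work: the subgroup of $\H^4(\Sp_6(\bF_2);\bZ)$ generated by the single class $c_2(V)$ is cyclic, and its restrictions to various cyclic subgroups are images of one and the same element, not independent classes. At best this shows $c_2(V)$ has order $4$, which does not distinguish $\H^4_{(2)}\cong\bZ/4$ from $\bZ/2\oplus\bZ/4$; and since your sharpened upper bound is only a containment in $\bZ/2\oplus\bZ/4$, ``matching'' the two bounds leaves the order of the group ambiguous. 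To certify the non-cyclic answer from below you need a second characteristic class independent of $c_2(V)$ --- compare the paper's treatment of $S_6$ inside Held's group (Theorem~\ref{thm:he}), where it takes \emph{four} $5$-dimensional irreps to span $(\bZ/2)^2\times\bZ/4$ --- or else the exact $E_\infty$ page, which is precisely what the machine computation supplies. The gap propagates to the cover: your exclusion of $\bZ/16$ rests on the injected copy of $\bZ/2\oplus\bZ/4$, and the step ruling out a cokernel of order $4$ in Lemma~\ref{lemma:schur cover} (no class restricting nontrivially to the central $\bZ/2$) is named as one of two options but never actually carried out.
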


\begin{proof}We computed these using HAP. The computation of $\H_3(\Sp_6(\bF_2))$ is fast, but computing $\H_3(2 \cdot \Sp_6(\bF_2))$ took many hours. Two of the faithful permutation representations of $2 \cdot \Sp_6(\bF_2)$ have degrees $240$ and $276$ (the latter coming from the embedding $2 \cdot \Sp_6(\bF_2) \subseteq \Co_3$). Our laptop computer ran out of memory running HAP on the degree 240 model, and gave the above output after six hours for the degree 276 model.
\end{proof}

\begin{Lemma}\label{lemma:g2q}We have
\begin{gather*}
\H_3(\mathrm{G}_2(2)) = \bZ/2 \oplus \bZ/8 \oplus \bZ/3, \!\qquad \H_3(\mathrm{G}_2(3)) = \bZ/8 \oplus \bZ/3, \!\qquad \H_3(\mathrm{G}_2(5)) = \bZ/8 \oplus \bZ/3.\!
\end{gather*}
\end{Lemma}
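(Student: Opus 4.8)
The plan is to use that $\mathrm{G}_2(q)$ is finite, so that $\H_3 \cong \H^4(-;\bZ)$, and to compute $\H^4(\mathrm{G}_2(q);\bZ)$ one prime at a time via the program of Section~\ref{sec:methods}. The relevant orders are $|\mathrm{G}_2(2)| = 2^6\cdot 3^3\cdot 7$, $|\mathrm{G}_2(3)| = 2^6\cdot 3^6\cdot 7\cdot 13$, and $|\mathrm{G}_2(5)| = 2^6\cdot 3^3\cdot 5^6\cdot 7\cdot 31$. Since $\mathrm{G}_2(2) \cong \mathrm{U}_3(3){:}2$ has order $12096$, it is small enough that HAP computes $\H_3$ directly from a low-degree permutation model, returning $\bZ/2\oplus\bZ/8\oplus\bZ/3$; I would treat this case by machine. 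The groups $\mathrm{G}_2(3)$ and $\mathrm{G}_2(5)$ are far too large for HAP and need the analytic methods. For orientation, the generic value is $\bZ/(q^2-1)$: for $q=5$ this is $\bZ/24 = \bZ/8\oplus\bZ/3$, matching the claim, whereas for $q=3$ it is only $\bZ/8$, so the extra $\bZ/3$ must be a defining-characteristic (exceptional) contribution.

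First I would dispose of the large primes $p\in\{7,13,31\}$: each has cyclic Sylow of order $p$, and one reads off from the character table that there are strictly fewer than $(p-1)/2$ classes of order $p$, so Lemma~\ref{large primes} kills the $p$-part. Next, for the defining characteristic I would use the Heisenberg (long-root) maximal parabolic $P = p^{1+4}{:}\GL_2(p)$: its unipotent radical $E = p^{1+4}$ is exactly the extraspecial group with $m=2$ of Lemma~\ref{lem:extraspecial-odd}, its Levi $\GL_2(p)$ is small with known cohomology, and $P$ contains a $p$-Sylow. Running the LHS spectral sequence, with Lemma~\ref{lem:extraspecial-odd} supplying $\H^{\le4}(E)$ as a module for $\GL_2(p)$ (acting through its map into $\GSp_4 = \Aut(E)$) and Cohomolo supplying the twisted groups $\H^i(\GL_2(p);-)$, bounds the $p$-part above. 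I expect this bound to be $0$ for $q=5$ (so the $5$-part vanishes) and $\bZ/3$ for $q=3$.

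The remaining contributions are cross-characteristic: $\bZ/8$ at $p=2$ for both $q=3,5$, and $\bZ/3$ at $p=3$ for $q=5$. For the lower bounds I would restrict $c_2$ of a faithful representation $V\colon G\to\SU(N)$, exactly as in Lemma~\ref{lemma:Dempwolff}. For the $2$-part I would restrict to a generalized quaternion subgroup $Q_8\subset\SL_2(q)\subset\mathrm{G}_2(q)$ with $\H^4(Q_8)\cong\bZ/8$ (the analogue of the $2D_8$ used there), and for the $3$-part to a suitable cyclic subgroup, using $c_2(V)|_{\langle g\rangle} = \sum_{k<k'}m_k m_{k'}t^2$. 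The matching upper bounds come from the $2$- and $3$-Sylow subgroups, which have orders at most $64$ and $27$ and so are directly computable (HAP), together with Lemma~\ref{transfer restriction} and a fusion/stability analysis of the stable classes.

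I expect the decisive difficulty to be the two places where the generic pattern $\bZ/(q^2-1)$ fails: the exceptional $2$-part of $\mathrm{G}_2(2)$ and the defining-characteristic $\bZ/3$ of $\mathrm{G}_2(3)$. The former I would simply leave to the brute-force HAP run. The latter rests on getting the $\GL_2(3)$-invariants and twisted cohomology in the parabolic spectral sequence exactly right with Cohomolo, so as to confirm that the upper bound is precisely $\bZ/3$ and not larger, and then certifying that surviving class by a characteristic-class lower bound restricted to an element of order $9$. Everything else reduces to Lemma~\ref{large primes}, Lemma~\ref{lem:extraspecial-odd}, and the quaternion-subgroup detection already used for the Dempwolff groups.
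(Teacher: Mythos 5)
Your plan is sound for $\mathrm{G}_2(2)$ (HAP, as in the paper), for the large primes via Lemma~\ref{large primes}, and for the $5$-part of $\mathrm{G}_2(5)$ via the parabolic $5^{1+4}{:}\GL_2(5)$ (the paper uses the same subgroup, shortcutting the spectral sequence with Lemma~\ref{lemma:central character} applied to the central $\bZ/4\subseteq\GL_2(5)$ instead of running Cohomolo). But the step you yourself flag as decisive---the exceptional $\bZ/3$ in $\H_3(\mathrm{G}_2(3))$---rests on a false structural premise. In defining characteristic $3$ the unipotent radical of a maximal parabolic of $\mathrm{G}_2(3)$ is \emph{not} extraspecial $3^{1+4}$: the Chevalley commutator $[x_{a+b}(t),x_{2a+b}(u)]=x_{3a+2b}(\pm 3tu)$, which provides the extraspecial pairing in characteristic prime to $3$, vanishes mod $3$, and the radical degenerates to $3^{1+2}_+\times 3^2$ with center of order $3^3$; the ATLAS accordingly lists both parabolics of $\mathrm{G}_2(3)$ as $\big(3^{1+2}_+\times 3^2\big){:}2S_4$. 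Consequently Lemma~\ref{lem:extraspecial-odd} does not apply (not to the radical, which is not extraspecial, and not to its Heisenberg factor, which has $m=1$), and the spectral sequence you propose cannot be run as described; the extraspecial parabolic $q^{1+4}{:}\GL_2(q)$ exists for $\mathrm{G}_2(5)$ precisely because $5\neq 3$. Relatedly, your justification for taking this route---that $\mathrm{G}_2(3)$ is ``far too large for HAP''---is incorrect: $\mathrm{G}_2(3)$ acts faithfully on the $351$ cosets of $\rU_3(3){:}2$, comparable to permutation models HAP handles elsewhere in the paper, and the paper's actual proof disposes of both $\mathrm{G}_2(2)$ and $\mathrm{G}_2(3)$ by direct HAP computation.

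For the cross-characteristic parts of $\mathrm{G}_2(5)$ your plan is also missing its hardest ingredient. At $p=2$, ``a fusion/stability analysis'' of the order-$64$ Sylow subgroup is precisely the difficult step, and you give no mechanism for carrying it out; moreover your proposed lower bound ($c_2$ of some representation restricted to $Q_8\subset\SL_2(5)$ having order $8$) is asserted, not certified, and even granted it would not by itself exclude an extra $\bZ/2$ summand. The paper closes this gap differently: the $2$-Sylow lies in $2^3\cdot\GL_3(2)$, whose $\H^4$ is $\bZ/2\oplus\bZ/8$ at $p=2$ by Lemma~\ref{lemma:Dempwolff}(1), so by Lemma~\ref{transfer restriction} $\H^4(\mathrm{G}_2(5))_{(2)}$ is a direct summand of $\bZ/2\oplus\bZ/8$; Milgram's computation of $\H^*(\mathrm{G}_2(q);\bF_2)$ for $q\equiv 1\pmod 4$, with $\Sq^1=0$ in the relevant degree, forces that summand to be cyclic of order at least $4$, and since $\bZ/2\oplus\bZ/8$ has no $\bZ/4$ direct summand the answer is $\bZ/8$. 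Some external input of this kind (or a genuine stable-elements computation) is needed; without it your upper and lower bounds do not meet. Similarly at $p=3$ the paper replaces your raw-Sylow-plus-fusion plan by the Sylow-containing subgroup $\rU_3(3){:}2$, whose $\H_3=\bZ/2\oplus\bZ/8\oplus\bZ/3$ HAP computes, so that $\H^4(\mathrm{G}_2(5))_{(3)}$ is a summand of $\bZ/3$, and then certifies nonvanishing by restricting $c_2$ of the $124$-dimensional irrep to class $3\rb$.
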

Jesper Grodal has shown that $\H^4(\mathrm{G}_2(\bF_q))$ is cyclic of order $q^2 -1$ if $q = p^r$ with either $p$ or $r$ sufficiently large~\cite{Grodal}. The computations in the lemma show that this holds also for $q = 5$, but not $q = 3$ or $q = 2$.

\begin{proof}\looseness=-1 We computed $\mathrm{G}_2(2)$ and $\mathrm{G}_2(3)$ with HAP.
The order of $\rG_2(5)$ is $2^6.3^3.5^6.7.31$. The proof of Lemma~\ref{large primes} applies to this group~-- for $p=7$ and $31$, there are strictly fewer than
 $(p-1)/2$ conjugacy classes of order $p$~-- and so we must compute $\H^4(\rG_2(5))_{(p)}$ for $p=2$, $3$, and~$5$.

The $2$-Sylow in $\rG_2(5)$ is contained in the nonsplit extension $2^3\cdot \GL_3(2)$ whose cohomology, per Lemma~\ref{lemma:Dempwolff}(1), is $\H^4\big(2^3\cdot \GL_3(2)\big)_{(2)} = 2 \times 8$. According to \cite{MR1603199}, for $q = 1 \pmod {4}$,
\begin{gather*}
 \H^1(\rG_2(q);\bF_2) \cong \H^2(\rG_2(q);\bF_2) \cong 0 ,\qquad \H^3(\rG_2(q);\bF_2) \cong \H^4(\rG_2(q);\bF_2) \cong \bF_2, \\
 \Sq^1 = 0\colon \ \H^3(\rG_2(q);\bF_2) \to \H^4(\rG_2(q);\bF_2).
\end{gather*}
It follows that $\H^4(\rG_2(q))_{(2)}$ is cyclic of order at least $4$.
Setting $q = 5$ and recalling Lemma~\ref{transfer restriction}, we therefore find that $\H^4(\rG_2(5))_{(2)}$ is a cyclic direct summand of $\H^4\big(2^3\cdot \GL_3(2)\big)_{(2)} = 2 \times 8$ of order at least $4$, and so $\H^4(\rG_2(5))_{(2)} = 8$.

The $3$-Sylow in $\rG_2(5)$ is contained in a maximal subgroup of shape $\mathrm{U}_3(3):2$. HAP computes $\H_3(\mathrm{U}_3(3):2) = 2 \times 8 \times 3$. Conjugacy class $3\rb \in \rG_2(5)$ acts on the $124$-dimensional irrep with trace $1$, and so $c_2(\text{124-dim rep})|_{\langle 3\rb\rangle} \neq 0$. It follows that $\H^4(\rG_2(5))_{(3)} = 3$.

The $5$-Sylow in $\rG_2(5)$ is contained in a maximal subgroup of shape $5^{1+4}:\mathrm{GL}_2(5)$. The central $4 \subseteq \mathrm{GL}_2(5)$ acts on all of $5^4$ with the same faithful central character. It therefore acts with nontrivial central characters on $\H^j(5^{1+4})$ for $j \in \{1,2,3,4\}$, and so $\H^i\big(\mathrm{GL}_2(5), \H^j\big(5^{1+4}\big)\big) = 0$ for these $j$ by Lemma~\ref{lemma:central character}. Since $\H^4(\mathrm{GL}_2(5)) = 4 \times 8 \times 3$ has no five part, we find that $\H^4\big(5^{1+4}:\mathrm{GL}_2(5)\big)_{(5)}$, and hence also $\H^4(\rG_2(5))_{(5)}$, vanishes.
\end{proof}

Recall from Section~\ref{subsec:notation} that $\Omega_n(q)$ denotes the simple subquotient of the orthogonal group $\mathrm{O}_n(\bF_q)$, and that when $n\geq 5$ and $q$ is odd, $\Omega_n(q)$ is of index $2$ in $\mathrm{SO}_n(\bF_q)$. We will use the names $\Spin_n(q)$ and $2.\Omega_n(q)$ interchangeably.

\begin{Lemma}\label{lemma:O73} $\H_3(\Omega_7(3)) \cong \bZ/4$ and $\H_3(2.\Omega_7(3)) \cong \bZ/8$.
\end{Lemma}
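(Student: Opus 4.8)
The plan is to compute the Pontryagin dual $\H^4(\Omega_7(3);\bZ)\cong\H_3(\Omega_7(3))$ (and similarly for the cover) one prime at a time. Since $|\Omega_7(3)| = 2^9\cdot 3^9\cdot 5\cdot 7\cdot 13$, the primes $5,7,13$ each have cyclic Sylow subgroup of order $p$, and inspection of the GAP character table confirms the counting hypothesis of Lemma~\ref{large primes} (for $p=5$ one checks that the normalizer of a Sylow $5$-subgroup realizes an order-$4$ automorphism, so there is a single class of order $5$); hence the $5$-, $7$-, and $13$-parts all vanish. This leaves the defining prime $3$ and the prime $2$, with the latter carrying all the content.

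For $p=2$ I would exploit the subgroup $\Sp_6(\bF_2)$, which embeds in $\Omega_7(3)$ (classically, from the action of $W(E_7)\cong 2\times\Sp_6(\bF_2)$ on the $E_7$ root lattice reduced modulo $3$) as a maximal subgroup. Because $|\Sp_6(\bF_2)|_2 = 2^9 = |\Omega_7(3)|_2$, it contains a full Sylow $2$-subgroup, so Lemma~\ref{transfer restriction} presents $\H^4(\Omega_7(3);\bZ)_{(2)}$ as a direct summand of $\H^4(\Sp_6(\bF_2);\bZ)_{(2)}$, which is $\bZ/2\oplus\bZ/4$ by Lemma~\ref{lemma:Sp6}. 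Thus the answer is one of $0,\bZ/2,\bZ/4,\bZ/2\oplus\bZ/4$. A lower bound of order $4$ comes from restricting $c_2(V)$ of a suitable faithful complex representation $V$ to a cyclic subgroup $\langle g\rangle$ whose order is divisible by $4$ and reading off the coefficient of $t^2$ via the Whitney formula. The remaining task is to exclude the rank-$2$ group $\bZ/2\oplus\bZ/4$, i.e.\ to show the image is cyclic; for this I would use the improved-upper-bound technique, restricting both the generators of $\H^4(\Sp_6(\bF_2);\bZ)_{(2)}$ and the characteristic classes pulled back from $\Omega_7(3)$ to the cyclic subgroups of $\Sp_6(\bF_2)$ and invoking the fusion map $\Sp_6(\bF_2)\to\Omega_7(3)$ from GAP's character table library, to check that no class restricting from $\Omega_7(3)$ meets the spurious $\bZ/2$ summand. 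This forces $\H^4(\Omega_7(3);\bZ)_{(2)}=\bZ/4$.

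For the defining prime $p=3$ I would pass to a maximal parabolic $S=E{.}J$ (so $E$ is the unipotent radical, a $3$-group, and $J$ a Levi with semisimple part of type $\mathrm{GL}_3(\bF_3)$) containing a Sylow $3$-subgroup, and run the LHS spectral sequence as in Section~\ref{sec:methods}, bounding $\H^4(\Omega_7(3);\bZ)_{(3)}$ by $\H^4(J;\bZ)_{(3)}$ together with the twisted terms $\H^0(J;\H^4(E))$, $\H^1(J;\H^3(E))$, $\H^2(J;\H^2(E))$, using the $\GL(E)$-module descriptions of Lemma~\ref{elemab}. Here the central-character shortcut of Lemma~\ref{lemma:central character} is \emph{not} available, since the central torus of the Levi is only $\bZ/2$ (as $q-1=2$) and acts trivially on even pieces such as $\Alt^2(E^*)$; so these twisted groups must be computed with Cohomolo, and the untwisted $\H^4(J)_{(3)}$ by HAP or induction. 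I expect all of them to have trivial $3$-part, giving $\H^4(\Omega_7(3);\bZ)_{(3)}=0$ and hence $\H_3(\Omega_7(3))\cong\bZ/4$.

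Finally, for $2.\Omega_7(3)=\Spin_7(3)$, the preimage of $\Sp_6(\bF_2)$ is a central $\bZ/2$-extension that one checks (via the lifting of element orders in GAP) to be the nonsplit cover $2\cdot\Sp_6(\bF_2)$, again containing a Sylow $2$-subgroup; so Lemma~\ref{transfer restriction} and Lemma~\ref{lemma:Sp6} present $\H^4(2.\Omega_7(3);\bZ)_{(2)}$ as a summand of $\bZ/2\oplus\bZ/8$. Combining this with Lemma~\ref{lemma:schur cover} (here $\H_2(\Omega_7(3))_{(2)}=\bZ/2$ is cyclic, so the pullback from $\H^4(\Omega_7(3);\bZ)_{(2)}=\bZ/4$ is injective with cokernel dividing $4$) pins the order to $8$ or $16$, i.e.\ to $\bZ/8$ or $\bZ/2\oplus\bZ/8$. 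Re-running the cyclic-restriction argument on the $2\cdot\Sp_6(\bF_2)$ side, tracking $\frac{p_1}2$ of the $8$-dimensional spin representation (which is Spin since $\H^2(2.\Omega_7(3);\bZ/2)=0$ for a Schur cover), then rules out $\bZ/2\oplus\bZ/8$ and yields $\H_3(2.\Omega_7(3))\cong\bZ/8$. The main obstacle throughout is this cyclicity step — showing that the restriction to $\Sp_6(\bF_2)$, respectively $2\cdot\Sp_6(\bF_2)$, misses the extra $\bZ/2$ — since the summand and characteristic-class bounds by themselves leave a rank ambiguity.
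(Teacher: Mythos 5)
Your treatment of the primes $p\geq 5$ and your skeleton at $p=2$ (restriction to $\Sp_6(\bF_2)$ and $2\cdot\Sp_6(\bF_2)$ via Lemma~\ref{transfer restriction}, a characteristic-class lower bound, and a fusion argument to kill the extra $\bZ/2$) match the paper. There are, however, two genuine gaps.

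The serious one is the prime $3$, which is where the real content of the lemma lies. You propose a single parabolic $S=3^{3+3}{:}\SL_3(\bF_3)$ and an LHS bound assembled ``using the $\GL(E)$-module descriptions of Lemma~\ref{elemab}''. That lemma applies only to elementary abelian groups, whereas the unipotent radical here is the nonabelian special group $3^{3+3}$: of the three maximal parabolics of $\Omega_7(3)$, only $3^5{:}\SO_5(\bF_3)$ has elementary abelian radical, the other two radicals being $3^{1+6}$ and $3^{3+3}$. So the module inputs to your $E_2$ page are not supplied by anything cited. Worse, your conclusion rests on the bare expectation that $\H^4(J)_{(3)}$ and all three twisted terms vanish, and exactly this expectation fails for the sibling parabolic that the paper does analyze: for $3^5{:}\SO_5(\bF_3)$ one has $E_2^{04}=\H^0\big(\SO_5(\bF_3);\H^4\big(3^5\big)\big)=\bZ/3$ (the invariant quadratic form) and $\H^4(\SO_5(\bF_3))_{(3)}=\bZ/3$, so the one-subgroup upper bound is $9$, not $0$. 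That is precisely why the paper plays two maximal subgroups against each other: the parabolic $3^5{:}\SO_5(\bF_3)$, plus a fusion argument with $c_2$ of the $6$-dimensional irrep of $\SO_5(\bF_3)\cong\mathrm{Weyl}(\rE_6)$, shows that the restriction $\H^4(\Omega_7(3))_{(3)}\to\H^4(\langle 3\rb\rangle)$ is injective; then the second subgroup $3^{1+6}_+{:}(2A_4\times A_4).2$, where the invariants genuinely do vanish, shows that this same restriction is zero. Nothing in your plan produces either half of this argument, and ``I expect all of them to have trivial $3$-part'' cannot carry the weight.

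The second gap is in your cyclicity step at $p=2$. The class you propose to track, $\frac{p_1}2$ of ``the $8$-dimensional spin representation'', provably cannot do the job. (Taken literally it does not even exist: the finite group $2.\Omega_7(3)$ has no $8$-dimensional complex representation~--- the $8$-dimensional spin module is an $\bF_3$-module; you presumably mean the $8$-dimensional real irrep of $2\Sp_6(\bF_2)$, i.e.\ $2\Sp_6(\bF_2)\subset\Spin(7)\to\SO(8)$.) The classes one must separate are $2\rb,2\rd\in\Sp_6(\bF_2)$, which both fuse to $2\rc\in\Omega_7(3)$. These are exactly the involutions lifting with order $2$ to $2\Sp_6(\bF_2)$, i.e.\ those whose $(-1)$-eigenspace on the $7$-dimensional representation is $4$-dimensional, and any such lift acts on the $8$-dimensional spin representation with spectrum $1^4(-1)^4$. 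Hence $\frac{p_1}2$ of that representation restricts to the \emph{same} nonzero value on both classes and distinguishes nothing. The paper instead uses $\frac{p_1}2$ of the $15$-dimensional representation, where the spectra are $1^{11}(-1)^4$ versus $1^7(-1)^8$ and the restrictions are $1$ and~$0$. Relatedly, your order of deduction (settle $\Omega_7(3)$ at $p=2$ first by fusion inside $\Sp_6(\bF_2)$ itself, then pass to the cover) requires a distinguishing class defined on $\Sp_6(\bF_2)$, where $\frac{p_1}2(15)$ does not exist; you have not produced one, and the paper's route~--- pin down $\H^4(2.\Omega_7(3))_{(2)}=\bZ/8$ on the cover first, then deduce $\H^4(\Omega_7(3))_{(2)}=\bZ/4$ from injectivity of the pullback plus the exponent bound coming from $\Sp_6(\bF_2)$~--- avoids ever needing one.
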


\begin{proof} The criterion in Lemma~\ref{large primes} applies for the primes $p\geq 5$.
 The $2$-Sylow is contained in $\Sp_6(\bF_2)$, giving an upper bound of
 $\H^4(\Sp_6(\bF_2))_{(2)} = 2 \times 4$ for $\H_3(\Omega_7(3))$, and an upper bound of $\H^4(2\Sp_6(\bF_2))_{(2)} = 2 \times 8$ for $\H_3(\Spin_7(3))$, both from Lemma~\ref{lemma:Sp6}.

 Let $V$ denote the $105$-dimensional representation of $\Omega_7(3)$. It is a real representation. (Indeed, all representations of $\Omega_7(3)$ are real except for the two dual complex representations of degree~$1560$.) Conjugacy class $4\ra \in \Omega_7(3)$ acts on $V$ with trace~$-5$. Its square, conjugacy class~$2\rb$, acts with trace~$5$. It follows that $4\ra$ acts with spectrum $(+1)^{25} (-1)^{30} (i)^{25} (-i)^{25}$, and so the total Chern class of $V|_{\langle4\ra\rangle}$ is
 \begin{gather*} c(V)|_{\langle 4\ra\rangle} = (1+2t)^{30} (1+t)^{25}(1-t)^{25} = 1 - t^2 + \cdots.\end{gather*}
 In particular, $c_2(V)|_{\langle 4\ra\rangle}$ has order $4$, giving a lower bound of $4$ to the order of $c_2(V) \in \H^4(\Omega_7(3))$ and a lower bound of $8$ to the order of $\frac{p_1}2(V) \in \H^4(2.\Omega_7(3))$.

 To show that $\H_3(\Spin_7(3))_{(2)}$ is exactly $\bZ/8$ (which implies in turn that $\H_3(\Omega_7(3))_{(2)}$ is exactly $\bZ/4$) it suffices to give a class in $\H^4(2.\Sp_6(\bF_2))_{(2)}$ not in the image of restriction $\H_3(\Spin_7(3))_{(2)} \to \H^4(2.\Sp_6(\bF_2))_{(2)}$. We claim that the fractional Pontryagin class of the $15$-dimensional irrep of $\Sp_6(\bF_2)$ is such a class. (This representation is not Spin over $\Sp_6(\bF_2)$, but is Spin over $2\Sp_6(\bF_2)$. We will henceforth call its fractional Pontryagin class $\frac{p_1}2(15) \in \H^4(2\Sp_6(\bF_2))$.) To prove this, we consider the conjugacy classes $2\rb$ and $2\rd$ in $\Sp_6(\bF_2)$. They act on the $15$-dimensional irrep with traces $7$ and $-1$ respectively; equivalently, $2\rb$ acts with spectrum $1^{11} (-1)^4$ whereas $2\rd$ acts with spectrum $1^7 (-1)^8$. These two classes lift with order $2$ to $2\Sp_6(2)$. The fractional Pontryagin classes are therefore $\frac{p_1}2(15)|_{2\rb} = 1 \in \H^4(\langle 2\rb\rangle) \cong \bZ/2$ and $\frac{p_1}2(15)|_{2\rd} = 0$. But $2\rb$ and $2\rd$ both fuse to class $2\rc \in \Omega_7(3)$. It follows that $\frac{p_1}2(15) \in \H^4(2.\Sp_6(\bF_2))$ is not the restriction of any class in $\H^4(\Spin_7(3))$.

 It remains to handle the prime $p=3$. In general,
 the $p$-Sylow in a characteristic-$p$ group of Lie type is the nilpotent subgroup, and so is contained in any parabolic. We will use two maximal parabolics of
 the algebraic group
 $\mathrm{SO}_7$,
 corresponding to the Dynkin subdiagrams $B_2 \subseteq B_3$ and $A_1 \times A_1 \subseteq B_3$. These lead to two maximal subgroups of $\Omega_7(3)$ that contain the $3$-Sylow:
 \begin{gather*} 3^5:\mathrm{SO}_5(\bF_3), \qquad 3^{1+6}_+\colon (2A_4 \times A_4).2. \end{gather*}
 There is one more maximal subgroup of $\Omega_7(3)$ containing the $3$-Sylow, corresponding to the Dynkin diagram inclusion $A_2 \subseteq B_3$, which we will not use in the present proof, but will use in the proof of Corollary~\ref{cor:3o73}.

 The spectral sequence for $3^5:\mathrm{SO}_5(\bF_3)$ has $E_2$ page:
 \begin{gather*} \begin{array}{ccccc}
 3 \\
 0 & 0 & 3 \\
 0 & 3 & 0 \\
 0 & 0 & 0 & 0 & 0 \\
 \bZ & 0 & 0 & 2 & 2^2 \times 4 \times 3
 \end{array} \end{gather*}
 The bottom line was computed in HAP, and the middle entries in Cohomolo. The entry $E_2^{04} = 3$ corresponds to the symmetric pairing on~$3^5$.

We claim that the maps $\H^4(\Omega_7(3))_{(3)} \to \H^4\big(3^5:\mathrm{SO}_5(\bF_3)\big)_{(3)}$ and $\H^4(\mathrm{SO}_5(\bF_3))_{(3)} \to \H^4\big(3^5:\mathrm{SO}_5(\bF_3)\big)$ have trivial intersection. To see this, first note that $\mathrm{SO}_5(\bF_3) \cong \mathrm{Weyl}(\rE_6)$ has a~$6$-dimensional irrep, on which the conjugacy class $3\rc$ acts with trace~$3$. It follows that \begin{gather*}c_2(\text{6-dim irrep})|_{\langle 3\rc\rangle} \neq 0.\end{gather*}
But $3\rc \in \mathrm{SO}_5(\bF_3)$ has among its preimages in $3^5:\SO_5(\bF_3)$ one which fuses to class $3\rb \in \Omega_7(3)$, and $3\rb$ also meets $3^5 \subseteq 3^5:\mathrm{SO}_5(\bF_3)$. It follows that $c_2(\text{6-dim irrep}) \in \H^4(\mathrm{SO}_5(\bF_3)),$ when pulled back along $3^5:\mathrm{SO}_5(\bF_3) \to \mathrm{SO}_5(\bF_3)$, distinguishes conjugate-in-$\Omega_7(3)$ elements, and so is not the restriction of a class in $\H^4(\Omega_7(3))$.

 Since $\H^4(\Omega_7(3))_{(3)} \subseteq \H^4\big(3^5:\SO_5(\bF_3)\big)$ and the latter is an extension of a quotient of $\H^4(\SO_5(\bF_3))$ and a subspace of $\H^0\big(\mathrm{SO}_5(\bF_3); \H^4\big(3^5\big)\big)$, and since $\H^4(\Omega_7(3))_{(3)}$ does not meet $\H^4(\SO_5(\bF_3))$, the restriction map $\H^4(\Omega_7(3))_{(3)} \to \H^0\big(\mathrm{SO}_5(\bF_3); \H^4\big(3^5\big)\big)$ must be an injection. The order-$3$ conjugacy classes in $\Omega_7(3)$ that meet $3^5 \subset 3^5: \SO_5(\bF_3)$ are classes $3\ra$, $3\rb$, and $3\rc$. Specifically, the intersection of conjugacy class $3\ra$ and $3^5 \cong \bF_3^5$ consists of the nonzero vectors of norm~$0$, and the intersections of $3\rb$ and $3\rc$ with $3^5$ are the vectors of norm~$\pm 1$. (These are the three nontrivial $\SO_5(\bF_3)$-orbits in $\bF_3^5$.)
 The nonzero classes in $\H^0\big(\mathrm{SO}_5(\bF_3); \H^4\big(3^5\big)\big) \cong \bZ/3$ corresponds to the symmetric pairing and its negation, and so restrict trivially to $\langle 3\ra\rangle$ but nontrivially to $\langle 3\rb\rangle$ and $\langle 3\rc\rangle$. In particular the restriction map $\H^4(\Omega_7(3))_{(3)} \to \H^4(\langle 3\rb\rangle)$ is an injection.

The other maximal subgroup we consider is the one of shape $3^{1+6}_+:(2A_4 \times A_4).2$. It is the normalizer of conjugacy class $3\ra$. GAP can work with $\Omega_7(3)$ by using its faithful degree-$351$ permutation representation, and find this subgroup. In particular, GAP finds that the action of $(2A_4 \times A_4).2$ on $3^6$ is generated by the following three matrices
 \begin{gather*}
 \begin{pmatrix}
 . & 1 & . & . & 2 & 2\\
 2 & . & . & 2 & . & 1\\
 1 & 1 & . & 1 & 2 & 2\\
 . & 2 & 2 & . & . & .\\
 . & . & 1 & 1 & . & 2\\
 2 & . & 2 & 1 & 2 & 2
\end{pmatrix},
 \qquad
\begin{pmatrix}
 . & . & 2 & 2 & . & 1\\
 . & 1 & 2 & . & 1 & 1\\
 . & 2 & . & . & 2 & .\\
 1 & . & 1 & 2 & . & 1\\
 . & 2 & 2 & . & . & .\\
 . & 2 & . & . & 1 & .
\end{pmatrix},
 \qquad
\begin{pmatrix}
 2 & . & 2 & 1 & . & 2\\
 . & . & 2 & . & 2 & 2\\
 . & . & . & . & 1 & .\\
 2 & . & 1 & . & . & .\\
 . & 1 & 2 & . & 1 & 1\\
 . & 2 & . & . & 1 & .
\end{pmatrix}.
 \end{gather*}

 Recall from Lemma~\ref{lem:extraspecial-odd} that $\H^4\big(3^{1+6}\big) \cong \Sym^2\big(3^6\big) \oplus \big(\Alt^3\big(3^6\big)/3^6\big)$. The group $(2A_4 \times A_4).2$ contains the matrix $-1$, and which acts by $-1$ on $\big(\Alt^3\big(3^6\big)/3^6\big)$. Furthermore, the representation $3^6$ is not symmetrically self-dual. (In fact it is not self-dual: its antisymmetric pairing changes by a sign under the odd elements of $(2A_4 \times A_4).2$.) It follows that $\H^0\big((2A_4 \times A_4).2; \H^4\big(3^{1+6}\big)\big) = 0$.

 But this means in particular that the restriction map $\H^4(\Omega_7(3)) \to \H^4\big(3^{1+6}\big)$ vanishes. Conjugacy class $3\rb \in \Omega_7(3)$ meets $3^{1+6}$. It follows that the restriction $\H^4(\Omega_7(3)) \to \H^4(\langle 3\rb\rangle)$ is the zero map. But we showed above that $\H^4(\Omega_7(3))_{(3)} \to \H^4(\langle 3\rb\rangle)$ is an injection. So $\H^4(\Omega_7(3))_{(3)} = 0$.
\end{proof}

The Chevalley group $\Omega_7(3)$ has an exceptional cover: its multiplier is $6$, whereas the multiplier of $\Omega_7(q)$ is generically $2 = \pi_1(\SO(7,\bC))$.

\begin{Corollary} \label{cor:3o73}
 $\H_3(3.\Omega_7(3)) \cong \bZ/12$ and $\H_3(6.\Omega_7(3)) \cong \bZ/24$.
\end{Corollary}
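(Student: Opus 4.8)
The plan is to compute the $2$-primary and $3$-primary parts of $\H^4(3.\Omega_7(3);\bZ) \cong \H_3(3.\Omega_7(3))$ separately, feeding in the values of Lemma~\ref{lemma:O73}, and then to deduce the $6.\Omega_7(3)$ case by one further application of the Schur-cover machinery. The $2$-part is immediate: $3.\Omega_7(3) \to \Omega_7(3)$ is a central extension whose kernel $\bZ/3$ has order prime to $2$, so in its Lyndon--Hochschild--Serre spectral sequence only the $j=0$ row survives $2$-locally, giving $\H^4(3.\Omega_7(3))_{(2)} \cong \H^4(\Omega_7(3))_{(2)} = \bZ/4$. For the upper bound on the $3$-part I would apply the odd-prime case of Lemma~\ref{lemma:schur cover} with $p=3$: since $\Omega_7(3)$ is perfect and $\H_2(\Omega_7(3))_{(3)} = 3$, the pullback $\H^4(\Omega_7(3)) \to \H^4(3.\Omega_7(3))$ is injective with cokernel of order dividing $3$, and because $\H^4(\Omega_7(3))_{(3)} = 0$ (again Lemma~\ref{lemma:O73}) we get $\H^4(3.\Omega_7(3))_{(3)} \in \{0, \bZ/3\}$.

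The crux is therefore the lower bound: that this $3$-part is in fact nontrivial, equivalently that the cokernel above equals $\bZ/3$. Since $\H^4(\Omega_7(3))_{(3)} = 0$, it suffices to exhibit a \emph{single} nonzero $3$-torsion class in $\H^4(3.\Omega_7(3))$. To localize the search I would use the third maximal subgroup of $\Omega_7(3)$ containing the $3$-Sylow, namely the Siegel parabolic $S = 3^{3+3}{:}\mathrm{L}_3(3)$ attached to the inclusion $A_2 \subseteq B_3$, whose Levi is $\mathrm{L}_3(3) = \SL_3(\bF_3)$. After checking that $\H^1(S;\bZ/3)=0$ and that the extension $3.\Omega_7(3)$ remains nonsplit over $S$, Lemma~\ref{lemma:cokers} identifies $\coker\big(\H^4(\Omega_7(3)) \to \H^4(3.\Omega_7(3))\big)$ with $\coker\big(\H^4(S) \to \H^4(3.S)\big)$, so everything can be detected on an explicit element inside $S$.

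Here the classical characteristic classes are too coarse. The $27$-dimensional faithful irrep $W$ of $3.\Omega_7(3)$ preserves the cubic form and so factors through the simply connected $\rE_6$; since the Dynkin index of the $27$ is $6$ one has $c_2(W) = \pm 6e$, where $e$ generates $\H^4(B\rE_6) \cong \bZ$, and thus $c_2(W)$ has order at most $2$ and cannot see the $3$-part. Instead I would restrict $e$ itself along $3.\Omega_7(3) \hookrightarrow \rE_6$. Using that $\mathrm{L}_3(3) = \SL_3(\bF_3)$ sits inside the maximal-rank subgroup $(\SL_3\times\SL_3\times\SL_3)/\mu_3 \subset \rE_6$, I would locate an element $g$ of order $9$ lying over the unipotent radical $3^{3+3}$, write $g$ through the coweight lattice of $\rE_6$, and evaluate $e|_{\langle g\rangle} \in \H^4(\langle g\rangle)$ by pairing with the basic invariant form exactly as in Lemma~\ref{lem:p12}. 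A value nonzero mod $3$ produces a class of order divisible by $3$, forcing $\H^4(3.\Omega_7(3))_{(3)} = \bZ/3$ and hence $\H_3(3.\Omega_7(3)) = \bZ/4 \oplus \bZ/3 = \bZ/12$.

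Finally, for $6.\Omega_7(3) = 3.\Spin_7(3)$ I would run the two quotient maps. The quotient $6.\Omega_7(3) \to \Spin_7(3)$ has odd kernel $\bZ/3$, so $\H^4(6.\Omega_7(3))_{(2)} \cong \H^4(\Spin_7(3))_{(2)} = \bZ/8$ by Lemma~\ref{lemma:O73}; while the quotient $6.\Omega_7(3) \to 3.\Omega_7(3)$ has kernel $\bZ/2$, so the $p=2$ case of Lemma~\ref{lemma:schur cover} gives an injection with $2$-primary cokernel, whence $\H^4(6.\Omega_7(3))_{(3)} \cong \H^4(3.\Omega_7(3))_{(3)} = \bZ/3$. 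Combining, $\H_3(6.\Omega_7(3)) = \bZ/8 \oplus \bZ/3 = \bZ/24$. The main obstacle is precisely the nonvanishing of the $3$-part: one must make the embedding into $\rE_6$ and the choice of the order-$9$ element concrete enough to carry out the weight-lattice evaluation, since no Lie-group-valued class coarser than the generator $e$ of $\H^4(B\rE_6)$ can detect this $\bZ/3$.
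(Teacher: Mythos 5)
Your skeleton matches the paper's exactly --- prime-by-prime assembly, the upper bound from Lemma~\ref{lemma:schur cover}, and the reduction via the third parabolic $S = 3^{3+3}{:}\SL(3,3)$ and Lemma~\ref{lemma:cokers} to $\coker\big(\H^4(S)\to\H^4(3.S)\big)$ --- but the step that actually produces the nonzero $3$-torsion class rests on a false premise. The $27$-dimensional irrep of $3.\Omega_7(3)$ does \emph{not} preserve a cubic form, and $3.\Omega_7(3)$ does \emph{not} embed in $\rE_6^{\rm sc}(\bC)$: if it did, then $\Omega_7(3)$ would sit inside $\rE_6^{\rm adj}(\bC)$ and would preserve the Lie bracket on its $78$-dimensional representation, which is precisely what the paper points out fails --- $\Omega_7(3)$ preserves the $\rE_6$-structure only modulo $2$, embedding into the finite group ${}^2\rE_6(2) \subseteq \rE_6^{\rm adj}(\bF_4)$ rather than into the complex Lie group. (Consistently, $\Omega_7(3)$ does not occur in the Cohen--Wales classification~\cite{MR1416728} of finite subgroups of $\rE_6^{\rm adj}(\bC)$; only the parabolic $S$ does.) So ``restrict $e$ along $3.\Omega_7(3)\hookrightarrow\rE_6$'' cannot be carried out, and your claim $c_2(W)=\pm 6e$ is unjustified for the same reason; the conclusion that classical Chern classes are too coarse here is correct (the paper says as much in Section~\ref{subsec:charclasses}), but for character-theoretic reasons, not because of a factorization through $\rE_6^{\rm sc}(\bC)$ that does not exist.

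What survives is your fallback of working entirely inside $S$. The true input, which the paper takes from~\cite{MR1416728}, is that $\rE_6^{\rm adj}(\bC)$ contains a copy of $S$ itself, lifting to a nonsplit extension $3.S \subseteq \rE_6^{\rm sc}(\bC)$; to identify this Lie-theoretic $3.S$ with the preimage of $S$ in $3.\Omega_7(3)$ one also needs $\H^2(S;\bZ/3)\cong\bZ/3$, so that the nonsplit extension is unique --- a computation the paper performs and you omit (your checks of $\H^1(S;\bZ/3)=0$ and nonsplitness feed Lemma~\ref{lemma:cokers} but do not give uniqueness). Granting that, your plan --- exhibit $g \in 3.S$ of order $9$ lifting an order-$3$ element of $S$ and evaluate $e|_{\langle g\rangle}$ through the coweight lattice --- could in principle finish the proof, since pullbacks from $S$ restrict to zero on any such $\langle g\rangle$; but you leave the evaluation entirely unexecuted, and there is no a priori guarantee that the cokernel class is detected on cyclic subgroups at all, so as written this is a plan rather than a proof. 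The paper's endgame avoids element-wise evaluation altogether: it quotes that $\coker\big(\H^4\big(B\rE_6^{\rm adj}\big)\to\H^4\big(B\rE_6^{\rm sc}\big)\big)$ has order $3$ and transfers this to $\coker\big(\H^4(S)\to\H^4(3.S)\big)$ by the Bockstein/$d_3$ naturality underlying Lemma~\ref{lemma:cokers}: the class $\alpha\in\H^2(S;\bZ/3)$ classifying $3.S$ is restricted from the Lie group, so $\operatorname{Bock}\big(\alpha^2\big)$ vanishes because it already vanishes there. To repair your proof: replace the embedding of the full group by the embedding of $S$, add the $\H^2(S;\bZ/3)$ uniqueness step, and either carry out the order-$9$ weight-lattice computation concretely or substitute the paper's naturality argument.
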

\begin{proof} We must calculate $\H^4(3.\Omega_7(3))_{(3)} \cong \H^4(6.\Omega_7(3))_{(3)}$. It either vanishes or is $\bZ/3$ by Lemmas~\ref{lemma:schur cover} and~\ref{lemma:O73}.

 We used two of the three maximal parabolics of $\Omega_7(3)$ in the proof of Lemma~\ref{lemma:O73}; for this calculation we will use the third one, of shape $3^{3+3}:\SL(3,3)$.
 For the remainder of the proof we will call this subgroup $S$. Since $S$ contains the $3$-Sylow, the extension $3.\Omega_7(3)$ restricts to a~nontrivial central extension $3.S$.
 One can show, for instance by running a LHS spectral sequence, that $\H^1(S;3) = 0$ and $\H^2(S;3) = 3$. In particular, there is a unique nonsplit extension $3.S$
 up to isomorphism. (The two nonzero classes in $\H^2(S;3)$ are related by the outer automorphism of~$\bZ/3$.)

 By Lemma~\ref{lemma:O73}, $\H^4(\Omega_7(3))_{(3)} = 0$, and so $\H^4(3.\Omega_7(3))_{(3)} = \coker\big(\H^4(\Omega_7(3)) \to \H^4(3.\Omega_7(3))\big)$. This is in turn isomorphic to $\coker\big(\H^4(S) \to \H^4(3.S)\big)$ by Lemma~\ref{lemma:cokers}.

 The smallest complex representations of $\Omega_7(3)$ and $3.\Omega_7(3)$ have dimensions $78$ and $27$ respectively, equal to the smallest representations of the simple Lie group $\rE_6^{\rm adj}(\bC)$ and its simply connected cover $\rE_6^{\rm sc} = 3.\rE_6^{\rm adj}$. However, $\Omega_7(3)$ does not preserve the Lie bracket on the $78$-dimensional representation. It does preserve a lattice, and preserves the Lie bracket ``modulo~$2$'': in fact, $\Omega_7(3)$ embeds into the twisted Chevalley group $^2\rE_6(2) \subseteq \rE_6^{\rm adj}(\bF_4)$. The finite subgroups of the Lie group $\rE_6^{\rm adj}(\bC)$ {not already contained in a smaller Lie group} were classified in~\cite{MR1416728}.
 {In particular, $\rE_6^{\rm adj}(\bC)$ contains a subgroup isomorphic to $S$, lifting to the nonsplit extension $3.S \subseteq \rE_6^{\rm sc}(\bC)$. (Presumably $S$ is precisely the intersection of $\rE_6^{\rm adj}(\bC)$ and $\Omega_7(3)$ in their common 78-dimensional representation.)}

 Both $\H^4\big(B\rE_6^{\rm adj}\big)$ and $\H^4\big(B\rE_6^{\rm sc}\big)$ are infinite cyclic, but the restriction map $\H^4\big(B\rE_6^{\rm adj}\big) \to \H^4\big(B\rE_6^{\rm sc}\big)$ is not an isomorphism: its cokernel has order~$3$. By Lemma~\ref{lemma:cokers}, this forces the inclusion $\H^4(S) \to \H^4(3.S)$ to have cokernel of order~$3$.
\end{proof}

The Chevalley group $\mathrm{G}_2(3)$ has an exceptional multiplier of order~$3$.

\begin{Corollary} $\H_3(3.\mathrm{G}_2(3))$ has order $72$.
\end{Corollary}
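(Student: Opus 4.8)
The plan is to reduce everything to the prime $3$ and then to show that passing to the triple cover makes the $3$-primary part of $\H^4$ grow from $\bZ/3$ to $\bZ/9$. First I would dispose of the other primes. Since $3.\mathrm{G}_2(3)$ is a central extension of $\mathrm{G}_2(3)$ by $\bZ/3$, the Lyndon--Hochschild--Serre spectral sequence for $\bZ/3 \to 3.\mathrm{G}_2(3) \to \mathrm{G}_2(3)$ collapses away from the prime $3$, so inflation gives $\H^4(3.\mathrm{G}_2(3))_{(p)} \cong \H^4(\mathrm{G}_2(3))_{(p)}$ for every $p \neq 3$. By Lemma~\ref{lemma:g2q}, $\H_3(\mathrm{G}_2(3)) = \bZ/8 \oplus \bZ/3$, so the $2$-part of $\H^4(3.\mathrm{G}_2(3))$ is $\bZ/8$ and there is nothing at primes $\geq 5$. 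Thus the order of $\H_3(3.\mathrm{G}_2(3)) \cong \H^4(3.\mathrm{G}_2(3))$ equals $8 \cdot |\H^4(3.\mathrm{G}_2(3))_{(3)}|$, and the whole problem is to compute this last factor.

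Applying Lemma~\ref{lemma:schur cover} with $G = \mathrm{G}_2(3)$ and $p=3$ (note $\H_1(\mathrm{G}_2(3))_{(3)} = 0$ and $\H_2(\mathrm{G}_2(3))_{(3)} = 3$), the pullback $\H^4(\mathrm{G}_2(3))_{(3)} \to \H^4(3.\mathrm{G}_2(3))_{(3)}$ is injective with cokernel of order dividing $3$. Since $\H^4(\mathrm{G}_2(3))_{(3)} = \bZ/3$, the target is either $\bZ/3$ or $\bZ/9$, and to obtain the claimed order $72$ I must show the cokernel is exactly $\bZ/3$. As in the proof of Lemma~\ref{lemma:cokers}, this cokernel is nonzero precisely when the differential $d_3 \colon E_3^{22} \to E_3^{50}$ of the extension spectral sequence vanishes, i.e.\ precisely when $\operatorname{Bock}(\alpha^2) = 0 \in \H^5(\mathrm{G}_2(3);\bZ)$, where $\alpha \in \H^2(\mathrm{G}_2(3);\bZ/3) \cong \bZ/3$ classifies the nonsplit central extension and $\operatorname{Bock}$ is the integral Bockstein.

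The key step is to verify this Bockstein vanishing by comparison with $\Omega_7(3)$, reusing Corollary~\ref{cor:3o73}. Here $\mathrm{G}_2(3)$ is a (maximal) subgroup of $\Omega_7(3)$, and I would first check that the inclusion lifts to $3.\mathrm{G}_2(3) \subseteq 3.\Omega_7(3)$, equivalently that the restriction of the extension class $\alpha_\Omega \in \H^2(\Omega_7(3);\bZ/3)$ along $\mathrm{G}_2(3) \hookrightarrow \Omega_7(3)$ is the nonzero class $\alpha \in \H^2(\mathrm{G}_2(3);\bZ/3)$; this can be read off from the fact that the $27$-dimensional faithful representation of $3.\Omega_7(3)$ restricts to a faithful representation of $3.\mathrm{G}_2(3)$ (the central $\bZ/3$ still acting by nontrivial scalars), or directly from the ATLAS. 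Granting this, $\alpha = \alpha_\Omega|_{\mathrm{G}_2(3)}$, and since the Bockstein commutes with restriction,
\[
\operatorname{Bock}(\alpha^2)=\operatorname{Bock}\big((\alpha_\Omega|_{\mathrm{G}_2(3)})^2\big)=\operatorname{Bock}(\alpha_\Omega^2)\big|_{\mathrm{G}_2(3)}.
\]
Corollary~\ref{cor:3o73} shows $\coker\big(\H^4(\Omega_7(3)) \to \H^4(3.\Omega_7(3))\big)_{(3)} = \bZ/3$, which by the same spectral-sequence criterion forces $\operatorname{Bock}(\alpha_\Omega^2) = 0$; restricting, $\operatorname{Bock}(\alpha^2) = 0$ as well. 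Hence the cokernel for $\mathrm{G}_2(3)$ is $\bZ/3$, $\H^4(3.\mathrm{G}_2(3))_{(3)} = \bZ/9$, and $\H_3(3.\mathrm{G}_2(3)) = \bZ/8 \oplus \bZ/9 \cong \bZ/72$ has order $72$.

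The step I expect to be the main obstacle is the lifting claim --- that the embedding $\mathrm{G}_2(3) \hookrightarrow \Omega_7(3)$ restricts the triple cover nontrivially. If the $27$-dimensional representation is awkward to pin down directly, a robust fallback is to invoke Lemma~\ref{lemma:cokers}: choose a maximal parabolic $S$ of shape $3^5{:}\GL_2(3)$ in $\mathrm{G}_2(3)$ containing the $3$-Sylow, check that $\H^1(S;\bZ/3)=0$ and that $3.\mathrm{G}_2(3)$ restricts nonsplit to $S$, so that the cokernel is computed on $S$; then compute $\coker\big(\H^4(S) \to \H^4(3.S)\big)$ directly in HAP, since $|3.S| = 3 \cdot 3^5 \cdot 48 \approx 3.5 \times 10^4$ is comfortably within range. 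Either route reduces the problem to a fact already established for $\Omega_7(3)$ or to a small, explicit machine computation.
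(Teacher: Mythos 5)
Your proposal is correct and takes essentially the same route as the paper: the paper's (three-sentence) proof likewise uses the inclusion $\mathrm{G}_2(3) \subseteq \Omega_7(3)$ given by the $7$-dimensional representation, the fact that the exceptional triple cover of $\Omega_7(3)$ restricts to the exceptional triple cover of $\mathrm{G}_2(3)$, and then Lemma~\ref{lemma:schur cover} together with Lemma~\ref{lemma:cokers} and Corollary~\ref{cor:3o73} to force the cokernel of $\H^4(\mathrm{G}_2(3)) \to \H^4(3.\mathrm{G}_2(3))$ to have order exactly~$3$. Your Bockstein-restriction argument is precisely the content of the injection half of Lemma~\ref{lemma:cokers} (applied with $G = \Omega_7(3)$ and $S = \mathrm{G}_2(3)$, which is legitimate since that half does not require $S$ to contain a Sylow subgroup), and your explicit handling of the primes away from $3$ merely spells out what Lemma~\ref{lemma:schur cover} already gives.
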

\begin{proof} The $7$-dimensional representation of $\mathrm{G}_2$ provides an inclusion $\mathrm{G}_2(3) \subseteq \Omega_7(3)$, and the exceptional triple cover of $\Omega_7(3)$ restricts to the exceptional triple cover of $\mathrm{G}_2(3)$. Lemmas~\ref{lemma:schur cover} and~\ref{lemma:cokers} then force the inclusion $\H^4(\mathrm{G}_2(3)) \to \H^4(3.\mathrm{G}_2(3))$ to have cokernel of order~$3$.
\end{proof}

\section{Mathieu groups} \label{sec:Mathieu}

The low-degree homology groups of all Mathieu groups can be computed in HAP, and are listed in~\cite{SE09}, where details of HAP's implementation are discussed. That paper was the first to compute $\H_3(\rM_{24})$, and was able to compute up to $\H_4$ exactly for all Mathieu groups, and $\H_5$ exactly for all Mathieu groups except $\rM_{24}$, for which the $2$-part was left ambiguous. In \cite{GPRV} it is shown that the restriction map $\H^4(\rM_{24}) \to \H^4(\langle 12\rb\rangle)$ is an isomorphism, and that $\H^4(\rM_{24})$ is generated by the ``gauge anomaly'' of ``$\rM_{24}$ moonshine''. In \cite[Theorems~5.1 and~5.2]{jft} we gave direct proofs of the results $\H^4(\rM_{23}) = 0$ and $\H^4(\rM_{24}) = 12$ following the method outlined in Section~\ref{sec:methods}, and we also recognized that the generator of $\H^4(\rM_{24})$ from~\cite{GPRV} is more simply described as the fractional Pontryagin class of the defining degree-$24$ permutation representation. We remark that the same holds for $\rM_{11}$:

\begin{Proposition} $\H^4(\rM_{11}) \cong \bZ/8$ is generated by the fractional Pontryagin class of the defining degree-$11$ permutation representation.
\end{Proposition}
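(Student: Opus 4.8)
The plan is to establish both the isomorphism $\H^4(\rM_{11};\bZ) \cong \bZ/8$ and the identification of its generator simultaneously, following the two-sided bound strategy of Section~\ref{sec:methods}. First I would dispose of the odd primes. The order of $\rM_{11}$ is $2^4 \cdot 3^2 \cdot 5 \cdot 11$, so the relevant odd primes are $3$, $5$, and $11$. For $p = 5$ and $p = 11$ the Sylow subgroup is cyclic of order $p$, and by inspecting the ATLAS character table one checks that $\rM_{11}$ has fewer than $(p-1)/2$ conjugacy classes of order $p$; Lemma~\ref{large primes} then kills these primes immediately. For $p = 3$ the Sylow subgroup has order $9$; here I would either invoke a direct HAP computation (as noted in the introduction, $\H_3(\rM_{11})$ is accessible to HAP via the degree-$11$ permutation model) or apply Lemma~\ref{large primes} after checking that the $3$-Sylow is elementary abelian of rank two, so that the detection-on-cyclic-subgroups argument forces $\H^4(\rM_{11};\bZ)_{(3)} = 0$. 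Either way, $\H^4(\rM_{11};\bZ)$ is a $2$-group.

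Next I would pin down the $2$-primary part. The goal is to show it is cyclic of order exactly $8$. For the upper bound I would locate a subgroup $S \subseteq \rM_{11}$ containing a $2$-Sylow whose cohomology I understand, and apply Lemma~\ref{transfer restriction} so that $\H^4(\rM_{11};\bZ)_{(2)}$ injects as a direct summand into $\H^4(S;\bZ)_{(2)}$. The most efficient route, however, is to use the lower bound to do most of the work: set $P$ to be the defining degree-$11$ permutation representation. Since $\rM_{11}$ is simple it acts through the reduced representation into $\SU(10)$, whose first Chern class vanishes, and the representation is real with trivial second Stiefel--Whitney class (as $\rM_{11}$ has trivial Schur multiplier, $\H^2(\rM_{11};\bZ/2) = 0$), so it is automatically Spin and $\frac{p_1}{2}(P)$ is defined by the discussion preceding Lemma~\ref{lem:p12}.

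To extract the order of $\frac{p_1}{2}(P)$ I would restrict to a well-chosen cyclic subgroup, mirroring the $\rM_{24}$ argument of \cite{jft}. The group $\rM_{11}$ has an element of order $8$ (class $8\ra$); restricting the degree-$11$ permutation character to $\langle g\rangle$ with $g$ of order $8$ gives a sum of one-dimensional characters whose exponents I read off from the cycle type of $g$ acting on $11$ points. Using the Whitney-sum formula $c_2(V)|_{\langle g\rangle} = \sum_{k<k'} m_k m_{k'}\, t^2$ and the relation $\frac{p_1}{2}(P) = -c_2(P\otimes\bC)/1$ valid after lifting to the cyclic subgroup (where the real representation splits as a sum of complex lines that pair up), I would compute the coefficient modulo $8$ and check it is a generator of $\H^4(\langle g\rangle;\bZ) \cong \bZ/8$. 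This simultaneously gives the lower bound (order of $\frac{p_1}{2}(P)$ is at least $8$) and, combined with the upper bound that $\H^4(\rM_{11};\bZ)_{(2)}$ embeds into the cohomology of a $2$-Sylow subgroup of order $16$, forces $\H^4(\rM_{11};\bZ)_{(2)} = \bZ/8$ with $\frac{p_1}{2}(P)$ as generator.

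The main obstacle I anticipate is the upper bound for the $2$-part: I must verify that $\H^4(\rM_{11};\bZ)_{(2)}$ is no larger than $\bZ/8$, which requires controlling $\H^4$ of the relevant $2$-local subgroup and confirming the restriction is onto a direct summand of the correct size. Since the $2$-Sylow of $\rM_{11}$ is the semidihedral group $\mathrm{SD}_{16}$ of order $16$, whose mod-$2$ and integral cohomology are classical, the cleanest argument is to compute $\H^4(\mathrm{SD}_{16};\bZ)$ directly, determine the subgroup stable under the fusion in $\rM_{11}$, and match it against the order-$8$ class already produced; alternatively one simply quotes the HAP computation $\H_3(\rM_{11}) \cong \bZ/8$ cited in the introduction and uses the lower-bound calculation only to identify the generator. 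Everything else is a routine Chern-class bookkeeping exercise.
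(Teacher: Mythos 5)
The core of your proposal is the paper's own argument: restrict the degree-$11$ permutation representation $\Perm$ to a cyclic subgroup generated by an order-$8$ element (cycle type $1^1 2^1 8^1$), compute $c_2$ there by the Whitney sum formula, and use that $\Perm$ is real and automatically Spin because $\H_2(\rM_{11}) = 0$ to promote this to a statement about $\tfrac{p_1}2(\Perm)$. The paper does not re-derive the group $\H^4(\rM_{11}) \cong \bZ/8$ at all: it quotes the HAP/Dutour Sikiri\'c--Ellis computation (the option you mention at the end) and uses the characteristic-class computation only to exhibit a generator. Your extra material on the upper bound (odd primes, the semidihedral $2$-Sylow and fusion-stable elements) is therefore not needed, and parts of it are flawed.

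Two concrete errors. First, Lemma~\ref{large primes} cannot be applied at $p = 3$: its hypothesis requires strictly fewer than $(3-1)/2 = 1$ conjugacy classes of order $3$, i.e., none, whereas $\rM_{11}$ has elements of order $3$; indeed the lemma's proof needs an element conjugating $h$ to $h^a$ with $a \not\equiv \pm 1 \pmod p$, which is impossible for $p = 3$. So the rank-two elementary abelian $3$-Sylow does not rescue that route, and you must fall back on the computer calculation at $p=3$. Second, your mechanism for evaluating $\tfrac{p_1}2(\Perm)|_{\langle g\rangle}$ --- ``the real representation splits as a sum of complex lines that pair up'' --- fails: the restriction of $\Perm$ to $\langle 8\ra\rangle$ has eigenvalue $+1$ with multiplicity $3$ and $-1$ with multiplicity $2$, so it admits no complex structure, and the identity $\tfrac{p_1}2(V) = -c_2(W)$ is only available when $V$ factors through some $W\colon \langle g \rangle \to \SU(N/2)$. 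Consequently one cannot directly read off $\tfrac{p_1}2$ on the cyclic subgroup this way (note also that $c_2(\Perm)|_{\langle 8\ra\rangle} = 2t^2$ is \emph{not} a generator of $\H^4(\langle 8\ra\rangle) \cong \bZ/8$). The correct patch, which is exactly what the paper does, is a divisibility argument: since $c_2(\Perm)|_{\langle 8\ra\rangle} = 2t^2$ has order $4$, and since $2\cdot\tfrac{p_1}2(\Perm) = p_1(\Perm) = -c_2(\Perm\otimes\bC)$, the class $\tfrac{p_1}2(\Perm)$ has order divisible by $8$ and hence generates $\H^4(\rM_{11}) \cong \bZ/8$. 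With these two repairs your argument goes through.
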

\begin{proof} Let $\Perm$ denote the permutation representation of $\rM_{11}$. The two conjugacy classes of order $8$ in $\rM_{11}$ have the same spectrum on $\Perm$: they act by $\operatorname{diag}\big(1,1,1,\zeta,i,\zeta^3,-1,-1,\zeta^{-3},-i,\allowbreak \zeta^{-1}\big)$, where $\zeta = \exp(2\pi i/8)$.
 Let $t \in \H^2(\bZ/8)$ denote a generator of $\H^\bullet(\bZ/8) = \bZ[t]/(8t)$.
 The total Chern class of $\Perm$, restricted to a cyclic group of order $8$, is therefore
 \begin{gather*} c(\Perm)|_{\langle 8\ra\rangle} = 1^3 (1-t)(1-2t)(1-3t)(1-4t)^2(1+3t)(1+2t)(1+t) = 1 + 2t^2 + \cdots.\end{gather*}
 In particular, $c_2(\Perm)$ has order divisible by~$4$. But $\Perm$ is a real and (since $\H_2(\rM_{11})$ vanishes) therefore Spin representation, and so $\frac{p_1}2(\Perm)$ has order divisible by~$8$.
\end{proof}

The Schur cover of $\rM_{12}$ is studied in \cite{CLW}; they compute $\H^4(2\rM_{12}) = 8^2 \times 3$ with HAP, and show that the map $\H^4(2\rM_{12}) \to \prod\limits_{g\in 2\rM_{12}} \H^4(\langle g\rangle)$ has kernel of order $2$. To fully describe $\H^4(2\rM_{12})$ requires moving slightly beyond cyclic groups, and also requires some notation. Let $\Perm$ denote (a choice of either) degree-$12$ permutation representation of $\rM_{12}$, and write $V_{12}$ for the unique $12$-dimensional faithful irrep of $2\rM_{12}$. Then $V_{12}$ is real, and hence spin, as a~$2\rM_{12}$-module (since~$2\rM_{12}$ has no central extensions). $\Perm \otimes \bR$ is not Spin as an $\rM_{12}$-module, but is automatically Spin as a $2\rM_{12}$-module, since $\H^2(2\rM_{12};\bZ/2) = 0$. Write $\frac{p_1}2(\Perm)$ and $\frac{p_1}2(V_{12})$ for their fractional Pontryagin classes. The group~$2\rM_{12}$ has two conjugacy classes of elements of order~$3$: class $3\rb$ acts on $\Perm$ with cycle structure~$3^{4}$. There are also four conjugacy classes of elements of order~$8$. Classes $8\ra$ and $8\rb$ differ by the central element and act on $\Perm$ with cycle structure~$1^2 2^1 8^1$; classes $8\rc$ and $8\rd$ differ by the central element and act with cycle structure~$4^1 8^1$. Finally, there is a unique conjugacy class of quaternion subgroups $Q_8 \subseteq 2\rM_{12}$ in which the center of $Q_8$ maps to the center of $2\rM_{12}$.

\begin{Proposition} $\H^4(2\rM_{12})$ is spanned by the classes $\frac{p_1}2(\Perm)$ and $\frac{p_1}2(V_{12})$. The restriction map \begin{gather*}\H^4(2\rM_{12}) \to \H^4(Q_8) \times \H^4(\langle 8\ra \rangle) \times \H^4(\langle 8\rc \rangle) \times \H^4(\langle 3\rb\rangle) \cong 8^3 \times 3\end{gather*} is an injection.
\end{Proposition}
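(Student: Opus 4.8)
The plan is to turn both assertions into a single statement about the order of an image. Since \cite{CLW} establishes that $A := \H^4(2\rM_{12})$ has order $192$ (it is $8^2 \times 3$), it suffices to prove that the two classes $\frac{p_1}2(\Perm)$ and $\frac{p_1}2(V_{12})$ restrict, under the stated map $r \colon A \to T$ with $T \cong 8^3 \times 3$, to a pair of elements generating a subgroup of order $192$. Indeed, if the subgroup $H \subseteq A$ they generate has $|r(H)| = 192$, then from $192 = |A| \geq |H| \geq |r(H)| = 192$ we conclude both $H = A$ (the two classes span) and $|r(A)| = |A|$ (so $r$ is injective). Everything thus reduces to computing the two restricted classes and checking that together they have order $192$ in $T$; I would do this one prime at a time.

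At $p = 3$ the relevant factor of $T$ is $\H^4(\langle 3\rb\rangle) \cong \bZ/3$, and I only need one of the two classes to hit it. Since $3\rb$ acts on $\Perm$ with cycle type $3^4$, I choose an $\SU$-polarization $W$ of $\Perm|_{\langle 3\rb\rangle}$ (so $c_1(W)=0$) and evaluate $\frac{p_1}2(\Perm)|_{\langle 3\rb\rangle} = -c_2(W)$ by the total Chern class formula; this is a nonzero multiple of the generator, disposing of the order-$3$ factor. At $p = 2$ the target is $\H^4(Q_8) \times \H^4(\langle 8\ra\rangle) \times \H^4(\langle 8\rc\rangle) \cong 8^3$, where I must show the two restricted classes generate a subgroup of order $64$; equivalently, that some $2 \times 2$ minor of the $2 \times 3$ matrix of restrictions is a unit mod $8$. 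The two cyclic restrictions are routine: from the cycle types $1^2 2^1 8^1$ and $4^1 8^1$ of $\Perm$ on $8\ra$ and $8\rc$, together with the character values of $V_{12}$ on these classes read off from GAP, I again pass to $\SU$-polarizations and compute each $\frac{p_1}2 = -c_2$. Crucially, \cite{CLW} already notes that the purely cyclic restriction has kernel of order $2$, so the $\{8\ra, 8\rc\}$-minor is even and the cyclic data alone cannot give injectivity; the quaternion subgroup must supply the missing generator.

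The crux, and the main obstacle, is the restriction to $Q_8$, for which $\H^4(Q_8) \cong \bZ/8$ is generated by $c_2$ of the faithful two-dimensional representation (just as for $2D_8 \subset \SU(2)$ in the proof of Lemma~\ref{lemma:Dempwolff}). The decisive structural input is that the center of $Q_8$ maps to the center of $2\rM_{12}$. Hence $V_{12}|_{Q_8}$, being faithful, is a sum of copies of the two-dimensional irrep $V$: since $V$ is quaternionic with $\bar V \cong V$ and $c_1(V)=0$, I get $V_{12}|_{Q_8} = 6V$, a polarization $3V$, and $\frac{p_1}2(V_{12})|_{Q_8} = -c_2(3V) = -3\,c_2(V)$, an odd multiple of the generator, which therefore generates $\bZ/8$. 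By contrast $\Perm|_{Q_8}$, being inflated from $\rM_{12}$, factors through $Q_8/Z(Q_8) \cong \bZ/2 \times \bZ/2$; since the image of $\H^4(\bZ/2\times\bZ/2)$ in $\H^4(Q_8) \cong \bZ/8$ is $2$-torsion (equivalently, $\frac{p_1}2 \equiv w_4 \pmod 2$ and the degree-four products of degree-one classes vanish in $\H^\bullet(Q_8;\bF_2)$), the class $\frac{p_1}2(\Perm)|_{Q_8}$ is even. Combining these, the $\{Q_8,\,8\ra\}$-minor is $(\frac{p_1}2(\Perm)|_{Q_8})(\frac{p_1}2(V_{12})|_{8\ra}) - (\frac{p_1}2(V_{12})|_{Q_8})(\frac{p_1}2(\Perm)|_{8\ra})$, which is odd as soon as $\frac{p_1}2(\Perm)$ restricts oddly to $\langle 8\ra\rangle$ (otherwise I use $\langle 8\rc\rangle$); this unit minor yields order $64$ at $p=2$ and completes the argument. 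I expect the real work to lie in pinning down these $Q_8$-restrictions — obtaining $\Perm|_{Q_8}$ and $V_{12}|_{Q_8}$ from character fusion and handling the quaternion cohomology ring carefully — since the abelian total-Chern-class computations on the cyclic subgroups are mechanical.
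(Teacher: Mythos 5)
Your overall strategy is the same as the paper's: compute the restrictions of $\frac{p_1}2(\Perm)$ and $\frac{p_1}2(V_{12})$ to the four listed subgroups and check that the two resulting vectors generate a subgroup of order $192=|\H^4(2\rM_{12})|$, which simultaneously gives spanning and injectivity. (The paper does exactly this, exhibiting the vectors $(3,1,1,-1)$ and $(4,-1,1,-1)$.) Your reformulation via an odd $2\times2$ minor is correct, and your $p=3$ computation and your evaluation $\frac{p_1}2(V_{12})|_{Q_8}=-c_2(3V)=-3c_2(V)$ (forced by the center of $Q_8$ hitting the center of $2\rM_{12}$, so that $V_{12}|_{Q_8}=6V$) are right. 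But there are two problems.

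The genuine flaw is the claim that, because $\Perm|_{Q_8}$ factors through $K:=Q_8/Z(Q_8)\cong 2^2$, the class $\frac{p_1}2(\Perm)|_{Q_8}$ lies in the image of $\H^4(K)\to\H^4(Q_8)$. This does not follow: $\frac{p_1}2$ is a characteristic class of the \emph{spin} representation, not of the underlying $\SO(12)$-representation, and $\Perm|_K$ is not spin --- one computes $w_2(\Perm|_K)=x^2+xy+y^2\neq 0$, which is exactly the extension class of $Q_8\to K$ (this is precisely why $\Perm$ only becomes spin after pulling back). In fact the image of inflation is the $2$-torsion subgroup $\{0,4\}\subset\bZ/8$, while the true value of $\frac{p_1}2(\Perm)|_{Q_8}$ is $\pm2$ times the generator: even, but \emph{not} inflated, so your two ``equivalent'' justifications are not equivalent. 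The parenthetical one is the one that works: $w_4$, unlike $\frac{p_1}2$, is an $\mathrm{O}(N)$-class and does inflate, $\frac{p_1}2\equiv w_4 \pmod 2$ on spin representations, and all degree-four products of degree-one classes vanish in $\H^4(Q_8;\bF_2)$. (Even more simply: apply your inflation argument to $p_1=2\cdot\frac{p_1}2$, since Pontryagin classes do inflate; this shows $p_1(\Perm)|_{Q_8}$ is $2$-torsion, hence $\frac{p_1}2(\Perm)|_{Q_8}$ is even.) Separately, your argument is left conditional at a point where it cannot be: you need $\frac{p_1}2(\Perm)$ to restrict to a \emph{generator} of $\H^4(\langle 8\ra\rangle)$ or $\H^4(\langle 8\rc\rangle)$, and you never verify this. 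It is not optional --- since $\frac{p_1}2(\Perm)|_{Q_8}$ is even, if both cyclic restrictions of $\Perm$ were also even then every minor would be even and the proof would collapse. The check is mechanical (cycle type $1^2 2^1 8^1$ gives an $\SU$-polarization with $c_2=89t^2$, an odd multiple), but it must be done.

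A final remark: your $Q_8$ parities ($\Perm$ even, $V_{12}$ odd) contradict the paper's table, which has $\frac{p_1}2(\Perm)\mapsto 3$ and $\frac{p_1}2(V_{12})\mapsto 4$ on that factor. Your parities appear to be the correct ones: the paper's entries are incompatible with $p_1=2\cdot\frac{p_1}2$, since $V_{12}|_{Q_8}=6V$ forces $p_1(V_{12})|_{Q_8}=-6c_2(V)\neq 0$, and $\Perm\otimes\bC|_{Q_8}$ being inflated from $K$ forces $p_1(\Perm)|_{Q_8}$ to be $2$-torsion. Fortunately either pair of vectors has an odd $2\times2$ minor, so the Proposition itself is unaffected.
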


We remark that the outer automorphism of $2\rM_{12}$ switches the two degree-$12$ permutation representations and also switches $8\ra\rb$ with $8\rc\rd$.

\begin{proof} We choose the following generators of $\H^4(Q_8)$ and $\H^4(\langle 8\ra\rangle) \cong \H^4(\langle 8\rc \rangle) \cong \H^4(C_8)$ and $\H^4(\langle 3\rb \rangle) \cong \H^4(C_3)$: the generator of $\H^4(Q_8)$ is the fractional Pontryagin class of the $4$-dimensional real representation (equal to the negative second Chern class of the $2$-dimensional complex irrep); if $n$ divides $24$, we take the unique generator of $\H^4(C_n)$ which is a cup square (it is unique by what Conway and Norton call ``the defining property of $24$'' \cite{MR554399}).

 It is straightforward to compute the images of $\frac{p_1}2(\Perm)$ and $\frac{p_1}2(V_{12})$ to $\H^4(Q_8) \times \H^4(\langle 8\ra \rangle) \times \H^4(\langle 8\rc \rangle) \times \H^4(\langle 3\rb\rangle)$. They are
 \begin{gather*}
 \tfrac{p_1}2(\Perm) \mapsto (3,1,1,-1), \\
 \tfrac{p_1}2(V_{12}) \mapsto (4,-1,1,-1).
 \end{gather*}
 But $(3,1,1,-1)$ and $(4,-1,1,-1)$ together generate a subgroup isomorphic to $8^2 \times 3 \cong \H^4(2\rM_{12})$ inside $8^3 \times 3$.
\end{proof}

The covers of $\rM_{22}$ are not directly computable by HAP, since they do not have sufficiently small permutation representations.

\begin{Proposition} \label{prop:mathieu}
 The covers of $\rM_{22}$ have the following third homology groups:
 \begin{gather*} \H_3(2\rM_{22}) = 4, \qquad \H_3(3\rM_{22}) = 3, \qquad \H_3(4\rM_{22}) = 8, \\ \H_3(6\rM_{22}) = 12, \qquad \H_3(12\rM_{22}) = 24.\end{gather*}
\end{Proposition}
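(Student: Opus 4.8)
The plan is to compute $\H^4(n\rM_{22};\bZ)\cong\H_3(n\rM_{22})$ one prime at a time and to reduce everything to three ``atomic'' cohomology groups. Since $|\rM_{22}|=2^7\cdot 3^2\cdot 5\cdot 7\cdot 11$ and the Schur multiplier $\H_2(\rM_{22})=\bZ/12$ is supported at the primes $2$ and $3$, I would first dispose of the primes $p\ge 5$. For such $p$ the central kernel $\bZ/n$ is prime to $p$, so a $p$-Sylow of $n\rM_{22}$ maps isomorphically onto one of $\rM_{22}$; running the LHS spectral sequence for the prime-to-$p$ central extension $n\rM_{22}\to\rM_{22}$ shows that only the $j=0$ row survives in the $p$-part (the higher rows are annihilated by $n$), whence $\H^4(n\rM_{22})_{(p)}\cong\H^4(\rM_{22})_{(p)}$. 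As $\H_3(\rM_{22})=1$ (computed by HAP and recorded in the table), this vanishes. The same argument applied to the prime-to-$p$ quotient maps among the covers (e.g.\ $6\rM_{22}\to 2\rM_{22}$ at $p=2$) shows that $\H^4(n\rM_{22})_{(p)}$ depends only on the $p$-part of $n$; hence the whole table is determined by $\H^4(2\rM_{22})_{(2)}$, $\H^4(4\rM_{22})_{(2)}$ and $\H^4(3\rM_{22})_{(3)}$, with $6\rM_{22}$ and $12\rM_{22}$ obtained by assembling the $2$- and $3$-parts.

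For the upper bounds I would feed $\H^4(\rM_{22})=0$ into Lemma \ref{lemma:schur cover}. Its three bullets give, respectively, that $\H^4(2\rM_{22})_{(2)}$ has order dividing $4$, that $\H^4(4\rM_{22})_{(2)}$ has order dividing $8$, and (using $\H_2(\rM_{22})_{(3)}=\bZ/3$ together with $\H^1(\rM_{22};\bZ/3)=0$) that $\H^4(3\rM_{22})_{(3)}$ has order dividing $3$. Since $\H^4(\rM_{22})$ vanishes, in each case the relevant pullback has trivial source, so these three cohomology groups equal the corresponding cokernels.

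The lower bounds are where the real work lies, and here I would use the characteristic-class machinery of Section \ref{subsec:charclasses}. For each atomic case I would pick a faithful irreducible representation of the relevant cover, compute its eigenvalues on a well-chosen cyclic subgroup from the character table, and read off $c_2$ (or, for a real Spin representation, $\frac{p_1}2$) via the Whitney formula $c_2(V)|_{\langle g\rangle}=\sum_{k<k'}m_km_{k'}t^2$. Concretely: for $2\rM_{22}$ I would restrict $c_2$ of the $10$-dimensional faithful irrep to the cyclic group generated by a lift of the order-$8$ class $8\ra$ and check the restriction has order $4$; for $4\rM_{22}$ I would do the same with a faithful irrep on which the central $\bZ/4$ acts by $i$, aiming at order $8$ on a lift of $8\ra$; and for $3\rM_{22}$ I would restrict $c_2$ of a faithful irrep to the cyclic group generated by a noncentral lift of the order-$3$ class $3\ra$ and check it is nonzero mod $3$ (recall by Lemma \ref{lemma:schur cover} every class restricts trivially to the central $\bZ/3$, so detection must occur off-center). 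In each case the restriction lands in a $p$-group, hence sees only the $p$-part; a nonzero restriction of the maximal allowed order then meets the upper bound and pins down the group.

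The main obstacle is precisely these lower bounds: the covers of $\rM_{22}$ have no small permutation model, so HAP is unavailable, and everything must be extracted by hand from the character tables and the power and lifting maps. The delicate points are determining the orders to which $8\ra$ and $3\ra$ lift in each cover -- that is, the isomorphism type of the preimage of the chosen cyclic subgroup, which controls whether $\H^4$ of that cyclic group is large enough to carry an order-$8$ class -- and verifying the Frobenius--Schur indicators and spin structures needed to make sense of $\frac{p_1}2$. Assembling the $2$- and $3$-parts then yields $\H_3(6\rM_{22})=\bZ/4\oplus\bZ/3$ and $\H_3(12\rM_{22})=\bZ/8\oplus\bZ/3$, completing the five entries.
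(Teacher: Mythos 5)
Your skeleton is the same as the paper's: work one prime at a time, take the upper bounds $|\H^4(2\rM_{22})|\leq 4$, $|\H^4(4\rM_{22})|\leq 8$, $|\H^4(3\rM_{22})|\leq 3$ from Lemma~\ref{lemma:schur cover} plus $\H_3(\rM_{22})=0$, and supply matching lower bounds by restricting characteristic classes of faithful representations to small subgroups; your $3\rM_{22}$ step (a faithful $21$-dimensional irrep restricted to a noncentral order-$3$ element) is exactly what the paper does. The genuine gap is your plan for the crucial bound $\H^4(4\rM_{22})\geq 8$, which cannot work as stated. You propose to find a faithful irrep $W$ of $4\rM_{22}$, with the central $\bZ/4$ acting by $i$, whose $c_2$ restricts with order $8$ to a cyclic subgroup. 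But the restriction of $c_2(W)$ to the central $\bZ/4$ is $\binom{N}{2}t^2$ where $N=\dim W$; since $4\rM_{22}$ is perfect, $\det W$ is trivial and $4\mid N$; and by Lemma~\ref{lemma:schur cover} every class in $\H^4(4\rM_{22})$ dies on the central $\bZ/2$, so the restriction map to the central $\bZ/4$ has image of order at most $2$ and kernel exactly the elements of order dividing $4$. Unwinding, $c_2(W)$ has order $8$ if and only if $N\equiv 4\pmod 8$. The faithful irreps of $4\rM_{22}$ have dimensions $16$, $56$, $64$, $160$, $176$, $616$ --- all divisible by $8$ --- and every non-faithful irrep factors through $2\rM_{22}$, contributing classes of order at most $4$. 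Since $c_1$ vanishes on a perfect group, Chern classes are additive, so \emph{every} Chern class of \emph{every} representation of $4\rM_{22}$ has order at most $4$, and consequently no restriction of a Chern class to any cyclic subgroup can ever exhibit order $8$. The generator of $\H^4(4\rM_{22})$ is simply not a Chern class, and no amount of character-table work within your plan will find it.

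This is precisely why the paper argues differently at this step: the unique faithful $210$-dimensional irrep $V$ of $2\rM_{22}$ is real; it is not Spin over $2\rM_{22}$ (its $c_2$ has order $4$, hence is not divisible by $2$), but it is automatically Spin over $4\rM_{22}$ because $\H^2(4\rM_{22};\bZ/2)=0$; and then $\frac{p_1}2(V)\in\H^4(4\rM_{22})$ has order $8$ with no further computation, since $2\cdot\frac{p_1}2(V)=\pm c_2(V\otimes\bC)$ has order $4$ by the injectivity of $\H^4(2\rM_{22})\to\H^4(4\rM_{22})$. No cyclic subgroup is ever asked to see the order $8$. Your $2\rM_{22}$ step is also shakier than you suggest: one can show (from the upper bound plus the fact that $c_2$ of the $10$-dimensional irrep restricts nontrivially to the center, as $\binom{10}{2}=45$ is odd) that no cyclic subgroup of order at least $8$ in $2\rM_{22}$ contains the central involution, so your chosen subgroup $\langle\text{lift of }8\ra\rangle$ misses the center entirely, and whether the restriction there has order $4$ is an honest computation you have not done; the paper instead verifies order $4$ on the order-$4$ class $4\rc$ of the $210$-dimensional representation. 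That step is repairable inside your framework; the $4\rM_{22}$ step is not, and needs the real-but-not-Spin trick (or something equally indirect) that the paper uses.
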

\begin{proof} Given Lemma~\ref{lemma:schur cover} together with the computer computation $\H_3(\rM_{22}) = 0$, it suffices to give lower bounds $\H^4(2\rM_{22}) \geq 4$, $\H^4(4\rM_{22}) \geq 8$, and $\H^4(3\rM_{22}) \geq 3$. The first two can be handled simultaneously as follows. $2\rM_{22}$ has a unique faithful $210$-dimensional irrep $V$. Conjugacy class $4\rc \in 2\rM_{22}$ acts on $V$ with spectrum $1^{50} (-1)^{50} i^{55} (-i)^{55}$. Let $t \in \H^2(\langle 4\rc\rangle)$ denote a generator. Then
 \begin{gather*} c_2(V)|_{\langle 4\rc\rangle} = t^2 \in \H^4(\langle 4\rc\rangle) \end{gather*}
 has order $4$. This gives the lower bound for $\H^4(2\rM_{22})$. The representation $V$ is real, but it is not Spin as a $2\rM_{22}$-module (since, indeed, $c_2(V)$ is not divisible by $2$). It is, however, Spin as a $4\rM_{22}$-module. Since $\H^4(2\rM_{22}) \to \H^4(4\rM_{22})$ is an injection, $c_2(V)|_{4\rM_{22}}$ has order divisible by $4$, so $\frac{p_1}2(V) \in \H^4(4\rM_{22})$ has order divisible by $8$. This provides the lower bound for $\H_3(4\rM_{22})$. Finally, for $3\rM_{22}$, we may use either $21$-dimensional faithful representation $W$. Element $3\rc \in 3\rM_{22}$ acts on $W$ with trace $0$, and so $c_2(W)|_{\langle 3\rc\rangle} = -\frac{21}3t^2 \neq 0 \in \H^4(\langle 3\rc\rangle)$. Thus $c_2(W)$ has order divisible by $3$ in $\H^4(3\rM_{22})$.
\end{proof}

\section{Leech lattice groups} \label{sec:Leech}

\subsection{Higman--Sims group} \label{sub:HS}

The smallest faithful permutation representations of the Higman--Sims group $\mathrm{HS}$ and its double cover $2\mathrm{HS}$ have degrees $100$ and $704$ respectively.
Using these representations, we find that HAP can compute $\H_3(\mathrm{HS})$ and $\H_3(\mathrm{2HS})$ without further human assistance:
\begin{gather*} \H_3(\mathrm{HS}) \cong (\bZ/2)^2, \qquad \H_3(2\mathrm{HS}) \cong \bZ/2 \times \bZ/8.\end{gather*}

Since $\H^4(G) \subseteq \H^4(2G)$ has index $4$, the latter must contain elements with nontrivial restriction to the central $2 \subseteq 2G$ by Lemma~\ref{lemma:schur cover}.
 It is not hard to check that all complex representations~$V$ of $2\mathrm{HS}$ have $c_2(V)|_{2} = 0$, and all real representations have $\frac{p_1}2(V)|_2 = 0$. In particular, we do not know generators for $\H^4(2\mathrm{HS}) \cong \bZ/2 \times \bZ/8$.

\subsection{Janko group 2}\label{sub:J2}

The smallest permutation representations of Janko's second group $\rJ_2$ (also called the Hall--Janko group $\mathrm{HJ}$) and of its double cover $2\rJ_2$ have degrees $100$ and $200$ respectively, and HAP computes:
\begin{gather*} \H_3(\rJ_2) \cong \bZ/30, \qquad \H_3(2\rJ_2) \cong \bZ/120. \end{gather*}

We record some finer information in this section: in particular we show that $\H^4(2\rJ_2;\bZ)$ is generated by the Chern class of either six-dimension irreducible representation of $2\rJ_2$, and the outer automorphism of $2\rJ_2$ acts by multiplication by $49$. For this, we will compute the Chern classes of the irreducible representations of $\SL(2,5)$, which is a subgroup of $2\rJ_2$ in two non-conjugate ways.

Let $\pi$ be a two-dimensional irreducible representation of $C = \SL(2,5)$. Then $\pi$ is faithful and has trivial determinant, and we can use the McKay correspondence to parametrize the other irreps of $G$, by nodes in the extended Dynkin diagram of type $\mathrm{E}_8$. That parametrization is
\begin{gather*}
\begin{tikzpicture}[anchor=base]
 \path
 node (1a) {$1$,}
 ++(-1,0) node (2a) {$\pi$}
 ++(-1.5,0) node (3a) {$S^2(\pi)$}
 ++(-1.75,0) node (4a) {$S^3(\pi)$}
 ++(-1.75,0) node (5a) {$S^4(\pi)$}
 ++(-1.75,0) node (6a) {$S^5(\pi)$}
 +(0,1) node (3b) {$S^2(\pi^\circ)$}
 ++(-1.75,0) node (4b) {$\pi \otimes \pi^\circ$}
 ++(-1.5,0) node (2b) {$\pi^\circ$}
 ;
 \draw (1a.west |- 2a.east) -- (2a.east);
 \draw (2a.west) -- (3a.east |- 2a.west);
 \draw (3a.west |- 2a.west) -- (4a.east |- 2a.west);
 \draw (4a.west |- 2a.west) -- (5a.east |- 2a.west);
 \draw (5a.west |- 2a.west) -- (6a.east |- 2a.west);
 \draw (6a.west |- 2a.west) -- (4b.east |- 2a.west);
 \draw (4b.west |- 2a.west) -- (2b.east |- 2a.west);
 \draw (6a) -- (3b);
\end{tikzpicture}
\end{gather*}
where we have written $S^n(-)$ as short for the $n$th symmetric power of a representations, and $\pi^{\circ}$ for the image of $\pi$ under a nontrivial outer automorphism of $G$.

\begin{Lemma}\label{mckay e8}$\H^4(\SL(2,5);\bZ) \cong \bZ/120$. If $\pi$ is an irreducible representation of dimension $2$ then $c:= c_2(\pi)$ generates $\H^4(G;\bZ)$, and the Chern classes of the remaining irreducibles are
\begin{gather*}
\begin{tikzpicture}[anchor=base]
 \path
 node (1a) {$0$.}
 ++(-1,0) node (2a) {$c$}
 ++(-1,0) node (3a) {$4c$}
 ++(-1.25,0) node (4a) {$10c$}
 ++(-1.25,0) node (5a) {$20c$}
 ++(-1.25,0) node (6a) {$35c$}
 +(0,1) node (3b) {$76c$}
 ++(-1.5,0) node (4b) {$100c$}
 ++(-1.25,0) node (2b) {$49c$}
 ;
 \draw (1a.west |- 2a.east) -- (2a.east);
 \draw (2a.west) -- (3a.east |- 2a.west);
 \draw (3a.west |- 2a.west) -- (4a.east |- 2a.west);
 \draw (4a.west |- 2a.west) -- (5a.east |- 2a.west);
 \draw (5a.west |- 2a.west) -- (6a.east |- 2a.west);
 \draw (6a.west |- 2a.west) -- (4b.east |- 2a.west);
 \draw (4b.west |- 2a.west) -- (2b.east |- 2a.west);
 \draw (6a) -- (3b);
\end{tikzpicture}
\end{gather*}
\end{Lemma}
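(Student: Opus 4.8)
The plan is to leverage two structural facts about $G = \SL(2,5)$: that it is the binary icosahedral group, i.e.\ a finite subgroup of $\SU(2)$ of order $120$, and that it is perfect. The first gives, exactly as invoked for $2D_8$ earlier, that $G$ acts freely on $S^3 = \SU(2)$ (with quotient the Poincar\'e homology sphere), so its cohomology is $4$-periodic and $\H^4(G;\bZ) \cong \bZ/120$ is cyclic, generated by $c := c_2(\pi)$. The second gives $\H^2(G;\bZ) = 0$, so $c_1(V) = 0$ for every representation $V$; the Whitney formula then collapses to $c_2(V \oplus W) = c_2(V) + c_2(W)$, and a computation with Chern roots whose sums vanish gives $c_2(V \otimes W) = (\dim W)\,c_2(V) + (\dim V)\,c_2(W)$. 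I would carry out everything inside the single group $\H^4(G) \cong \bZ/120$.

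First I would treat the long arm of the diagram, the symmetric powers. Since $G \subseteq \SU(2)$, the Clebsch--Gordan decomposition $\pi \otimes S^k(\pi) \cong S^{k+1}(\pi) \oplus S^{k-1}(\pi)$ holds as an isomorphism of $G$-representations. Writing $c_2(S^k(\pi)) = a_k c$ and feeding this into the two displayed formulas (with $\dim S^k(\pi) = k+1$) yields the recursion $a_{k+1} = 2a_k - a_{k-1} + (k+1)$, with $a_0 = 0$ and $a_1 = 1$. This immediately produces $a_2 = 4$, $a_3 = 10$, $a_4 = 20$, $a_5 = 35$. Because the recursion descends from genuine isomorphisms of representations, these identities hold on the nose in $\bZ/120$, with no indeterminacy.

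Next I would pin down the outer automorphism $\sigma$, since $\pi^\circ = \pi \circ \sigma$ gives $c_2(\pi^\circ) = \sigma^* c = u\, c$ for a unit $u \in (\bZ/120)^\times$. I would determine $u$ one prime at a time, using that by Lemma~\ref{transfer restriction} restriction to a Sylow subgroup is injective on each $p$-primary part; here the Sylow $3$- and $5$-subgroups are cyclic and the Sylow $2$-subgroup is $Q_8$. At $p = 2$, both $\pi$ and $\pi^\circ$ restrict to the unique faithful $2$-dimensional irrep of $Q_8$, so their restrictions agree and $u \equiv 1 \pmod 8$. At $p = 3$, $\sigma$ fixes the single class of order-$3$ elements, whose $\pi$-eigenvalues form the symmetric set $\{\omega, \omega^{-1}\}$, so again the restrictions agree and $u \equiv 1 \pmod 3$. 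At $p = 5$, $\sigma$ interchanges the two classes of order-$5$ elements, so for an order-$5$ generator $g$ the image $\sigma(g)$ is conjugate to $g^2$; hence $\pi^\circ(g)$ has eigenvalues $\{\zeta^{\pm 2a}\}$ when $\pi(g)$ has eigenvalues $\{\zeta^{\pm a}\}$, and the formula $c_2(\pi)|_{\langle g\rangle} = -a^2 t^2$ shows the restriction scales by $4$, giving $u \equiv 4 \pmod 5$. The Chinese remainder theorem then forces $u = 49$, so $c_2(\pi^\circ) = 49c$.

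Finally, the two remaining nodes follow formally: naturality gives $c_2(S^2(\pi^\circ)) = \sigma^* c_2(S^2(\pi)) = 49 \cdot 4c = 196c = 76c$, and the tensor formula gives $c_2(\pi \otimes \pi^\circ) = 2c_2(\pi) + 2c_2(\pi^\circ) = 2c + 98c = 100c$. I expect the only genuine obstacle to be the $p = 5$ step, namely verifying that $\sigma$ swaps the two order-$5$ classes and that $\sigma(g) \sim g^2$ rather than $g^{\pm 1}$: this is the sole prime at which the automorphism acts nontrivially, and it is precisely what distinguishes $49$ from $1$. Every other coefficient reduces to the elementary additivity and multiplicativity of $c_2$ under the vanishing of $c_1$.
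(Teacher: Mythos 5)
Your proof is correct, and although it shares the paper's overall skeleton (periodicity of the cohomology of a finite subgroup of $\SU(2)$ gives $\H^4 \cong \bZ/120$ generated by $c = c_2(\pi)$; the outer automorphism acts by a unit pinned down one prime at a time and assembled by the Chinese remainder theorem), each of the three main computations is executed by genuinely different means. For the symmetric powers, the paper restricts the universal weight calculation from a maximal torus of $\SU(2)$, while you run the Clebsch--Gordan recursion through the Whitney formula and the identity $c_2(V\otimes W)=\dim(W)\,c_2(V)+\dim(V)\,c_2(W)$; that identity is legitimate integrally (the universal polynomial for $c_2$ of a tensor product has integer coefficients) and applies because $\H^2(\SL(2,5);\bZ)=0$, so both routes give $c,4c,10c,20c,35c$ with no indeterminacy. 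For the unit $u$ with $\sigma^*c = uc$, the paper restricts to an order-$5$ subgroup $H_5$ and an order-$24$ subgroup $H_{24}\cong\SL(2,3)$ preserved by the automorphism, and must argue the nontrivial fact that the action on $H_{24}$ is realized by conjugation inside $\SU(2)$, hence is trivial on $\H^4(H_{24})\cong\bZ/24$; your Sylow-by-Sylow treatment replaces this at $p=2$ and $p=3$ by purely representation-theoretic observations ($Q_8$ has a unique faithful two-dimensional irrep; $\SL(2,5)$ has a single class of order $3$), which is arguably cleaner. For $\pi\otimes\pi^\circ$, the paper again argues by restriction ($o$-invariance kills the $5$-part, and comparison with $1\oplus \Sym^2(\pi)$ handles $H_{24}$), whereas your tensor formula gives $2c + 2\cdot 49c = 100c$ in one line, and naturality handles $\Sym^2(\pi^\circ)$ just as in the paper. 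Finally, the one step you flagged---that $\sigma$ swaps the two order-$5$ classes, so $\sigma(g)\sim g^{\pm 2}$---is precisely the verification the paper also needs (its assertion that $o$ acts on $\H^2(H_5;\bZ)$ by a primitive fourth root of unity in $\bF_5$), and it is immediate from the paper's explicit model of $o$ as conjugation by $\mathrm{diag}(2,1)\in\GL(2,5)$: this sends a unipotent element to its square, which lies in the other unipotent class because $2$ is not a square mod $5$; alternatively, $o$ must swap these classes because it swaps $\pi$ and $\pi^\circ$, whose characters differ on them. In short, your argument is more elementary and self-contained, at the cost of the structural information about $\SL(2,3)$-subgroups that the paper's route makes visible.
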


\begin{proof}The integer cohomology ring of $B\SU(2)$ is isomorphic to $\bZ[c]$, where~$c$ in degree~$4$ is~$c_2$ of the tautological representation $V$ of $\SU(2)$, and the integer cohomology ring of $B\mathrm{U}(1)$ is isomorphic to $\bZ[b]$ where $b$ in degree~$2$ denotes the first Chern class of the tautological one-dimensional representation. The restriction of the $n$th symmetric power of $V$ to a maximal torus $\mathrm{U}(1) \subset \mathrm{SU}(2)$ splits as a sum of $1$-dimensional representations of weights
\begin{gather*}
n, (n-2),(n-4),\ldots,(-n+2),(-n).
\end{gather*}
Thus, the total Chern class of $S^n(V)$ can be written as
\begin{gather*}
c_t(S^n(V)) := \big(1+c_2(S^n(V))t^2 + c_4(S^n)t^4 + \cdots\big) = (1+nbt)(1+(n-2)bt)\cdots(1-nbt).
\end{gather*}
In particular
\begin{gather*}
c_2(V) = -b^2, \qquad c_2\big(S^2(V)\big) = -4b^2, \qquad c_2\big(S^3(V)\big) =-10b^2, \\ c_2\big(S^4(V)\big) = -20b^2, \qquad c_2\big(S^5(V)\big) = -35b^2.
\end{gather*}
This explains six of the nine Chern classes reported in the Lemma. It remains to compute $c_2(\pi^\circ)$, $c_2\big(S^2(\pi^\circ)\big)$ and $c_2(\pi \otimes \pi^\circ)$. We will need a concrete description of the outer automorphism of~$\SL(2,5)$.

Let $o\colon \SL(2,5) \to \SL(2,5)$ denote the conjugation by the diagonal matrix
\begin{gather*}
\left(
\begin{matrix}
2 \\ & 1
\end{matrix}
\right) \in \GL(2,5).
\end{gather*}
Then $\pi^{\circ}$ is the composite of $\pi$ with $o$. We claim that $o$ acts as multiplication by $49$ on $\H^4(\SL(2,5),\bZ)$. To see this, make the following observations:
\begin{enumerate}\itemsep=0pt
\item There are six subgroups of order $5$ in $\SL(2,5)$. All of them are conjugate to each other and two of them are preserved by $o$.
Writing $H_5$ for either one of these two $o$-fixed cyclic subgroups, the action of~$o$ on~$\H^2(H_5;\bZ) \cong \bZ/5$ is multiplication by a primitive $4$th root of~$1$ (in $\bF_5$)~-- the action on $\H^4(H_5;\bZ) = \H^2(H_5;\bZ)^{\otimes 2}$ is therefore by~$-1$.
\item There are five subgroups of order $24$ in $\SL(2,5)$, all of them conjugate to each other. Exactly one of them~-- call it $H_{24}$~-- is preserved by $o$. The action of $o$ on $H_{24}$ coincides with the conjugation action of an element $x \in \SU(2)$, in particular $o$ acts as the identity on $\H^4(H_{24};\bZ) \cong \bZ/24$.
\end{enumerate}
The restriction maps $\H^4(\SL(2,5);\bZ) \to \H^4(H_5;\bZ) \oplus \H^4(H_{24};\bZ)$ is an isomorphism. The number~$49$ arises as the unique solution to $49 = -1 \pmod {5}$ and $49 = 1 \pmod {24}$. It follows that $c_2(\pi^{\circ}) = 49 c$ and $c_2(S^2(\pi^\circ)) = 49 \cdot 4c = 76c$. To compute $c_2(\pi \otimes \pi^{\circ})$, we note that $\pi \otimes \pi^{\circ}$ is isomorphic to its conjugate by $o$, so its restriction to $\H^4(H_5;\bZ)$ vanishes, while $\pi \otimes \pi^{\circ}$ and $1+ \Sym^2(\pi)$ have the same restriction to $H_{24}$; the number $100$ arises as the unique solution to $100 = 0 \pmod {5}$ and $100 = 4 \pmod {24}$.
\end{proof}

\begin{Proposition} \label{lem:J2} $\H^4(2\rJ_2;\bZ)\cong \bZ/120$ is generated by $c_2(V)$, where $V$ is either $6$-dimensional irrep of $2\rJ_2$. For one $($but not the other$)$ of the two conjugacy classes of $\SL(2,5)$-subgroups of~$2\rJ_2$, the restriction map $\H^4(2\rJ_2;\bZ) \to \H^4(\SL(2,5);\bZ)$ is an isomorphism. The outer automorphism of $2\rJ_2$ acts by multiplication by $49$ on $\H^4(2\rJ_2;\bZ)$.
\end{Proposition}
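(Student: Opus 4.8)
The plan is to leverage the already-known computation $\H_3(2\rJ_2) \cong \bZ/120$, which by the universal coefficient theorem (together with finiteness of $\H_4$) gives $\H^4(2\rJ_2;\bZ) \cong \bZ/120$ as an abstract group; everything then reduces to pinning down a generator and the relevant restriction maps. The engine for all three assertions is restriction to the two conjugacy classes of subgroups $\SL(2,5) \subseteq 2\rJ_2$, whose degree-$4$ cohomology is completely described by Lemma~\ref{mckay e8}: it is cyclic of order $120$, generated by $c = c_2(\pi)$, the Chern classes of all nine irreps are tabulated there, and the outer automorphism $o$ of $\SL(2,5)$ acts by multiplication by $49$. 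Note also that $\SL(2,5)$ is perfect with trivial multiplier, so $\H^2(\SL(2,5);\bZ) = 0$ and every irrep has $c_1 = 0$; the Whitney sum formula therefore has no cross terms.

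First I would fix one embedding, call it $C_1$, and decompose $V|_{C_1}$ into $\SL(2,5)$-irreps; the class fusion needed for this is recorded in GAP's character table library. Reading off the summands' Chern classes from Lemma~\ref{mckay e8} and adding them, I obtain $c_2(V)|_{C_1} = m\,c$ for an explicit $m$, and the point is that for this embedding $m$ is a unit modulo $120$. Two conclusions follow at once: since $c_2(V)|_{C_1}$ has order $120$, so does $c_2(V) \in \H^4(2\rJ_2;\bZ) \cong \bZ/120$, whence $c_2(V)$ generates; and the restriction $\H^4(2\rJ_2;\bZ) \to \H^4(\SL(2,5);\bZ)$ is a surjection of cyclic groups of equal order, hence an isomorphism. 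For the second embedding $C_2$, the same procedure with its (different) fusion gives $c_2(V)|_{C_2} = m'\,c$ with $m'$ not a unit --- I expect $V|_{C_2}$ to stay irreducible as the $6$-dimensional $S^5(\pi)$, so that $m' = 35$ and $\gcd(35,120) = 5$ --- and therefore the restriction to $C_2$ is not surjective and cannot be an isomorphism. That the two embeddings must behave differently is also forced abstractly: were the outer automorphism to swap the two classes it would conjugate one restriction map into the other, but it acts on $\H^4(2\rJ_2;\bZ)$ by a unit and so cannot convert an isomorphism into a non-isomorphism; hence it fixes each class, and the two restrictions are genuinely independent.

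For the outer automorphism I would use that it interchanges the two $6$-dimensional irreps $V$ and $V'$ of $2\rJ_2$ (visible in the character table), so that $\mathrm{out}^* c_2(V) = c_2(V')$. Transporting along the isomorphism $\mathrm{res}_{C_1}$ from the previous step, the scalar by which $\mathrm{out}^*$ acts on $\H^4(2\rJ_2;\bZ)$ is the ratio $c_2(V')|_{C_1}/c_2(V)|_{C_1}$, which makes sense because $c_2(V)|_{C_1}$ is a unit. Since $\mathrm{out}$ fixes the class $C_1$, its restriction there is an automorphism of $\SL(2,5)$, and one checks from the fusion data that this restriction is the outer $o$; consequently $V'|_{C_1}$ is the $o$-twist of $V|_{C_1}$, Lemma~\ref{mckay e8} multiplies each summand's Chern class by $49$, and the ratio is $49$, as claimed.

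The main obstacle is the bookkeeping in the middle step: correctly reading off the class fusion $\SL(2,5) \to 2\rJ_2$ for each of the two non-conjugate embeddings, and thereby obtaining the right branching of $V$ (and of $V'$). Everything downstream --- generation of $\H^4$ by $c_2(V)$, the isomorphism for $C_1$, its failure for $C_2$, and the factor $49$ --- is then a short arithmetic consequence of Lemma~\ref{mckay e8}; the only genuine input is getting these decompositions right from the character-table data.
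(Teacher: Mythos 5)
Your proposal is correct and essentially reproduces the paper's proof: the paper likewise combines the HAP computation $\H_3(2\rJ_2) \cong \bZ/120$ with restriction to the two conjugacy classes of $\SL(2,5)$-subgroups and the Chern-class table of Lemma~\ref{mckay e8}, finding $V|_{\SL(2,5)_b} \cong \pi \oplus S^3(\pi)$ with $c_2(V)|_{\SL(2,5)_b} = 11c$ (a unit mod $120$, which yields both the generator claim and the isomorphism) and then deducing the factor $49$ exactly as you do, via the fact that the outer automorphism of $2\rJ_2$ restricts to the outer automorphism $o$ of $\SL(2,5)$. Your one guessed datum is off: for the other class the fusion gives $V|_{\SL(2,5)_a} \cong \pi \oplus 2\pi^{\circ}$, so the restriction there is $99c$, of order $40$, rather than the irreducible $S^5(\pi)$ with $35c$ --- but since $99$ is also a non-unit mod $120$, this discrepancy affects none of your conclusions.
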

\begin{proof}

Let $V$ and $V'$ denote the two $6$-dimensional irreps of $2\rJ_2$. They are exchanged by the outer automorphism of $2\rJ_2$. Let us write $\SL(2,5)_a \subseteq 2\rJ_2$ and $\SL(2,5)_b \subseteq 2\rJ_2$ for representatives of the two conjugacy classes of $\SL(2,5)$-subgroups. The representations $V$ and $V'$ restrict as
 \begin{gather*} V|_{\SL(2,5)_a} \cong \pi \oplus 2\pi^{\circ}, \qquad V'|_{\SL(2,5)_a} \cong \pi^{\circ} \oplus 2\pi, \\
 V|_{\SL(2,5)_b} \cong \pi \oplus S^3(\pi), \qquad V'|_{\SL(2,5)_b} \cong \pi^{\circ} \oplus S^3(\pi).\end{gather*}
 It follows that the outer automorphism of $2\rJ_2$ restricts along either embedding $\SL(2,5) \subseteq 2\rJ_2$ to the outer automorphism of $\SL(2,5)$. Moreover, $c_2(\pi \oplus 2\pi^{\circ}) = c_2(\pi) + 2c_2(\pi^{\circ}) = 99c$, which has order $40$ in $\bZ/120$, but $c_2\big(\pi \oplus S^3(\pi)\big) = c_2(\pi) + c_2(S^4(\pi)) = 11 c$ has order $120$. Thus $\H^4(2\rJ_2;\bZ) \to \H^4(\SL(2,5)_b;\bZ) \cong \bZ/120$ is surjective, and hence an isomorphism
 given the HAP computation. Since the outer automorphism acts by multiplication by $49$ on $\H^4(\SL(2,5)_b;\bZ)$, it must also act by multiplication by $49$ on $\H^4(2\rJ_2;\bZ)$.
\end{proof}

\subsection{Conway groups}

In \cite[Theorems 0.1 and 5.3]{jft} we showed that
\begin{gather*} \H^4(\Co_1) \cong \bZ/12, \qquad \H^4(2.\Co_1) \cong \bZ/24,\end{gather*}
and that these groups are generated by the fractional Pontryagin classes of the $276$- and $24$-dimensional representations, respectively. Let us denote the $24$-dimensional real representation of $2.\Co_1$ by the name $\Leech$. The second and third Conway groups $\Co_2$ and $\Co_3$ are subgroups of~$2.\Co_1$, and so $\Leech$ restricts to representations of each (where it splits as a trivial representation plus a $23$-dimensional irrep).

\begin{Theorem}\label{thm.co2} $\H^4(\Co_2) \cong \bZ/4$ is generated by the restriction of $\frac{p_1}2(\Leech)$.
\end{Theorem}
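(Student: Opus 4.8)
\emph{Strategy.} Since $\H_2(\Co_2)$ is trivial, $\H^4(\Co_2;\bZ) \cong \H_3(\Co_2)$, so it suffices to show this group is $\bZ/4$. The order of $\Co_2$ is $2^{18}\cdot 3^6 \cdot 5^3 \cdot 7 \cdot 11 \cdot 23$, and I would compute $\H^4(\Co_2;\bZ)$ one prime at a time following Section~\ref{sec:methods}: show the odd-primary parts vanish, then pin the $2$-part to $\bZ/4$ by squeezing an LHS upper bound against the lower bound coming from $\frac{p_1}2(\Leech)$.

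\emph{Odd primes.} The maximal subgroup $\mathrm{McL}\subset\Co_2$ has order $2^7\cdot 3^6\cdot 5^3\cdot 7\cdot 11$ and therefore contains a full Sylow $p$-subgroup of $\Co_2$ for each $p\in\{3,5,7,11\}$. Since $\H_3(\mathrm{McL})=1$ (computable by HAP, as recorded in the introduction), we have $\H^4(\mathrm{McL})=0$, and Lemma~\ref{transfer restriction} then gives an injection $\H^4(\Co_2)_{(p)}\hookrightarrow \H^4(\mathrm{McL})_{(p)}=0$ for these four primes at once. The only remaining odd prime is $p=23$, whose Sylow is $\bZ/23$; as $\Co_2$ has only two classes of order $23$ and $(23-1)/2=11$, Lemma~\ref{large primes} disposes of it. Hence $\H^4(\Co_2;\bZ)$ is a $2$-group.

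\emph{The $2$-part.} The full $2$-Sylow lies inside the maximal subgroup $S = 2^{1+8}_+{:}\Sp_6(\bF_2)$, so by Lemma~\ref{transfer restriction} I would bound $\H^4(\Co_2)_{(2)}$ by $\H^4(S)_{(2)}$. Here $E=2^{1+8}_+$ is extraspecial with $m=4$, so its cohomology as an $\Sp_6(\bF_2)$-module is supplied by Section~\ref{sub.extraspecial 2group}, while $\H^4(\Sp_6(\bF_2))_{(2)}=\bZ/2\oplus\bZ/4$ by Lemma~\ref{lemma:Sp6}. Feeding the module structure of $\H^{\leq 4}(2^{1+8}_+)$ into Cohomolo to evaluate the twisted terms $\H^2(J;\H^2E)$, $\H^1(J;\H^3E)$ and $\H^0(J;\H^4E)$ produces the $E_2$ page, hence an upper bound for $\H^4(S)_{(2)}$. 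To sharpen the bound to $\bZ/4$ I would invoke the trivial-intersection refinement of Section~\ref{sec:methods}, restricting generators of $\H^4(\Sp_6(\bF_2))$ and of $\H^4(\Co_2)$ to cyclic subgroups of $S$ (with fusion read from GAP's character-table library) to show no class can satisfy both families of constraints. For the matching lower bound, note that $\Co_2\subset 2.\Co_1$, so $\Leech$ restricts to a $24$-dimensional real $\Co_2$-module which is Spin because $\H^2(\Co_2;\bZ/2)=0$; its class $\frac{p_1}2(\Leech)$ is the proposed generator. Restricting to a suitable cyclic subgroup $\langle g\rangle$ with $4\mid|g|$, I would read off the spectrum of $g$ on $\Leech$ and apply the Whitney-sum formula of Section~\ref{subsec:charclasses} to $\frac{p_1}2=-c_2$ of the half-complexification, exhibiting a class of order $4$ in $\H^4(\langle g\rangle)$; equivalently one restricts the generator of $\H^4(2.\Co_1)\cong\bZ/24$ from~\cite{jft}. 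Combined with the upper bound this forces $\H^4(\Co_2)\cong\bZ/4$, generated by $\frac{p_1}2(\Leech)$.

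\emph{Main obstacle.} The delicate point is the $2$-primary upper bound: assembling the correct $\Sp_6(\bF_2)$-module structure on $\H^{\leq 4}(2^{1+8}_+)$, computing the twisted cohomology in Cohomolo, and resolving the LHS differentials and extension problems tightly enough that the bound lands on $4$ rather than leaving an extra factor of $2$. Everything else (the odd primes via $\mathrm{McL}$ and Lemma~\ref{large primes}, and the order-$4$ lower bound via $\frac{p_1}2(\Leech)$) is routine by comparison.
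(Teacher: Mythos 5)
Your framework is the paper's (prime-by-prime reduction, Sylow-containing maximal subgroups, LHS spectral sequence sharpened by a trivial-intersection argument, characteristic-class lower bound), and two of your three steps are essentially the paper's proof. The odd primes do die via $\mathrm{McL}$ together with Lemma~\ref{large primes} (the paper splits the primes slightly differently, using Lemma~\ref{large primes} for all $p\geq 7$ and $\mathrm{McL}$ only for $p=3,5$, but your version is equally valid), and your lower bound is what the paper does, made concrete: class $4\rg\in\Co_2$ has Frame shape $4^6$ on $\Leech$, so $\frac{p_1}2(\Leech)$ restricts to $\langle 4\rg\rangle$ with order $4$ by \cite[Theorem~7.1]{jft}.

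The gap is the $2$-primary upper bound, where you chose a different $2$-local subgroup than the paper, and the choice is not cosmetic: the paper uses $2^{10}{:}(\rM_{22}{:}2)$, not $2^{1+8}_+{:}\Sp_6(\bF_2)$, for reasons that threaten your route. First, for your subgroup the entry $E_2^{04}=\H^0\big(\Sp_6(\bF_2);\H^4\big(2^{1+8}_+\big)\big)$ has coefficients in the group $\big(E^*.\Alt^2(E^*).\Alt^3(E^*)/E^*\big).2$ of Section~\ref{sub.extraspecial 2group}, which has exponent $4$ and so is not an $\bF_2$-module; Cohomolo cannot evaluate it directly, one must chase long exact sequences, and pinning down the exact answer (rather than the bound ``$2$ or $4$'') is precisely the computation the paper attempted and could not complete in the two parallel extraspecial situations $2^{1+6}\cdot\SWeyl(\rE_6)$ (Lemma~\ref{lem:2or4}) and $2^{1+24}\cdot\Co_1$ (Section~\ref{sec:monster}). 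Second, your budget has no slack: since your extension splits, $\H^4(\Sp_6(\bF_2))_{(2)}\cong\bZ/2\oplus\bZ/4$ is a direct summand of $\H^4(S)_{(2)}$, so for the squeeze to close you need the entire complementary contribution of $E_\infty^{04}$, $E_\infty^{13}$ (coefficients $\Alt^2(E^*)/B_Q$) and $E_\infty^{22}$ (coefficients $E^*$) to have order exactly $4$; any surviving extra $\bZ/2$ in any of these three terms, e.g., a nonzero $\H^2(\Sp_6(\bF_2);E^*)$, destroys the argument. Third, your trivial-intersection step must exclude a group of exponent $4$, so restriction to involutions cannot suffice; you need order-$4$ fusion analysis, and even the truth of the needed statement depends on an unverified fact, namely that $\frac{p_1}2(\Leech)$ restricts to $2^{1+8}_+$ with order $4$ (if that restriction had order $2$, then twice the generator would vanish on the normal subgroup and could lie in the inflated image). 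The paper's choice dissolves all three problems at once: $E=2^{10}$ is elementary abelian, so $\H^{\leq 4}(E)$ consists of honest $\bF_2[\rM_{22}{:}2]$-modules (Lemma~\ref{elemab}); the Frame-shape fact that $\frac{p_1}2(\Leech)|_E\neq 0$ forces $E^*\cong V_b$ and hence $E_2^{04}=E_2^{13}=\bZ/2$, $E_2^{22}=0$; and $\H^4(\rM_{22}{:}2)_{(2)}\cong(\bZ/2)^2$ has exponent $2$, so it is detected on the three involution classes, whose lifts are $\Co_2$-conjugate into $E$ where inflated classes vanish, making the trivial intersection a one-line fusion argument. As written, the step you defer as the ``main obstacle'' is exactly where the proof lives, and the paper's own experience indicates your subgroup may not let you finish it.
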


\begin{proof} In \cite[Theorem 7.1]{jft} we gave a formula for $\frac{p_1}2(\Leech)|_{\langle g\rangle}$ for all elements $g \in 2.\Co_1$ in terms of the Frame shape of $g$ in the $\Leech$ representation. (Introduced by Frame in~\cite{MR0269751} to study the $\rE_8$ Weyl group, Frame shapes encode the characteristic polynomials of lattice-preserving orthogonal matrices.)
 The conjugacy class $4\rg \in \Co_2$ has Frame shape $4^6$, and so $\frac{p_1}2(\Leech)$ restricts with order $4$ to this conjugacy class. This gives the lower bound $\H^4(\Co_2) \geq 4$.

For the upper bound, Lemma~\ref{large primes} handles the primes $\geq 7$. For the primes $3$ and $5$, we note that $\Co_2$ contains a subgroup isomorphic to $\mathrm{McL}$, which in turn contains the $3$- and $5$-Sylows. Since $\H^4(\mathrm{McL}) = 0$ (see Section~\ref{sec:McL}), we learn that $\H^4(\Co_2)_{(p)} = 0$ for $p$ odd.

 It remains to give an upper bound for the $2$-part of $\H^4(\Co_2)$. The $2$-Sylow in $\Co_2$ is contained in a subgroup isomorphic to $2^{10}:(\mathrm{M}_{22}:2)$, where $E = 2^{10}$ is an irreducible $(\rM_{22}:2)$-module over $\bF_2$. The subgroup $E$ contains elements with Frame shape~$2^{12}$. By \cite[Theorem~7.1]{jft}, $\frac{p_1}2(\Leech)$ restricts nontrivially to such an element, and so $\frac{p_1}2(\Leech)|_E \in \H^0\big(\rM_{22}:2; \H^4(E)\big)$ is nonzero. There are two irreducible $10$-dimensional $(\rM_{22}:2)$-modules over $\bF_2$, which we will call $V_a$ and $V_b$, where the letters ``$a$'' and ``$b$'' match the notation in~\cite{ATLASonline}. They enjoy
 \begin{gather*}
 \H^0(\mathrm{M}_{22}:2; V_a) = \H^0\big(\mathrm{M}_{22}:2; \Alt^2(V_a)\big) = \H^0\big(\mathrm{M}_{22}:2; \Alt^3(V_a)\big) = 0, \\
 \H^0(\mathrm{M}_{22}:2; V_b) = \H^0\big(\mathrm{M}_{22}:2; \Alt^2(V_b)\big) = 0, \qquad \H^0\big(\mathrm{M}_{22}:2; \Alt^3(V_b)\big) \cong \bZ/2.
 \end{gather*}
 Since $\H^4(E) \cong E^*.\Alt^2(E^*).\Alt^3(E^*)$ by Lemma~\ref{elemab}, the only way for $\H^4(E)$ to have a nontrivial $(\mathrm{M}_{22}:2)$-fixed point is if $E^* \cong V_b$.

 With this isomorphism in hand, we can compute the $E_2$ page of the LHS spectral sequence for $E\colon (\rM_{22}:2)$:
 \begin{gather*}
 \begin{array}{ccccc}
 2 \\
 0 & 2 & 2^2 \\
 0 & 0 & 0 \\
 0 & 0 & 0 & 0 & 0 \\ \hdashline
 \bZ & 0 & 2 & 2 & 2^2
 \end{array}
 \end{gather*}
 The bottom row is computed by HAP, and the middle rows by Cohomolo. The dashed line reminds that the extension $E\colon (\rM_{22}:2)$ splits, and so $\H^4(\rM_{22}:2)$ is a direct summand of $\H^4(E\colon (\rM_{22}:2))$.

To complete the proof, it suffices to show that $\H^4(\Co_2) \to \H^4(E:\rM_{22}:2)$ and $\H^4(\rM_{22}:2) \to \H^4(E:\rM_{22}:2)$ have trivial intersection. There are three conjugacy classes of order $2$ in $\rM_{22}:2$, with cycle structures $1^6 2^8$, $1^8 2^7$, and $2^{11}$ in the degree-$22$ permutation representation. Together, these three classes detect $\H^4(\rM_{22}:2)$: if $\alpha \in \H^4(\rM_{22}:2)$ is nonzero, then there is an element $g \in\rM_{22}:2$ of order $2$ such that $\alpha|_{\langle g\rangle} \neq 0$. (Indeed, the images of $\H^4(2) \to \H^4(\rM_{22}:2)$ and $\H^4(\rM_{24}) \to \H^4(\rM_{22}:2)$ are transverse, and one can quickly compute the restrictions of their images to the three elements of order $2$.)

Given $\alpha \neq 0 \in \H^4(\rM_{22}:2)$, choose $g \in\rM_{22}:2$ of order $2$ such that $\alpha|_{\langle g\rangle} \neq 0$. Choose also an order-2 lift $\tilde g$ of $g$ in $E:\rM_{22}:2$, and let $\tilde\alpha \in \H^4(E:\rM_{22}:2)$ denote the pullback of $\alpha$. Then $\tilde\alpha|_{\langle\tilde g\rangle} = \alpha|_{\langle g\rangle} \neq 0$. But $\Co_2$ has only three conjugacy classes of order $2$, distinguished by their traces on $\Leech$, and all three classes meet $E$. Since $\tilde\alpha|_{E} = 0$, we find that $\tilde\alpha$ takes different values on conjugate-in-$\Co_2$ elements, and so cannot be the restriction of a class in $\H^4(\Co_2)$. This completes the proof that $\H^4(\Co_2) \cong \bZ/4$.
\end{proof}

\begin{Theorem}\label{thm.co3}
 $\H^4(\Co_3) \cong \bZ/6$ is generated by the restriction of $\frac{p_1}2(\Leech)$.
\end{Theorem}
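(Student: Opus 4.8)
The plan is to follow the template of Theorem~\ref{thm.co2}, computing $\H^4(\Co_3)$ one prime at a time, using $\H^4(\Co_3) \cong \H_3(\Co_3)$ (as $\H_2(\Co_3) = 0$) so that the target is $\bZ/2 \oplus \bZ/3$. The class $\frac{p_1}{2}(\Leech)$ makes sense by restricting along $\Co_3 \subseteq 2.\Co_1$, and the formula of \cite[Theorem~7.1]{jft} computes its restriction $\frac{p_1}{2}(\Leech)|_{\langle g\rangle}$ to any cyclic subgroup from the Frame shape of $g$. First I would produce the lower bound: restricting to a suitable element of order $3$ shows $\frac{p_1}{2}(\Leech)$ has order divisible by $3$, and restricting to a $2$-element shows it has order divisible by $2$. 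Together these give $\H^4(\Co_3) \geq \bZ/6$ and show $\frac{p_1}{2}(\Leech)$ has order $6$, so once the upper bound matches, this class is automatically a generator.

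For the primes $p \geq 5$ the upper bound is immediate. The subgroup $\mathrm{McL} \subseteq \Co_3$ contains Sylow subgroups at $p = 5, 7, 11$, since $|\mathrm{McL}| = 2^7 3^6 5^3 \cdot 7 \cdot 11$ already accounts for the full $5$-, $7$-, and $11$-parts of $|\Co_3|$; as $\H^4(\mathrm{McL}) = 0$ (Section~\ref{sec:McL}), Lemma~\ref{transfer restriction} forces $\H^4(\Co_3)_{(p)} = 0$ for these primes. The prime $p = 23$ has cyclic Sylow and only two classes of order $23$, fewer than $(23-1)/2 = 11$, so Lemma~\ref{large primes} kills it. Thus only $p = 2, 3$ contribute.

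For $p = 3$ I would take the $3$-local maximal subgroup $S = 3^5 : (2 \times \rM_{11})$, which contains the full $3$-Sylow $3^7$, and run the LHS spectral sequence with $E = 3^5$ elementary abelian and $J = 2 \times \rM_{11}$. Here $\H^4(J)_{(3)} = 0$, because $\H^4(\rM_{11}) = \bZ/8$ has trivial $3$-part. The central involution of $J$ acts on $E = 3^5$ by $-1$, hence through the nontrivial character $\bZ/2 \to \bF_3^\times$ on $\H^2(E) = E^*$ and on $\Alt^3(E^*) \subset \H^4(E)$; so Lemma~\ref{lemma:central character} annihilates $\H^2(J;\H^2(E))$ and the $\Alt^3$-summand of $\H^0(J;\H^4(E))$. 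What survives in total degree $4$ is then governed by $\H^0(J;\Sym^2(E^*))$ and $\H^1(J;\Alt^2(E^*))$ (on which $-1$ acts trivially, so the last reduces to $\H^1(\rM_{11};\Alt^2(E^*))$), which a Cohomolo computation should pin to a single $\bZ/3$. This gives the upper bound $\H^4(\Co_3)_{(3)} \leq \bZ/3$, matching the lower bound.

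The prime $p = 2$ is the main obstacle. The $2$-Sylow $2^{10}$ is contained in the Dempwolff--Alperin subgroup $2^4 \cdot A_8 = 2^4 \cdot \GL_4(2)$ (and in the involution centralizer $2.\Sp_6(2)$), so by Lemma~\ref{lemma:Dempwolff} we get an injection $\H^4(\Co_3)_{(2)} \hookrightarrow \H^4(2^4 \cdot A_8)_{(2)} = \bZ/2 \oplus \bZ/4$ onto a direct summand (or, via Lemma~\ref{lemma:Sp6}, into $\H^4(2.\Sp_6(2))_{(2)} = \bZ/2 \oplus \bZ/8$). The lower bound already pins a $\bZ/2$, so the remaining task is purely to exclude the order-$4$ (resp. order-$8$) classes. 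I would do this by a fusion/transversality argument as in Theorem~\ref{thm.co2}: restrict the offending generator to cyclic $2$-subgroups and exhibit a pair of elements of the $2$-local subgroup that are conjugate in $\Co_3$ but on which the class takes different values, so it cannot be the restriction of any class in $\H^4(\Co_3)$. The delicate point, and where I expect the real work to be, is the bookkeeping of the fusion of $2$-elements (read from GAP's character table library) together with the explicit cyclic restrictions computed from Frame shapes and the Whitney formula, so as to verify that only the $\bZ/2$ generated by $\frac{p_1}{2}(\Leech)$ is $\Co_3$-stable. Assembling the three primary parts then yields $\H^4(\Co_3) \cong \bZ/6$, generated by $\frac{p_1}{2}(\Leech)$.
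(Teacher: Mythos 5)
Your architecture matches the paper's almost exactly: the lower bound comes from the Frame-shape formula of \cite[Theorem~7.1]{jft} (though you never name the witnessing elements; the paper uses the single class $6\re$, with Frame shape $6^4$, to get order exactly $6$ in one stroke), the primes $p\geq 5$ are handled by $\mathrm{McL}$ together with Lemma~\ref{large primes}, and at $p=3$ you run the LHS spectral sequence for $3^5:(2\times\rM_{11})$ (the subgroup the paper prints as ``$3^{11}:(2\times\rM_{11})$'' with ``$11$-dimensional'' modules, evidently a misprint for what you wrote). Your central-character analysis correctly isolates $\H^0\big(J;\Sym^2(E^*)\big)$ and $\H^1\big(J;\Alt^2(E^*)\big)$ as the only possible contributions; the paper's machine computation finds these to be $0$ and $\bZ/3$ respectively (running both dual $5$-dimensional modules and discarding the one inconsistent with the lower bound), so your expectation of a single $\bZ/3$ is confirmed and this part of your plan is sound.

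The genuine gap is at $p=2$, and it is a gap of omission. You correctly reduce, via Lemma~\ref{transfer restriction} applied to the two $2$-Sylow-containing subgroups $2^4\cdot\GL_4(\bF_2)$ and $2\cdot\Sp_6(\bF_2)$, to the statement that $\H^4(\Co_3)_{(2)}$ is isomorphic to a direct summand of $\bZ/2\oplus\bZ/4$ and also to a direct summand of $\bZ/2\oplus\bZ/8$; but you then defer the exclusion of the larger summands to an unexecuted fusion/transversality analysis, which you yourself flag as ``the real work.'' As written, the $2$-primary upper bound is therefore not established, and the feasibility of that fusion argument (which classes of $\H^4\big(2^4\cdot\GL_4(\bF_2)\big)$ actually distinguish $\Co_3$-conjugate elements) is never checked. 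The point you are missing is that no further work is needed: the direct summands of $\bZ/2\oplus\bZ/4$ are $0$, $\bZ/2$, $\bZ/4$, and $\bZ/2\oplus\bZ/4$, while those of $\bZ/2\oplus\bZ/8$ are $0$, $\bZ/2$, $\bZ/8$, and $\bZ/2\oplus\bZ/8$; the only groups on both lists are $0$ and $\bZ/2$. Hence the two constraints you already derived force $\H^4(\Co_3)_{(2)}\subseteq\bZ/2$ outright --- this is exactly how the paper concludes, using the two $2$-local subgroups simultaneously rather than either one plus a fusion argument. Inserting that one observation closes your proof.
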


\begin{proof}The conjugacy class $6\re \in \Co_3$ has Frame shape $6^4$ in the Leech representation. It follows from \cite[Theorem 7.1]{jft} that $\frac{p_1}2(\Leech)|_{\langle 6\re\rangle}$ has order $6$, giving the claimed lower bound $\H^4(\Co_3) \geq 6$.
Lemma~\ref{large primes} handles the primes $\geq 7$, and $\Co_3$ contains a copy of $\mathrm{McL}$, which contains the $5$-Sylow.

The $3$-Sylow in $\Co_3$ is contained in a subgroup of shape $3^{11}:(2 \times \rM_{11})$. There are two irreducible $11$-dimensional representations of $\rM_{11}$ over $\bF_3$, dual to each other. They lead to LHS spectral sequences with $E_2$ pages
\begin{gather*}
 \begin{array}{ccccc}
 0 \\
 0 & 0 & 0 \\
 0 & 0 & 0 \\
 0 & 0 & 0 & 0 & 0 \\
 \bZ & 0 & 2 & 0 & 2 \times 8
 \end{array}
 \qquad \text{and} \qquad
 \begin{array}{ccccc}
 0 \\
 0 & 3 & 3 \\
 0 & 0 & 0 \\
 0 & 0 & 0 & 0 & 0 \\
 \bZ & 0 & 2 & 0 & 2 \times 8
 \end{array}
\end{gather*}
Only the latter of these is consistent with the lower bound $\H^4(\Co_3) \geq 3$, and provides the desired upper bound {on $\H^4(\Co_3)_{(3)}$}.

To complete the proof we must verify that $\H^4(\Co_3)_{(2)} \leq 2$. The $2$-Sylow in $\Co_3$ is contained in three maximal subgroups: one the form $2^4 \cdot \GL_4(\bF_2)$, one of the form $2 \cdot \Sp_6(\bF_2)$, and one of order $2^{10}.3^3$. Lemma~\ref{lemma:Dempwolff}(2) and Lemma~\ref{lemma:Sp6} give
\begin{gather*}
 \H^4\big(2^4 \cdot \GL_4(\bF_2);\bZ\big) = \bZ/2 \oplus \bZ/4 \oplus \bZ/3, \\
 \H^4(2 \cdot \Sp_6(\bF_2);\bZ) = \bZ/2 \oplus \bZ/8 \oplus \bZ/3.
\end{gather*}
By Lemma~\ref{transfer restriction}, $\H^4(\Co_3)_{(2)}$ is a direct summand of both $\bZ/2 \oplus \bZ/4$ and of $\bZ/2 \oplus \bZ/8$, which forces $\H^4(\Co_3)_{(2)} \subseteq \bZ/2$.
\end{proof}

\subsection{McLaughlin group} \label{sec:McL}

HAP is able to directly compute
\begin{gather*} \H_3(\mathrm{McL}) = 0 \end{gather*}
 by using the permutation representation of degree $275$. HAP is unable to directly compute $\H_3(3\mathrm{McL})$ because the smallest faithful permutation representation of $3\mathrm{McL}$ has degree 66825. Lemma~\ref{lemma:schur cover} only provides the upper bound $\H_3(3\mathrm{McL}) \leq 3$.
Nevertheless, with some human involvement, we do have:

\begin{Theorem} \label{thm.mcl}
 $\H_3(3\mathrm{McL}) = 0$.
\end{Theorem}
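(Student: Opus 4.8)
The plan is to reduce everything to the $3$-primary part and then to a single Sylow-containing subgroup. The computer already gives $\H_3(\mathrm{McL}) = 0$, equivalently $\H^4(\mathrm{McL};\bZ) = 0$, and since $3\mathrm{McL} \to \mathrm{McL}$ is a central extension by $\bZ/3$ the pullback is an isomorphism on $p$-parts for every prime $p \neq 3$; hence $\H_3(3\mathrm{McL}) = \H^4(3\mathrm{McL};\bZ)$ is concentrated at the prime $3$. By Lemma~\ref{lemma:schur cover} (applied with the odd prime $3$, using $\H_2(\mathrm{McL})_{(3)} = 3$) this $3$-part is cyclic of order $1$ or $3$, and since $\H^4(\mathrm{McL}) = 0$ it coincides with $\coker\big(\H^4(\mathrm{McL}) \to \H^4(3\mathrm{McL})\big)$. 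So the entire problem is to show that this cokernel is trivial.

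Next I would invoke Lemma~\ref{lemma:cokers}. Choose a maximal subgroup $S \subseteq \mathrm{McL}$ containing the full $3$-Sylow of order $3^6$; the natural candidate is the $3$-local subgroup of shape $3^{1+4}{:}2S_5$. Because $S$ contains the Sylow, the restriction $3\mathrm{McL}|_S$ is automatically a nonsplit central $\bZ/3$-extension (Lemma~\ref{transfer restriction} forces the extension class to survive restriction to the Sylow), and one checks the remaining hypothesis $\H^1(S;\bZ/3) = 0$ by a short LHS computation, since $2S_5$ has no $\bZ/3$ quotient and acts without invariants on the Frattini quotient of $3^{1+4}$. Lemma~\ref{lemma:cokers} then yields an injection
\begin{gather*}
\coker\big(\H^4(\mathrm{McL}) \to \H^4(3\mathrm{McL})\big) \hookrightarrow \coker\big(\H^4(S) \to \H^4(3S)\big),
\end{gather*}
so it is enough to prove that the right-hand cokernel vanishes.

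Finally I would compute $\coker\big(\H^4(S) \to \H^4(3S)\big)$. Following the proof of Lemma~\ref{lemma:cokers}, this cokernel is trivial exactly when the differential $d_3\colon E_3^{22} \to E_3^{50}$ in the spectral sequence of $3S \to S$ is nonzero, i.e.\ when $\operatorname{Bock}\big(\alpha^2\big) \neq 0 \in \H^5(S;\bZ)$ for the class $\alpha \in \H^2(S;\bZ/3)$ classifying the extension $3S$. I would extract $\operatorname{Bock}\big(\alpha^2\big)$ from the LHS spectral sequence of the normal extraspecial subgroup $E = 3^{1+4}$, using the $m=2$ case of Lemma~\ref{lem:extraspecial-odd} to describe the $E_2$-page as a $\GSp_4$-module together with Cohomolo for the twisted $2S_5$-cohomology, and then confirm its nonvanishing by restricting along a cyclic or elementary-abelian $3$-subgroup on which the Bockstein is explicit. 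Equivalently, since $S$ and $3S$ are now small, one can feed them directly to HAP or Cohomolo and read the cokernel off.

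The hard part will be this last step, the nonvanishing of $\operatorname{Bock}\big(\alpha^2\big)$. This single Bockstein is precisely what distinguishes $\mathrm{McL}$ (cokernel $0$) from the superficially parallel case of $\Omega_7(3)$ in Corollary~\ref{cor:3o73} (cokernel $3$), so its value is the whole content of the theorem and cannot be obtained from a soft or purely formal argument; it must be pinned down by the explicit module-theoretic or machine computation above.
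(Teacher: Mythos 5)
Your reductions are all correct, and you land on exactly the subgroup the paper uses, but the endgame is genuinely different from the paper's, and your proposal stops precisely at the step you identify as the crux. The paper never invokes Lemma~\ref{lemma:cokers} for $\mathrm{McL}$. Instead it works with the preimage $3S = 3^{2+4}{:}2S_5 \subseteq 3\mathrm{McL}$ of your $S$: a multi-hour HAP run gives $\H_1(3S;\bF_3) = \H_2(3S;\bF_3) = 0$ and $\H_3(3S;\bF_3) = \bZ/3$, so $\H_3(3S)_{(3)}$ is cyclic; since the extension $3^{2+4}{:}2S_5$ splits, $\H_3(2S_5)_{(3)} = \bZ/3$ is a direct summand, forcing $\H^4(3S)_{(3)} \cong \H^4(2S_5)_{(3)} \cong \bZ/3$ with inflation an isomorphism; and then a fusion argument (every order-$3$ class of $3\mathrm{McL}$ meets $3^{2+4}$, while a nonzero inflated class vanishes on $3^{2+4}$ but is detected on an order-$3$ element of the complement $2S_5$) shows no nonzero class of $\H^4(3S)_{(3)}$ can be restricted from $3\mathrm{McL}$. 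Your route via Lemma~\ref{lemma:cokers} (whose hypotheses you check correctly: nonsplitness over $S$ by transfer, and $\H^1(S;\bZ/3)=0$ since $S^{\mathrm{ab}} \cong \bZ/2$) does work, and it has the virtue of making the contrast with Corollary~\ref{cor:3o73} explicit; indeed the paper's own data settles your cokernel, because the generator of $\H^4(3S)_{(3)}$ is inflated from the quotient $2S_5$ of $S$, hence lies in the image of $\H^4(S)$, so $\coker\big(\H^4(S) \to \H^4(3S)\big) = 0$, equivalently $\operatorname{Bock}\big((\alpha|_S)^2\big) \neq 0$. What your route costs is that this cokernel still has to be produced by machine, and as written your proof is a reduction plus a computation plan rather than a finished argument; the paper's route costs one HAP run plus ATLAS fusion data but then closes by hand.

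Two concrete corrections to your plan for the final computation. First, restricting to cyclic subgroups can never verify $\operatorname{Bock}\big((\alpha|_S)^2\big) \neq 0$: on a cyclic $3$-group every class in $\H^2(-;\bZ/3)$ is the reduction of an integral class, hence so is its cup-square, and the integral Bockstein of any reduction vanishes. So of your two suggested test families only elementary abelian subgroups (ones whose preimage in $3S$ is nonabelian, with some order-$9$ lifts) could possibly detect the class; the robust method is your alternative of computing both $\H_3(S)_{(3)}$ and $\H_3(3S)_{(3)}$ directly, since inflation $\H^4(S) \to \H^4(3S)$ is injective when $\H^1(S;\bZ/3)=0$ and the cokernel is then just the ratio of orders. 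Second, your ``exactly when'' (cokernel over $S$ trivial iff $\operatorname{Bock}\big(\alpha^2\big)\neq 0$) tacitly assumes $\H^2(S;\bZ/3)$ is generated by $\alpha|_S$; without that, only the implication you actually need~-- $\operatorname{Bock}\big(\alpha^2\big)|_S \neq 0$ forces $\coker\big(\H^4(\mathrm{McL}) \to \H^4(3\mathrm{McL})\big) = 0$, via Lemma~\ref{transfer restriction} as in the proof of Lemma~\ref{lemma:cokers}~-- is guaranteed. Neither issue breaks the strategy; both are worth fixing before calling it a proof.
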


\begin{proof}
 The computer calculation of $ \H_3(\mathrm{McL})$ leaves only the $3$-part of $\H_3(3\mathrm{McL})$ to be computed. But we can also dispense with the other parts directly.
 The $2$-Sylow in $\mathrm{McL}$ is contained in a maximal subgroup of shape $\rM_{22}$, and $\H_3(\rM_{22})_{(2)} = 0$ by computer calculation (see Proposition~\ref{prop:mathieu}). The $5$-Sylow is contained in a group of shape $5^{1+2}:3:8$, and HAP quickly computes \begin{gather*}\H^4\big(5^{1+2}:3:8\big)_{(5)} = 0.\end{gather*} (The ATLAS contains generators for most maximal subgroups of sporadic groups.)
 Lemma~\ref{large primes} handles the primes $p\geq 7$.

 Only the prime $3$ is left. The $3$-Sylow in $3\mathrm{McL}$ is contained in two maximal subgroups, one of shape $3^5.\rM_{10}$ and the other of shape $3^{2+4}:2S_5$.
 The latter is more useful, and for the remainder of the proof we will call it~$S$.
 The quotient $2S_5$ is the one listed in the ATLAS under the names ``$2S_5\mathrm{i}$'' and ``$\operatorname{Isoclinic}(2.A_5.2)$''; it is the group of shape $2S_5$ that contains elements of order $12$. This $2S_5$ has a faithful 4-dimensional representation over $\bF_3$. The quotient of $S$ in $\mathrm{McL}$ has shape $3^{1+4}:2S_5$, and the ``central'' $3$ is not central, but rather transforms by the sign representation of $2S_5$. In terms of the $4$-dimensional module, it corresponds to a symplectic form on $3^4$ which is $2A_5$- but not $2S_5$-fixed. There is also a symplectic form which is $2S_5$-fixed, and the $3^{2+4}$ subgroup of $S$ extends $3^4$ by both symplectic forms simultaneously.

 After a multi-hour computation, HAP reports
 \begin{gather*} \H_1(S; \bF_3) = \H_2(S; \bF_3) = 0, \qquad \H_3(S; \bF_3) = 3, \end{gather*}
 from which we learn that
 \begin{gather*} \H_1(S)_{(3)} = \H_2(S)_{(3)} = 0, \qquad \H_3(S)_{(3)} \text{ is cyclic.} \end{gather*}
 On the other hand,
 \begin{gather*} \H_3(2S_5)_{(3)} = 3, \end{gather*}
 and since the extension $S = 3^{2+4}:2S_5$ splits, $\H_3(S)_{(3)}$ contains $\H_3(2S_5)_{(3)}$ as a direct summand. Passing to cohomology, we learn that the pullback map
 \begin{gather*} \H^4(2S_5) \to \H^4(S) \end{gather*}
 is an isomorphism.

 There is a unique conjugacy class of order $3$ in $2S_5$, and the restriction map $\H^4(2S_5)_{(3)} \to \H^4(\langle 3\ra\rangle)$ is an isomorphism. Take any element $g \in 2S_5$ of order $3$ and lift it to an order-$3$ element $\tilde g \in S$. Then the composition $\H^4(2S_5)_{(3)} \overset\sim\to \H^4(S)_{(3)} \to \H^4(\langle \tilde g \rangle)$ is an isomorphism. On the other hand, all conjugacy classes of order-$3$ elements in $3\mathrm{McL}$ meet the normal subgroup $3^{2+4} \subseteq S$, and the composition $\H^4(2S_5)_{(3)} \overset\sim\to \H^4(S)_{(3)} \to \H^4\big(3^{2+4}\big)$ is zero.

 Thus the image of $\H^4(2S_5)_{(3)} \overset\sim\to \H^4(S)_{(3)}$ has trivial intersection with the restriction map $\H^4(3\mathrm{McL})_{(3)} \hookrightarrow \H^4(S)_{(3)}$, and so $\H^4(3\mathrm{McL})_{(3)} = 0$.
\end{proof}

\subsection{Suzuki group}\label{sec.suz}

The Schur cover of the Suzuki group is the beginning of a famous sequence of subgroups of $2\Co_1$ centralizing actions of binary alternating groups on the Leech lattice; $6\Suz$ centralizes an action of $2A_3 \cong \bZ/6$, which corresponds to a ``complex structure'' on the Leech lattice. In particular, $6\Suz$ has a conjugate pair of 12-dimensional irreducible complex representations, (either one of) which we will call $V$ throughout this section. The underlying real representation of $V$ is $(\Leech \otimes \bR)|_{6\Suz}$.

The maximal subgroups of $\Suz$ are listed in \cite{MR716777}. The 2-Sylow subgroup of $\Suz$ is contained in {the centralizer of~2a}, a subgroup of shape
\begin{gather*}
2^{1+6}\cdot\SWeyl (\rE_6) \subset \Suz,
\end{gather*}
where $\SWeyl(\rE_6)$ is the index-2 subgroup of the Weyl group that acts with trivial determinant of the reflection representation. The ATLAS calls this group $\SWeyl (\rE_6) = \rU_4(2)$. We write it as a Weyl group to make the action on $2^6$ transparent: it is the mod-$2$ reduction of the action of $\mathrm{Weyl}(\rE_6)$ on the $\rE_6$-lattice. In particular, the quadratic form on $2^6$ associated with the extraspecial group $2^{1+6}$ has Arf invariant $-1$.

\begin{Lemma}\label{lem:inE6}
Let $\mathrm{E}_6^{\mathrm{adj}}$ denote the compact Lie group of adjoint type $\mathrm{E}_6$. There is a homomorphism
\begin{gather*}
2^{1+6}\cdot \SWeyl(\rE_6) \to \mathrm{E}_6^{\mathrm{adj}},
\end{gather*}
whose kernel is the central $\bZ/2$ and which maps $2^{1+6}$ onto the $2$-torsion subgroup of a maximal torus in $\mathrm{E}_6^{\mathrm{adj}}$.
\end{Lemma}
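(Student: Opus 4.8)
The plan is to realize the homomorphism through the normalizer of a maximal torus of $\rE_6^{\mathrm{adj}}$. Fix a maximal torus $T \subset \rE_6^{\mathrm{adj}}$ and set $A := T[2]$, its $2$-torsion subgroup. Since $\rE_6$ is simply laced and of adjoint type, the cocharacter lattice $X_*(T)$ is the coroot lattice $Q^\vee$, so $A \cong \tfrac12 Q^\vee/Q^\vee \cong Q^\vee \otimes \bF_2 \cong \bF_2^6$, and the $\rE_6$ inner product descends to a nondegenerate quadratic form $q(x) = \tfrac12\langle \tilde x,\tilde x\rangle \bmod 2$ on $A$. The Weyl group $W = W(\rE_6)$ acts on $A$ preserving $q$, and because $-1 \notin W(\rE_6)$ this action is faithful; moreover one checks $q$ is of minus type, matching the group $2^{1+6}_-$ of the preceding paragraph. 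Hence $\mathrm{O}(A,q) \cong \mathrm{O}_6^-(2)$, a group of the same order $51840$ as $W(\rE_6)$, so $W(\rE_6) \isom \mathrm{O}_6^-(2)$, and this carries the index-$2$ subgroup $\SWeyl(\rE_6)$ isomorphically onto $\Omega_6^-(2) \cong \rU_4(2)$. This is the point that makes the copy of $\rU_4(2)$ appearing as a quotient of our group and the copy appearing in the torus normalizer act on the common module $A$ in the same way.

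Next I would produce a finite lift of $\SWeyl(\rE_6)$ into $N(T)$. By Tits's construction of the extended Weyl group, the lifts $\dot s_i$ of the simple reflections, normalized so that $\dot s_i^2 = \alpha_i^\vee(-1) \in A$, generate a finite subgroup $\mathcal N \subset N(T)$ fitting in an extension $1 \to A \to \mathcal N \to W \to 1$ with $\mathcal N \cap T = A$ (the simple coroots generate $Q^\vee$, so their reductions generate $A$). Let $\mathcal N_0$ denote the preimage of $\SWeyl(\rE_6)$; it is an extension of $\SWeyl(\rE_6)$ by $A$ of order $2^{12}\cdot 3^4 \cdot 5$. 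On the other side, conjugation of $G := 2^{1+6}\cdot\SWeyl(\rE_6)$ on its normal extraspecial subgroup $E$ gives an injection $G/Z(E) \mono \Aut(E) = \Aut\big(2^{1+6}_-\big)$ with kernel $C_G(E) = Z(E)$ and image the subgroup generated by the inner automorphisms $A$ together with the outer group $\SWeyl(\rE_6) \subset \mathrm{O}_6^-(2)$; thus $G/Z(E)$ is also an extension of $\SWeyl(\rE_6)$ by $A$ of order $2^{12}\cdot 3^4 \cdot 5$, with the same module structure by the previous paragraph. Granting an isomorphism $G/Z(E) \isom \mathcal N_0$ over the identity of $A$ and $\SWeyl(\rE_6)$, the composite $G \epi G/Z(E) \isom \mathcal N_0 \mono N(T) \subset \rE_6^{\mathrm{adj}}$ is the desired homomorphism: its kernel is exactly $Z(E) \cong \bZ/2$, and it carries $E = 2^{1+6}$ onto $A = T[2]$.

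The hard part will be exactly this identification $G/Z(E) \cong \mathcal N_0$, that is, matching the two extension classes in $\H^2(\SWeyl(\rE_6);A)$. Both extensions have the same kernel, quotient, and module, so it suffices to show that $\H^2(\Omega_6^-(2); A)$ (with $A$ the natural module) is small enough that the class is pinned down by a single computable invariant, and that $\mathcal N_0$ and $G/Z(E)$ agree on it. I would compute this cohomology group with Cohomolo and read off the splitting type of $\mathcal N_0$ from the Tits relations $\dot s_i^2 = \alpha_i^\vee(-1)$, which record how the reflection lifts meet $A$, and compare with the known structure of $G = C_{\Suz}(2a)$; alternatively, since both groups have order $2^{12}\cdot 3^4\cdot 5$ and explicit descriptions, one can simply identify them in GAP. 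Everything else — the faithfulness of $W(\rE_6)$ on $A$, the Arf-invariant bookkeeping, and the reading off of the kernel and image — is routine.
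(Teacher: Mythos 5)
Your proposal is correct and takes essentially the same route as the paper: the paper also realizes the homomorphism through Tits's finite subgroup $T[2].W$ of the torus normalizer \cite{MR0206117}, and it settles your ``hard part'' exactly as you sketch, namely via the Cohomolo computation $\H^2\big(\SWeyl(\rE_6);2^6\big)=\bZ/2$ together with nonsplitness of both extensions (confirmed in GAP for the quotient $2^6\cdot\SWeyl(\rE_6)$ of the Suzuki-side group, and quoted from Curtis--Wiederhold--Williams \cite{MR0376956} for the Lie-theoretic side, rather than read off from the relations $\dot s_i^2=\alpha_i^\vee(-1)$, which by themselves do not rule out a complement). One slip worth noting: for the adjoint form the cocharacter lattice is the coweight lattice $P^\vee$, not the coroot lattice $Q^\vee$; this is harmless here because $[P^\vee:Q^\vee]=3$ is odd, so $T[2]\cong Q^\vee\otimes\bF_2$ as $W$-modules in any case.
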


\begin{proof}Let $T.W$ be the normalizer of a maximal torus in a compact Lie group, and $T[2]$ for the $2$-torsion of $T$. A subgroup $T[2].W \subset T.W$ is studied in \cite{MR0206117} and proved to be nonsplit for groups of type $\mathrm{E}_6$ in \cite{MR0376956}.
{The quotient of $2^{1+6}\cdot\SWeyl (\rE_6) \subset \Suz$ by the central $\bZ/2$ is also a nonsplit extension $2^6 \cdot \SWeyl(\rE_6)$, as can be quickly confirmed in GAP. (Indeed, GAP can easily look up the maximal subgroup $2^{1+6}.\SWeyl(\rE_6)$ of $\Suz$ in the ATLAS, compute its quotient $2^6.\SWeyl(\rE_6)$, and work out that there are no nontrivial homomorphisms into it from $\SWeyl(\rE_6)$.)}
We use Cohomolo to compute
\begin{gather*}
\H^2\big(\SWeyl(\rE_6),2^6\big) = \bZ/2,
\end{gather*}
so the two non-split extensions of $\SWeyl(\rE_6)$ by $2^6$ must be isomorphic.
\end{proof}

\begin{Remark}There is similar relationship between a centralizer in $\Co_1$ and the group $2^8\cdot\mathrm{Weyl}(\mathrm{E}_8)$ in the $\mathrm{E}_8$ Lie group, which has some conformal-field theoretic significance. The $\mathrm{E}_8$ Lie group acts on the $\mathrm{E}_8$-lattice VOA, and the simple group $\Co_1$ acts on Duncan's ``supermoonshine'' SVOA of \cite{MR2352133, JohnDuncan-thesis}. In~\cite{JohnDuncan-thesis}, the latter is constructed out of the former in such a way as to give a natural identification between subquotients of $\mathrm{E}_8$ and $\Co_1$ of shape $2^8 \cdot \mathrm{PSWeyl}(E_8)$. (Here $\mathrm{PSWeyl}$ denotes the quotient of $\SWeyl$ by the center, which is nontrivial for Weyl group of $\mathrm{E}_8$.) The homomorphism from Lemma~\ref{lem:inE6} can be constructed by starting with this identification and analyzing $\bZ/3$-centralizers (in $\mathrm{E}_8$ and in $\Co_1$). It would be interesting to find a direct ``moonshine'' construction of $\Suz$ from the $\mathrm{E}_6$ lattice making this homomorphism transparent.
\end{Remark}

Our goal in this section is to prove:

\begin{Theorem}\label{thm:suz} The Suzuki group and its Schur covers have the following fourth cohomology groups:
 \begin{gather*} \H^4(\Suz) = \bZ/4, \qquad \H^4(2\Suz) = \bZ/8, \qquad \H^4(3\Suz) = \bZ/12, \qquad \H^4(6\Suz) = \bZ/24.\end{gather*}
 $\H^4(6\Suz)$ is generated by $c_2(V)$, where $V$ denotes either $12$-dimensional complex irrep.
\end{Theorem}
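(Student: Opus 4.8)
The plan is to compute $\H^4$ one prime at a time, using Lemma~\ref{transfer restriction} to bound each $p$-primary part from above by restriction to a subgroup containing a Sylow $p$-subgroup, and a characteristic class for the lower bound; the four covers are then tied together by Lemmas~\ref{lemma:schur cover} and~\ref{lemma:cokers}. The organizing remark is that a central $\bZ/2$-extension leaves all odd-primary cohomology unchanged and a central $\bZ/3$-extension leaves the $2$-primary cohomology unchanged (the relevant LHS sequence collapses after localization). Hence the $2$- and $3$-parts decouple: writing $a_2=\H^4(\Suz)_{(2)}=\H^4(3\Suz)_{(2)}$, $b_2=\H^4(2\Suz)_{(2)}=\H^4(6\Suz)_{(2)}$, $a_3=\H^4(\Suz)_{(3)}=\H^4(2\Suz)_{(3)}$ and $b_3=\H^4(3\Suz)_{(3)}=\H^4(6\Suz)_{(3)}$, it suffices to find these four groups. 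Since $\H_2(\Suz)=\bZ/6$ and $|\Suz|=2^{13}3^{7}5^{2}\cdot 7\cdot 11\cdot 13$, the primes $7,11,13$ are cyclic with few classes and die for $\Suz$ by Lemma~\ref{large primes}, hence for every cover, while $p=5$ dies by Lemma~\ref{lem:suz 5}; so only $p=2,3$ remain.

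All the lower bounds come from the $12$-dimensional complex irrep $V$ of $6\Suz$, whose underlying real representation is $\Leech|_{6\Suz}$, so that $c_2(V)=-\tfrac{p_1}2(\Leech)$ and I can read off $c_2(V)|_{\langle g\rangle}$ from the Frame-shape formula of \cite[Theorem~7.1]{jft}. First I would exhibit an element $g$ of order $24$ in $6\Suz$ whose restriction generates $\H^4(\langle g\rangle)\cong\bZ/24$, proving $c_2(V)$ has order $24$ and giving $b_2\geq\bZ/8$, $b_3\geq\bZ/3$. On $2\Suz$ the representation $\Leech$ is spin and $\tfrac{p_1}2(\Leech)\in\H^4(2\Suz)$ realizes the order-$8$ $2$-part, producing a cyclic $\bZ/8\subseteq b_2$ with a \emph{named} generator; on $3\Suz$ one may instead use $c_2$ of the $12$-dimensional irrep coming from the complex Leech lattice for $b_3$.

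For the upper bound at $p=3$ I would take the maximal subgroup $3^{5}{:}\rM_{11}$, which contains a $3$-Sylow. Feeding the $\rM_{11}$-module structure of $\H^{\leq4}(3^{5})$ from Lemma~\ref{elemab} into the LHS spectral sequence and evaluating $\H^{1}$ and $\H^{2}$ of $\rM_{11}$ in these twisted coefficients with Cohomolo, the only possible degree-$4$ survivor is a $\Sym^2$-invariant, which I would eliminate exactly as for $\Omega_7(3)$ in Lemma~\ref{lemma:O73}, by restricting to the cyclic subgroups inside $3^{5}$ on which $\Suz$ fuses distinct $\SO_5(\bF_3)$-orbits. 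This gives $a_3=0$, and Lemma~\ref{lemma:schur cover} then forces $b_3\in\{0,\bZ/3\}$; the lower bound makes it $\bZ/3$. The heart of the argument is $p=2$, where the Sylow lies in $C=2^{1+6}\cdot\SWeyl(\rE_6)=2^{1+6}\cdot\rU_4(2)$ (Lemma~\ref{lem:inE6}). I would run the LHS spectral sequence for $C$ using the extraspecial module $\H^{\leq4}(2^{1+6})$ of Section~\ref{sub.extraspecial 2group} (the case $m=3$, with $Q$ of Arf invariant $-1$) together with Cohomolo computations of $\H^{i}(\rU_4(2);-)$, and then sharpen the bound with the transversality trick of Section~\ref{sec:methods}: the images of $\H^4(\rU_4(2))$ and of $\H^4(\Suz)$ in $\H^4(C)$ are forced to meet trivially by comparing restrictions to cyclic subgroups via GAP's fusion data, yielding $a_2\leq\bZ/4$.

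To pass from $a_2$ to $b_2$ I would analyze the LHS spectral sequence of the central extension $2\Suz\to\Suz$ localized at $2$: its only interesting feature in total degree $\leq5$ is the fate of the fiber class $y^{2}$, equivalently whether the integral Bockstein of the square of the class $\alpha\in\H^2(\Suz;\bZ/2)$ classifying $2\Suz$ survives, which I would decide by restriction to $C$; this shows the cokernel of $\H^4(\Suz)_{(2)}\to\H^4(2\Suz)_{(2)}$ has order $2$, so $b_2\leq\bZ/8$. Combined with the order-$8$ class $\tfrac{p_1}2(\Leech)$ this gives $b_2=\bZ/8$ (an order-$8$ element in an order-$8$ group), and since $\tfrac{p_1}2(\Leech)$ restricts trivially to the central $\bZ/2\subset2\Suz$, Lemma~\ref{lemma:schur cover} rules out cokernel $4$ and fixes $a_2=\bZ/4$ of index $2$. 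By the decoupling this produces $\H^4(\Suz)=\bZ/4$, $\H^4(2\Suz)=\bZ/8$, $\H^4(3\Suz)=\bZ/12$ and $\H^4(6\Suz)=\bZ/24$, the last generated by $c_2(V)$. I expect the main obstacle to be precisely this $p=2$ computation: extracting the $\rU_4(2)$-module structure of $\H^{*}(2^{1+6})$, carrying out the several Cohomolo runs, and controlling the higher differentials and the non-$O(Q)$-split extensions of Section~\ref{sub.extraspecial 2group} accurately enough that the transversality step cuts the bound to $\bZ/4$ and the Bockstein analysis fixes the cyclic $\bZ/8$ on the cover.
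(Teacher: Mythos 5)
Your overall architecture (prime-by-prime bounds, decoupling the four covers, $3^5{:}\rM_{11}$ at $p=3$, the involution centralizer $2^{1+6}\cdot\SWeyl(\rE_6)$ at $p=2$) matches the paper's, but the proposal fails at its foundation, the lower bound. You propose to ``exhibit an element $g$ of order $24$ in $6\Suz$ whose restriction generates $\H^4(\langle g\rangle)\cong\bZ/24$.'' No such element exists --- not in $6\Suz$, and not even in all of $2\Co_1$: the $2$-primary part of $\frac{p_1}2(\Leech)|_{\langle g\rangle}$ has order at most $4$ for \emph{every} cyclic subgroup $\langle g\rangle\subseteq 2\Co_1$, so cyclic subgroups can never certify the order-$8$ part of $c_2(V)=-\frac{p_1}2(\Leech)|_{6\Suz}$. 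To see this, reduce to the $2$-Sylow of $\langle g\rangle$; every $2$-element of $2\Co_1$ is conjugate into the sign-permutation subgroup $2^{12}{:}\rM_{24}$, which contains a $2$-Sylow. Writing $g=(\epsilon,\sigma)$ with $\epsilon$ a Golay codeword, the Whitney-sum computation shows the restriction has $2$-part of order $8$ exactly when the multiplicity of $\Phi_8$ (if $|g|=8$) or of $\Phi_{16}$ (if $|g|=16$) in the characteristic polynomial of $g$ on $\Leech$ is odd; that multiplicity equals the number of $4$-cycles (resp.\ $8$-cycles) of $\sigma$ on which $\epsilon$ has odd weight, and self-duality of the Golay code forces this number to be even, because the relevant unions of cycles --- the octad fixed pointwise by $\sigma^2$, the $16$-set complementary to the octad of a class-$8\ra$ element of $\rM_{24}$, and the dodecads (one $4$-cycle)$\,\cup\,$(one $8$-cycle) --- are themselves codewords and hence meet $\epsilon$ evenly. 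This obstruction is precisely why the paper (and \cite{jft,JFmoonshine} before it) goes to the trouble of producing a \emph{binary dihedral} subgroup $2D_8\subseteq 6A_7\subseteq 6\Suz$ in Lemma~\ref{lem:suz lower bound}, quoting \cite[Lemma~4.1]{jft}: cyclic subgroups see at most a $\bZ/12$ worth of the class. Without the order-$8$ lower bound, your conclusions $\H^4(2\Suz)_{(2)}=\bZ/8$ and $\H^4(\Suz)_{(2)}=\bZ/4$ have no support. (The $3$-part lower bound is easily repaired, e.g.\ with elements of Frame shape $3^8$ as in the paper.)

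The same blind spot weakens your upper bounds. Your ``transversality'' step proposes to separate the images of $\H^4(\SWeyl(\rE_6))$ and $\H^4(\Suz)$ inside $\H^4\big(2^{1+6}\cdot\SWeyl(\rE_6)\big)$ by comparing restrictions to cyclic subgroups via fusion data; the paper's Lemma~\ref{suz trivial intersection} instead works in $2^{1+6}\cdot2\SWeyl(\rE_6)$ and hinges on a pair of quaternion subgroups $Q',Q''$ that become conjugate in $2\Suz$, whose construction is itself a nontrivial step (Lemma~\ref{suzlemma-lifingQ8}, via Hurwitz quaternions and $\rF_4\subset\rE_6$) --- again because order-$8$ phenomena here are invisible to cyclic subgroups. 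Moreover $E_2^{04}=\H^0\big(\SWeyl(\rE_6);\H^4\big(2^{1+6}\big)\big)$ is only known to be $2$ or $4$ (Lemma~\ref{lem:2or4}), so the spectral sequence plus trivial intersection bounds $\H^4(2\Suz)_{(2)}$ by order $16$, not $8$; the paper eliminates $16$ not by your Bockstein analysis of $2\Suz\to\Suz$ but by a divisibility argument (a hypothetical $\tfrac12 c_2(V)$ would restrict to $2D_8$ with order $16$, hence nontrivially to the center, which is impossible), and it then pins down $\H^4(\Suz)_{(2)}\cong\H^4(3\Suz)_{(2)}\cong\bZ/4$ at the very end from the commutative diagram comparing $D_8\to3\Suz\to\Co_1$ with its double cover, using that $\H^4(D_8)_{(2)}$ has exponent $4$. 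In short, every step you delegate to ``cyclic restrictions plus GAP'' is exactly a step where the paper needs a non-cyclic subgroup ($2D_8$, $Q_8$, $2A_4$) and some genuine geometry to produce it.
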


We will split the proof into a series of lemmas.

\begin{Lemma}\label{lem:suz lower bound} $c_2(V)$ has order $24$ in $\H^4(6\Suz)$.
\end{Lemma}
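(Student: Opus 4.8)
The plan is to pin down the order of $c_2(V)$ by squeezing it between a divisibility upper bound coming from the ambient group $2\Co_1$ and a cyclic-restriction lower bound. For the upper bound I would exploit the inclusion $6\Suz \subseteq 2\Co_1$. Since the underlying real representation of $V$ is $(\Leech\otimes\bR)|_{6\Suz}$, and the realification map $\SU(12)\to\Spin(24)$ pulls $\frac{p_1}2$ back to $-c_2$, the restriction map $\H^4(2\Co_1)\to\H^4(6\Suz)$ sends $\frac{p_1}2(\Leech)$ to $-c_2(V)$. Because $\frac{p_1}2(\Leech)$ generates $\H^4(2\Co_1)\cong\bZ/24$ (recorded in the Conway-groups section, following \cite{jft}), its image $-c_2(V)$ is the image of an order-$24$ element of a cyclic group of order $24$, and hence has order dividing $24$.

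For the matching lower bound I would restrict $c_2(V)$ to a well-chosen cyclic subgroup $\langle g\rangle\subseteq 6\Suz$ and show the restriction generates $\H^4(\langle g\rangle)$. Concretely, I would locate a conjugacy class $g$ of order $24$, read off the eigenvalues of $g$ on the $12$-dimensional $V$ from the character table of $6\Suz$ in GAP, write $V|_{\langle g\rangle}=\bigoplus_k \bC_{\zeta^{m_k}}$, and evaluate $c_2(V)|_{\langle g\rangle}=\bigl(\sum_{k<k'}m_k m_{k'}\bigr)t^2$ in $\H^4(\langle g\rangle)\cong\bZ/24$ via the Whitney sum formula, checking that the coefficient $\sum_{k<k'}m_k m_{k'}$ is a unit modulo $24$. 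Equivalently, since $c_2(V)|_{\langle g\rangle}=-\tfrac{p_1}2(\Leech)|_{\langle g\rangle}$, I could instead invoke the Frame-shape formula of \cite[Theorem 7.1]{jft}: an element whose Frame shape in $\Leech$ is $24^1$ should restrict with full order $24$, exactly as the balanced shapes $4^6$ and $6^4$ produced orders $4$ and $6$ for $\Co_2$ and $\Co_3$.

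If no single order-$24$ class with a sufficiently generic spectrum is convenient, I would instead prove order divisible by $24$ by splitting into coprime parts: exhibit one cyclic subgroup on which $c_2(V)$ restricts with order divisible by $8$ (say an order-$8$ class, Frame shape $8^3$) and another on which it restricts with order divisible by $3$ (an order-$3$ or order-$6$ class, Frame shape $3^8$ or $6^4$). Divisibility by the coprime numbers $8$ and $3$ then forces order divisible by $24$, which together with the upper bound gives order exactly $24$.

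The main obstacle is the lower-bound computation: it reduces entirely to identifying the correct conjugacy class of $6\Suz$ and verifying, from its character values (equivalently its Frame shape in $\Leech$), that the resulting Chern-class coefficient is coprime to $24$. The only real care needed is in bookkeeping the eigenvalue multiplicities of $g$ on $V$ and in confirming that such a class genuinely lives in the cover $6\Suz$ rather than merely in $\Suz$; once the spectrum is in hand, the arithmetic modulo $24$ is routine.
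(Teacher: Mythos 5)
Your upper bound (since $c_2(V)=-\tfrac{p_1}2(\Leech)|_{6\Suz}$ and $\H^4(2\Co_1)\cong\bZ/24$ is generated by $\tfrac{p_1}2(\Leech)$, the order of $c_2(V)$ divides $24$) and your $3$-part lower bound (restriction to a Frame-shape-$3^8$ element) are exactly the paper's first two steps. The genuine gap is at the prime $2$: no cyclic subgroup of $6\Suz$ can detect the order-$8$ part of $c_2(V)$, so both your primary route (an order-$24$ class of Frame shape $24^1$) and your fallback (an order-$8$ class of Frame shape $8^3$) are impossible, not merely inconvenient. First, those Frame shapes cannot occur in $6\Suz$: every element of $6\Suz$ commutes with the central $\bZ/6$, hence acts $\bC$-linearly on $V$, i.e., lies in $\mathrm{U}(12)\subset\SO(24)$, so its real eigenvalues $\pm1$ on $\Leech\otimes\bR$ have even multiplicity; but $24^1$ and $8^3$ require $\pm 1$ with odd multiplicities ($1$ and $3$ respectively). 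Second, the obstruction is not limited to these two shapes. Because $6\Suz$ preserves the complex Leech lattice, a rank-$12$ lattice over the Eisenstein integers $\bZ[\zeta_3]$, the characteristic polynomial of any $g\in 6\Suz$ on $V$ has coefficients in $\bZ[\zeta_3]$, so its eigenvalue multiset is stable under $\mathrm{Gal}\big(\overline{\mathbf{Q}}/\mathbf{Q}(\zeta_3)\big)$. This already rules out elements of order $16$ (the eight primitive $16$th roots of unity form a single Galois orbit over $\mathbf{Q}(\zeta_3)$, and $8\nmid 12$), so cyclic $2$-subgroups of $6\Suz$ have order at most $8$; and for $g$ of order $8$ one checks both cases: if $g^4=z$ is the central involution, then $c_2(V)|_{\langle z\rangle}=\binom{12}{2}t^2=0$ forces $c_2(V)|_{\langle g\rangle}$ to be an even multiple of $t^2$, hence of order at most $4$; if $z\notin\langle g\rangle$, Galois stability forces the four primitive $8$th roots of unity to occur among the eigenvalues with a common multiplicity $p$ and $\pm i$ with a common multiplicity $q$, and then the Whitney-sum coefficient satisfies $\sum_{j<k}m_jm_k\equiv 6p+4q\pmod 8$, which is even, so again the order is at most $4$. (Elements of order $24$ reduce to their order-$8$ power.) Thus every cyclic restriction of $c_2(V)$ has $2$-part of order at most $4$: the top factor of $2$ in $\bZ/24$ is invisible to cyclic subgroups~-- the same phenomenon the paper records for $2\rM_{12}$, where restriction to all cyclic subgroups has kernel of order~$2$.

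The missing idea, and the way the paper actually argues, is to restrict to a \emph{non-cyclic} $2$-group. The paper finds a binary dihedral group $2D_8$ of order $16$ inside the maximal subgroup $6A_7\subseteq 6\Suz$: the $6$-fold cover of $A_7$ pulls back over a dihedral $D_8\subseteq A_6\subseteq A_7$ to $3\times 2D_8$, and \cite[Lemma~4.1]{jft} shows that $\tfrac{p_1}2(\Leech)|_{2D_8}$ has order $8$ in $\H^4(2D_8)\cong\bZ/16$. A binary dihedral group can see this precisely because $\H^4(2D_8)$ is not detected by its cyclic subgroups. If you replace your order-$8$ cyclic restriction by this $2D_8$ restriction, your coprime-splitting scheme~-- order $8$ at the prime $2$, order $3$ at the prime $3$, capped above by $24$ from $2\Co_1$~-- becomes exactly the paper's proof.
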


\begin{proof} Since the underlying real representation of $V$ is $\Leech \otimes \bR$, we have
\begin{gather*} c_2(V) = -\tfrac{p_1}2(\Leech)|_{6\Suz}.\end{gather*} Then \cite[Theorem 0.1]{jft} gives an upper bound of $24$ on the order of $c_2(V)$.

The action of $6\Suz$ on the Leech lattice includes elements with Frame shape $3^8$; according to \cite[Theorem 7.1]{jft}, $\frac{p_1}2(\Leech)$ has nontrivial restriction to such elements. This gives a lower bound of $3$ on the order of $c_2(V)$.

$6\Suz$ contains a maximal subgroup of shape $6A_7$. As observed in \cite[Lemma 4.1]{jft}, there is a~unique conjugacy class of subgroups $D_8 \subseteq A_6$, where $D_8$ denotes the dihedral group of order~$8$. Along the standard inclusion $A_6 \subseteq A_7$, the $6$-fold cover pulls back to the cover $3 \times 2D_8$ of~$D_8$, where $2D_8$ denotes the binary dihedral group of order~$16$. This group $2D_8$ is the one used in \cite[Lemma~4.1]{jft}, where it is shown that $\frac{p_1}2(\Leech)|_{2D_8}$ has order $8$. This gives a lower bound of~$8$ on the order of~$c_2(V)$.
\end{proof}

\begin{Lemma}\label{lem:suz 5}\label{lem:suz 3} The $3$- and $5$-primary parts of $\H^4(\Suz)_{(5)}$ vanish.
\end{Lemma}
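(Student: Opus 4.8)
The plan is to treat the two primes separately, each time restricting along Lemma~\ref{transfer restriction} to a subgroup that contains a full Sylow subgroup of $\Suz$. For $p=5$ the Sylow subgroup is elementary abelian of order $5^2$, and Lemma~\ref{large primes} just misses (there are two classes of order $5$); so instead I would restrict to the maximal subgroup $\mathrm{L}_2(25)=\mathrm{PSL}_2(\bF_{25})$, whose order is divisible by $5^2$ and which therefore contains a full $5$-Sylow $P\cong\bF_{25}^{+}$. In $\mathrm{L}_2(25)$ the normalizer of $P$ is the Borel $P{:}T$ with $T$ cyclic of order $12$ acting on $P=\bF_{25}$ by multiplication by the squares of $\bF_{25}^{\times}$. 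Since $P$ is abelian and self-centralizing, every class of $\H^4(\mathrm{L}_2(25))$ restricts to a $T$-invariant element of $\H^4(P)=\Sym^2(P^{*})$. A generator of $T$ acts on $P\otimes\bar\bF_5$ with eigenvalues $\gamma,\gamma^{5}$ for $\gamma$ of order $12$, hence on $\Sym^2(P^{*})$ with eigenvalues $\gamma^{-2},\gamma^{-6},\gamma^{-10}$, none of which is trivial; so $\Sym^2(P^{*})^{T}=0$ and therefore $\H^4(\mathrm{L}_2(25))_{(5)}=0$. By Lemma~\ref{transfer restriction} the $5$-part of $\H^4(\Suz)$ vanishes.

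For $p=3$ the Sylow subgroup has order $3^7$, and I would restrict to the maximal subgroup $S=3^5{:}\mathrm{M}_{11}$, where $E=3^5$ is a $5$-dimensional irreducible $\bF_3[\mathrm{M}_{11}]$-module; its order is divisible by $3^7$, so it contains a full $3$-Sylow. The plan is then to bound $\H^4(S)_{(3)}$ by the LHS spectral sequence $E_2^{ij}=\H^i(\mathrm{M}_{11};\H^j(E))$, reading off $\H^{j}(E)$ from Lemma~\ref{elemab}: $\H^2(E)=E^{*}$, $\H^3(E)=\Alt^2(E^{*})$, $\H^4(E)=\Sym^2(E^{*})\oplus\Alt^3(E^{*})$. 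In total degree $4$ the entry $E_2^{40}$ vanishes because $\H^4(\mathrm{M}_{11})_{(3)}=0$, and $E_2^{31}=0$ because $\H^1(E)=0$. The key clean input is that this $5$-dimensional module is not isomorphic to its dual, so $\Hom_{\mathrm{M}_{11}}(E,E^{*})=0$ and there is no invariant bilinear form: thus $\Sym^2(E^{*})^{\mathrm{M}_{11}}=0$, while $\Alt^3(E^{*})\cong\Alt^2(E)$ (as $\det E$ is trivial) also has no invariants. Hence $E_2^{04}=\H^0(\mathrm{M}_{11};\H^4(E))=0$, which already forces every class of $\H^4(S)$ to restrict trivially to $E$.

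What remains, and what I expect to be the main obstacle, are the two off-diagonal terms $E_2^{13}=\H^1(\mathrm{M}_{11};\Alt^2(E^{*}))$ and $E_2^{22}=\H^2(\mathrm{M}_{11};E^{*})$. Unlike $E_2^{04}$, these are twisted cohomology groups that I cannot dispose of by a self-duality or determinant argument, and I would compute them with Cohomolo, expecting both to vanish; their vanishing would give $\H^4(S)_{(3)}=0$ and hence $\H^4(\Suz)_{(3)}=0$. If instead one of them survives, the fallback would be the transversality method of Section~\ref{sec:methods}: bring in a second subgroup containing the $3$-Sylow (such as $3_2.\rU_4(3){:}2$) and show that no class can be simultaneously stable in $\Suz$ and detected off the diagonal of the $3^5{:}\mathrm{M}_{11}$ spectral sequence.
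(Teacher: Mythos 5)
Your $p=5$ argument is correct but follows a genuinely different route from the paper's. The paper also restricts to a maximal subgroup containing the $5$-Sylow, but chooses $\rJ_2{:}2$ rather than $\mathrm{L}_2(25)$: by Proposition~\ref{lem:J2} the outer automorphism of $\rJ_2$ acts on $\H^4(\rJ_2)_{(5)} \cong \bZ/5$ by multiplication by $49 \equiv -1 \pmod 5$, so $\H^4(\Suz)_{(5)} \subseteq \H^4(\rJ_2{:}2)_{(5)} = 0$. Your Borel-subgroup computation in $\mathrm{L}_2(25)$~-- restriction lands in the $T$-invariants of $\H^4(P) \cong \Sym^2(P^*)$, and the eigenvalues $\gamma^{-2}$, $\gamma^{-6}$, $\gamma^{-10}$ of a generator of $T$ are all nontrivial~-- is a correct extension of the fusion argument used to prove Lemma~\ref{large primes}, upgraded from single cyclic subgroups (which fail here, as you note, because $\Suz$ has two classes of order~$5$) to the full rank-$2$ Sylow with its order-$12$ Weyl action. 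Your version is self-contained and purely local, whereas the paper's leans on the $\SL(2,5)$/McKay analysis behind Proposition~\ref{lem:J2}; the paper's costs nothing extra only because that proposition is needed elsewhere anyway.

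For $p=3$ you use the same subgroup $3^5{:}\rM_{11}$ and the same spectral sequence as the paper, and your arguments that $E_2^{40}$, $E_2^{31}$ and $E_2^{04}$ vanish are all valid (and hold for either choice of $5$-dimensional module). But the deferred Cohomolo computation is exactly where the difficulty sits, and its outcome depends on which of the two dual modules $E$ occurs in $\Suz$: by the paper's computations, one of the two has $\H^1(\rM_{11};\Alt^2(E^*)) \cong \bZ/3$, so that $E_2^{13} \neq 0$ and your method would only yield the upper bound $\H^4(\Suz)_{(3)} \leq 3$; for the other module both $E_2^{13}$ and $E_2^{22}$ vanish, the only nonzero twisted entry being $\H^1(\rM_{11};E^*) \cong \bZ/3$, which sits in total degree~$3$. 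So your ``expectation'' is correct for precisely one of the two candidate modules, and your proof must identify which module is realized~-- or rule out the bad case. The paper does the latter without any identification: since $\H_2(\Suz) \cong \bZ/6$, universal coefficients give $\H^3(\Suz)_{(3)} \cong \bZ/3$, which by Lemma~\ref{transfer restriction} injects into $\H^3\big(3^5{:}\rM_{11}\big)_{(3)}$; the bad module's $E_2$ page is zero in total degree~$3$, so it cannot be the one occurring, and the surviving option has nothing in total degree~$4$. I recommend this observation in place of your transversality fallback via $3_2.\rU_4(3){:}2$: it is shorter, requires no further computation, and~-- unlike the fallback~-- is guaranteed to close the argument.
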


\begin{proof}The $5$-Sylow in $\Suz$ is contained in a maximal subgroup of shape $\rJ_2\!:\!2$. By Proposition~\ref{lem:J2}, the outer automorphism of $\rJ_2$ acts by multiplication by $-1$ on $\H^4(\rJ_2)_{(5)} =5$, and so $\H^4(\Suz)_{(5)} \subseteq \H^4(\rJ_2\!:\!2)_{(5)} = 0$.

The $3$-Sylow in $\Suz$ is contained in a maximal subgroup of shape $3^{5}\!:\!\rM_{11}$. There are two nontrivial 5-dimensional $\rM_{11}$-modules over $\bF_3$. They lead to LHS spectral sequences with $E_2$ pages:
 \begin{gather*}
 \begin{array}{ccccc}
 0 \\
 0 & 3 & 3 \\
 0 & 0 & 0 \\
 0 & 0 & 0 & 0 & 0 \\
 \bZ & 0 & 0 & 0 & 8
 \end{array}
 \qquad \text{or} \qquad
 \begin{array}{ccccc}
 0 \\
 0 & 0 & 0 \\
 0 & 3 & 0 \\
 0 & 0 & 0 & 0 & 0 \\
 \bZ & 0 & 0 & 0 & 8
 \end{array}
 \end{gather*}
The former is incompatible with $\H^3(\Suz)_{(3)} = 3$, and the latter immediate gives $\H^4(\Suz)_{(3)} \allowbreak = 0$.
\end{proof}

\begin{Lemma}\label{lem:2or4} $\H^0\big(\SWeyl (\rE_6); \H^4\big(2^{1+6}\big)\big)$ is either $\bZ/2$ or $\bZ/4$.
\end{Lemma}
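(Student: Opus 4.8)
The plan is to compute the invariants $\H^0\big(\SWeyl(\rE_6);\H^4\big(2^{1+6}\big)\big)$ directly from the filtration description of $\H^4\big(2^{1+6}\big)$ provided by the analysis of extraspecial $2$-groups in Section~\ref{sub.extraspecial 2group}.

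Let me think carefully about what tools I have. We have $E = 2^6$ with a nondegenerate quadratic form $Q$ of Arf invariant $-1$ (as stated, this is the mod-2 reduction of the $\rE_6$-lattice form), so $m = 3$. From Section~\ref{sub.extraspecial 2group}, for $m \geq 3$ we have $\H^4(2.E) \cong X.2$ where $X \cong E^*.\Alt^2(E^*).\big(\Alt^3(E^*)/E^*\big)$, and the top $\bZ/2$ is a copy of $Q$ itself (the $E_\infty^{04}$ entry is killed, but the $\bZ/2$ appearing is the $Q$ from $E_\infty^{22}$). I need the $\SWeyl(\rE_6) = \rU_4(2)$-invariants of this filtered module. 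Since $\SWeyl(\rE_6)$ fixes $Q$ (it acts through $\rO(Q)$), the top $\bZ/2$ contributes an invariant, which already gives a lower bound $\H^0 \geq \bZ/2$.

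**The key steps**, in order: First I would reduce to computing the invariants of each subquotient of $X$, namely $\H^0\big(\rU_4(2);E^*\big)$, $\H^0\big(\rU_4(2);\Alt^2(E^*)\big)$, and $\H^0\big(\rU_4(2);\Alt^3(E^*)/E^*\big)$, since invariants of a filtered module are controlled by invariants of its subquotients. Because $E$ is an \emph{irreducible} $\rU_4(2)$-module over $\bF_2$ (the statement before Theorem~\ref{thm:suz} asserts $2^6$ is the reflection representation of $\mathrm{Weyl}(\rE_6)$ reduced mod $2$, restricted to the index-2 subgroup, and this is well-known to be irreducible), we have $E^* \cong E$ has no nonzero invariants, so $\H^0\big(\rU_4(2);E^*\big) = 0$. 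For $\Alt^2(E^*)$ and $\Alt^3(E^*)/E^*$ I would use that $\rU_4(2)$ preserves the symplectic form $B_Q$; invariant alternating forms correspond exactly to $B_Q$ and its multiples, so I expect $\H^0\big(\rU_4(2);\Alt^2(E^*)\big) \cong \bZ/2$ (spanned by $B_Q$), while for $\Alt^3$ the relevant invariants should be controlled by $B_Q \wedge E^*$, which is precisely the submodule $E^*$ quotiented out in forming $\Alt^3(E^*)/E^*$. The upshot is that the subquotients contribute at most one more $\bZ/2$ beyond the top $Q$, giving $\H^0 \leq \bZ/4$. Then I must determine whether the genuine invariant coming from $B_Q \in \Alt^2(E^*)$ combines with the top $Q$ to produce $\bZ/4$ or whether the extension splits to give $(\bZ/2)^2$ — but in either case the \emph{order} is at most $4$, and combined with the lower bound the group is $\bZ/2$ or $\bZ/4$.

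**The main obstacle** will be controlling the possible extension problem and verifying that no \emph{additional} invariant sneaks in through $\Alt^3(E^*)/E^*$; the cleanest route is to invoke Cohomolo (as the authors do repeatedly, e.g.\ in Lemma~\ref{lem:inE6}) to compute the dimensions $\dim_{\bF_2}\H^0\big(\rU_4(2);\Alt^j(E^*)\big)$ for $j = 2,3$ directly, pinning down exactly which subquotients are hit. Since the statement only claims the answer is $\bZ/2$ \emph{or} $\bZ/4$, I do not actually need to resolve the extension: it suffices to show the lower bound $\bZ/2$ (from $Q$) and the upper bound that $X^{\rU_4(2)}$ has order dividing $2$, so that the total invariants have order dividing $4$. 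The representation-theoretic fact that $\rU_4(2)$ fixes a unique (up to scale) alternating form, namely $B_Q$, and that this single invariant lives in $\Alt^2(E^*)$ rather than higher in the filtration, is what makes the upper bound work; I would phrase the argument so that the delicate part — whether the answer is $\bZ/2$ or $\bZ/4$ — is deliberately left open, matching the statement of the Lemma.
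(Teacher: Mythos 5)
Your overall strategy---filtering $\H^4\big(2^{1+6}\big)$ as $X.2$ with $X = E^*.\Alt^2(E^*).\big(\Alt^3(E^*)/E^*\big)$ and bounding the invariants by invariants of the subquotients---is the same as the paper's, and your identifications $\H^0(J;E^*)=0$ and $\H^0\big(J;\Alt^2(E^*)\big)=\bZ/2$ (spanned by $B_Q$), writing $J = \SWeyl(\rE_6)$, agree with it. But your lower bound rests on a false step. The top $\bZ/2$, spanned by the image of $Q$, is a \emph{quotient} of $\H^4\big(2^{1+6}\big)$, not a subgroup, and $\H^0(J;-)$ is only left exact: a $J$-fixed element of a quotient need not lift to a $J$-fixed element of the module, so ``$J$ fixes $Q$'' does not by itself produce any invariant class in $\H^4\big(2^{1+6}\big)$. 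In fact, whether the invariants surject onto this top $\bZ/2$ is precisely the question of whether the answer is $\bZ/2$ or $\bZ/4$, which the paper explicitly leaves open. Worse, if your step were valid it would settle that question: by Section~\ref{sub.extraspecial 2group}, $\H^4\big(2^{1+6}\big) \cong (\bZ/2)^n \times \bZ/4$ and $X$ is elementary abelian of index $2$, hence is exactly the $2$-torsion subgroup; so every preimage of $Q$ has order $4$, an invariant preimage would generate a $\bZ/4$, and the answer would be forced to be exactly $\bZ/4$. The correct lower bound instead comes from a canonical class \emph{inside} $X$: the unique nonzero element of $\H^4\big(2^{1+6}\big)$ divisible by $2$ (equivalently, $c_2$ of the unique $2^3$-dimensional complex irrep of $2^{1+6}$) is fixed by every automorphism, in particular by $J$.

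Two further gaps, both reparable. First, the upper bound needs $\H^0\big(J;\Alt^3(E^*)/E^*\big)=0$, and computing $\H^0\big(J;\Alt^3(E^*)\big)$, as you propose, does not give this: invariants of a quotient module can be strictly larger than the image of the invariants of the module (the same left-exactness failure as above, one step up the filtration). The paper's argument is that $\Alt^3(E^*)/E^*$ is a \emph{simple} $J$-module of dimension $14$, hence nontrivial simple, hence without invariants; alternatively one can compute fixed points of the quotient module itself in GAP. Second, the lemma excludes $(\bZ/2)^2$, and your ``in either case the order is at most $4$'' sentence does not. The exclusion does hold, but one must use the group structure: since $\H^0(J;X)\leq\bZ/2$, an invariant subgroup of order $4$ must contain an element outside $X$, and, as noted above, every element outside $X$ has order $4$, so invariants of order $4$ are necessarily cyclic.
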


We were unable to determine the exact value of $\H^0\big(\SWeyl (\rE_6); \H^4\big(2^{1+6}\big)\big)$. We remark that the order-$2$ class therein has many descriptions. It arises as $c_2$ of the unique $2^3$-dimensional complex irrep of $2^{1+6}$. It also arises as follows. By Lemma~\ref{lem:inE6}, the nonsplit extension $2^6\cdot\SWeyl (\rE_6)$ is naturally a subgroup of the compact Lie group of type $\rE_6$ (adjoint form); in this realization, $2^6$ is the group of $2$-torsion points in the maximal torus. The generator of $\H^4(B\rE_6)$ restricts to $2^6$ to the $\rE_6$ quadratic form living in $\Sym^2\big(2^6\big) \subseteq \H^4\big(2^6\big)$, and this form pulls back to $2^{1+6}$ to the $\SWeyl (\rE_6)$-fixed order-$2$ class.

\begin{proof}Let us write $J$ for $\SWeyl (\rE_6)$ and $E$ for the $6$-dimensional $\SWeyl (\rE_6)$-module over $\bF_2$. In Section~\ref{sub.extraspecial 2group} we identified $\H^4(2.E)$ as
 \begin{gather*} \big(E^*.\Alt^2(E^*).\Alt^3(E^*)/E^*\big).2. \end{gather*}
 This group has a unique nonzero element which is divisible by~$2$; it lives in the subgroup $X = E^*.\Alt^2(E^*).\Alt^3(E^*)/E^*$, and so $\H^0(J; X) \geq \bZ/2$.
 On the other hand,
 \begin{gather*} \H^0(J; E^*) = \H^0\big(J;\Alt^3(E^*)/E^*\big) = 0, \qquad \H^0\big(J;\Alt^2(E^*)\big) = \bZ/2. \end{gather*}
 Indeed, $E^*$ and $\Alt^3(E^*)/E^*$ are simple $J$-modules of dimensions $6$ and $14$ respectively, and the unique
 $J$-fixed point in $\Alt^2(E^*)$ is the underlying alternating form of the quadratic form defining the extension $2.E$. From the long exact sequence $\H^\bullet(J; A) \to \H^\bullet(J; A.B) \to \H^\bullet(J; B) \to \H^{\bullet+1}(J;A) \to \cdots$, we find
 \begin{gather*} \H^0(J;X) \leq \bZ/2\end{gather*}
 and
 \begin{gather*}
 \H^0(J; X.2) \leq (\bZ/2).(\bZ/2) = \bZ/4.\tag*{\qed}
 \end{gather*}\renewcommand{\qed}{}
\end{proof}

We may now compute the $E_2$ page of the LHS spectral sequence for the extension \linebreak $2^{1+6}.\SWeyl (\rE_6)$ using HAP and Cohomolo:
\begin{gather*} \begin{array}{ccccc}
2 \text{ or } 4 \\
0 & 2 & 0 \\
0 & 0 & 2 \\
0 & 0 & 0 & 0 & 0 \\
\bZ & 0 & 0 & 2 & 4
\end{array} \end{gather*}
From the vanishing of $E_2^{03}, E_2^{12}$, and $E_2^{21}$, we learn that the restriction map
\begin{gather*}
\H^3(\Suz;\bZ) \to \H^3(\SWeyl(\rE_6);\bZ)
\end{gather*}
is an isomorphism on $2$-primary parts. It follows that the preimage of $\SWeyl(\rE_6)$ in $2\Suz$ is the Spin double cover of $\SWeyl(\rE_6) \subset \SO(6)$, which we denote by $2\SWeyl(\rE_6)$.

The preimage of
{$2^{1+6}\cdot(\SWeyl(\rE_6)) \subseteq \Suz$}
 in $2\Suz$ is of shape $2^{1+6}\cdot(2\SWeyl(\rE_6))$. Using HAP and Cohomolo, we compute that its LHS spectral sequence has $E_2$ page:
\begin{gather*} \begin{array}{ccccc}
2 \text{ or } 4 \\
0 & 2 & 0 \\
0 & 0 & 2 \\
0 & 0 & 0 & 0 & 0 \\
\bZ & 0 & 0 & 0 & 8
\end{array} \end{gather*}

\begin{Lemma}\label{suzlemma-lifingQ8} There is a quaternion group $Q' \cong Q_8 \subseteq 2^{1+6}.2\SWeyl (\rE_6)$ such that the central element of $Q'$ maps to an element of $\SWeyl(\rE_6)$ of conjugacy class~$2\ra$.
\end{Lemma}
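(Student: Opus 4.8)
The plan is to reduce the lemma to a statement about $\SWeyl(\rE_6)$ alone and then control the two central extensions separating that quotient from $2^{1+6}.2\SWeyl(\rE_6)$. Write $\rho\colon 2^{1+6}.2\SWeyl(\rE_6)\to\SWeyl(\rE_6)$ for the natural surjection (the quotient by the extraspecial $2^{1+6}$ followed by $2\SWeyl(\rE_6)\to\SWeyl(\rE_6)$); its kernel is an order-$2^8$ normal $2$-group. Since $\rho(Z(Q'))=w$ is to be nontrivial and every nontrivial subgroup of $Q_8$ contains its center, the kernel meets $Q'$ trivially, so $\rho|_{Q'}$ is injective and $Q'$ maps isomorphically onto a copy of $Q_8\subseteq\SWeyl(\rE_6)$ with central involution $w$. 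Thus the real content is: such a $Q_8$ exists inside $\SWeyl(\rE_6)$ and lifts to a genuine quaternion group in $2^{1+6}.2\SWeyl(\rE_6)$.

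First I would determine $w$ using the $6$-dimensional reflection representation of $\SWeyl(\rE_6)=\rU_4(2)\subset\SO(6)$. An involution here has even-dimensional $(-1)$-eigenspace, so either dimension $2$ (type $2A_1$, character $+2$) or dimension $4$ (type $4A_1$, character $-2$). The decisive input is the Clifford-algebra lift formula: an orthogonal involution acting as $-1$ on a $d$-dimensional space lifts to $\Spin$ as $\pm e_1\cdots e_d$, with square $(-1)^{d(d-1)/2+d}$, which is $+1$ exactly when $d\equiv0\pmod4$. Hence a $4A_1$-involution has an order-$2$ lift to $2\SWeyl(\rE_6)\subset\Spin(6)$, while a $2A_1$-involution lifts only to order $4$; running this through the extension shows no order-$2$ element of $2^{1+6}.2\SWeyl(\rE_6)$ can cover a $2A_1$-class. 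Because $Z(Q')$ has order $2$, its image is forced to be the $4A_1$-class, which is $2\ra$ (confirmed by its character value $-2$, or by comparing centralizer orders in $\rU_4(2)$). This both explains the appearance of $2\ra$ and rules out $2\rb$.

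Next I would build the quaternion group and lift it to the Spin cover. Realizing the $4$-dimensional $(-1)$-eigenspace $U$ of $w$ as $\bH$, let $\{\pm1,\pm i,\pm j,\pm k\}$ act on $U$ by left multiplication and trivially on $U^\perp$; this is a $Q_8\subseteq\SO(6)$ with center $w$, and I would verify (by matching traces on the reflection representation, or in GAP) that it occurs inside $\SWeyl(\rE_6)$. Since it acts trivially on $U^\perp$, its preimage in $\Spin(6)$ lies in $\Spin(U)=\Spin(4)\cong\SU(2)\times\SU(2)$, and left multiplication by unit quaternions is exactly one $\SU(2)$-factor, which maps injectively to $\SO(4)$. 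Consequently this $Q_8$ lifts to an honest quaternion subgroup $\widehat Q\subseteq 2\SWeyl(\rE_6)$ whose center is the order-$2$ lift $\tilde w$ of $w$, matching the parity count above.

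The hard part will be the last step: lifting $\widehat Q$ through $2^{1+6}.2\SWeyl(\rE_6)\to 2\SWeyl(\rE_6)$, whose kernel is the extraspecial group $E=2^{1+6}$ of Section~\ref{sub.extraspecial 2group}. Choosing set-theoretic lifts $I,J$ of generators of $\widehat Q$, the elements $I^2$, $J^2$, $(IJ)^2$ and $[I,J]$ all land in a single $E$-coset over $\tilde w$, and I must correct $I,J$ by elements of $E$ so that these coincide in one common order-$2$ element and the quaternion relations hold exactly. The obstruction lives in the cohomology of $Q_8$ with coefficients in $Z(E)=\langle z\rangle\cong\bZ/2$, and—unlike the Spin step—its vanishing is not automatic from a parity; this is the genuinely delicate point. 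I would resolve it either by an explicit adjustment of the lifts or, most safely, by a direct search in a permutation model of $2\Suz$ in GAP, locating a quaternion subgroup $Q'$ of $2^{1+6}.2\SWeyl(\rE_6)$ and checking via the known class fusion that $\rho(Z(Q'))$ lands in $2\ra$. Everything before this final lift is forced by the Spin-cover parity; the lift through $E$ is the main obstacle.
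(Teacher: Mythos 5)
Your opening reduction, your identification of $2\ra$ as the unique involution class whose preimages in $2\SWeyl(\rE_6)\subset\Spin(6)$ have order~$2$ (the Clifford-algebra parity count), and your construction of a $Q_8\subseteq\SWeyl(\rE_6)$ by left multiplication of unit quaternions on the $D_4\cong\rF_4$ root sublattice, with its clean lift into one $\SU(2)$-factor of $\Spin(4)$, are all correct and closely parallel what the paper does. (Two small caveats: ``matching traces'' does not by itself prove your $Q_8$ sits inside $\SWeyl(\rE_6)$ --- the honest argument is that it lies in $\mathrm{Weyl}(D_4)$, whose reflections act trivially on the orthogonal complement of $D_4$ in the $\rE_6$ lattice; and you implicitly use the paper's earlier spectral-sequence fact that the relevant double cover $2\SWeyl(\rE_6)$ is the Spin cover.) The genuine gap is exactly where you flag it: the lift of $\widehat Q$ through the extraspecial kernel $E=2^{1+6}$ is never carried out, and this is the actual content of the lemma. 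It cannot be finessed formally: $\H^1(Q_8;\bZ/2)\cong(\bZ/2)^2$ and $\H^2(Q_8;\bZ/2)\cong(\bZ/2)^2$ are nonzero, so neither stage of the obstruction theory (splitting the extension by $E/Z(E)\cong 2^6$, then lifting through the central $Z(E)\cong\bZ/2$) vanishes for cohomological reasons, and the choices made at the first stage feed into the second. Deferring this to ``an explicit adjustment'' or an unperformed GAP search leaves the crux unproved.

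The paper sidesteps precisely this obstruction by a trick you nearly had in hand: it lifts not $Q_8$ but the binary tetrahedral group $2A_4$ --- which is exactly the full unit group of the Hurwitz quaternions, containing your $Q_8=\{\pm1,\pm i,\pm j,\pm k\}$ as its $2$-Sylow --- and then takes $Q'$ to be the $2$-Sylow of the lifted $2A_4$. Since $\H_1(2A_4)=\bZ/3$ and $\H_2(2A_4)=0$, one has $\H^1(2A_4;\bZ/2)=\H^2(2A_4;\bZ/2)=0$, so $2A_4$ lifts automatically through the kernel of $2^{1+6}.2\SWeyl(\rE_6)\to 2^6.\SWeyl(\rE_6)$; the only computation needed is $\H^2\big(2A_4;2^4\big)=0$ for the mod-$2$ reduction of the $\rF_4$-lattice action, which produces an appropriate $2A_4\subseteq 2^4.\SWeyl(\rF_4)\subseteq 2^6.\SWeyl(\rE_6)$ in the first place. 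If you want to complete your argument without computer search, replace $\widehat Q$ by this $2A_4\subseteq\SWeyl(\rF_4)$ and run your lifting scheme for it instead: the vanishing of its low-degree $\bF_2$-cohomology does all the work that your final paragraph leaves open.
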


There are two conjugacy classes of elements of order $2$ in $\SWeyl (\rE_6)$. They can be distinguished by the orders of their preimages in $2\SWeyl (\rE_6)$: elements in class $2\ra$ lift with order~$2$ (and both lifts are conjugate in $2\SWeyl(\rE_6)$), whereas elements in class $2\rb$ lift with order~$4$. So the content of the Lemma is the existence of such a $Q'$ such that the composition $Q' \mono 2^{1+6}.2\SWeyl (\rE_6) \to \SWeyl (\rE_6)$ is injective.

\begin{proof} We will find this $Q'$ by finding a larger group: we will hunt for a binary tetrahedral group $2A_4 \subseteq 2^{1+6}.2\SWeyl (\rE_6)$, and then take $Q'$ to be its $2$-Sylow. Let us say that copy of~$2A_4$ inside some extension of $\SWeyl(\rE_6)$ is ``{appropriate}'' if the central element of that $2A_4$ maps to class $2\ra \in \SWeyl (\rE_6)$. Then for our search, it suffices to find an appropriate $2A_4 \subseteq 2^6.\SWeyl(\rE_6)$. Indeed, $\H_1(2A_4) = 3$ and $\H_2(2A_4) = 0$, and so any $2A_4 \subseteq 2^6.\SWeyl(\rE_6)$ will lift to a $2A_4 \subseteq 2^{1+6}.2\SWeyl (\rE_6)$.

To find an appropriate $2A_4 \subseteq 2^6.\SWeyl(\rE_6)$, we recognize that
 \begin{gather*} 2^6.\SWeyl(\rE_6) \subseteq 2^6.\mathrm{Weyl}(\rE_6) \subseteq (\text{maximal torus}).\mathrm{Weyl}(\rE_6) \subseteq \text{Lie group }\rE_6, \end{gather*}
 where the group $2^6$ is nothing but the $2$-torsion points in the maximal torus. Consider the standard Lie group embedding $\rF_4 \subseteq \rE_6$. This leads to an embedding
 \begin{gather*} 2^4.\SWeyl(\rF_4) \subseteq 2^6.\mathrm{SWeyl}(\rE_6). \end{gather*}
 covering an embedding $\SWeyl(\rF_4) \subseteq \mathrm{SWeyl}(\rE_6)$. Because the Lie group $\rF_4$ has no outer automorphisms, the Weyl group of $\rF_4$ contains all automorphisms of the $\rF_4$ root lattice (isomorphic to the $D_4$ lattice). There is a standard identification between the $\rF_4$ lattice and the Hurwitz quaternions $\big\{a + bi + cj + dk \in \bH \,|\, a,b,c,d \in \bZ\big\} \sqcup \big\{a + bi + cj + dk \in \bH \,|\, a,b,c,d \in \bZ+\frac12\big\}$. The group of units in the Hurwitz numbers is a copy of~$2A_4$. This provides a subgroup $2A_4 \subseteq \SWeyl(\rF_4) \subseteq \SWeyl(\rE_6)$, which is easily seen to be appropriate: central $2 \subseteq 2A_4$ acts by $-1$ on the $\rF_4$ lattice, and so with trace~$-2$ on the~$\rE_6$ lattice, and so fuses to class $2\ra \in \SWeyl(\rE_6)$.

 Finally, we claim that the extension $2^4.2A_4$ splits. Indeed, the action of $2A_4$ on $2^4$ is the mod-2 reduction of the action on the $\rF_4$ lattice, and for this action, $\H^2\big(2A_4; 2^4\big) = 0$. Thus we have found an appropriate $2A_4 \subseteq 2^4.\SWeyl(\rF_4) \subseteq 2^6.\mathrm{SWeyl}(\rE_6)$.
\end{proof}

\begin{Lemma} \label{suz trivial intersection} The pullbacks \[
 \begin{tikzcd}
 \H^4(2\Suz)_{(2)} \arrow[r,hook] &
 \H^4\big(2^{1+6}.2\SWeyl (\rE_6)\big)
 & \H^4\big(2\SWeyl (\rE_6)\big)_{(2)} \arrow[l,hook']
 \end{tikzcd}
 \] have trivial intersection.
\end{Lemma}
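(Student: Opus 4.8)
The plan is to show that the two subgroups $\H^4(2\Suz)_{(2)}$ and $\H^4(2\SWeyl(\rE_6))_{(2)}$, viewed inside $\H^4(2^{1+6}.2\SWeyl(\rE_6))$, cannot share a nonzero class, by exhibiting a test subgroup on which any class coming from $2\SWeyl(\rE_6)$ behaves incompatibly with any class coming from $2\Suz$. The natural candidate is the quaternion group $Q' \cong Q_8$ provided by Lemma~\ref{suzlemma-lifingQ8}, whose central element maps to class $2\ra \in \SWeyl(\rE_6)$. The strategy is to restrict both images to $\H^4(Q')$ and argue that a class pulled back from $2\SWeyl(\rE_6)$ must restrict nontrivially (indeed with order $8$) to $Q'$ precisely because $Q'$ witnesses the Spin double cover over the $2\ra$ class, whereas the generator of $\H^4(2\Suz)_{(2)}$---which by Lemma~\ref{lem:suz lower bound} is $c_2(V) = -\frac{p_1}2(\Leech)|$, computed via Frame shapes---restricts to $Q'$ in a controlled, \emph{different} way.

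\medskip

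First I would pin down $\H^4(2\SWeyl(\rE_6))_{(2)}$ and its restriction to $Q'$. Since $2\SWeyl(\rE_6)$ is the Spin cover of $\SWeyl(\rE_6) \subset \SO(6)$, and since $Q'$ sits inside it with central element over class $2\ra$ (the class that lifts with order $2$), the copy of $Q' \cong Q_8$ is a subgroup of $\Spin$, hence of $\SU(2)$, so $\H^4(Q') \cong \bZ/16$ with generator $c_2$ of the defining $2$-dimensional irrep. The key input is that the fractional Pontryagin class of the reflection representation restricts to $Q'$ with large order: because the central element acts as $-1$ on the $6$-dimensional representation, the restriction $\frac{p_1}2|_{Q'}$ can be computed directly from the eigenvalue data as in the $2D_8$ computations of Lemma~\ref{lemma:Dempwolff} and \cite[Lemma~4.1]{jft}, and it is nonzero of order divisible by $8$. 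Thus any nonzero $\alpha$ in $\H^4(2\SWeyl(\rE_6))_{(2)}$ that descends from the generator restricts nontrivially to $Q'$.

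\medskip

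Second I would compute the restriction of the $2\Suz$-generator to $Q'$. The point is that $Q' \subseteq 2^{1+6}.2\SWeyl(\rE_6) \subseteq 2\Suz$, and the Leech representation restricted to $Q'$ is governed by the Frame-shape formula of \cite[Theorem~7.1]{jft} applied to the image of the central element of $Q'$---which, mapping to class $2\ra$, has a \emph{specific} Frame shape in the Leech action. I would read off the spectrum of $\frac{p_1}2(\Leech)|_{Q'}$ and observe that it lands in a \emph{different} element of $\H^4(Q') \cong \bZ/16$ than the one hit by $\alpha|_{Q'}$; more precisely, the two restrictions generate transverse cyclic subgroups of $\H^4(Q')$, so no common nonzero class can restrict to both patterns simultaneously. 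Since $\H^4(2\Suz)_{(2)}$ is cyclic (bounded above by the $E_\infty$ computation giving $\leq 8$, and below by Lemma~\ref{lem:suz lower bound}), its generator restricts to $Q'$ in exactly one way, and transversality forces the intersection of the two images to vanish.

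\medskip

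\textbf{The main obstacle} I expect is the explicit bookkeeping of the two restrictions to $\H^4(Q') \cong \bZ/16$: one must compute both $\frac{p_1}2(\Leech)|_{Q'}$ (from the Frame shape of the $2\ra$-image in the Leech lattice) and $\frac{p_1}2(\text{reflection rep})|_{Q'}$ (from the $6$-dimensional $\SWeyl(\rE_6)$-representation), and verify that they are genuinely transverse rather than merely distinct---i.e.\ that their images do not accidentally overlap in the cyclic group $\bZ/16$. This is where the argument could fail if the ambiguity $\H^0(\SWeyl(\rE_6);\H^4(2^{1+6}))$ being ``$2$ or $4$'' (Lemma~\ref{lem:2or4}) interferes; I would need to check that the undetermined order-$4$ contribution, if present, also restricts compatibly to $Q'$ so that the transversality conclusion is insensitive to the unresolved extension. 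If $Q'$ alone does not separate the images, the fallback is to enlarge the test group to the full $2A_4 \subseteq 2^4.\SWeyl(\rF_4)$ constructed in the proof of Lemma~\ref{suzlemma-lifingQ8}, whose richer representation theory gives finer fusion constraints.
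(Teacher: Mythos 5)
There is a genuine gap here, and it is structural rather than a matter of bookkeeping. Your plan is to restrict both images to the single test subgroup $Q'$ and show that they generate ``transverse cyclic subgroups'' of $\H^4(Q')$. First, a small but symptomatic error: $\H^4(Q') \cong \bZ/8$, not $\bZ/16$, since $Q' \cong Q_8$ has order $8$ and for a finite subgroup of $\SU(2)$ the fourth cohomology is cyclic of order equal to the group order (you have conflated $Q_8$ with the binary dihedral group $2D_8$ of order $16$). More seriously, the transversality you want is impossible: $\H^4(Q')$ is a cyclic $2$-group, so any two nonzero subgroups of it intersect nontrivially, both containing the unique subgroup of order $2$. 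Worse, you yourself argue that the class pulled back from $2\SWeyl(\rE_6)$ restricts to $Q'$ with order $8$, i.e., its image is \emph{all} of $\H^4(Q')$; nothing nonzero can be transverse to that. Finally, even if the two images in $\H^4(Q')$ did intersect trivially, this would only show that a class in the intersection restricts to zero on $Q'$, not that it vanishes in $\H^4\big(2^{1+6}.2\SWeyl(\rE_6)\big)$: a single test subgroup can never prove the lemma unless restriction to it is injective on one of the two images and simultaneously kills the other, which is not what you establish (and, given the surjectivity just noted, cannot hold in the form you propose).

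The missing idea is fusion: one must compare \emph{two} subgroups that become conjugate in $2\Suz$ but on which the pulled-back class takes different values. The paper uses $Q'$ (whose center maps to class $2\ra \in \SWeyl(\rE_6)$, so that $Q'$ maps isomorphically onto a quaternion group $Q \subseteq 2\SWeyl(\rE_6)$ and $\tilde\alpha|_{Q'}$ generates $\H^4(Q') \cong \bZ/8$, where $\alpha$ generates $\H^4(2\SWeyl(\rE_6))_{(2)}$) together with a second quaternion group $Q'' \subseteq 2^{1+6}.2\SWeyl(\rE_6)$, conjugate to $Q'$ in $2\Suz$, whose center covers the center of $2\SWeyl(\rE_6)$; the conjugacy comes from the fact that $2^{1+6}.\SWeyl(\rE_6)$ is the centralizer of a $2\ra$-element of $\Suz$ and that order-$2$ lifts of class $2\ra \in \SWeyl(\rE_6)$ fuse to class $2\ra \in \Suz$. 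Over the image of $Q''$ the $4$-dimensional faithful representation of $2\SWeyl(\rE_6)$ splits as two copies of the $2$-dimensional irrep or as four $1$-dimensionals, so $\tilde\alpha|_{Q''}$ has order at most $4$ and hence $4\tilde\alpha|_{Q''} = 0$, whereas $4\tilde\alpha|_{Q'} \neq 0$. Any class restricted from $\H^4(2\Suz)$ must take equal values on the $2\Suz$-conjugate subgroups $Q'$ and $Q''$, so $4\tilde\alpha$ is not such a restriction; and since every nonzero subgroup of the cyclic group $\operatorname{im}\big(\H^4(2\SWeyl(\rE_6))_{(2)}\big) = \langle\tilde\alpha\rangle \cong \bZ/8$ contains $4\tilde\alpha$, this already forces the intersection to be trivial. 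Note that your proposed fallback (replacing $Q'$ by the larger $2A_4$) does not repair the argument, because it is still a single-subgroup test; what is needed is the pair of subgroups and the fusion constraint.
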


\begin{proof} Let $Q' \subseteq 2^{1+6}.2\SWeyl (\rE_6)$ denote the quaternion group found in Lemma~\ref{suzlemma-lifingQ8}, and let $Q \subseteq 2\SWeyl(\rE_6)$ denote it image. Then $Q \cong Q_8$ is another quaternion group since the center of~$Q'$ is not in the kernel of the map $Q' \to Q$.

 We claim that $\H^4(2\SWeyl(\rE_6)) \to \H^4(Q)$ is an isomorphism. Indeed, consider either 4-dimensional faithful complex representations of $2\SWeyl(\rE_6)$. Class $2\ra$ acts on this representation with trace $0$. It follows that this representation decomposes over $Q$ as one copy of the $2$-dimensional irrep plus two copies of the same $1$-dimensional irrep, and so $c_2(\text{4-dim rep})|_Q$ has order $8$. We furthermore learn that $c_2(\text{4-dim rep})$ generates $\H^4(2\SWeyl(\rE_6))$.

 We henceforth write $\alpha \in \H^4(2\SWeyl (\rE_6)) \cong \bZ/8$ for this generator. Let $\tilde\alpha$ denote its pullback to $2^{1+6}.2\SWeyl (\rE_6)$.
 To prove the proposition, it suffices to show that $4\tilde\alpha$ is not in the image of $\H^4(2\Suz)$.

The central element of $Q'$ is an order-$2$ lift of class $2\ra \in \SWeyl(\rE_6)$. Any such lift fuses to class $2\ra \in \Suz$. But $2^{1+6}.\SWeyl(\rE_6)$ is the centralizer of an element of class $2\ra \in \Suz$. It follows that $Q'$ is conjugate in $2\Suz$ to some other quaternion group $Q'' \subseteq 2^{1+6}.2\SWeyl(\rE_6)$ whose central element
covers the center of $2\SWeyl(\rE_6)$.

Since $Q'$ is a lift of $Q$, we find that $\tilde\alpha|_{Q'}$ is a generator of $\H^4(Q')$, and so $4\tilde\alpha|_{Q'} \neq 0$.
On the other hand, since the center of $Q''$ maps to something central in $2\SWeyl$, the $4$-dimensional representation of $2\SWeyl$ breaks up over the image of $Q''$ as either four 1-dimensional representations or two copies of the 2-dimensional representation, and in either case we find that $\tilde\alpha|_{Q''} = c_2(\text{4-dim rep})|_{Q''}$ has order at most $4$, so that $4\tilde\alpha|_{Q''} = 0$. Since $Q'$ and $Q''$ are conjugate in $2\Suz$, the class $4\tilde\alpha$ cannot be the restriction of a class in $\H^4(2\Suz)$.
\end{proof}

\begin{proof}[Proof of Theorem~\ref{thm:suz}] $\H^4(\Suz)_{(p)}$ vanishes for $p\geq 7$ by Lemma~\ref{large primes}, {and for $p=5$ by Lemma~\ref{lem:suz 5}}. Lemma~\ref{lem:suz 3} gave $\H^4(\Suz)_{(3)} = \H^4(2\Suz)_{(3)} = 0$, and so Lemma~\ref{lemma:schur cover} provides the upper bound $\H^4(3\Suz)_{(3)} \leq 3$. But Lemma~\ref{lem:suz lower bound} provides the lower bound $\H^4(3\Suz)_{(3)} \geq 3$, and so{\samepage
 \begin{gather*} \H^4(3\Suz)_{(3)} \cong \H^4(6\Suz)_{(3)} \cong \bZ/3, \end{gather*}
 generated by the $3$-part of $c_2(V)$.}

We now argue that $\H^4(2\Suz) = 8$. Lemma~\ref{lem:suz lower bound} implies that $\H^4(2\Suz)$ contains an element of order $8$, namely the $2$-part of $c_2(V)$, where $V$ denotes the 12-dimensional irrep of $6\Suz$. Lemma~\ref{suz trivial intersection} implies that $\H^4(2\Suz)$ has order at most $16$. Furthermore, since the $2$-part of~$c_2(V)$ has order~$8$, its restriction to~$2^{1+6}$ must be nonzero. On the other hand, for every representation~$W$ of~$2^{1+6}$, $c_2(W) \in \H^4\big(2^{1+6}\big)$ has order dividing~$2$. (Indeed, for the one-dimensional irreps of $2^{1+6}$ this is automatic, and for the unique irrep of dimension $2^3$ it is a straightforward computation.) Thus $c_2(V)$ restricts to the unique class in $\H^4(2^{1+6})$ which is divisible by $2$. From this we learn that the only way for $\H^4(2\Suz)$ to have order $16$ is if $c_2(V)$ is divisible by~$2$.

 Suppose that there were a class ``$\frac12 c_2(V)$''. Since $c_2(V)$ restricts to $2D_8$ with order~$8$, this class $\frac12 c_2(V)$ would have to have order $16$ when restricted to $2D_8$. The order-16 classes in $\H^4(2D_8)$ are the ones that have nontrivial restriction to the center of $2D_8$, which by construction is the center of $2\Suz$. But all classes in $\H^4(2^{1+6}.2\SWeyl(\rE_6))$, hence all classes in $\H^4(2\Suz)$, restrict trivially to the center. This proves
 \begin{gather*} \H^4(2\Suz) \cong \bZ/8, \end{gather*}
 generated by the $2$-part of $c_2(V)$.

 Finally, we argue that $\H^4(\Suz)_{(2)} \cong \H^4(3\Suz)_{(2)} \cong \bZ/4$ by repeating the logic from \cite[Theorem 5.3]{jft}. Namely, we have a commutative diagram
 \[
 \begin{tikzcd}
 2D_8 \arrow[r,hook] \arrow[d,twoheadrightarrow] & 6\Suz \arrow[r,hook] \arrow[d,twoheadrightarrow] & 2\Co_1 \arrow[d,twoheadrightarrow] \\
 D_8 \arrow[r,hook] & 3\Suz \arrow[r,hook] & \Co_1,
 \end{tikzcd}
 \]
 which, upon applying $\H^4(-)_{(2)}$, gives the diagram
 \[ \begin{tikzcd}
 \bZ/16 & \bZ/8 \arrow[l,hook'] & \bZ/8 \arrow[l,hook'] \\
 \bZ/4 \times (\bZ/2)^2 \arrow[u] & \H^4(3\Suz)_{(2)} \arrow[u,hook'] \arrow[l] & \bZ/4. \arrow[u,hook'] \arrow[l]
 \end{tikzcd} \]
 The north-then-west compositions are injective, and so both southern arrows must be injective. It follows that
\begin{gather*} \H^4(\Suz) \cong \H^4(3\Suz)_{(2)} \cong \bZ/4.\tag*{\qed}
\end{gather*}\renewcommand{\qed}{}
\end{proof}

\section{Pariahs} \label{sec:pariahs}

\subsection{Janko groups 1 and 3}

Using the permutation representations listed in the ATLAS, HAP is able to compute
\begin{gather*} \H_3(\rJ_1) \cong \bZ/30, \qquad \H_3(\rJ_3) \cong \bZ/15. \end{gather*}
HAP is unable to compute $\H_3(3\rJ_3)$ directly.
\begin{Theorem}
 $\H_3(3\rJ_3) \cong (\bZ/3)^2 \times \bZ/5$. Both $\H^4(\rJ_3)$ and $\H^4(3\rJ_3)$ consist entirely of Chern classes, and are detected on cyclic subgroups.
\end{Theorem}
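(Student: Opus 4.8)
The plan is to compute $\H^4(3\rJ_3;\bZ)\cong\H_3(3\rJ_3)$ one prime at a time, taking the HAP value $\H_3(\rJ_3)\cong\bZ/15$ as input. Since $\H_2(\rJ_3)=\bZ/3$ is a $3$-group, the spectral sequence of the central extension $\bZ/3\to 3\rJ_3\to\rJ_3$ shows that $\H^4(3\rJ_3;\bZ)_{(p)}\cong\H^4(\rJ_3;\bZ)_{(p)}$ for every prime $p\ne 3$. In particular the $2$-part vanishes, and the primes $17$ and $19$ fall to Lemma~\ref{large primes} (cyclic Sylow of prime order, with far fewer than $(p-1)/2$ classes of that order). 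For $p=5$ the Sylow is $\bZ/5$, so by Lemma~\ref{transfer restriction} the group $\H^4(\rJ_3)_{(5)}$ is a direct summand of $\H^4(\bZ/5)=\bZ/5$; as HAP makes it nonzero it equals $\bZ/5$, is detected on the order-$5$ cyclic subgroup, and one checks it is the restriction of the $c_2$ of a $\rJ_3$-representation by evaluating characters on that class. By the displayed isomorphism the same class accounts for $\H^4(3\rJ_3)_{(5)}=\bZ/5$. This settles every prime except $3$ and already verifies the ``Chern class / detected on cyclic subgroups'' claim at $5$; the corresponding claim for the $\bZ/3$ in $\H^4(\rJ_3)$ is checked by exhibiting a representation whose $c_2$ is nonzero on an order-$3$ or order-$9$ element, again a character computation run against the ATLAS tables in GAP.

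The heart of the matter is $\H^4(3\rJ_3)_{(3)}$. By Lemma~\ref{lemma:schur cover} (odd-$p$ case, with $\H_2(\rJ_3)_{(3)}=\bZ/3$) the pullback $\H^4(\rJ_3)\mono\H^4(3\rJ_3)$ is injective with cokernel of order dividing $3$; since $\H^4(\rJ_3)_{(3)}=\bZ/3$, the target has order $3$ or $9$, and I must show it is exactly $(\bZ/3)^2$. I would organize both the lower bound and the structure around a single observation: because $|\H^4(3\rJ_3)_{(3)}|\le 9$, any restriction map onto a group isomorphic to $(\bZ/3)^2$ must be an isomorphism, as $\bZ/9$ admits no surjection to $(\bZ/3)^2$; such a map simultaneously forces the group to be $(\bZ/3)^2$ and proves detection on cyclic subgroups.

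Concretely I would use two Chern classes: the pullback $u$ of the generator of $\H^4(\rJ_3)_{(3)}$, and $v:=c_2(V)$ for $V$ one of the $18$-dimensional faithful irreps of $3\rJ_3$. The independence of $v$ from $u$ is cleanest on an element $g$ of order $9$ whose cube is the central generator (i.e.\ lifting an order-$3$ element of $\rJ_3$): there $u$ vanishes, since it factors through $\bZ/9\epi\bZ/3$ and $t^2$ pulls back to $9t^2=0\in\H^4(\bZ/9)$, whereas Lemma~\ref{lemma:schur cover} (all classes restrict trivially to the central $\bZ/3=\langle g^3\rangle$) forces $v|_{\langle g\rangle}$ into the order-$3$ subgroup of $\H^4(\langle g\rangle)\cong\bZ/9$. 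Reading off the eigenvalues of $g$ on $V$ then shows $v|_{\langle g\rangle}\ne 0$, so $v\notin\langle u\rangle$ and the cokernel is exactly $\bZ/3$. To pin the structure I would instead restrict $u$ and $v$ to two \emph{non-central} order-$3$ cyclic subgroups $\langle g_1\rangle,\langle g_2\rangle$ and verify that their images form a basis of $\H^4(\langle g_1\rangle)\times\H^4(\langle g_2\rangle)\cong(\bZ/3)^2$; by the observation above this yields $\H^4(3\rJ_3)_{(3)}\cong(\bZ/3)^2$ together with detection.

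The main obstacle is the explicit $2\times 2$ rank computation: verifying that the restrictions of $u$ and $v$ to the chosen order-$3$ (and order-$9$) subgroups are nonzero and independent. This reduces to computing how the relevant conjugacy classes of $3\rJ_3$ fuse along $\langle g_i\rangle\to 3\rJ_3$ and to reading the eigenvalue multiplicities of the $18$-dimensional representation on those classes, which I would carry out with GAP's character-table library (the Whitney formula of Section~\ref{subsec:charclasses} then gives each $c_2|_{\langle g_i\rangle}$). As an independent check, and to confirm that the cokernel is genuinely nontrivial without relying on the eigenvalue bookkeeping, I would apply Lemma~\ref{lemma:cokers} to the $3$-local maximal subgroup $S=3^{2+1+2}:8\subseteq\rJ_3$, which contains a Sylow $3$-subgroup: the cover $3S$ has order $5832$ and is directly accessible to HAP, so computing $\coker(\H^4(S)\to\H^4(3S))$ determines the cokernel for $3\rJ_3$ itself.
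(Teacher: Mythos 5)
Your reduction of the problem (the primes $p \neq 3$ via the spectral sequence of the central extension, the upper bound $\bigl|\H^4(3\rJ_3)_{(3)}\bigr| \leq 9$ from Lemma~\ref{lemma:schur cover}, and the observation that a surjection onto $(\bZ/3)^2$ would simultaneously settle the structure and the detection claim) is correct and agrees with the paper. The gap is that both of your mechanisms for producing such a surjection fail, for concrete $3$-local reasons in $3\rJ_3$ that your plan never checks. First, the element $g$ of order $9$ with $g^3$ central \emph{does not exist}. Such a $g$ would be an order-$9$ lift of an order-$3$ element of $\rJ_3$, i.e.\ some order-$3$ class of $\rJ_3$ would have preimage $\bZ/9$ rather than $3^2$. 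But $3\ra$ lifts with order $3$ (its lifts include the class $3\rc$), and $3\rb$ does too: since $C_{\rJ_3}(3\ra) \cong 3\times A_6$ contains no element of order $9$, the classes $9\ra$, $9\rb$, $9\rc$ of $\rJ_3$ all cube into $3\rb$; the lifts of $9\ra$ to $3\rJ_3$ have order $9$ (the same ATLAS fact the paper uses), and the cubes of those lifts are then order-$3$ lifts of $3\rb$. So no cyclic subgroup of order $9$ in $3\rJ_3$ contains the center, and your ``cleanest'' independence argument has no starting point.

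Second, and this is why the failure is structural rather than a fixable bookkeeping slip, your plan to pin down the group by restricting $u$ and $v = c_2(V_{18})$ to two non-central order-$3$ subgroups cannot work for \emph{any} choice of $g_1$, $g_2$, because $v$ restricts to zero on every order-$3$ element of $3\rJ_3$. Indeed, by the previous paragraph every non-central order-$3$ element lies in a subgroup $E \cong 3^2$ which is the preimage of an order-$3$ subgroup of $\rJ_3$; write $\H^4(E) = \Sym^2(E^*) = \langle x^2, xy, y^2\rangle$ (Lemma~\ref{elemab}) with $y$ dual to the central $\bZ/3$. The Whitney formula gives $v|_E = e_2(m)\,x^2 + 17\bigl(\sum_k m_k\bigr)xy + \binom{18}{2}y^2$, where $\omega^{m_k}$ are the eigenvalues of a non-central generator; here $\binom{18}{2} \equiv 0 \pmod 3$, and $\sum_k m_k \equiv 0 \pmod 3$ because $3\rJ_3$ is perfect, so $\det V_{18}$ is trivial. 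Thus $v|_E = e_2(m)x^2$, which restricts with the same coefficient to all three non-central $\bZ/3$'s in $E$, so $v$ dies on one of them iff it dies on all of $E$. For $E$ over $\langle 3\ra\rangle$, the vanishing $v|_{\langle 3\rc\rangle} = 0$ (true, as the paper records) kills $v|_E$; for $E$ over $\langle 3\rb\rangle$, take the $\bZ/3$ generated by the cube of an order-$9$ lift of a $9\ra$-element: $v|_{\langle 9\ra\rangle}$ has order $3$ in $\H^4(\bZ/9) \cong \bZ/9$, and the order-$3$ elements there are exactly the kernel of restriction to the index-$3$ subgroup, so again $v|_E = 0$. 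Consequently every class $au+bv \in \H^4(3\rJ_3)_{(3)}$ restricts to order-$3$ subgroups as $a$ times the restriction of $u$: the total restriction to all order-$3$ subgroups has rank at most one, and an order-$9$ subgroup meeting the center trivially is unavoidable. That is exactly the paper's remedy: it restricts to $\H^4(\langle 3\rc\rangle) \times \H^4(\langle 9\ra\rangle)$, where $u$ maps to a pair with both coordinates nonzero and $v$ maps to $(0,\ast)$ with $\ast \neq 0$. Finally, your fallback via Lemma~\ref{lemma:cokers} and HAP applied to $3^{2+1+2}:8$, even granting its hypotheses, only determines the \emph{order} of $\H^4(3\rJ_3)_{(3)}$; it cannot distinguish $(\bZ/3)^2$ from $\bZ/9$, nor establish detection on cyclic subgroups, so it does not complete the theorem either.
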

\begin{proof}
 Let $V_{323}$ denote a choice of complex irrep of $\rJ_3$ of dimension $323$. (The two choices are the characters $\chi_4$ and $\chi_5$ when listed by increasing dimension; both are real, and are exchanged by the outer automorphism of $\rJ_3$.) Then $c_2(V_{323})$ restricts nontrivially to all cyclic subgroups of order $5$ in $\rJ_3$. For the $3$-parts of $\H^4(\rJ_3)$, we focus on the conjugacy classes $3\ra$ and $9\ra$, and any choice of $1920$-dimensional irrep $V_{1920}$ (there are three such irreps, all real, with characters~$\chi_{14}$,~$\chi_{15}$, and~$\chi_{16}$). Then $c_2(V_{1920})$ restricts nontrivially to both $\langle 3\ra\rangle$ and $\langle 9\ra \rangle$.

 Choose any lift of $3\ra \in \rJ_3$ to $3\rJ_3$, for example $3\rc \in 3\rJ_3$. (The classes $3\ra, 3\rb \in 3\rJ_3$ are central.) The class $9\ra \in \rJ_3$ lifts to a single conjugacy class in $3\rJ_3$, also called $9\ra$. With these new names, we still have that $c_2(V_{1920})|_{3\rc}$ and $c_2(V_{1920})|_{9\ra}$ are nontrivial.
 Finally, consider the smallest faithful representation $V_{18}$ of $3\rJ_3$, with dimension $18$ and character $\chi_{22}$. Then $c_2(V_{18})|_{3\rc} = 0$, but $c_2(V_{18})$ restricts with order $3$ to $\langle 9\ra\rangle$.

 It follows that the image of the map $\H^4(3\rJ_3)_{(3)} \to \H^4(\langle 3\rc\rangle) \times \H^4(\langle 9\ra\rangle)$ is not cyclic. On the other hand, the HAP computation of $\H_3(\rJ_3)_{(3)}$ together with Lemma~\ref{lemma:schur cover} imply that the domain has order at most $9$. So $\H^4(3\rJ_3)_{(3)} \cong (\bZ/3)^2$.
\end{proof}

\subsection{O'Nan group}

\begin{Theorem}\label{thm:on}
 $\H_3(\mathrm{O'N}) \cong \H_3(3\mathrm{O'N}) \cong \bZ/8$.
\end{Theorem}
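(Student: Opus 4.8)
The plan is to compute $\H^4(\mathrm{O'N};\bZ)\cong\H_3(\mathrm{O'N})$ one prime at a time, using $|\mathrm{O'N}|=2^9\cdot3^4\cdot5\cdot7^3\cdot11\cdot19\cdot31$. For $p\in\{5,11,19,31\}$ the Sylow is cyclic of order $p$, and the ATLAS \cite{ATLAS} shows strictly fewer than $(p-1)/2$ classes of order $p$, so Lemma~\ref{large primes} gives $\H^4(\mathrm{O'N})_{(p)}=0$. This leaves $p=2,3,7$ and then the triple cover, and I would lean on three maximal subgroups of $\mathrm{O'N}$ that each contain the relevant Sylow: $L_3(7):2$, the $3$-Sylow normalizer $3^4:2^{1+4}.D_{10}$, and the Alperin group $4^3\cdot\GL_3(\bF_2)$.

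For $p=7$ the Sylow is the Heisenberg group $7^{1+2}$, so Lemma~\ref{large primes} fails; instead I would restrict to $L_3(7)$, whose Borel $7^{1+2}:T$ has torus $T$ of order prime to $7$. By Lemma~\ref{transfer restriction}, $\H^4(\mathrm{O'N})_{(7)}\hookrightarrow\H^4(L_3(7))_{(7)}\hookrightarrow\H^4(7^{1+2})^T$, and since $T$ acts on $\H^4(7^{1+2})$ through nonzero combinations of the roots of $\SL_3$ there are no invariants, so $\H^4(\mathrm{O'N})_{(7)}=0$. (Equivalently one cites Grodal's $\H^4(\SL_3(\bF_7))\cong\bZ/48$ \cite{Grodal}, whose $7$-part is trivial.) For $p=3$ the Sylow $E\cong3^4$ is elementary abelian, so Lemma~\ref{transfer restriction} embeds $\H^4(\mathrm{O'N})_{(3)}$ as a summand of $\H^4(E)\cong\Sym^2(E^*)\oplus\Alt^3(E^*)$ (Lemma~\ref{elemab}), landing among the classes stable under the normalizer $N=3^4:Q$ with $Q=2^{1+4}.D_{10}$. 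Since $Q$ acts irreducibly on $E$, the twist $\Alt^3(E^*)$ of the irreducible $E$ has no invariants, and $\Sym^2(E^*)^Q$ is at most the line of the invariant quadratic form; exactly as for $\Omega_7(3)$ in Lemma~\ref{lemma:O73}, that form restricts nontrivially on anisotropic vectors of $E$ which $\mathrm{O'N}$ fuses to isotropic ones, so no nonzero stable class survives and $\H^4(\mathrm{O'N})_{(3)}=0$.

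The prime $2$ carries the answer and is the crux. The upper bound is Lemma~\ref{lemma:Dempwolff}(4): $\H^4(4^3\cdot\GL_3(\bF_2))_{(2)}\cong(\bZ/2)^2\oplus\bZ/8$, of which $\H^4(\mathrm{O'N})_{(2)}$ is a direct summand by Lemma~\ref{transfer restriction}. For the lower bound I would exhibit a faithful representation $V$ and an element $g$ of order $8$ (or $16$) for which the ATLAS eigenvalue data give $c_2(V)|_{\langle g\rangle}$ of order $8$ in $\H^4(\langle g\rangle)$ -- it cannot be larger, since the global $2$-part divides $8$ -- so that $\H^4(\mathrm{O'N})_{(2)}\supseteq\bZ/8$. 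To delete the two extra $\bZ/2$ summands I would argue transversally as in Theorems~\ref{thm.co2} and~\ref{thm.co3}: the generators of the $(\bZ/2)^2$ in $\H^4(4^3\cdot\GL_3(\bF_2))$ separate elements of the Alperin group that are fused in $\mathrm{O'N}$, hence are not restrictions of $\mathrm{O'N}$-classes, forcing $\H^4(\mathrm{O'N})_{(2)}\cong\bZ/8$ and $\H^4(\mathrm{O'N})\cong\bZ/8$. I expect this step -- pinning the lower bound with explicit eigenvalues and the upper bound with explicit fusion -- to be the main obstacle, though each reduces to bookkeeping in GAP's character-table library.

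For the Schur cover $3\mathrm{O'N}$ the kernel $\bZ/3$ is invisible to the primes $2$ and $7$, so $\H^4(3\mathrm{O'N})_{(2)}\cong\bZ/8$ and $\H^4(3\mathrm{O'N})_{(7)}=0$ immediately. At $p=3$, Lemma~\ref{lemma:schur cover} makes the pullback $\H^4(\mathrm{O'N})\to\H^4(3\mathrm{O'N})$ injective with cokernel of order dividing $3$; since $\H^4(\mathrm{O'N})_{(3)}=0$, it remains to show this cokernel is trivial. By Lemma~\ref{lemma:cokers} the cokernel agrees with that for the $3$-Sylow normalizer, and by the Bockstein criterion in its proof the cokernel vanishes precisely when $\operatorname{Bock}(\alpha^2)\ne0$, where $\alpha\in\H^2(\mathrm{O'N};\bZ/3)$ classifies the extension; restricting to the Sylow $E=3^4$ via Lemma~\ref{transfer restriction} turns this into the computation of $\operatorname{Bock}((\alpha|_E)^2)\in\H^5(E)$ from the explicit nonsplit extension class and the module structure of Lemma~\ref{elemab}. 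I would show this Bockstein is nonzero -- equivalently, mimic the $3\mathrm{McL}$ argument of Theorem~\ref{thm.mcl}, using that every order-$3$ element of $3\mathrm{O'N}$ meets the normal $3$-subgroup of a suitable maximal subgroup so that no $\bZ/3$ can survive -- concluding $\H^4(3\mathrm{O'N})_{(3)}=0$ and hence $\H^4(3\mathrm{O'N})\cong\bZ/8$.
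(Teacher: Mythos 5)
At the odd primes your route is genuinely different from the paper's and essentially sound. For $p=7$ the paper simply runs HAP on $\mathrm{PSL}_3(7)$; your Borel--torus argument works, but needs one more sentence of care: ``nonzero integer combination of roots'' is not by itself enough, since the torus has exponent $6$ and, e.g., $4\alpha+2\beta$ \emph{is} trivial on it -- you must check that the specific weights occurring in $\H^{\leq 4}\big(7^{1+2}\big)$ (namely $-2\alpha$, $-2\beta$, $-\alpha-\beta$, $-2\alpha-\beta$, $-\alpha-2\beta$, $-2\alpha-2\beta$) are nontrivial mod $6$, which they are. For $p=3$ the paper runs HAP on the preimage $3^{1+4}{:}2^{1+4}.D_{10}$ inside $3\mathrm{O'N}$, killing the base and the cover in one stroke; your stable-element argument for $\mathrm{O'N}$ also works, in fact more easily than you state: since $D_{10}$ embeds in $O_4^-(2)\cong S_5$ but not in $O_4^+(2)$, the extraspecial group is of minus type, $E=3^4$ is a symplectic-type absolutely irreducible $Q$-module, and $\Sym^2(E^*)^Q=0$ outright -- there is no invariant quadratic form, and your fusion step is vacuous. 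Your Bockstein computation for the cover (reducing to $\operatorname{Bock}\big(\omega^2\big)\neq 0$ on the Sylow, with $\omega$ the symplectic form of $3^{1+4}$) is a legitimate replacement for the paper's HAP computation.

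The genuine gap is at $p=2$, which is the entire content of the answer $\bZ/8$, and both halves of your plan there are left as hopes rather than proofs. For the lower bound you propose to exhibit $g$ of order $8$ or $16$ and a representation $V$ with $c_2(V)|_{\langle g\rangle}$ of order $8$; no such pair is exhibited, and there is real reason to doubt one exists: the paper nowhere claims $\H^4(\mathrm{O'N})$ consists of Chern classes or is detected on cyclic subgroups (contrast its explicit statement for $\rJ_3$, and its explicit admission for $2\mathrm{HS}$ that characteristic classes fail to generate), and in the closely analogous order-$8$ situations in this paper -- $2^5\cdot\GL_5(2)$ in Lemma~\ref{lemma:Dempwolff}(3), $2\Suz$ in Theorem~\ref{thm:suz} -- the order-$8$ classes are invisible on cyclic subgroups and must be detected on binary dihedral subgroups $2D_8$. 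Your ``transversal'' step deleting the $(\bZ/2)^2$ is modelled on Theorem~\ref{thm.co2}, but there the extension $2^{10}{:}(\rM_{22}{:}2)$ splits, so one cohomology image is an honest direct summand with tabulated restrictions; $4^3\cdot\GL_3(\bF_2)$ is nonsplit and you set up no analogous bookkeeping. What the paper actually uses in place of both steps is the published computation of Adem and Milgram~\cite{MR1345306}: $\H^3(\mathrm{O'N};\bF_2)\cong\H^4(\mathrm{O'N};\bF_2)\cong\bF_2$ with $\Sq^1=0$ on $\H^3$, which forces $\H^4(\mathrm{O'N})_{(2)}$ to be cyclic of order greater than $2$, hence equal to $\bZ/8$ because it is a direct summand of $(\bZ/2)^2\oplus\bZ/8$. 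Some input of this strength -- or an actually executed eigenvalue-plus-fusion computation -- is what your proposal is missing, and without it the theorem is not proved.
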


\begin{proof} The $p$-parts of $\H^4(\mathrm{O'N})$ vanish for $p=5$ and $p\geq 11$ by Lemma~\ref{large primes}. The $7$-Sylow is contained in a subgroup isomorphic to $\mathrm{PSL}_3(7)$, and a HAP computation gives $\H^4(\mathrm{PSL}_3(7)) \cong \bZ/16$.

 The 3-Sylow in $3\mathrm{O'N}$ is an extraspecial group of shape $3^{1+4}$, and its normalizer $N$ is a maximal subgroup of shape $N = 3^{1+4}:2^{1+4}.D_{10}$. HAP computes
 \begin{gather*} \H_3(N) \cong \bZ/4 \times \bZ/8 \times \bZ/5.\end{gather*}
 It follows that $\H^4(3\mathrm{O'N})_{(3)}$, and hence also $\H^4(\mathrm{O'N})_{(3)}$, vanishes.

 The $2$-Sylow in $\mathrm{O'N}$ is contained inside the nonsplit extension $4^3\cdot \mathrm{GL}_3(2)$. According to Lemma~\ref{lemma:Dempwolff}(4), \begin{gather*}\H^4\big(4^3\cdot \mathrm{GL}_3(2)\big)_{(2)} \cong (\bZ/2)^2 \times \bZ/8.\end{gather*} The $\bF_2$-cohomology ring of~$\mathrm{O'N}$, including the action of Steenrod squares, was computed by \cite{MR1345306}. They find that
 \begin{gather*}\H^1(\mathrm{O'N};\bF_2) = \H^2(\mathrm{O'N};\bF_2) = 0, \qquad
 \H^3(\mathrm{O'N};\bF_2) \cong \H^4(\mathrm{O'N};\bF_2) \cong \bF_2,\end{gather*}
 but \begin{gather*}\Sq^1 = 0\colon \H^3(\mathrm{O'N};\bF_2) \to \H^4(\mathrm{O'N};\bF_2).\end{gather*} It follows that $\H^4(\mathrm{O'N})_{(2)}$ is cyclic of order strictly greater than $2$. Since $\H^4(\mathrm{O'N})_{(2)}$ is a direct summand of $\H^4\big(4^3\cdot \mathrm{GL}_3(2)\big)_{(2)}$, the only option is $\H^4(\mathrm{O'N})_{(2)} \cong \bZ/8$.
\end{proof}

\subsection{Janko group 4 and Lyons group}

The two largest Pariahs are Janko's fourth group $\rJ_4$ and Lyons' group $\mathrm{Ly}$. Both have trivial Schur multiplier~\cite{MR2611672}, and so their cohomologies vanish in degrees $\leq 3$. We find that in fact their cohomologies vanish in degrees $\leq 4$. Only one other sporadic group has this property: the cohomology of $\rM_{23}$ vanishes in degrees $\leq 5$~\cite{MR1736514}.

\begin{Theorem} $\H^4(\rJ_4)$ is trivial.
\end{Theorem}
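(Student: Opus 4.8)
The strategy is the standard one from Section~\ref{sec:methods}: bound $\H^4(\rJ_4)$ one prime at a time, and show every $p$-part vanishes. Since $\rJ_4$ has trivial Schur multiplier, there is no central extension to worry about, so it suffices to prove $\H^4(\rJ_4;\bZ)_{(p)} = 0$ for every prime $p$ dividing $|\rJ_4| = 2^{21}\cdot 3^3 \cdot 5 \cdot 7 \cdot 11^3 \cdot 23 \cdot 29 \cdot 31 \cdot 37 \cdot 43$. First I would dispose of the large primes with Lemma~\ref{large primes}: for $p \in \{23,29,31,37,43\}$ the $p$-Sylow is cyclic of order $p$, and one checks in the ATLAS character table that there are strictly fewer than $(p-1)/2$ conjugacy classes of order $p$, so the criterion applies and these $p$-parts vanish.

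For the remaining primes $p \in \{2,3,5,7,11\}$ the plan is to locate, for each $p$, a subgroup $S \subseteq \rJ_4$ containing a $p$-Sylow for which $\H^4(S;\bZ)_{(p)}$ can be computed or bounded, and then invoke Lemma~\ref{transfer restriction} (the restriction $\H^4(\rJ_4;\bZ)_{(p)} \hookrightarrow \H^4(S;\bZ)_{(p)}$ is a split injection). The natural candidates come from the list of maximal subgroups of $\rJ_4$: the involution centralizer $2^{1+12}.3.\rM_{22}:2$ and the large $2$-local $2^{11}:\rM_{24}$ control the prime $2$; subgroups such as $2^{11}:\rM_{24}$ and $11^{1+2}:(5\times 2S_4)$ control $11$; and the odd primes $3,5,7$ sit inside groups like $\rM_{24}$ or $\mathrm{PSL}_3(7)$-type subgroups whose cohomology is already known or HAP-computable. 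Where possible I would cite the $\bF_2$-cohomology of $\rJ_4$ if available in the literature (as was done for $\mathrm{O'N}$ via \cite{MR1345306}) to pin down the $2$-part exactly, combining the $\Sq^1$-information with the upper bound from the $2$-local subgroup's $\H^4$.

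The main obstacle will be the prime $2$, since $\rJ_4$ has a very large $2$-Sylow (order $2^{21}$) and its $2$-local structure is intricate. The odd primes are comparatively easy: for $5,7$ one reduces to small known groups via maximal subgroups and Lemma~\ref{large primes}-style arguments, and for $3$ and $11$ one feeds the relevant $p$-local maximal subgroup through the LHS spectral sequence using Lemma~\ref{elemab}, Lemma~\ref{lem:extraspecial-odd}, and Cohomolo to show $\H^4(S;\bZ)_{(p)} = 0$ outright. The hard part is establishing $\H^4(\rJ_4;\bZ)_{(2)} = 0$: here I expect to need the $\bF_2$-cohomology ring of $\rJ_4$ (to control $\H^3$ and $\Sq^1$) together with a careful LHS computation on a maximal $2$-local such as $2^{11}:\rM_{24}$, using the known value $\H^4(\rM_{24}) = 12$ and the vanishing of its $2$-part. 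Showing the resulting $E_2$-page has no surviving $2$-torsion in total degree $4$, and that no spurious class survives the differentials, is where the real work lies.
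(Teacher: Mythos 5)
Your overall framework (prime-by-prime reduction, Lemma~\ref{transfer restriction}, LHS spectral sequences on $p$-local maximal subgroups) is the paper's framework, but at both of the hard primes your plan hits a wall that it does not resolve. At $p=3$ you propose to exhibit a $3$-Sylow-containing subgroup $S$ with $\H^4(S;\bZ)_{(3)}=0$ ``outright.'' No such subgroup is available: the $3$-Sylow of $\rJ_4$ lies in $2^{11}{:}\rM_{24}$ (or in the involution centralizer $2^{1+12}.3\rM_{22}{.}2$), and both have nonvanishing $3$-part in $\H^4$ --- in particular $\H^4(\rM_{24})_{(3)}\cong\bZ/3$. So restriction only gives the upper bound $\H^4(\rJ_4)_{(3)}\leq\bZ/3$, and the missing idea is a fusion argument: the generator of $\H^4(\rM_{24})_{(3)}$ restricts to zero on class $3\ra$ but nontrivially on class $3\rb$ of $\rM_{24}$, while $\rJ_4$ has a \emph{single} conjugacy class of order $3$; hence no nonzero class of $\H^4(2^{11}{:}\rM_{24})_{(3)}$ can be pulled back from $\rJ_4$, which is how the paper gets vanishing.

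At $p=2$ two of your proposed inputs are unavailable or incorrect, and the decisive step is absent. First, the $2$-part of $\H^4(\rM_{24})=\bZ/12$ is $\bZ/4$, not zero, so the LHS computation on $2^{11}{:}\rM_{24}$ cannot by itself give vanishing; again one must argue that classes pulled back from the quotient are killed by fusion in $\rJ_4$. Second, the mod-$2$ cohomology ring of $\rJ_4$ is \emph{not} in the literature (the reference \cite{MR1198410} computes the cohomology of $\rJ_4$ only away from the prime $2$), so there is no analogue of the $\mathrm{O'N}$ shortcut. The paper instead works with the involution centralizer $2^{1+12}.3\rM_{22}{.}2$: the spectral sequence plus a fusion argument (both involution classes of $\rJ_4$ meet $2^{1+12}$, and $\H^4(\rM_{22}{:}2)_{(2)}$ is detected on involutions) reduces everything to one potential order-$2$ class $\alpha$ whose restriction to $2^{1+12}$ is the distinguished class $\tilde q$ coming from the quadratic form. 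Killing $\alpha$ is the genuinely new step: one finds a quaternion group $Q_8\subseteq 2^{1+12}$ that detects $\tilde q$, extends it by a class-$4\rb$ element to a binary dihedral group $2D_8$, and notes that an order-$2$ class in $\H^4(2D_8)\cong\bZ/16$ is divisible by $8$ and hence restricts to zero on $Q_8$ --- a contradiction. Your proposal defers exactly this point (``where the real work lies''), but without some such detection argument the spectral sequence leaves an order-$2$ (indeed possibly order-$4$) ambiguity that cannot be removed.
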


The full cohomology ring of $\rJ_4$ is computed away from the prime $2$ in \cite{MR1198410}.

\begin{proof} The only primes not covered by Lemma~\ref{large primes} are $2$, $3$, and $11$.
 The $11$-Sylow in $\rJ_4$ is contained in a maximal subgroup of shape $11^{1+2}:(5\times 2S_4)$. It is easy to check that $\H^4\big(11^{1+2}:(5\times 2S_4)\big)_{(11)} = 0$; for example by observing that the central $10 \subseteq 5\times 2S_4$ acts on $11^2$ through the isomorphism $10 \cong (\bZ/11)^\times$ and applying Lemma~\ref{lemma:central character}.

 The $3$-Sylow is contained in a maximal subgroup of shape $2^{11}:\rM_{24}$. There are two conjugacy classes of elements of order $3$ in $\rM_{24}$; the restriction map $\H^4(\rM_{24})_{(3)} \cong \bZ/3 \to \H^4(\langle 3\ra\rangle)$ is zero, whereas $\H^4(\rM_{24})_{(3)} \cong \bZ/3 \to \H^4(\langle 3\rb\rangle)$ is an isomorphism \cite{GPRV}. But $\rJ_4$ has only one conjugacy class of order $3$. It follows that $\H_4(\rJ_4)_{(3)} = 0$.

 The $2$-Sylow is contained in $2^{11}:\rM_{24}$, and also in a maximal subgroup of shape $2^{1+22}.3\rM_{22}.2$ centralizing conjugacy class $2\ra \in \rJ_4$. This latter subgroup turns out to be more useful. Using Cohomolo and Proposition~\ref{prop:mathieu}, we find that the $E_2$ page for the LHS spectral sequence reads
 \begin{gather*} \begin{array}{ccccc}
 \leq 4 \\
 0 & 0 & 2 \\
 0 & 0 & 0 \\
 0 & 0 & 0 & 0 & 0 \\
 \bZ & 0 & 2 & 4 & 2^2 \times 3
 \end{array} \end{gather*}
 The entry ``$\leq 4$'' in bidegree $(0,4)$ comes from the LES for the extension
 \begin{gather*} \H^4\big(2^{1+12}\big) = 2^{12}.\Alt^2\big(2^{12}\big).\big(\Alt^3\big(2^{12}\big)/2^{12}\big).2 \end{gather*}
 from Section~\ref{sub.extraspecial 2group} and the calculations
 \begin{gather*} \H^0\big(3\rM_{22}.2; 2^{12}\big) = \H^1\big(3\rM_{22}.2; 2^{12}\big) = \H^0\big(3\rM_{22}.2; \Alt^3\big(2^{12}\big)\big) = 0, \\ \H^0\big(3\rM_{22}.2; \Alt^2\big(2^{12}\big)\big) = 2. \end{gather*}

 We showed during the proof of Theorem~\ref{thm.co2} that $\H^4(\rM_{22}:2)_{(2)} = 2^2$ is detected by restricting to the three conjugacy classes of order $2$ in $\rM_{22}:2$. All three of these classes have order-$2$ preimages in $2^{1+22}.3\rM_{22}.2$. But both conjugacy classes of order $2$ in $\rJ_4$ meet $2^{1+22}$. It follows that the images of $\H^4(\rM_{22}:2)_{(2)} \to \H^4\big(2^{1+22}.3\rM_{22}.2\big)_{(2)}$ and $\H^4(\rJ_4)_{(2)} \to \H^4\big(2^{1+22}.3\rM_{22}.2\big)_{(2)}$ have trivial intersection.

In particular, if $\H^4(\rJ_4) \neq 0$, then it contains an order-$2$ class $\alpha$ whose restriction to $2^{1+12}$ is the unique element $\tilde q \in \H^4\big(2^{1+12}\big)$ which is twice some other element. That unique element is pulled back from $\H^4(2^{12})$, where it corresponds to the quadratic form $q \in \Sym^2(2^{12})$ defining the extension $2^{1+12}$. Choose any pair of vectors $v_1,v_2 \in 2^{12}$ such that $q(v_1) = q(v_2) \neq 0$. Their lifts generate a quaternion group $Q_8 = 2^{1+2} \subseteq 2^{1+12}$, and $\tilde q \in \H^4(2^{1+12})$ restricts nontrivially to that quaternion group. (These lifts of $v_1$, $v_2$ have order $4$ in $2^{1+12}$, and so are in conjugacy class $4\ra$ in $2^{1+12}.3\rM_{22}.2$.)

Choose $\tilde g \in 2^{1+12}.3\rM_{22}.2$ in conjugacy class $4\rb$. The character table libraries confirm that this~$\tilde g$ has the following properties: $\tilde g^2$ is the nontrivial central element in $2^{1+12} \subseteq 2^{1+12}.3\rM_{12}.2$; the image $g$ in $3\rM_{22}.2$ of $\tilde g$ is in conjugacy class~$2\ra$. In particular, $g$ acts on $2^{12}$ fixing $8$ dimensions. Regardless of the Arf invariant of $q$, one can find a basis such that $q$ vanishes on at most one basis vector; thus we can find a vector $v_1 \in 2^{12}$ with $q(v) \neq 0$ and $vg \neq v$. (Following GAP's conventions, we write the action of $3\rM_{22}.2$ on $2^{12}$ from the right.) Set $v_2 = vg$; then $q(v_2) \neq 0$, and so the lifts of $v_1,v_2$ generate a $Q_8$ as in the previous paragraph.

Furthermore, we have arranged for the lifts of $v_1$, $v_2$ together with $\tilde g$ to generate a binary dihedral group $2D_8 \subseteq 2^{1+12}.3\rM_{22}.2$ extending this~$Q_8$. Suppose that $\H^4(\rJ_4) \neq 0$, and let $\alpha$ denote its order-$2$ class. Then $\alpha|_{2D_8}$ has order $2$, and so is divisible by $8$. But then $\alpha|_{Q_8} = 0$. This contradicts the fact that~$Q_8$ detected $\tilde q$, and so $\H^4(\rJ_4)$ must vanish.
 \end{proof}

\begin{Theorem}$\H^4(\mathrm{Ly})$ is trivial.
\end{Theorem}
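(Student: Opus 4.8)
The plan is to compute $\H^4(\mathrm{Ly};\bZ)_{(p)}$ one prime at a time, using that $|\mathrm{Ly}| = 2^8\cdot 3^7\cdot 5^6\cdot 7\cdot 11\cdot 31\cdot 37\cdot 67$ and that $\H_2(\mathrm{Ly}) = 0$ (so there are no Schur covers to worry about). For each of $p \in \{7,11,31,37,67\}$ the $p$-Sylow is cyclic of order $p$, and inspection of the ATLAS character table shows that $\mathrm{Ly}$ has strictly fewer than $(p-1)/2$ conjugacy classes of order $p$; thus Lemma~\ref{large primes} disposes of all five of these primes simultaneously, giving $\H^4(\mathrm{Ly})_{(p)} = 0$. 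This leaves the primes $2$, $3$, and $5$, each of which I would attack by restricting to a maximal subgroup containing the relevant Sylow and invoking the transfer-restriction injection of Lemma~\ref{transfer restriction}.

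For $p = 5$, the maximal subgroup $\mathrm{G}_2(5) \subseteq \mathrm{Ly}$ has $5$-part $5^6$, hence contains a full $5$-Sylow; since Lemma~\ref{lemma:g2q} gives $\H_3(\mathrm{G}_2(5)) = \bZ/8\oplus\bZ/3$, which has no $5$-torsion, Lemma~\ref{transfer restriction} forces $\H^4(\mathrm{Ly})_{(5)} = 0$. For $p = 3$, the maximal subgroup $3\mathrm{McL}{:}2$ has $3$-part $3^7$ and so contains a full $3$-Sylow; its index-$2$ subgroup $3\mathrm{McL}$ still contains that $3$-Sylow, and Theorem~\ref{thm.mcl} (which gives $\H_3(3\mathrm{McL}) = 0$, hence $\H^4(3\mathrm{McL}) = 0$) shows $\H^4(3\mathrm{McL})_{(3)} = 0$. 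Applying Lemma~\ref{transfer restriction} twice, first $\H^4(3\mathrm{McL}{:}2)_{(3)} \hookrightarrow \H^4(3\mathrm{McL})_{(3)}$ and then $\H^4(\mathrm{Ly})_{(3)} \hookrightarrow \H^4(3\mathrm{McL}{:}2)_{(3)}$, yields $\H^4(\mathrm{Ly})_{(3)} = 0$.

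The remaining and hardest case is $p = 2$, where the bound from a Sylow-containing subgroup will not be zero and a detection argument is needed. Here I would work inside the centralizer of a $2a$-involution, which has shape $2.A_{11}$ and whose $2$-part is $2^8$, so that it contains a full $2$-Sylow of $\mathrm{Ly}$. The first step is to pin down $\H^4(2.A_{11})_{(2)}$: one computes $\H_3(A_{11})_{(2)}$ (the degree-$11$ permutation representation is small enough for HAP, and the value can be cross-checked against the known homology of alternating groups, using $A_8 \cong \GL_4(\bF_2)$ and Lemma~\ref{lemma:Dempwolff}), and then passes to the double cover via Lemma~\ref{lemma:schur cover}, which applies since $\H_2(A_{11}) = \bZ/2$. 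The main obstacle is then to show that \emph{no} nonzero class of $\H^4(2.A_{11})_{(2)}$ lies in the image of the restriction $\H^4(\mathrm{Ly})_{(2)} \hookrightarrow \H^4(2.A_{11})_{(2)}$. I would do this by the same detection technique used for $\rJ_4$ above: locate an explicit quaternion or binary dihedral subgroup inside $2.A_{11}$ on which any putative surviving class is forced to restrict nontrivially, and then exhibit a fusion in $\mathrm{Ly}$ — using that $\mathrm{Ly}$ has a single conjugacy class of involutions and that the relevant order-$2$ and order-$4$ elements of $2.A_{11}$ all meet the center — identifying this subgroup with a conjugate on which the same class must vanish. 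The resulting contradiction would force $\H^4(\mathrm{Ly})_{(2)} = 0$, completing the proof. The delicate point, and the step I expect to require the most care, is controlling the exact structure of $\H^4(2.A_{11})_{(2)}$ together with the fusion pattern needed to make the detection argument rigorous.
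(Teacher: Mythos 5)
Your handling of the odd primes is correct and coincides with the paper's own proof: Lemma~\ref{large primes} for $p\in\{7,11,31,37,67\}$, the maximal subgroup $\mathrm{G}_2(5)$ together with Lemma~\ref{lemma:g2q} for $p=5$, and $3\mathrm{McL}\subseteq 3\mathrm{McL}{:}2$ together with Theorem~\ref{thm.mcl} and Lemma~\ref{transfer restriction} for $p=3$. The gap is at $p=2$, and it is genuine: what you offer there is a plan, not a proof. You never determine $\H^4(2.A_{11})_{(2)}$ (the HAP computation of $\H_3(A_{11})$ and the Lemma~\ref{lemma:schur cover} step are only proposed, and the latter would in any case give only an upper bound, leaving a group of order up to $16$ whose generators you would still need to name), and the detection step is deferred entirely. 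Moreover, the analogy with the $\rJ_4$ argument does not transfer the way you suggest: that argument is anchored to the normal extraspecial subgroup $2^{1+22}$~-- the putative class is forced to restrict to the unique $2$-divisible class there, and the $Q_8$ and $2D_8$ used for the contradiction are built from explicit vectors of its quadratic form. The group $2.A_{11}$ is quasisimple, with no normal $2$-subgroup at all, so none of that scaffolding exists; you would instead have to exhibit explicit generators of $\H^4(2.A_{11})_{(2)}$ (say, characteristic classes of spin representations), compute their restrictions to cyclic, quaternion, or binary dihedral subgroups, and pin down the fusion of order-$2$, $4$, and $8$ elements in $\mathrm{Ly}$~-- none of which is done, and which you yourself flag as the hard part.

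What you missed is that this work is unnecessary, because the subgroup you already used for $p=3$ does the job for $p=2$ as well: $|3\mathrm{McL}{:}2| = 2^8\cdot 3^7\cdot 5^3\cdot 7\cdot 11$, so $3\mathrm{McL}{:}2$ contains a full $2$-Sylow of $\mathrm{Ly}$ (not just the $3$-Sylow). By Theorem~\ref{thm.mcl}, together with the perfectness of $3\mathrm{McL}$ and the vanishing of its multiplier, $\H^j(3\mathrm{McL};\bZ)=0$ for $1\leq j\leq 4$, so $\H^4(3\mathrm{McL}{:}2)\cong\H^4(\bZ/2)\cong\bZ/2$, generated by inflation from the quotient $\bZ/2$. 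That inflated class vanishes on the involutions lying inside $3\mathrm{McL}$ but is nonzero on the outer involutions; since $\mathrm{Ly}$ has a single conjugacy class of involutions, meeting both, no nonzero class of $\H^4(3\mathrm{McL}{:}2)$ can be a restriction from $\mathrm{Ly}$. Lemma~\ref{transfer restriction} then kills $\H^4(\mathrm{Ly})_{(2)}$ (and $\H^4(\mathrm{Ly})_{(3)}$) in one stroke. This is exactly the paper's proof, and it replaces your entire $p=2$ program with a fusion observation that is already available once Theorem~\ref{thm.mcl} is in hand.
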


\begin{proof}For $p>5$, the Sylow $p$-subgroup of $\mathrm{Ly}$ is cyclic, so Lemma~\ref{large primes} applies.

$\mathrm{G}_2(5)$ and $3\mathrm{McL}:2$ are subgroups (in fact, maximal subgroups) of the $\mathrm{Ly}$ \cite[Propositions~2.5 and~5.4]{MR0299674}. The $5$-Sylow is contained in $\mathrm{G}_2(5)$, so the $\H^4(\mathrm{Ly})_{(5)}$ vanishes by Lemma~\ref{lemma:g2q}.

The $2$- and $3$-Sylows are contained in $3\mathrm{McL}:2$. By Theorem~\ref{thm.mcl}, the only cohomology of the latter is pulled back from the quotient $\bZ/2$, and so is detected on a conjugacy class of order~$2$ (specifically, class $2\rb \in 3\mathrm{McL}:2$). But $\mathrm{Ly}$ has only one conjugacy class of order~$2$, and it meets $3\mathrm{McL} \subseteq 3\mathrm{McL}:2$ (since $3\mathrm{McL}$ has a class of order~$2$). It follows that the pullback $\H^4(2) \overset\sim\to \H^4(3\mathrm{McL}:2)$ and the restriction $\H^4(\mathrm{Ly})_{(2)} \to \H^4(3\mathrm{McL}:2)$ have nonintersecting images.
\end{proof}

\section{Monster sections}\label{sec:monsters}

\subsection{Held group}

The Held group is small enough to be accessible by the methods of Sections~\ref{sec:methods}--\ref{subsec:charclasses}.

\begin{Theorem}\label{thm:he}
 $\H^4(\mathrm{He}) \cong \bZ/12$. It is spanned by fractional Pontryagin classes.
\end{Theorem}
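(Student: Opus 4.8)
The plan is to compute $\H^4(\mathrm{He};\bZ)$ one prime at a time, using that $|\mathrm{He}| = 2^{10}\cdot 3^3\cdot 5^2\cdot 7^3\cdot 17$, so the only relevant primes are $p\in\{2,3,5,7,17\}$; the target $\bZ/12 = \bZ/4\oplus\bZ/3$ then amounts to proving $\H^4_{(2)} = \bZ/4$, $\H^4_{(3)} = \bZ/3$, and $\H^4_{(p)} = 0$ for $p = 5,7,17$, with the nonzero parts realized by fractional Pontryagin classes (these exist for every real representation since $\H^2(\mathrm{He};\bZ/2) = 0$). First I would dispose of the large primes $p = 5,17$ with Lemma~\ref{large primes}: the $17$-Sylow is cyclic and the $5$-Sylow is $\bZ/5\oplus\bZ/5$, and inspecting the ATLAS one sees $\mathrm{He}$ has fewer than $(p-1)/2$ classes of order $p$ (a single class of order $5$, and two classes of order $17$), so both parts vanish.

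For $p = 7$ I would use the maximal subgroup $N = 7^{1+2}{:}(3\times S_3)$, the normalizer of the centre of a $7$-Sylow, which contains a full $7$-Sylow. Writing $J = 3\times S_3$, the key point (checkable in GAP) is that the central $\bZ/3\subseteq J$ acts on the defining module $E = 7^2$ as scalar multiplication by a primitive cube root of unity; it therefore acts through nontrivial characters on $E^*$, on $\Sym^2(E^*)$, and on the line $L_\omega$, hence with no nonzero fixed vectors on each of $\H^2$, $\H^3$, $\H^4$ of $7^{1+2}$. Since $|Z(J)| = 3$ is prime to $7$, each $\H^j(7^{1+2})$ with $j>0$ splits as a sum of its two nontrivial $\bZ/3$-isotypic $J$-submodules, and Lemma~\ref{lemma:central character} applied to each summand gives $\H^i(J;\H^j(7^{1+2})) = 0$ for all $i$ and all $j>0$. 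In the LHS spectral sequence for $N$ the only total-degree-$4$ entry with a $7$-part is $E_2^{4,0} = \H^4(J;\bZ)$, whose $7$-part is zero since $|J| = 18$ is prime to $7$; so $\H^4(N)_{(7)} = 0$ and, by Lemma~\ref{transfer restriction}, $\H^4(\mathrm{He})_{(7)} = 0$.

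For $p = 3$ I would restrict to the maximal subgroup $S = 3{\cdot}S_7$, which contains a $3$-Sylow of $\mathrm{He}$, compute $\H^4(S)_{(3)}$ (directly in HAP, or from $\H^4(S_7)$ via Lemma~\ref{lemma:schur cover}), and read off from Lemma~\ref{transfer restriction} that $\H^4(\mathrm{He})_{(3)}$ has order at most $3$. The matching lower bound comes from a fractional Pontryagin class: I would take a small real irrep of $\mathrm{He}$ (e.g.\ of dimension $51$ or $1920$) and check, from the spectrum of the relevant element as in Section~\ref{subsec:charclasses}, that its $\frac{p_1}2$ restricts nontrivially to a cyclic subgroup of order $3$ or $9$. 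This gives $\H^4(\mathrm{He})_{(3)}\cong\bZ/3$.

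The prime $p = 2$ is the main obstacle. The $2$-Sylow lies in the involution centraliser $C = 2^{1+6}_+{\cdot}\mathrm{L}_3(2)$ (the same $2$-local shape that characterizes $\mathrm{He}$, $\rM_{24}$, and $\GL_5(\bF_2)$), where $E = 2^6$ is the $\GL_3(\bF_2)$-module $V\oplus V^*$. Using the filtered $\GL_3(\bF_2)$-module structure of $\H^{\leq 4}(2^{1+6})$ from Section~\ref{sub.extraspecial 2group} and Lemma~\ref{elemab}, I would run the LHS spectral sequence for $C$ (invariants and $\H^1$, $\H^2$ of $\GL_3(\bF_2)$ via Cohomolo, bottom row via HAP) to bound $\H^4(C)_{(2)}$, and hence $\H^4(\mathrm{He})_{(2)}$. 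The hard part will be (i) pinning the bound down to exactly $\bZ/4$ rather than a larger or noncyclic group, which I expect to need a ``trivial intersection'' argument comparing the restrictions of $\H^4(C)$ and $\H^4(\mathrm{He})$ to cyclic subgroups fused in $\mathrm{He}$, exactly as in the proofs of Theorems~\ref{thm.co2} and~\ref{thm:suz}; and (ii) the lower bound, from a real representation whose $\frac{p_1}2$ restricts with order $4$ to an order-$8$ cyclic subgroup. As an alternative to (i), one could invoke the known $\bF_2$-cohomology of $\mathrm{He}$ with its $\Sq^1$-action to force $\H^4(\mathrm{He})_{(2)}$ to be cyclic of order at least $4$, as was done for $\mathrm{O'N}$ and $\mathrm{G}_2(5)$, and then cut it down by the Sylow bound. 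Assembling the five primes then yields $\H^4(\mathrm{He})\cong\bZ/12$, spanned by fractional Pontryagin classes.
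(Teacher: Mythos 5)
Your overall architecture (prime by prime, Sylow-containing maximal subgroups, LHS spectral sequences, trivial-intersection arguments, characteristic-class lower bounds) is exactly the paper's, and your treatment of $p=5,17$ via Lemma~\ref{large primes} and of $p=7$ via $7^{1+2}{:}(3\times S_3)$ matches it (the paper simply computes that $\H^4\big(7^{1+2}{:}(3\times S_3)\big)=\H^4(3\times S_3)$ has no $7$-part; your central-character argument is the same technique the paper uses for $\rG_2(5)$, $\mathrm{HN}$, and $\rJ_4$). But there is a genuine gap at $p=3$. You claim that computing $\H^4(3{\cdot}S_7)_{(3)}$ and applying Lemma~\ref{transfer restriction} ``reads off'' the bound $\H^4(\mathrm{He})_{(3)}\leq 3$. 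It does not: the paper's computation gives $\H^4(3S_7)=2^2\times 4\times 3^2$, so the $3$-part has order $9$, and the Sylow-restriction lemma only tells you $\H^4(\mathrm{He})_{(3)}$ is a direct summand of $(\bZ/3)^2$. The essential missing step is a fusion argument: the class $c_2(\Perm)\in\H^4(S_7)$, pulled back to $3S_7$, takes the values $-t^2$ and $+t^2$ on the two order-$3$ classes acting on $\Perm$ with cycle structures $1^43^1$ and $1^13^2$, and these two classes merge in $\mathrm{He}$; hence the pullback of $\H^4(S_7)_{(3)}$ and the image of $\H^4(\mathrm{He})_{(3)}$ intersect trivially inside $\H^4(3S_7)_{(3)}$, which is what cuts $3^2$ down to $3$. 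You correctly anticipate exactly this kind of argument at $p=2$ but omit it at $p=3$, where it is equally indispensable. A secondary error in the same paragraph: Lemma~\ref{lemma:schur cover} cannot be used to get $\H^4(3S_7)$ from $\H^4(S_7)$, because $3{\cdot}S_7$ is \emph{not} a central extension of $S_7$ (the odd permutations invert the normal $\bZ/3$; note also $\H_2(S_7)=\bZ/2$ has no $3$-part), so only the direct HAP computation is available.

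Two smaller points of comparison. At $p=2$ you work with the involution centralizer $2^{1+6}_+{\cdot}\mathrm{L}_3(2)$, whereas the paper uses the maximal subgroup $2^6{:}3S_6$; both contain a $2$-Sylow, but the paper's choice has elementary abelian kernel, so the coefficient modules in the LHS spectral sequence are handled directly by Lemma~\ref{elemab} rather than the heavier extraspecial machinery of Section~\ref{sub.extraspecial 2group}, and its trivial-intersection step is run against $\H^4(S_6)_{(2)}$ detected on the classes $2\rb$, $4\ra$, $4\rb$ (the lower bound then comes from $\frac{p_1}2$ of the real $7650$-dimensional irrep $\chi_{19}$ restricted to class $4\ra$). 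Your plan is plausible but unverified, and your proposed fallback of quoting the $\bF_2$-cohomology of $\mathrm{He}$ with its $\Sq^1$-action is not clearly available in the literature (unlike for $\mathrm{O'N}$). Finally, the $51$-dimensional irreps of $\mathrm{He}$ are a complex-conjugate pair, not real, so for the $3$-part lower bound you should use $c_2$ of one of them restricted to $\langle 3\rb\rangle$ (which acts with trace $0$), as the paper does, rather than a fractional Pontryagin class of a ``real irrep of dimension $51$''.
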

\begin{proof}
 The primes not covered by Lemma~\ref{large primes} are $p=2$, $3$, and $7$.

 The normalizer of a $7$-Sylow in $\mathrm{He}$ has shape $7^{1+2}:(3 \times S_3)$. There are no $7$s in its low cohomology: $\H^4\big(7^{1+2}:(3 \times S_3)\big) = \H^4(3 \times S_3) = 2 \times 3^2$.

 The $3$-Sylow in $\mathrm{He}$ is inside a maximal subgroup of shape $3S_7$, with $\H^4(3S_7) = 2^2 \times 4 \times 3^2$. We claim that the inclusions $\H^4(\mathrm{He})_{(3)} \to \H^4(3S_7)_{(3)}$ and $\H^4(S_7)_{(3)} = 3 \to \H^4(3S_7)_{(3)}$ have nonintersecting images, giving an upper bound $\H^4(\mathrm{He})_{(3)} \leq 3$. To show this, we first observe that if $C_3$ is a cyclic group of order $3$ then the two nonzero classes in $\H^4(C_3) \cong \bZ/3$ are canonically distinguished: one, which we will call $t^2 \in \H^4(C_3)$, is the cup-square of both nonzero classes in $\H^2(C_3) \cong \bZ/3$, and the other is not a cup-square. Now consider the class $c_2(\Perm) \in \H^4(S_7)$, where $\Perm$ denotes the defining permutation representation. There are three conjugacy classes of order $3$ in $3S_7$: the ``central'' one (not actually central~-- it is inverted by the odd elements of~$S_7$), and two that act on $\Perm$ with cycle structures $1^4 3^1$ and $1^1 3^2$, respectively. It follows that $c_2(\Perm)|_{\langle 1^4 3^1\rangle} = -t^2$ whereas $c_2(\Perm)|_{\langle 1^1 3^2\rangle} = +t^2$, meaning that $c_2(\Perm)$ takes different values on these two classes. However, these two classes merge to the same class in $\mathrm{He}$, and so $c_2(\Perm)$ cannot be the restriction of a cohomology class on $\mathrm{He}$.

 To establish the lower bound $\H^4(\mathrm{He})_{(3)} \geq 3$, we observe that the smallest irrep of $\mathrm{He}$ has dimension $51$, and conjugacy class $3\rb \in \mathrm{He}$ acts with trace $0$, and so $c_2$ of this irrep, when restricted to $\langle 3\rb\rangle$, does not vanish.

 For the prime $p=2$, we use the $2$-Sylow-containing maximal subgroup of shape $2^6:3S_6$. The $E_2$ page (localized at $p=2$) of the LHS spectral sequence for this extension reads
 \begin{gather*} \begin{array}{ccccc}
 E_2^{04} \\
 0 & 2 & 2^2 \\
 0 & 0 & 0 \\
 0 & 0 & 0 & 0 & 0 \\
 \bZ & 0 & 2 & 2 & 2^2\times 4
 \end{array} \end{gather*}
 with
 \begin{gather*} E_2^{04} = \H^0\big(3S_6; V . \Alt^2(V) . \Alt^3(V)\big), \qquad V = \big(2^6\big)^*. \end{gather*}
 Since
 \begin{gather*} \H^0(3S_6; V) = \H^0\big(3S_6; \Alt^2(V)\big) = 0, \qquad \H^0\big(3S_6; \Alt^3(V)\big) \cong \bZ/2, \end{gather*}
 we find that
 \begin{gather*} E_2^{04} \leq 2.\end{gather*}

 We claim that the inclusions $\H^4(S_6)_{(2)} \cong \H^4(3S_6)_{(2)} \to \H^4\big(2^6:3S_6\big)$ and $\H^4(\mathrm{He})_{(2)} \to \H^4\big(2^6:3S_6\big)$ have trivial intersection. To establish this claim, we first study $\H^4(S_6)_{(2)}$. There are four 5-dimensional complex irreps of $S_6$, corresponding to the characters $\chi_3$, $\chi_4$, $\chi_5$, and $\chi_6$. Their second Chern classes, when restricted to the conjugacy classes $2\rb$, $4\ra$, and $4\rb$ in $S_6$, are
 \begin{gather*} \begin{array}{c|ccc}
 & 2\rb & 4\ra & 4\rb \\ \hline &&& \\[-10pt]
 c_2(\chi_3) & 0 & -t^2 & +t^2 \\
 c_2(\chi_4) & 0 & -t^2 & -t^2 \\
 c_2(\chi_5) & 1 & -t^2 & -t^2 \\
 c_2(\chi_6) & 1 & -t^2 & +t^2
 \end{array} \end{gather*}
 Our notation is the following. The two classes in $\H^4(\langle 2\rb\rangle) \cong \bZ/2$ are ``$0$'' and ``$1$''. If $C_4$ is a~cyclic group of order $4$, the two generators of $\H^2(C_4)$ have the same cup-square in $\H^4(C_4)$, which we call ``$+t^2$''; the generator of $\H^4(C_4)$ which is not a cup-square is called~``$-t^2$''.

 The images of $\{c_2(\chi_3), c_2(\chi_4), c_2(\chi_5), c_2(\chi_6)\}$ in $\H^4(\langle 2\rb\rangle) \times \H^4(\langle 4\ra\rangle) \times \H^4(\langle 4\rb\rangle) \cong \bZ/2 \times (\bZ/4)^2$ together span a group isomorphic to $(\bZ/2)^2 \times \bZ/4$. It follows that these four classes span $\H^4(S_6)_{(2)} \cong (\bZ/2)^2 \times \bZ/4$ and that the restriction map $\H^4(S_6)_{(2)} \to \H^4(\langle 2\rb\rangle) \times \H^4(\langle 4\ra\rangle) \times \H^4(\langle 4\rb\rangle)$ is an injection.

 On the other hand, the subgroup $2^6 \subseteq 2^6:3S_6 \subseteq \mathrm{He}$ meets both conjugacy classes of order~$2$, and the order-$4$ preimages in $2^6 \subseteq 2^6:3S_6$ of the classes $4\ra,4\rb \in S_6$ are $\mathrm{He}$-conjugate to preimages of order-$2$ elements in~$S_6$. It follows that the image of $\H^4(S_6)_{(2)} \cong \H^4(3S_6)_{(2)} \to \H^4\big(2^6:3S_6\big)$ does not intersect $\H^4(\mathrm{He})$.

 We have so far established the following upper bound on $\H^4(\mathrm{He})_{(2)}$: it is a group of order at most $4$.
 The last ingredient needed is a cohomology class of order divisible by $4$. Let $V$ denote the irreducible $\mathrm{He}$-module with character $\chi_{19}$: it is real and of dimension $7650$. Consider the conjugacy class $4\ra \in \mathrm{He}$. It squares to $2\ra$, and
 \begin{gather*} \chi_{19}(4\ra) = 6, \qquad \chi_{19}(2\ra) = 90. \end{gather*}
 Therefore $4\ra$ acts with spectrum $1^{1938} i^{1890} (-1)^{1932} (-i)^{1890}$, and so the total Chern class of $V$, when restricted to $\langle 4\ra\rangle$, is
 \begin{gather*} c(V) = 1^{1938} (1-t)^{1890} (1-2t)^{1932} (1+t)^{1890} = 1 - 2 t^2 + \cdots \pmod {4t}. \end{gather*}
 In particular, $c_2(V)|_{\langle 4\ra\rangle} \neq 0$. But $V$ is a real representation, and therefore Spin (since $\mathrm{He}$ has trivial Schur multiplier). So it has a fractional Pontryagin class, twice of which is~$c_2(V)$. It follows that $\frac{p_1}2(V)$ has order divisible by~$4$, and so $\H^4(\mathrm{He})_{(2)} \cong \bZ/4$.
\end{proof}

\subsection{Harada--Norton and Thompson groups}

We were able to obtain partial results about the Harada--Norton and Thompson groups $\mathrm{HN}$ and~$\mathrm{Th}$.

\begin{Theorem}
 $\H^4\big(\mathrm{HN}; \bZ\big[\frac12\big]\big) \cong \bZ/3$. At the prime~$2$, $\H^4(\mathrm{HN})_{(2)}$ has order at most $16$ and exponent at most~$8$.
\end{Theorem}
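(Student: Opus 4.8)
Because the smallest permutation representation of $\mathrm{HN}$ is far too large for a direct HAP computation, the plan is to proceed prime by prime as in Section~\ref{sec:methods}. Since $|\mathrm{HN}| = 2^{14}\cdot 3^6\cdot 5^6\cdot 7\cdot 11\cdot 19$, the primes $7$, $11$, $19$ have cyclic Sylow subgroups and, by inspection of the ATLAS character table, strictly fewer than $(p-1)/2$ classes of order $p$, so Lemma~\ref{large primes} kills $\H^4(\mathrm{HN})_{(p)}$ for these. This leaves $p\in\{2,3,5\}$, and for each I would localise inside a maximal subgroup $S$ containing a $p$-Sylow: the $3$-local $3^4:2.(A_4\times A_4).4$, the $5$-local $5^{1+4}.2^{1+4}.5.4$, and, for $p=2$, an involution centraliser of shape $2^{1+8}.(A_5\times A_5).2$.

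For $p=5$ I would run the LHS spectral sequence for the normal subgroup $E=5^{1+4}$ of $S$, describing $\H^2(E)$, $\H^3(E)$ and $\H^4(E)\cong\Sym^2(E^*).(\Alt^2(E^*)_\omega\otimes L_\omega)$ via the $m=2$ case of Lemma~\ref{lem:extraspecial-odd}. A central element of the complement $J=2^{1+4}.5.4$ of order prime to $5$ acts through a nontrivial $\bF_5^\times$-character on $\H^2(E)$, on $\H^3(E)$ and on the $\Sym^2(E^*)$-summand of $\H^4(E)$, so Lemma~\ref{lemma:central character} clears all of these. The only terms that can survive are the invariants in $\Alt^2(E^*)_\omega\otimes L_\omega$ (on which the scalar acts trivially) and the $5$-part of the bottom row $\H^4(J)$; I would clear the first with Cohomolo, using as in the $\Omega_7(3)$ computation that the $5^4$-module is not symmetrically self-dual, and the second by observing that the extra central $5$ in $J$ is normalised by a full copy of $\bF_5^\times$ and so carries no invariant positive-degree cohomology. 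This gives $\H^4(\mathrm{HN})_{(5)}=0$.

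For $p=3$ the same machine applied to $E=3^4\subseteq 3^4:2.(A_4\times A_4).4$, with $\H^4(E)\cong\Sym^2(E^*)\oplus\Alt^3(E^*)$ from Lemma~\ref{elemab} and the twisted terms $\H^1(J;\H^3(E))$, $\H^2(J;\H^2(E))$ and $\H^4(J)_{(3)}$ supplied by Cohomolo, yields an upper bound of $3$ once the images of $\H^4(J)$ and $\H^4(\mathrm{HN})$ in $\H^4(S)$ are shown to meet trivially. I would establish that transversality by restricting to cyclic subgroups of order $3$ and separating classes according to whether they are cup-squares, exactly as in the proof of Theorem~\ref{thm:he}. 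A matching lower bound comes from the second Chern class of a small faithful irrep (for instance the $133$-dimensional one), which restricts with order $3$ to a suitable order-$3$ element. Hence $\H^4(\mathrm{HN})_{(3)}\cong\bZ/3$, and combining the odd primes gives $\H^4\big(\mathrm{HN};\bZ\big[\tfrac12\big]\big)\cong\bZ/3$.

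The prime $2$ is the real obstacle and the reason the theorem stops at a bound. With $E=2^{1+8}$ and $J=(A_5\times A_5).2$ acting on $2^8\cong\bF_4^2\otimes_{\bF_4}\bF_4^2$ through the two natural $A_5\cong\SL_2(\bF_4)$-modules (the outer $2$ swapping the factors), I would take $\H^*(E)$ from Section~\ref{sub.extraspecial 2group}, so that $\H^4(E)\cong(\bZ/2)^9\times\bZ/4$, and read off the $E_2$ page with HAP and Cohomolo. By Lemma~\ref{transfer restriction} the group $\H^4(\mathrm{HN})_{(2)}$ is a direct summand of $\H^4(S)_{(2)}$; after discarding the image of $\H^4(J)$, which is transverse to the $\mathrm{HN}$-image by the usual cyclic-subgroup argument, the part that can be hit from $\H^4(\mathrm{HN})$ has order at most $16$, with exponent at most $8$ since the only non-elementary-abelian contribution remaining is the $\bZ/4\subseteq\H^4(2^{1+8})$, which can extend to at most a $\bZ/8$. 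The step I cannot complete is the production of a characteristic class realising the surviving classes, so the exact isomorphism type remains open; resolving it would require either a finer fusion analysis across the two $2$-local subgroups or a Lie-group-valued representation of sufficiently large order, neither of which is available here.
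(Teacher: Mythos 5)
Your prime-by-prime plan and your choices of $2$- and $5$-local subgroups coincide with the paper's proof (at $p=3$ the paper uses the other $3$-Sylow-containing maximal subgroup, $3^{1+4}{:}4A_5$, rather than your $3^4{:}2.(A_4\times A_4).4$, but either could in principle give the upper bound). The genuine gap is your lower bound at the prime $3$: no ``suitable order-$3$ element'' exists. If $g\in\mathrm{HN}$ has order $3$ and acts on a representation $V$ with (real) character value $t$, its eigenvalue multiplicities are $(a,b,b)$ with $a+2b=\dim V$, and the restriction formula of Section~\ref{subsec:charclasses} gives $c_2(V)|_{\langle g\rangle}=\bigl(2b^2+5\binom b2\bigr)t^2\equiv -b\,t^2 \pmod 3$, which vanishes exactly when $9$ divides $\dim V-t$. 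For the $133$-dimensional character $\chi_2$ one has $\chi_2(3\rb)=-2$ and $\chi_2(3\ra)=7$ (the latter follows, e.g., from $\chi_2|_{A_{12}}=1+\chi^{(6,6)}$), and both are congruent to $133\equiv 7\pmod 9$; so $c_2(\chi_2)$ restricts to zero on \emph{every} cyclic subgroup of order $3$. This is precisely why the paper restricts instead to the unique order-$9$ class, using $\chi_2(9\ra)=1$ and $\chi_2(9\ra^3)=\chi_2(3\rb)=-2$ to pin down the eigenvalues of $9\ra$ and conclude $c_2(\chi_2)|_{\langle 9\ra\rangle}\neq 0$. Without this step you have only an upper bound at $p=3$, not the claimed isomorphism $\H^4\bigl(\mathrm{HN};\bZ\bigl[\frac12\bigr]\bigr)\cong\bZ/3$.

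Two further claims are wrong as stated, though repairable. At $p=5$, the center of $J=2^{1+4}.5.4$ is the central involution $z$ of $2^{1+4}$, which acts by $-1$ on $E^*=\H^2(E)$ and therefore acts \emph{trivially} on $\H^3(E)\cong\Alt^2(E^*)_\omega$ and on $\Sym^2(E^*)$, these being even tensor constructions; so Lemma~\ref{lemma:central character} clears $\H^2(J;\H^2(E))$ but not the terms you claim it clears. The surviving entries need separate arguments: $\H^0(J;\Sym^2(E^*))=0$ because $2^{1+4}$ acts irreducibly on $5^4$ preserving only the symplectic form (no invariant quadratic form exists), and $\H^1(J;\Alt^2(E^*)_\omega)$ must go to Cohomolo. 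At $p=2$, $\H^4\bigl(2^{1+8}\bigr)$ is not $(\bZ/2)^9\times\bZ/4$; that is the paper's value for $2^{1+4}_-$, whereas Section~\ref{sub.extraspecial 2group} gives $(\bZ/2)^{83}\times\bZ/4$ for $2^{1+8}$, of rank $\binom 82+\binom 83=84$. Finally, the exponent bound should not be argued by saying ``the $\bZ/4$ can extend to at most a $\bZ/8$'': it follows because, after discarding the bottom filtration step (the image of $\H^4(A_5\wr 2)$, which is transverse to the image of $\H^4(\mathrm{HN})$ by the fusion argument), $\H^4(\mathrm{HN})_{(2)}$ embeds in a three-step filtration whose graded pieces $E_\infty^{04}$, $E_\infty^{13}$, $E_\infty^{22}$ are elementary abelian of orders at most $2$, $4$, $2$, giving order at most $16$ and exponent at most $2^3=8$.
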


\begin{proof} Lemma~\ref{large primes} handles the primes $p\geq 7$. The $5$-Sylow in $\mathrm{HN}$ is contained in a maximal subgroup of shape $5^{1+4}.2^{1+4}.5.4$; the LHS spectral sequence gives $\H^4\big(5^{1+4}.2^{1+4}.5.4\big)_{(5)} = 0$.

 The $3$-Sylow is inside a maximal subgroup of shape $3^{1+4}:4A_5$, where by ``$4A_5$'' we mean the ``diagonal'' central extension $2.(A_5 \times 2)$. HAP can directly compute
 \begin{gather*} \H^4\big(3^{1+4}:4A_5\big)_{(3)} = 3^2.\end{gather*}
 We claim that the generator of $\H^4(4A_5)_{(3)} = 3$, when pulled back to $3^{1+4}:4A_5$, is not the restriction of a class on $\mathrm{HN}$. Indeed, that generator has nontrivial restriction to the elements of order~$3$ in~$4A_5$, and so its pullback has nonzero restriction to some elements of order $3$. But both conjugacy classes of order $3$ in $\mathrm{HN}$ meet $3^{1+4} \subseteq 3^{1+4}:4A_5$. Finally, we claim that $\H^4(\mathrm{HN})_{(3)} \neq 0$. There is a unique conjugacy class of order $9$ in $\mathrm{HN}$, and its traces on either $133$-dimensional representation, which characters $\chi_2$ and $\chi_3$, are $\chi_2(9\ra) = 1$ and $\chi_2\big(9\ra^3\big) = \chi_2(3\rb) = -2$. From this one can compute that $c_2(\chi_2)|_{\langle 9\ra\rangle} \neq 0$.

 The $2$-Sylow in $\mathrm{HN}$ is contained in the centralizer of conjugacy class $2\rb$, which has shape $2^{1+8}_+.(A_5 \wr 2)$. The $E_2$ page of the corresponding LHS spectral sequence provides an upper bound of $\big|\H^4(\mathrm{HN})_{(2)}\big| \leq 2^6$. Let $E = 2^8 \cong \big(2^8\big)^*$ and $J = (A_5 \wr 2)$; then
 \begin{gather*} \H^0(J;E) = \H^1(J;E) = \H^0\big(J; \Alt^3(E)\big) = 0, \qquad \H^2(J;E) \cong \bZ/2,\end{gather*}
 while
 \begin{gather*} \H^0\big(J;\Alt^2(E)\big) \cong \bZ/2, \qquad \H^0\big(J;\Alt^2(E)/2\big) \cong \bZ/2; \qquad \H^1\big(J;\Alt^2(E)/2\big) \cong (\bZ/2)^2. \end{gather*}
 Therefore the $E_2$ page of the LHS spectral sequence, after localizing at $2$, reads
 \begin{gather*} \begin{array}{ccccc}
 2 \\
 2 & 2^2 \\
 0 & 0 & 2 \\
 0 & 0 & 0 & 0 & 0\\
 \bZ & 0 & 2 & 2 & 2^2
 \end{array} \end{gather*}
 As usual, the image of $\H^4(A_5 \wr 2)_{(2)} \to \H^4\big(2^{1+8}_+.(A_5 \wr 2)\big)_{(2)}$ does not intersect $\H^4(\mathrm{HN})$: the former is detected on elements of order $2$, but both conjugacy classes of order $2$ in $\mathrm{HN}$ meet $2^{1+8}_+ \subseteq 2^{1+8}_+.(A_5 \wr 2)$.
\end{proof}

\begin{Theorem} $\H^4\big(\mathrm{Th}; \bZ\big[\frac13\big]\big) \cong \bZ/8$.
\end{Theorem}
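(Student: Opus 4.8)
The plan is to compute $\H^4(\mathrm{Th};\bZ)_{(p)}$ one prime at a time for every prime $p\neq 3$ dividing $|\mathrm{Th}| = 2^{15}\cdot 3^{10}\cdot 5^3\cdot 7^2\cdot 13\cdot 19\cdot 31$, that is, for $p\in\{2,5,7,13,19,31\}$, and to show that all of these vanish except the $2$-part, which is $\bZ/8$. Since inverting $3$ gives $\H^4(\mathrm{Th};\bZ[\tfrac13]) \cong \bigoplus_{p\neq 3}\H^4(\mathrm{Th})_{(p)}$, assembling the primes will yield the claim.

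First I would dispose of the primes $p\geq 7$. For $p\in\{13,19,31\}$ the Sylow is cyclic of order $p$ (witnessed by the maximal subgroups $L_2(19){:}2$ and the Frobenius group $31{:}15$), and inspection of the character table shows far fewer than $(p-1)/2$ classes of order $p$, so Lemma~\ref{large primes} applies and the $p$-part vanishes. For $p=7$ the Sylow is the elementary abelian $7^2$ (its normalizer is $7^2{:}(3\times 2S_4)$) and $\mathrm{Th}$ has at most two classes of order $7$, fewer than $(7-1)/2 = 3$, so Lemma~\ref{large primes} again gives $\H^4(\mathrm{Th})_{(7)} = 0$. For $p=5$ the Sylow has order $5^3$ and is the extraspecial group $5^{1+2}$ sitting in the maximal subgroup $S = 5^{1+2}{:}4S_4$. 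Since $|4S_4| = 96$ is coprime to $5$, the $5$-part of $\H^4(S)$ reduces to $\H^4(5^{1+2})^{4S_4}$. The central $\bZ/4 = Z(4S_4)$ is the group of scalars of the $\GL_2(\bF_5)$ acting on $5^2 = 5^{1+2}/Z$; it acts by a nontrivial scalar on each nonzero $\H^j(5^{1+2})$ for $j\in\{2,3,4\}$, because every surviving class of degree $\leq 4$ is built from $E^* = (5^2)^*$ and the cohomology of the central $\bZ/5$ with total weight $\not\equiv 0 \pmod 4$, the sole weight-$0$ exception (the class from $\H^4$ of the center) being killed by the transgression $d_3(\omega^2) = 2\omega\,d_3\omega$. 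Thus Lemma~\ref{lemma:central character} (or simply taking invariants) gives $\H^4(S)_{(5)} = 0$, hence $\H^4(\mathrm{Th})_{(5)} = 0$ by Lemma~\ref{transfer restriction}.

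The crux is the prime $2$. The $2$-Sylow of $\mathrm{Th}$ (of order $2^{15}$) is contained in the maximal subgroup $2^5\cdot\GL_5(2)$, which is the nonsplit Dempwolff group and has the same $2$-part as $\mathrm{Th}$. By Lemma~\ref{lemma:Dempwolff}(3) we have $\H^4(2^5\cdot\GL_5(2))_{(2)} \cong \bZ/8$, so Lemma~\ref{transfer restriction} realizes $\H^4(\mathrm{Th})_{(2)}$ as a direct summand of $\bZ/8$, forcing it to be either $0$ or $\bZ/8$. To see it is nonzero I would use the classical embedding of $\mathrm{Th}$ into the compact Lie group $\rE_8$, whose $248$-dimensional irrep is the restriction of the adjoint representation. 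Writing $e$ for the generator of $\H^4(B\rE_8)$, the class $e|_{\mathrm{Th}}$ restricts further to the Dempwolff subgroup, where Lemma~\ref{lemma:Dempwolff}(3) already exhibits $e$ with order $8$ (via the binary dihedral $2D_8$). Hence $e|_{\mathrm{Th}}$ has order divisible by $8$, so $\H^4(\mathrm{Th})_{(2)} \cong \bZ/8$, and therefore $\H^4(\mathrm{Th};\bZ[\tfrac13]) \cong \bZ/8$.

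The step I expect to be the genuine obstacle is this $2$-primary lower bound. It rests on the facts that $\mathrm{Th}$ embeds in $\rE_8$ with its $248$-dimensional representation realized as the adjoint, and that the Dempwolff subgroup of $\mathrm{Th}$ is $\rE_8$-conjugate to the standard Dempwolff subgroup used in Lemma~\ref{lemma:Dempwolff}, so that the intrinsic computation of $e|_{2^5\cdot\GL_5(2)}$ there transfers verbatim. Everything else — the large primes, the $5$-local central-character argument, and the $2$-primary upper bound via transfer-restriction to the Dempwolff maximal subgroup — is routine given the lemmas already established.
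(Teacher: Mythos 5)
Your overall architecture — Lemma~\ref{large primes} for $p\geq 7$, the central-character argument of Lemma~\ref{lemma:central character} applied to the normalizer $5^{1+2}{:}4S_4$ for $p=5$, and restriction to the Dempwolff maximal subgroup $2^5\cdot\GL_5(2)$ (which contains a $2$-Sylow and has $\H^4_{(2)}\cong\bZ/8$ by Lemma~\ref{lemma:Dempwolff}(3)) for the upper bound at $p=2$ — is exactly the paper's, and those parts are fine. But the step you yourself identify as the crux, the $2$-primary lower bound, rests on a false premise: the Thompson group does \emph{not} embed in the compact (or complex) Lie group $\rE_8$. The classical fact is that $\mathrm{Th}$ embeds in the finite Chevalley group $\rE_8(3)$; it preserves a $248$-dimensional lattice and the Lie bracket only modulo $3$, not over $\bC$. (This is the same phenomenon the paper flags for $\Omega_7(3)$, which lies in $^2\rE_6(2)$ but not in $\rE_6^{\mathrm{adj}}(\bC)$.) Consequently there is no class ``$e|_{\mathrm{Th}}$'', and the assertion that the Dempwolff subgroup of $\mathrm{Th}$ is ``$\rE_8$-conjugate'' to Griess's Dempwolff subgroup of $\rE_8(\bC)$ has no meaning: only the Dempwolff group itself, not $\mathrm{Th}$, sits inside the Lie group.

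The paper's proof is engineered precisely to avoid this. What is true is that the $248$-dimensional irrep $V$ of $2^5\cdot\GL_5(2)$ (the restriction of the adjoint representation of $\rE_8(\bC)$) extends to the defining $248$-dimensional irrep of $\mathrm{Th}$ as an abstract complex representation. Hence $c_2(V)$, not $e$ itself, lies in the image of the restriction $\H^4(\mathrm{Th})\to\H^4\big(2^5\cdot\GL_5(2)\big)$. Note that $c_2(V)=-60\,e|_{2^5\cdot\GL_5(2)}$ has order only $2$ in $\bZ/24$, so this does not directly exhibit an order-$8$ class; the paper closes the gap with the direct-summand trick you already have in hand: by Lemma~\ref{transfer restriction} the image of $\H^4(\mathrm{Th})_{(2)}$ is a direct summand of $\bZ/8$, and since $\bZ/8$ is indecomposable, a nonzero direct summand must be all of $\bZ/8$. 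Thus the nonzero order-$2$ class $c_2(V)$ already forces $\H^4(\mathrm{Th})_{(2)}\cong\bZ/8$. If you replace your $\rE_8$-embedding step by this argument (extension of the representation plus the indecomposability of $\bZ/8$), your proof becomes correct and coincides with the paper's.
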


\begin{proof}Lemma~\ref{large primes} handles the primes $p\geq 7$. The $5$-Sylow is inside $5^{1+5}:4S_4$, and Lemma~\ref{lemma:central character} implies $\H^4\big(5^{1+5}:4S_4\big)_{(5)} = 0$. The $3$-Sylow does not live in any nice maximal subgroups, and so we cannot compute $\H^4(\mathrm{Th})_{(3)}$.

The $2$-Sylow in $\mathrm{Th}$ is contained in the Dempwolff group of shape $2^5\cdot \GL_5(2)$. According to Lemma~\ref{lemma:Dempwolff}(3), $\H^4\big(2^5\cdot \GL_5(2)\big)_{(2)} = 8$, and the proof of that lemma established that $c_2(V) \neq 0$, where~$V$ denotes the $248$-dimensional irrep of $2^5\cdot \GL_5(2)$. This irrep $V$ extends to the defining $248$-dimensional irrep of~$\mathrm{Th}$, and so the restriction map $\H^4(\mathrm{Th})_{(2)} \to \H^4\big(2^5\cdot \GL_5(2)\big)_{(2)}$ is nonzero. Since the image of that restriction map is a direct summand, we must have $\H^4(\mathrm{Th})_{(2)} \cong \H^4\big(2^5\cdot \GL_5(2)\big)_{(2)} \cong \bZ/8$.
\end{proof}

\subsection{Fischer groups}

The Fischer groups $\Fi_{22}$, $\Fi_{23}$, and $\Fi_{24}$ are a ``third generation'' version of the Mathieu groups $\rM_{22}$, $\rM_{23}$, and $\rM_{24}$. Specifically, the $2$-Sylow in $\Fi_N$, for $N \in \{22,23,24\}$, lives in a subgroup of shape $2^{\lceil N/2\rceil - 1}.\rM_N$, making the calculation of $\H^4(\Fi_N)_{(2)}$ systematic. The extension $2^{\lceil N/2\rceil - 1}.\rM_N$ splits for $N = 22$ and does not split for $N = 23$ and~$24$.

The $3$-Sylows in all cases are contained in orthogonal groups over $\bF_3$. We were able to handle the $3$-parts of $\H^4(G)$ for $G = \Fi_{22}$ and $3\Fi_{22}$, but not for the larger Fischer groups.

\begin{Theorem}\label{thm.fi} The Fischer groups have the following cohomologies away from the prime $3$:
 \begin{enumerate}\itemsep=0pt
 \item[$1)$] $\H^4\big(\Fi_{22}; \bZ\big[\frac13\big]\big) = 0$; 
 \item[$2)$] $\H^4\big(2\Fi_{22}; \bZ\big[\frac13\big]\big)$ has order $2$ or $4$; 
 \item[$3)$] $\H^4\big(\Fi_{23};\bZ\big[\frac13\big]\big) \cong \bZ/2$; 
 \item[$4)$] $\H^4\big(\Fi_{24}'; \bZ\big[\frac13\big]\big)$ has order $2$ or~$4$. 
 \end{enumerate}
\end{Theorem}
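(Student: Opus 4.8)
The plan is to work one prime $p\neq3$ at a time, inverting $3$ throughout. For each group I would first dispose of the large primes with Lemma~\ref{large primes}: for $p\geq7$ (in $\Fi_{22}$ and $\Fi_{23}$) and $p\geq11$ (in $\Fi_{24}'$) the $p$-Sylow is $\bZ/p$ or $\bZ/p\times\bZ/p$ and there are too few classes of order $p$, so those parts vanish. The prime $5$ has Sylow $5^2$ in all three cases; here I would either apply Lemma~\ref{large primes} directly (when there is a single class of order $5$) or restrict to a $5$-local maximal subgroup and use Lemma~\ref{lemma:central character} to kill its $5$-part. The one genuinely new large-prime case is $p=7$ for $\Fi_{24}'$, whose Sylow has order $7^3$: there I would pass to the normalizer of the $7$-group (a $7$-local maximal subgroup $7^{1+2}{:}(\cdots)$) whose complement has a central element acting through a nontrivial character, and conclude vanishing again by Lemma~\ref{lemma:central character}. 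After this everything reduces to the $2$-primary computation.

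The heart of the argument is the prime $2$, and the structural input recalled before the statement is essential: the $2$-Sylow of $\Fi_N$ sits inside $S_N:=2^{\lceil N/2\rceil-1}.\rM_N$, with $E:=2^{\lceil N/2\rceil-1}$ elementary abelian, so by Lemma~\ref{transfer restriction} it suffices to bound $\H^4(S_N)_{(2)}$ and to identify the image of restriction from $\Fi_N$. I would run the LHS spectral sequence for $S_N$, feeding in the $\rM_N$-module structure of $\H^{\leq4}(E)$ from Lemma~\ref{elemab} (so $\H^2(E)=E^*$, $\H^3(E)=\Alt^2(E^*)$, and $\H^4(E)=E^*.\Alt^2(E^*).\Alt^3(E^*)$), the known Mathieu values $\H^4(\rM_{22})=\H^4(\rM_{23})=0$ and $\H^4(\rM_{24})_{(2)}=\bZ/4$, and the twisted groups $\H^i(\rM_N;\H^j(E))$ computed with Cohomolo. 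The splitting of $S_{22}$ makes $\H^4(\rM_{22})=0$ a direct summand, while for $N=23,24$ the nonsplit extensions must be pushed through the edge maps and differentials. The template is the proof of Theorem~\ref{thm.co2} for $\Co_2$, which used the closely related subgroup $2^{10}:(\rM_{22}:2)$: one separates the classes pulled back from $\rM_N$ from those restricted from $\Fi_N$ by a detection/transversality argument, using that the Mathieu cohomology is detected on small cyclic subgroups while the elements of $E$ realizing those restrictions become fused in $\Fi_N$. For $\Fi_{23}$ this should leave one surviving $\bZ/2$, visibly hit from $\Fi_{23}$, giving $\H^4(\Fi_{23};\bZ[\tfrac13])\cong\bZ/2$; for $\Fi_{22}$ the vanishing of $\H^4(\rM_{22})$ should force the whole $2$-part to die; and for $\Fi_{24}'$ the surviving part of $\H^4(\rM_{24})_{(2)}=\bZ/4$, together with the twisted terms, leaves a group of order $2$ or $4$, the lower bound of $2$ coming (as for $\Fi_{23}$) from a characteristic class of a faithful representation detected on a cyclic or quaternion subgroup.

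For the Schur cover $2\Fi_{22}$ in part~(2) I would invoke Lemma~\ref{lemma:schur cover}: since $\H_1(\Fi_{22})_{(2)}=0$ and $\H_2(\Fi_{22})_{(2)}=\bZ/2$ is nontrivial cyclic, the pullback $\H^4(\Fi_{22})\to\H^4(2\Fi_{22})$ is an injection with cokernel of order dividing $4$; as $\H^4(\Fi_{22})_{(2)}=0$ was already established, this bounds $\H^4(2\Fi_{22})_{(2)}$ by $4$. For the matching lower bound of $2$ I would exhibit a nonzero class, for instance the restriction of the $c_2$ (or $\frac{p_1}2$) of a faithful representation of $2\Fi_{22}$ to a suitable cyclic or quaternion subgroup whose generator is genuinely in the double cover, so that the order lands in $\{2,4\}$.

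The hard part --- and the reason parts~(2) and~(4) are left ambiguous --- is exactly separating order $2$ from order $4$. For $2\Fi_{22}$ this is whether the order-$4$ cokernel permitted by Lemma~\ref{lemma:schur cover} is actually realized; for $\Fi_{24}'$ it is whether the full $\bZ/4$ coming from $\H^4(\rM_{24})_{(2)}$ is detected on $\Fi_{24}'$ or is truncated by the nonsplit extension $2^{11}.\rM_{24}$. Both reduce to an undetermined higher differential or extension problem in the spectral sequence that the transfer-and-restriction methods used here cannot settle, which is why these entries appear as ``$[{\leq}4]$''-type answers rather than exact groups.
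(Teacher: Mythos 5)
Your upper-bound machinery is essentially the paper's: Lemma~\ref{large primes} for the primes $p\geq 5$, the LHS spectral sequences for $2^{\lceil N/2\rceil-1}.\rM_N$ with twisted coefficients fed in from Lemma~\ref{elemab} and Cohomolo, a fusion argument to show the pullback $\tilde\alpha$ of a generator of $\H^4(\rM_{24})_{(2)}\cong\bZ/4$ is not restricted from $\Fi_{24}'$, and Lemma~\ref{lemma:schur cover} to bound $\H^4(2\Fi_{22})_{(2)}$ by $4$ once $\H^4(\Fi_{22})_{(2)}=0$ is known. Your treatment of the $7$-part of $\Fi_{24}'$ via the $7$-local maximal subgroup $7^{1+2}{:}(6\times S_3)$ and Lemma~\ref{lemma:central character} is a reasonable variant of the paper's shortcut, which instead notes that the $7$-Sylow lies in a copy of $\mathrm{He}$ and quotes Theorem~\ref{thm:he}. (One small logical slip: for $\Fi_{22}$ it is not the vanishing of $\H^4(\rM_{22})$ that kills the $2$-part, but the additional computational fact that all twisted terms $E_2^{ij}$ of total degree $4$ vanish as well.)

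The genuine gap is in every one of your lower bounds. For parts (2)--(4) you propose ``a characteristic class of a faithful representation detected on a cyclic or quaternion subgroup,'' but you exhibit no representation, no subgroup, and no computation; and for $\Fi_{23}$ you assert the surviving $\bZ/2$ at $E_2^{13}$ is ``visibly hit from $\Fi_{23}$,'' which nothing in the spectral sequence shows~-- restriction from $\H^4(\Fi_{23})_{(2)}$ into $\H^4\big(2^{11}.\rM_{23}\big)$ is injective by Lemma~\ref{transfer restriction}, but its image could perfectly well be zero. This is precisely the step that the paper cannot do with Lie-group-valued characteristic classes. Its lower bound for all three covers comes instead from the monstrous class $\omega^\natural\in\H^4(\rM)$ of \cite{JFmoonshine}, restricted along the chain $2\Fi_{22}\subseteq\Fi_{23}\subseteq 3\Fi_{24}'\subseteq\rM$: Monster class $4\rb$ meets $2\Fi_{22}$, and the McKay--Thompson series of $4\rb$ has a multiplier of order $2$, so $\omega^\natural|_{\langle 4\rb\rangle}\neq 0$ and hence $\omega^\natural$ restricts with order at least $2$ to each of these groups (the $2$-parts of $\H^4(\Fi_{24}')$ and $\H^4(3\Fi_{24}')$ agree by Lemma~\ref{lemma:schur cover}, so this settles part~(4) as well). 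There is no a priori reason ordinary $c_2$ or $\frac{p_1}2$ classes can detect these cohomology classes: compare Section~\ref{sub:HS}, where every $c_2(V)$ and every $\frac{p_1}2(V)$ of $2\mathrm{HS}$ restricts trivially to the center and the authors are left with no known generators of $\H^4(2\mathrm{HS})$, and Section~\ref{subsec:charclasses}, which flags the Monster sections as exactly the cases requiring the ``monstrous characteristic class.'' Without a concrete substitute for $\omega^\natural$, your argument proves only the upper bounds: $\H^4\big(\Fi_{22};\bZ\big[\frac13\big]\big)=0$, $\H^4\big(\Fi_{23};\bZ\big[\frac13\big]\big)$ of order at most $2$, and orders dividing $4$ in parts (2) and (4).
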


\begin{proof}
 Lemma~\ref{large primes} handles all primes $p\geq 5$ except for $\H^4(\Fi_{24}')_{(7)}$. But the $7$-Sylow in $\Fi_{24}'$ is inside a copy of Held's group $\mathrm{He}$, and $\H^4(\mathrm{He})_{(7)} = 0$ by Theorem~\ref{thm:he}.

 We now inspect the LHS spectral sequences for the extensions $2^{\lceil N/2\rceil - 1}.\rM_N \subseteq \Fi_N$.
 The three cases are:
 \begin{gather*}
 \begin{array}{ccccc}
 \begin{array}{ccccc}
 0 \\
 0 & 0 & 2 \\
 0 & 2 & 0 \\
 0 & 0 & 0 & 0 & 0 \\
 \bZ & 0 & 0 & 12 & 0 \\
 \end{array}
 &\qquad&
 \begin{array}{ccccc}
 0 \\
 0 & 2 & 2 \\
 0 & 0 & 0 \\
 0 & 0 & 0 & 0 & 0 \\
 \bZ & 0 & 0 & 0 & 0
 \end{array}
 &\qquad&
 \begin{array}{ccccc}
 0 \\
 0 & 2 & \\
 0 & 0 & 0 \\
 0 & 0 & 0 & 0 & 0 \\
 \bZ & 0 & 0 & 0 & 12
 \end{array}
 \\ \\
 2^{10}:\rM_{22} \subseteq \Fi_{22}
 &&
 2^{11}.\rM_{23} \subseteq \Fi_{23}
 &&
 2^{11}.\rM_{24} \subseteq \Fi_{24}
 \end{array}
 \end{gather*}
 The first proves claim (1) and provides the upper bound for claim (2). The second provides the upper bound for claim (3).

 Let $\alpha \in \H^4(\rM_{24})_{(2)} \cong \bZ/4$ denote a generator, and let $\tilde\alpha$ denote its pullback to $2^{11}.\rM_{24}$. Consider the conjugacy classes $4\rb$ and $4\rc$ in $\rM_{24}$. The formula given in \cite[Section~3.3]{GPRV} provides $\alpha|_{\langle 4\rb\rangle } = 0$ whereas $\alpha|_{\langle 4\rc\rangle}$ has order $4$ in $\H^4(\langle 4\rc\rangle)$. These classes admit preimages in $2^{11}.\rM_{24}$ living in conjugacy classes $8\re$ and $8\ra$ respectively, and so $\tilde\alpha|_{8\re} = 0$ whereas $\tilde\alpha|_{8\ra}$ has order $2$. (The pullback map $\H^4(C_4) \to \H^4(C_8)$ along a surjection of cyclic groups $C_8 \to C_4$ has image of order~$2$.) Both $8\re$ and $8\ra$ fuse in $\Fi_{24}'$ to class $8\ra$, and so $\tilde\alpha$ cannot be the restriction of a~cohomology class on $\Fi_{24}'$. Together with the above spectral sequence, we find the upper bound in claim~(4).

 All that remain are the lower bounds. Let $\omega^\natural$ denote the ``gauge anomaly'' of the Monster CFT, studied in~\cite{JFmoonshine}; cf.\ Section~\ref{sec:monster}. The McKay--Thompson series for class $4\rb$ in the Monster group~$\rM$ has a nontrivial multiplier (of order~$2$), and so $\omega^\natural|_{\langle 4\rb\rangle}$ is nonzero. But $4\rb$ meets $2\Fi_{22} \subseteq \Fi_{23} \subseteq 3\Fi_{24}'$, and so $\omega^\natural$ restricts with order at least $2$ to all of these groups.
\end{proof}

\begin{Theorem} $\H^4(\Fi_{22}) = \{0\}$ and $\H^4(3\Fi_{22}) \cong \bZ/3$.
\end{Theorem}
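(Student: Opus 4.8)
The previous result, Theorem~\ref{thm.fi}(1), already gives $\H^4\big(\Fi_{22};\bZ\big[\frac13\big]\big) = 0$, so every prime-to-$3$ part of $\H^4(\Fi_{22})$ vanishes and only the $3$-primary part remains. The plan is to reduce everything to the orthogonal group $\Omega_7(3)$, which is a maximal subgroup of $\Fi_{22}$ containing a full $3$-Sylow: the $3$-part of $|\Fi_{22}|$ is $3^9$, and this equals the $3$-part of $|\Omega_7(3)|$, so a $3$-Sylow of $\Omega_7(3)$ is already a $3$-Sylow of $\Fi_{22}$.

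First I would dispose of $\H^4(\Fi_{22})_{(3)}$. By Lemma~\ref{transfer restriction}, the restriction $\H^4(\Fi_{22};\bZ)_{(3)} \to \H^4(\Omega_7(3);\bZ)_{(3)}$ is an injection onto a direct summand. But the proof of Lemma~\ref{lemma:O73} established that $\H^4(\Omega_7(3))_{(3)} = 0$, so $\H^4(\Fi_{22})_{(3)} = 0$ as well. Combined with Theorem~\ref{thm.fi} this yields $\H^4(\Fi_{22}) = 0$.

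For the triple cover I would invoke Lemma~\ref{lemma:cokers} with $G = \Fi_{22}$, $p = 3$, and $S = \Omega_7(3)$. The hypotheses $\H_1(\Fi_{22})_{(3)} = 0$ and $\H_2(\Fi_{22})_{(3)} = 3$ hold because $\Fi_{22}$ is simple with multiplier $6$, and $\H^1(\Omega_7(3);\bZ/3) = 0$ since $\Omega_7(3)$ is perfect. The delicate point — and the main obstacle — is to check that the nonsplit extension $3\Fi_{22}$ restricts to a \emph{nonsplit} extension of $\Omega_7(3)$. Its classifying class is a nonzero element of $\H^2(\Fi_{22};\bZ/3)_{(3)}$, and since restriction to a subgroup containing a $3$-Sylow is injective on the $3$-primary part of cohomology (the transfer argument underlying Lemma~\ref{transfer restriction}, applied with $\bZ/3$-coefficients), this class restricts nontrivially to $\Omega_7(3)$. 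Because $\H^2(\Omega_7(3);\bZ/3) \cong \Hom(\H_2(\Omega_7(3)),\bZ/3) \cong \bZ/3$, the only nontrivial $\bZ/3$-extension of $\Omega_7(3)$ is its exceptional cover, so $3\Fi_{22}|_{\Omega_7(3)} = 3.\Omega_7(3)$.

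With these in place, Lemma~\ref{lemma:cokers} (using that $\Omega_7(3)$ contains the $3$-Sylow) gives an isomorphism
\[
\coker\big(\H^4(\Fi_{22}) \to \H^4(3\Fi_{22})\big)_{(3)} \cong \coker\big(\H^4(\Omega_7(3)) \to \H^4(3.\Omega_7(3))\big)_{(3)},
\]
and the right-hand cokernel is $\bZ/3$ by Corollary~\ref{cor:3o73} (which computed $\H_3(3.\Omega_7(3)) = \bZ/12$ against $\H_3(\Omega_7(3)) = \bZ/4$). Since $\H^4(\Fi_{22})_{(3)} = 0$, this identifies $\H^4(3\Fi_{22})_{(3)} \cong \bZ/3$. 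Finally, the prime-to-$3$ part of $\H^4(3\Fi_{22})$ agrees with that of $\H^4(\Fi_{22}) = 0$, as the central extension by a $3$-group leaves cohomology away from $3$ unchanged (the LHS spectral sequence contributes only through its $j=0$ row at primes $\neq 3$). Hence $\H^4(3\Fi_{22}) \cong \bZ/3$.
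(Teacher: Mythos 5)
Your proof is correct and takes essentially the same route as the paper's: reduce to the prime $3$ via Theorem~\ref{thm.fi}(1), kill $\H^4(\Fi_{22})_{(3)}$ by restricting to the $3$-Sylow-containing maximal subgroup $\Omega_7(3)$ where Lemma~\ref{lemma:O73} gives vanishing, and then identify $\H^4(3\Fi_{22})_{(3)}$ with the cokernel $\coker\big(\H^4(\Omega_7(3)) \to \H^4(3.\Omega_7(3))\big) \cong \bZ/3$ via Lemma~\ref{lemma:cokers} and Corollary~\ref{cor:3o73}. Your explicit check that $3\Fi_{22}$ restricts to the \emph{nonsplit} (hence exceptional) triple cover of $\Omega_7(3)$ is a hypothesis of Lemma~\ref{lemma:cokers} that the paper leaves implicit, so including it only strengthens the write-up.
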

Combined with Theorem~\ref{thm.fi}(2), we find that $\H^4(6\Fi_{22})$ has order either~$6$ or~$12$.
\begin{proof}
 Given Theorem~\ref{thm.fi}(1), we must only compute $\H^4(\Fi_{22})_{(3)}$ and $\H^4(3\Fi_{22})_{(3)}$. But the $3$-Sylow in $\Fi_{22}$ is contained in a maximal subgroup isomorphic to $\Omega_7(3)$, and $\H^4(\Omega_7(3))_{(3)} = 0$ by Lemma~\ref{lemma:O73}. By Lemma~\ref{lemma:cokers}, the inclusions $\H^4(\Omega_7(3))_{(3)} \to \H^4(3\Omega_7(3))_{(3)}$ and $\H^4(\Fi_{22})_{(3)} \to \H^4(3\Fi_{22})_{(3)}$ have the same cokernel; this cokernel is $\bZ/3$ by
 Corollary~\ref{cor:3o73}.
\end{proof}

\subsection{Monster} \label{sec:monster}

The main result of \cite{JFmoonshine} is that $\H^4(\rM)$ contains a class $\omega^\natural$, arising as the gauge anomaly of the Moonshine CFT, of exact order $24$.
It is reasonable to conjecture that $\omega^\natural$ generates $\H^4(\rM)$. The calculations in this paper allow us to come close to proving that conjecture:

\begin{Theorem}
 $\H^4(\rM) \cong \bZ/24 \oplus X$, where the $\bZ/24$ summand is generated by $\omega^\natural$ and where the order of $X$ divides $4$.
\end{Theorem}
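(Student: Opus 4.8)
The plan is to compute $\H^4(\rM)$ one prime at a time. The order of the Monster is $|\rM| = 2^{46}\,3^{20}\,5^9\,7^6\,11^2\,13^3\cdot 17\cdot 19\cdot 23\cdot 29\cdot 31\cdot 41\cdot 47\cdot 59\cdot 71$. The lower bound is already in hand: by the main theorem of \cite{JFmoonshine} the gauge anomaly $\omega^\natural$ has order exactly $24$, so $\bZ/8$ injects into $\H^4(\rM)_{(2)}$ and $\bZ/3$ into $\H^4(\rM)_{(3)}$. The theorem therefore reduces to three claims: $\H^4(\rM)_{(p)} = 0$ for $p \geq 5$; $\H^4(\rM)_{(3)} = \bZ/3$; and $\H^4(\rM)_{(2)}$ has order at most $32$ and exponent at most $8$ (so that, since it contains an element of order $8$, its largest cyclic summand is $\bZ/8$ and the complement $X$ has order dividing $4$). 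Throughout I would bound $\H^4(\rM)_{(p)}$ by restricting to a maximal subgroup containing a $p$-Sylow, using Lemma \ref{transfer restriction}.

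The large primes are routine. Each prime $p \geq 17$ divides $|\rM|$ only once, so its Sylow is $\bZ/p$; the character table shows each has fewer than $(p-1)/2$ classes of order $p$, so Lemma \ref{large primes} gives $\H^4(\rM)_{(p)} = 0$. The same lemma kills $p = 11$, whose Sylow is $11^2$ (rank two) with a single class of order $11$. For $p = 13$ the Sylow is extraspecial $13^{1+2}$; I would pass to its normalizer $13^{1+2}{:}(3\times 4S_4)$ and observe that a central element of order prime to $13$ scales the symplectic form on $13^2$, so Lemma \ref{lemma:central character} annihilates $\H^i\big(J;\H^j(13^{1+2})\big)$ for $j>0$, while the complement contributes no $13$-torsion. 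For $p = 7$, whose Sylow has order $7^6$, the Held group does not contain it, so instead I would use the maximal subgroup $7^{1+4}{:}(3 \times 2S_7)$: the central $\bZ/3$ scales the extraspecial $7^{1+4}$, Lemma \ref{lemma:central character} kills the twisted terms, and $\H^4(3 \times 2S_7)_{(7)} = 0$ because $S_7$ has cyclic $7$-Sylow and one class of order $7$.

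For $p = 5$ the clean route is the maximal subgroup $5^{1+6}{:}2\rJ_2{:}4$, which contains the full $5$-Sylow since $5^7\cdot 5^2 = 5^9$. A central element of order $4$ scales the symplectic form on $5^6$, so by Lemma \ref{lemma:central character} the only possible $5$-torsion comes from $\H^4(2\rJ_2{:}4)_{(5)}$; but Proposition \ref{lem:J2} shows the outer automorphism of $\rJ_2$ acts by $49 \equiv -1 \pmod 5$ on $\H^4(2\rJ_2)_{(5)} \cong \bZ/5$, whose invariants vanish, so $\H^4(\rM)_{(5)} = 0$. The prime $p = 3$ requires genuine work. The subgroup $S = 3^{1+12}\cdot 2\Suz{:}2$ contains the full $3$-Sylow ($3^{13}\cdot 3^7 = 3^{20}$), with $E = 3^{1+12}$ extraspecial of type $m = 6$ and complement $J = 2\Suz{:}2$. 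By Theorem \ref{thm:suz} we have $\H^4(2\Suz)_{(3)} = 0$, so all $3$-torsion of $\H^4(S)$ must arise from the twisted terms $\H^0\big(J;\H^4(E)\big)$, $\H^1\big(J;\H^3(E)\big)$, $\H^2\big(J;\H^2(E)\big)$. Using the $\GSp_{12}$-module description of $\H^j(3^{1+12})$ from Lemma \ref{lem:extraspecial-odd}, the action of $2\Suz$ on $3^{12}$ (the mod-$3$ reduction of the $12$-dimensional representation), and Cohomolo, I expect to bound these by a single $\bZ/3$, matching the $\omega^\natural$ lower bound and giving $\H^4(\rM)_{(3)} = \bZ/3$.

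The prime $p = 2$ is the main obstacle, and is precisely why the theorem can only bound $X$. The relevant subgroup is the $2B$-centralizer $2^{1+24}\cdot\Co_1$, which contains the full $2$-Sylow ($2^{25}\cdot 2^{21} = 2^{46}$). I would run the Lyndon--Hochschild--Serre spectral sequence with $E = 2^{1+24}$, feeding in the extraspecial $2$-group cohomology of Section \ref{sub.extraspecial 2group} and the value $\H^4(\Co_1)_{(2)} \cong \bZ/4$ from \cite{jft}. The twisted inputs $\H^i\big(\Co_1;\H^j(2^{1+24})\big)$ are governed by the exterior and symmetric powers of the Leech-lattice-mod-$2$ module $2^{24}$, whose $\Co_1$-cohomology I would estimate with Cohomolo; this should yield an upper bound of order $32$ and exponent $8$. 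The $\bZ/8$ summand is realised by $\omega^\natural$, leaving a complement $X$ of order at most $4$. I do not expect to resolve $X$ by these methods: doing so would require exact control of the higher differentials and of the nonsplit extensions among the $\Co_1$-cohomology groups of the higher exterior powers of the Leech module, which lies beyond the reach of the present techniques. Assembling the prime-by-prime answers gives $\H^4(\rM) \cong \bZ/24 \oplus X$ with $|X|$ dividing $4$, the $\bZ/24$ generated by $\omega^\natural$.
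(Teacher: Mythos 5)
Your prime-by-prime skeleton coincides with the paper's: the same subgroups $p^{1+24/(p-1)}.J$ with $J \subseteq \Co_1$, Lemma~\ref{large primes} for $p=11$ and $p\geq 17$, Lemma~\ref{lemma:central character} plus the outer-automorphism-acts-by-$49$ trick for $p=5,7,13$, and the normalizer $3^{1+12}.2\Suz.2$ at $p=3$. Those parts are correct. At $p=3$ your plan is right but note one practical point the paper has to work around: $2\Suz.2$ is too large for Cohomolo, so the bound $\H^1\big(2\Suz.2;\Alt^2\big(3^{12}\big)_\omega\big) \leq \bZ/3$ is obtained by restricting the coefficient module to the $3$-Sylow-containing subgroup $3^5{:}(\rM_{11}\times 2)$; also the term $\H^0\big(2\Suz.2;\Sym^2\big(3^{12}\big)\big)$ is killed not by computer but by the observation that $3^{12}$ is antisymmetrically, not symmetrically, self-dual. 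These are fixable details.

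The genuine gap is at $p=2$, exactly where the theorem's conclusion is delicate. First, a Lyndon--Hochschild--Serre spectral sequence can never give you ``exponent at most $8$'': the $E_\infty$ page only supplies a filtration of $\H^4\big(2^{1+24}\cdot\Co_1\big)_{(2)}$ with graded pieces of orders at most $2,2,2,4$, and a group with such a filtration can perfectly well be $\bZ/32$; so your stated route to splitting off the $\bZ/8$ generated by $\omega^\natural$ collapses at its second hypothesis. Second, the entry $E_2^{04} = \H^0\big(\Co_1;\H^4\big(2^{1+24}\big)\big)$ is not within reach of your proposed computations: even with the inputs of Holt (reported in \cite{Ivanov09}) and the calculation of $\H^1\big(\Co_1;\Alt^2\big(2^{24}\big)/2\big)$ in \cite{JFmoonshine} via Soicher's presentation \cite{MR886429}, the paper can only pin $E_2^{04}$ down to $\bZ/2$ or $\bZ/4$; if it were $\bZ/4$ your order bound would be $64$, allowing $|X|=8$. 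The paper closes both holes with an argument you are missing: using that $3^{12}{:}2\Suz.2$ splits, it produces a copy of $6\Suz$ inside $N(2\rb) = 2^{1+24}\cdot\Co_1$ whose central involution is of class $2\rb$, notes that $\omega^\natural|_{6\Suz}$ has order $8$ by the $2D_8$ computation of \cite{JFmoonshine}, and invokes $\H^4(6\Suz)_{(2)} \cong \bZ/8$ from Theorem~\ref{thm:suz}. The restriction $\H^4(\rM)_{(2)} \to \H^4(6\Suz)_{(2)}$ thus carries $\omega^\natural$ to a generator of $\bZ/8$, which (i) exhibits $\langle\omega^\natural\rangle$ as a retract, hence a direct summand, with no exponent bound needed, and (ii) shows $\omega^\natural$ is not divisible by $2$, which combined with the fact from \cite{JFmoonshine} that $\omega^\natural|_{2^{1+24}}$ has exact order $2$ is what forces $E_\infty^{04} = \bZ/2$ and hence the order bound $\big|\H^4(\rM)_{(2)}\big| \leq 32$. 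Without this $6\Suz$ argument (or a substitute), neither the order bound, nor the summand property of $\omega^\natural$, is obtainable by the methods you list.
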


\begin{proof}
 The primes $p=11$ and $p\geq 17$ are covered by Lemma~\ref{large primes}. To calculate $\H^4(\rM)$, we must study the $p$-Sylows for $p=2,3,5,7,13$. For these $p$, the $p$-Sylow is contained in the normalizer~$N(p\rb)$ of an element of conjugacy class $p\rb$. These normalizers are all of shape
 \begin{gather*} N(p\rb) = p^{1 + m}.J , \end{gather*}
 where $m = 24/(p-1)$ and $J \subseteq \Co_1$. Specifically
 \begin{gather*} \begin{array}{c|c}
 p & J \\ \hline
 2 & \Co_1 \\
 3 & 2\Suz.2 \\
 5 & 2\rJ_2.4 \\
 7 & 3 \times 2S_7 \\
 13 & 3 \times 4S_4 \end{array} \end{gather*}
 The extension $p^{1 + m}.J$ splits for $p\geq 5$. When $p=3$, $3^{1+12}.2.\Suz.2$ does not split, but the quotient $3^{12}:2\Suz.2$ does split. When $p=2$, the quotient $2^{24}\cdot \Co_1$ does not split.

 The center of the group $J$ in all cases has order $p-1$, and acts by a faithful central character on $p^m$. Applying Lemma~\ref{lemma:central character},
 we find that $\H^i\big(J; \H^j\big(p^{1+m}\big)\big) = 0$ for $0 < j < p$. Combined with the HAP calculation of $\H^4(2\rJ_2)$ from Section~\ref{sub:J2}, the known value $\H^4(3\times 2S_7)_{(7)} = 0$ (which follows by the methods of Lemma~\ref{large primes}), and the trivial result $\H^4(3 \times 4S_4)_{(13)}$, we find that $\H^4\big(\rM; \bZ\big[\frac16\big]\big) = 0$.

 At the prime $3$, central character considerations imply that the only potentially nonzero entries of total degree $4$ on the LHS spectral sequence for $3^{1+12}.2\Suz.2$ are, in the notation of Lemma~\ref{lem:extraspecial-odd}:
 \begin{gather*} \H^0\big(2\Suz.2; \Sym^2\big(3^{12}\big)\big), \qquad \H^1\big(2\Suz.2; \Alt^2\big(3^{12}\big)_\omega\big), \qquad \H^4(2\Suz.2; \bZ)_{(3)}. \end{gather*}
 The first vanishes because $3^{12}$ is antisymmetrically but not symmetrically self-dual as a $2.\Suz$ module. Actually, as a $2\Suz.2$ module, $3^{12}$ is not self-dual at all: the symplectic pairing changes by a sign via the surjection $2\Suz.2 \to 2$. The last vanishes by Theorem~\ref{thm:suz}.

 Therefore $\H^1\big(2\Suz.2; \Alt^2\big(3^{12}\big)_\omega\big)$ gives an upper bound for $\H^4(\rM)_{(3)}$. The group $2\Suz.2$ is too large for Cohomolo to handle directly, but its $3$-Sylow-containing maximal subgroup of shape $3^5:(\rM_{11} \times 2)$ is not, and Cohomolo computes
 \begin{gather*} \H^1\big( 3^5:(\rM_{11} \times 2); \Alt^2\big(3^{12}\big)_\omega\big) \cong 3^1. \end{gather*}
 This provides an upper bound for $\H^1\big(2\Suz.2; \Alt^2\big(3^{12}\big)_\omega\big)$, and so
 \begin{gather*} \H^4(\rM)_{(3)} \leq 3. \end{gather*}
 On the other hand, \cite[Lemma 3.2.4]{JFmoonshine} shows that $\H^4(\rM)_{(3)} \geq 3$.

 The $p=2$ part of $\H^4(\rM)$ is more subtle.

We first claim that the $\bZ/8 \subseteq \H^4(\rM)_{(2)}$ generated by $\omega^\natural$ is a direct summand. Equivalently, we claim that $\omega^\natural$ is not divisible by~$2$. Consider the subgroup $N(3\rb) = 3^{1+12}.2.\Suz.2 \subseteq \rM$. Since its quotient $3^{12}:2\Suz.2$ splits, we can find a copy of $6\Suz \subseteq 6\Suz.2 \subseteq \rM$. The central $3 \subseteq 6\Suz$ is generated by class $3\rb$ by construction, and the central $2 \subseteq 6\Suz \subseteq \rM$ is of class $2\rb$. It follows that this $6\Suz$ is (conjugate to) a subgroup of the normalizer $N(2\rb) = 2^{1+12}\cdot \Co_1$, where it lives over a copy of $3\Suz \subseteq \Co_1$. As observed in the proof of Theorem~\ref{thm:suz}, the corresponding $6\Suz \subseteq 2\Co_1$ contains the group $2D_8$ used in \cite{jft}; thus the $6\Suz \subseteq \rM$ contains a $2D_8$ such that the central element is of class $2\rb \in \rM$. This is the $2D_8 \subseteq \rM$ used in \cite[Section~3.3]{JFmoonshine} to show that the order of $\omega^\natural$ is divisible by $8$. It follows in particular that the order of $\omega^\natural|_{6\Suz}$ is divisible by~$8$. But $\H^4(6\Suz)_{(2)} = 8$ by Theorem~\ref{thm:suz}, and so $\omega^\natural|_{6\Suz}$ is not divisible by $2$, proving the claim.

 The last step of the proof is to study the LHS spectral sequence for the extension $2^{1+24}\cdot\Co_1$:
 \begin{gather*} \begin{array}{ccccc}
 \H^0\big(\Co_1; \H^4\big(2^{1+24}\big)\big) \\
 \H^0\big(\Co_1; \H^3\big(2^{1+24}\big)\big) & \H^1\big(\Co_1; \H^3\big(2^{1+24}\big)\big) \\
 \H^0\big(\Co_1; \H^2\big(2^{1+24}\big)\big) & \H^1\big(\Co_1; \H^2\big(2^{1+24}\big)\big) & \H^2\big(\Co_1; \H^2\big(2^{1+24}\big)\big) \\
 0 & 0 & 0 & 0 & 0 \\
 \bZ & 0 & 0 & 2 & 4
 \end{array} \end{gather*}
 The $2$ in the bottom row is $\H^3(\Co_1)$, and the $4$ in the bottom row is $\H^4(\Co_1)_{(2)}$; the latter result is due to~\cite{jft}.

 From Section~\ref{sub.extraspecial 2group}, we know
 \begin{gather*} \H^2\big(2^{1+24}\big) \cong 2^{24}, \qquad \H^3\big(2^{1+24}\big) \cong \Alt^2\big(2^{24}\big)/2 \cong 2^{275},\end{gather*}
 while
 \begin{gather*} \H^4\big(2^{1+24}\big) \cong 2^{24}.\Alt^2\big(2^{24}\big).\big(\Alt^3\big(2^{24}\big)/2^{24}\big).2 \cong 2^{2300}.2 \end{gather*}
 has exponent $4$.

 Without much difficulty, GAP can compute
 \begin{gather*} \H^0\big(\Co_1; 2^{24}\big) = \H^0\big(\Co_1; \Alt^2\big(2^{24}\big)/2\big) = \H^0\big(\Co_1; \Alt^3\big(2^{24}\big)\big) = 0.\end{gather*}
 and
 \begin{gather*} \H^0\big(\Co_1; \Alt^2\big(2^{24}\big)\big) \cong \bZ/2. \end{gather*}

 The groups $\H^i\big(\Co_1; 2^{24}\big)$ for $i=1,2$ were calculated by Derek Holt and reported in \cite[Lemma~1.8.8]{Ivanov09}. They are
 \begin{gather*} \H^1\big(\Co_1; 2^{24}\big) = 0, \qquad \H^2\big(\Co_1; 2^{24}\big) \cong \bZ/2. \end{gather*}
 A presentation for $\Co_1$ is given in~\cite{MR886429}. Using it, \cite[Section~3.5]{JFmoonshine} calculates
 \begin{gather*} \H^1\big(\Co_1; \Alt^2\big(2^{24}\big)/2\big) \cong \bZ/2.\end{gather*}

 These calculations almost fill in the $E_2$ page. The remaining missing ingredient is $E_2^{04} = \H^0\big(\Co_1; \H^4\big(2^{1+24}\big)\big)$.
 Using the above calculations together with the long exact sequence for cohomology with values in an extension, one finds:
 \begin{gather*} \H^0\big(\Co_1; 2^{24}.\Alt^2\big(2^{24}\big).\big(\Alt^3\big(2^{24}\big)/2^{24}\big)\big) = \H^0\big(\Co_1; 2^{2300}\big) \cong \bZ/2. \end{gather*}
 It follows that $\H^0\big(\Co_1; \H^4\big(2^{1+24}\big)\big)$ is isomorphic to either $\bZ/2$ or $\bZ/4$. We suspect the former, but were unable to compute it.

 Although we do not know whether $E_2^{04} = 2$ or $4$, we claim that $E_\infty^{04} = 2$. To prove this we quote two facts. First, according to \cite[Section~3.5]{JFmoonshine}, $\omega^\natural|_{2^{1+24}}$ has exact order~$2$, and so provides an element of order $2$ in $E_\infty^{04}$. Second, we showed above that $\omega^\natural$ is not divisible by $2$ in $\H^4(\rM)$. Since the map $\H^4(\rM) \to \H^4\big(2^{1+24}.\Co_1\big)$ is an inclusion onto a direct summand, it follows that $\omega^\natural|_{2^{1+24}.\Co_1}$ is not divisible by~$2$. But $\H^4\big(2^{1+24}.\Co_1\big)$ surjects onto $E_\infty^{04}$, and sends~$\omega^\natural$ to a~nonzero value. So the image of~$\omega^\natural$ in~$E_\infty^{04}$ cannot be divisible by $2$, proving that $E_\infty^{04} \neq 4$.

 All together, we find an $E_\infty$ page of the following form
 \begin{gather*} \begin{array}{ccccc}
 2 \\
 0 & \leq 2 \\
 0 & 0 & \leq 2 \\
 0 & 0 & 0 & 0 & 0 \\
 \bZ & 0 & 0 & 2 & 4
 \end{array} \end{gather*}
 It follows that $\H^4(\rM)_{(2)}$ has order at most $32$, completing the proof.
\end{proof}

\subsection*{Acknowledgements}

This project began when John Duncan asked us to compute $\H^4(\mathrm{O'N};\bZ)$.
The referees provided detailed and valuable corrections.
We also thank Graham Ellis, Jesper Grodal, Geoff Mason, and Robert Wilson for numerous comments and hints about finite groups and cohomology.
This work was supported by the US NSF grant DMS-1510444 and by the Government of Canada through the Department of Innovation, Science and Economic Development Canada and by the Province of Ontario through the Ministry of Research, Innovation and Science.

\pdfbookmark[1]{References}{ref}
\LastPageEnding

\end{document}